\DeclareMathAlphabet{\mathbb}{U}{jkpsyb}{m}{n}
\SetMathAlphabet{\mathbb}{bold}{U}{jkpsyb}{bx}{n}
\newcommand{\Qrat}{\mathbb{Q}}
\newcommand{\T}{\mathbb{T}}
\newcommand{\Mm}{\mathbb{M}}
\newcommand{\D}{\mathbb{S}}
\newcommand{\Zint}{\mathbb{Z}}
\newcommand{\Prob}{\mathbb{P}}
\newcommand{\Ch}{\mathbb{D}}
\newcommand{\cA}{\mathcal{A}}
\newcommand{\cD}{\mathcal{D}}
\newcommand{\cG}{\mathcal{G}}
\newcommand{\cH}{\mathcal{H}}
\newcommand{\cN}{\mathcal{N}}
\newcommand{\cS}{\mathcal{S}}
\newcommand{\cW}{\mathcal{W}}
\newcommand{\sK}{\mathfrak{K}}
\newcommand{\ft}{\mathfrak{t}}
\newcommand{\sw}{\mathfrak{w}}
\newcommand{\sz}{\mathfrak{z}}
\newcommand{\bw}{\mathrm{bw}}
\newcommand{\com}{\mathrm{c}}
\newcommand{\dd}{\mathrm{d}}      
\newcommand{\diam}{\mathrm{diam}}  
\newcommand{\dis}{\mathop{\mathrm{dis}}}
\newcommand{\Disc}{\mathop{\mathrm{Disc}}}
\newcommand{\M}{\mathrm{M1}}
\newcommand{\per}{\mathrm{per}}
\newcommand{\Sp}{\mathrm{sp}}
\newcommand{\uDelta}{\boldsymbol{\Delta}}
\renewcommand{\SS}{\mathscr{S}}
\newcommand{\ST}{\mathscr{T}}
\def\restr{\mathord{\upharpoonright}}
\renewcommand{\R}{\mathbb{R}}
\renewcommand{\N}{\mathbb{N}}
\renewcommand{\Q}{\mathbb{Q}}
\renewcommand{\T}{\mathbb{T}}
\renewcommand{\Z}{\mathbb{Z}}
\renewcommand{\E}{\mathbb{E}}
\renewcommand{\P}{\mathbb{P}}
\newcommand{\da}{\downarrow}
\newcommand{\ua}{\uparrow}
\newcommand{\uda}{\downarrow\mathrel{\mspace{-1mu}}\uparrow}
\newcommand{\dotp}{\bigcdot}
\newcommand{\llb}{\llbracket}
\newcommand{\rrb}{\rrbracket}
\newcommand{\pid}{\pi^\da}
\newcommand{\pidd}{\pi^{\da,\delta}}
\newcommand{\piud}{\pi^{\ua,\delta}}
\colorlet{darkblue}{blue!90!black}
\colorlet{darkred}{red!90!black}
\colorlet{dr}{red!90!black}
\tikzset{
        dot/.style={thin,circle,fill=gray,draw=black,inner sep=0pt,minimum size=2mm},
        vertex/.style={thin,circle,fill=black,draw=black,inner sep=0pt,minimum size=1mm},
	}
\newcommand*{\bigcdot}{}
\DeclareRobustCommand*{\bigcdot}{%
  \mathbin{\mathpalette\bigcdot@{}}%
}
\newcommand*{\bigcdot@scalefactor}{.5}
\newcommand*{\bigcdot@widthfactor}{1.15}
\newcommand*{\bigcdot@}[2]{%
  \sbox0{$#1\vcenter{}$}
  \sbox2{$#1\cdot\m@th$}%
  \hbox to \bigcdot@widthfactor\wd2{%
    \hfil
    \raise\ht0\hbox{%
      \scalebox{\bigcdot@scalefactor}{%
        \lower\ht0\hbox{$#1\bullet\m@th$}%
      }%
    }%
    \hfil
  }%
}
\DeclareRobustCommand{\TitleEquation}[2]{\texorpdfstring{\StrLeft{\f@series}{1}[\@firstchar]$\if%
b\@firstchar\boldsymbol{#1}\else#1\fi$}{#2}}
\def\dash{\leavevmode\unskip\kern0.18em--\penalty\exhyphenpenalty\kern0.18em}
\def\slash{\leavevmode\unskip\kern0.15em/\penalty\exhyphenpenalty\kern0.15em}
\begin{document}

\title{The Brownian Web as a random \TitleEquation{\R}{R}-tree}
\author{G. Cannizzaro$^{1,2}$ and M. Hairer$^{1,3}$}

\institute{Imperial College London, SW7 2AZ, UK  \and University of Warwick, CV4 7AL, UK
\and EPFL, 1015 Lausanne, Switzerland\\
 \email{giuseppe.cannizzaro@warwick.ac.uk}\\
 \email{m.hairer@imperial.ac.uk}}

\maketitle

\begin{abstract}
Motivated by~\cite{CHbc}, we provide a construction of the Brownian Web~\cite{TW,FINR}, 
i.e.\ a family of coalescing Brownian motions starting from every point in $\R^2$ simultaneously,
as a random variable taking values in a space of (spatial) $\R$-trees. This gives a stronger topology
than the classical one {(i.e.\ Hausdorff convergence on closed sets of paths)}, 
thus providing us with more continuous functions of the Brownian Web
and ruling out a number of potential pathological behaviours.
Along the way, we introduce a modification of the 
topology of spatial $\R$-trees in~\cite{DL,BCK} which makes it a complete separable metric space and could be of 
independent interest. 
We determine some properties of the characterisation of the Brownian Web in this context (e.g.\ its box-counting dimension) 
and recover some which were determined in earlier works, such as duality, special points and convergence 
of the graphical representation of coalescing random walks. 
\end{abstract}

\setcounter{tocdepth}{2}       
\tableofcontents

\section{Introduction}

The Brownian Web is a random object that can be heuristically described as a collection of coalescing Brownian 
motions starting from every space-time point in $\R^2$, a typical realisation of which is displayed in Figure~\ref{fig:BW}. 
Its study originated in the PhD thesis of Arratia~\cite{A}, 
who was interested in the Voter model~\cite{Lig}, its dual, given by a family of (backward) coalescing random walks, 
and their diffusive scaling limit. 
It was rediscovered by T\'oth and Werner in~\cite{TW}, where they provided the first thorough construction, 
determined its main properties, and used it to introduce the so-called true self-repelling motion. 
A different characterisation was subsequently given in~\cite{FINR} where, by means of a new topology, 
a sufficient condition for the convergence of families of coalescing random walks was derived. 
Later on, further generalisations 
via alternative approaches appeared, 
e.g.\ in~\cite{NT} \dash motivated by the connection with Hastings--Levitov 
planar aggregation models \dash, in~\cite{BGS} \dash where the optimal convergence condition 
was obtained and a family of coalescing Brownian motions on the Sierpinski gasket was built \dash, 
and in~\cite{GSW} \dash where the Brownian Web was used to study the scaling limit of the genealogies 
of a population. 
For an account of further developments of the Brownian Web and 
the diverse contexts in which it emerged, we refer to the review paper~\cite{SSS}. 

\begin{figure}
\begin{center}
\includegraphics[width=14cm]{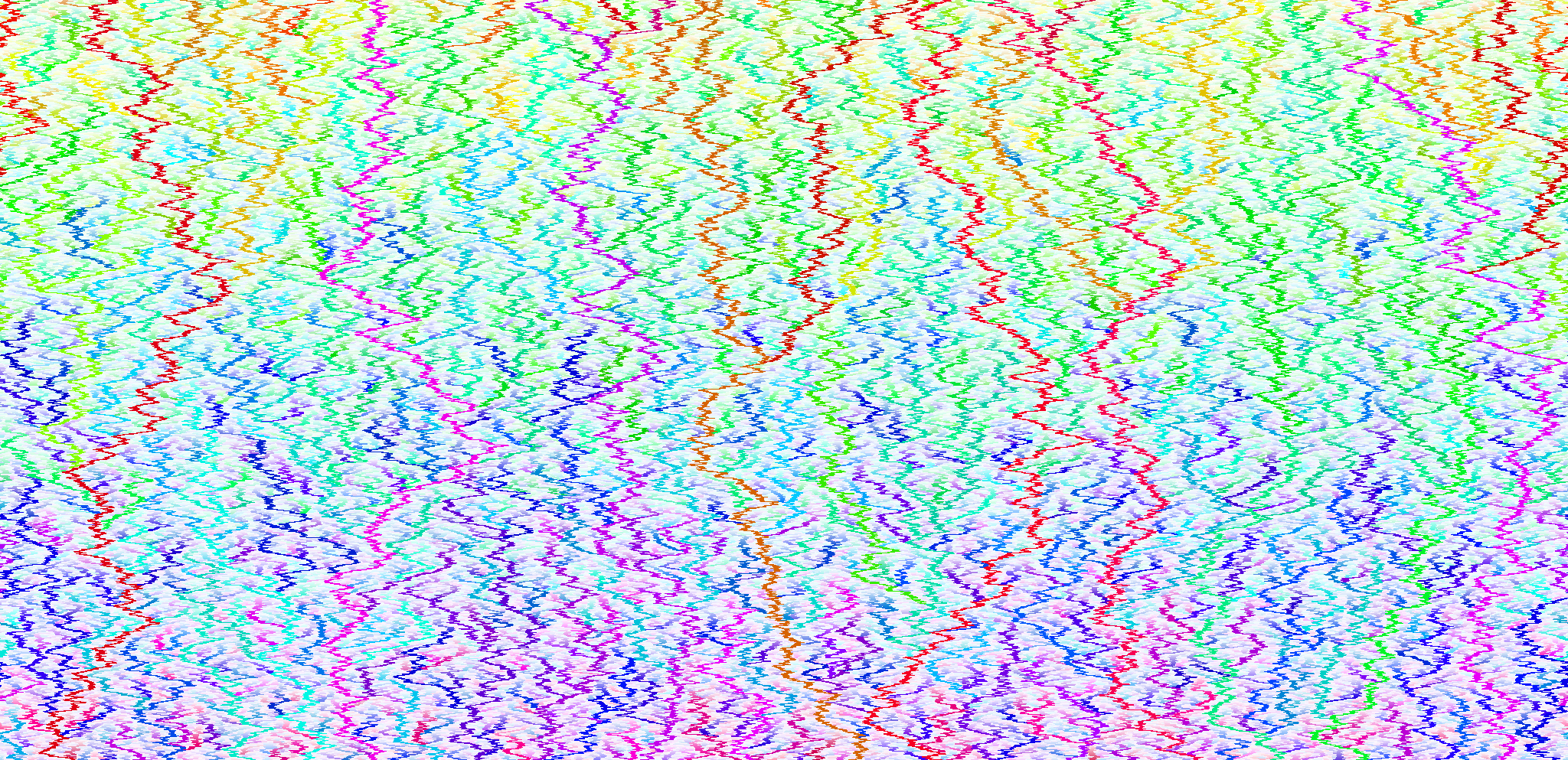}
\end{center}
\caption{A typical realisation of the Brownian web: coalescing Brownian trajectories emanate from
every point of the plane simultaneously. Trajectories are coloured according to their creation time\slash age.}
\label{fig:BW}
\end{figure}
In most (but not all, see e.g.~\cite{GSW})
 of these works, the Brownian Web is viewed as a random (compact) collection of paths $\cW$ 
in a suitable space of trajectories. Elements of $\cW$ are pairs $(t,\pi)$ with $t \in \R$ and
$\pi \colon \R \to \R$ such that furthermore $\pi(s) = \pi(t)$ for $s \ge t$.
{ In the present paper (similarly to~\cite{A3,GSW}), we focus instead on }
another of its characterising features, namely 
its {\it coalescence} or {\it tree structure}, clearly apparent in Figure~\ref{fig:BW}. 
The main motivation comes from the companion paper~\cite{CHbc} in which such a structure is 
used to construct  and study the Brownian Castle, 
a stochastic process whose value at a given point in $\R^2$ equals that of a Brownian motion indexed by the Brownian Web. 
Since the characteristics of the Brownian Castle are given by {\it backward} (coalescing) Brownian motions, 
in what follows we will (mainly) consider the case in which paths in $\cW$ run backward in time 
(the so-called backward Brownian Web~\cite{FINR}). 

To carry out this programme, we would like to view $\cW$ 
as a metric space with metric given by the {\it intrinsic distance}, namely the distance between 
two points $(t_i, \pi_i)$ is given by $t_1 + t_2 - 2\tau$, where $\tau$ is the largest time
such that $\pi_1(s) = \pi_2(s)$ for all $s \le\tau$.
More precisely, we view the Brownian Web as a (random) quadruplet 
$\zeta^\da_\bw\eqdef(\ST^\da_\bw,\ast^\da_\bw, d^\da_\bw,M^\da_\bw)$ such that
$(\ST^\da_\bw,\ast^\da_\bw, d^\da_\bw)$ is a pointed locally compact $\R$-tree, 
namely a connected locally compact metric space with no loops (see Definition~\ref{def:Rtree}) 
and such that $M^\da_\bw \colon \ST^\da_\bw \to \R^2$ is an embedding into $\R^2$.
(In the above identification of $\ST^\da_\bw$ with a set of elements of the type $(t,\pi)$, one 
would simply set $M^\da_\bw(t,\pi) = (t,\pi(t))$.)

{The goals of the present article are: identify a ``good'' space in which the 
quadruplet $\zeta^\da_\bw$ lives and in which we can uniquely characterise its law; 
determine a suitable topology under which such space is Polish 
and that allows for a manageable characterisation of its compact subsets; 
show that standard approximations to the Brownian Web converge in this (stronger) topology. 
Let us remark that the choice of the space and topology thereon is done in such a way that 
the Brownian Castle in~\cite{CHbc} is {\it continuous} (in a suitable sense) as a map 
from such space to the set of c\`adl\`ag functions.}

First, we introduce the space $\T^\alpha_\Sp$, $\alpha\in(0,1)$, whose 
elements are {\it spatial $\R$-trees}, i.e.\ quadruplets of the form $\zeta=(\ST,\ast,d,M)$ 
in which 
\begin{enumerate}[noitemsep]
\item $(\ST,\ast,d)$ is a pointed locally compact $\R$-tree,
\item $M$, the {\it evaluation map}, is a {locally little $\alpha$-H\"older continuous} map from $\ST$ to $\R^2$, i.e.
for all $K\subset\ST$ compact
\[
\lim_{\eps \to 0} \sup_{\sz \in K} \sup_{d(\sz,\sz') \le \eps} \|M(\sz)-M(\sz')\| / d(\sz,\sz')^\alpha = 0\,,
\]
\item $M$ is proper, i.e.\ the preimage of compact subsets is compact. 
\end{enumerate}
{In $\T^\alpha_\Sp$, we identify elements $\zeta=(\ST,\ast,d,M)$ and $\zeta'=(\ST',\ast',d',M')$ 
if there exists a bijective isometry $\phi\colon\ST\to\ST'$ such that $\phi(\ast)=\ast'$ and $M\circ\phi=M'$. 

Before commenting on the reason why we require the previous properties and introducing a metric on 
$\T^\alpha_\Sp$ under which it is complete and separable, we state and prove (one of) the main 
result of the present paper, namely the characterisation of the law of the Brownian Web (Tree).

Given any finite tuple $(z_1,\ldots,z_m) \subset \R^2$ and writing $z_i = (t_i, x_i)$, 
let $X$ be the unique (in law) $\R^m$-valued continuous 
martingale (but with time running backwards, i.e.\ $\E (X_s \,|\,\CF_t) = X_t$ for $s < t$ where $\CF_t = \sigma\{X_r\,:\, r\ge t\}$) such that, setting $\tau_{ij} = \sup\{t \le t_i \wedge t_j\,:\, 
X_i(t) = X_j(t)\}$ (in particular $\tau_{ii} = t_i$), one has
\begin{equ}
\lim_{t \to \infty} X_i(t) = x_i\;,\qquad
\scal{X_i, X_j}(t) = 
\left\{\begin{array}{cl}
	0 & \text{if $t \ge \tau_{ij}$,} \\
	\tau_{ij}-t & \text{otherwise,}
\end{array}\right.
\end{equ}
so that in particular $X_i(t)=x_i$ for all $t\geq t_i$. 
In other words, the $X_i$'s are Brownian motions starting at $x_i$ at time $t_i$ that are independent until the first 
time they meet, at which point they coalesce.
We then set $d_{ij} = 2\tau_{ij}-t_i-t_j$.

\begin{theorem}\label{thm:Main}
There exists a unique (in law) random element $\zeta=(\ST,\ast,d,M)$ of $\T^\alpha_\Sp$ with the following properties
\begin{enumerate}
\item $M(\ast) = (0,0)$,
\item for any fixed $z \in \R^2$, there almost surely exists a unique $\sz_z \in \ST$ such that 
$M(\sz_z) = z$,
\item for any finite tuple $(z_1,\ldots,z_m) \subset \R^2$, the law of $\{d(\sz_{z_i},\sz_{z_j})\}_{i,j \le m}$
is the same as the law of $\{d_{ij}\}_{i,j\le m}$ constructed just above,
\item for any fixed countable dense subset $\CD \subset \R^2$, the set $\{\sz_z\,:\, z \in \CD\}$ is almost surely 
dense in $\ST$.
\end{enumerate}
\end{theorem}

Thanks to this theorem, the \textit{backward Brownian Web Tree} (which will be defined in Definition~\ref{def:BW}) 
is the random variable $\zeta^\da_\bw\eqdef(\ST^\da_\bw,\ast^\da_\bw, d^\da_\bw,M^\da_\bw)$, 
whose law is uniquely characterised by points 1-4 above. 

\begin{proof}
The existence of a random variable satisfying these properties follows by the existence part of Theorem~\ref{thm:BW} below.
Regarding uniqueness, it suffices to note that properties 2 and 3 guarantee that any
 two candidates $\zeta = (\ST,\ast,d,M)$ and $\bar \zeta = (\bar\ST,\bar\ast,\bar d,\bar M)$ can be
coupled in such a way that $d(\sz_{z},\sz_{z'}) = \bar d(\bar\sz_{z},\bar\sz_{z'})$ almost surely, for all $z,z' \in \CD$. 
It follows from property 4 that, setting $\phi (\sz_z) = \bar\sz_z$, this extends to an isometry 
$\phi \colon \ST \to \bar \ST$. Since $M(\sz_z) = \bar M(\bar \sz_z) = z$ by assumption, the continuity of $M$ and $\bar M$
immediately implies that $M = \bar M\circ \phi$ so that, since one also has $\phi(\ast) = \bar \ast$ by property 1, 
$\zeta = \bar\zeta$ in $\T^\alpha_\Sp$.
\end{proof}

We will give the precise definition of the metric we endow $\T^\alpha_\Sp$ with in Section~\ref{sec:SpTrees}.  
Heuristically, under this metric,} 
a sequence $\{\zeta_n=(\ST_n,\ast_n,d_n,M_n)\}_n$ converges to $\zeta=(\ST,\ast,d,M)$ provided that 
\begin{enumerate}[noitemsep]
\item the pointed $\R$-trees converge in a local version of the Gromov--Hausdorff topology\footnote{For an 
introduction in the case of general metric 
and length spaces we refer to the monograph~\cite{BBI}, 
and to~\cite{E08} for the specific case of $\R$-trees.},
\item the evaluation maps $M_n$ converge to $M$ locally uniformly {and in $\alpha$-H\"older sense},
\item the size of the preimage of compact balls via the $M_n$ converges.  
\end{enumerate}
Let us comment on what these conditions entail and why we require them. 
The first condition takes into account the metric structure of the $\R$-trees and morally says that 
the metrics $d_n$ converge to $d$. 
In the present context, this ensures that couples of distinct 
paths which are close also coalesce approximately at the same time. 
The locally uniform convergence of the evaluation maps can be thought of as a control 
over the sup-norm distance of paths and 
is somewhat similar in spirit to that of~\cite{FINR}. The control on the $\alpha$-H\"older distance 
was added in order to make the space $\T^\alpha_\Sp$ complete. 
Indeed, as pointed out in~\cite[Remark 3.2]{BCK}, 
if we remove Hölder continuity of $M$ from the definition of $\T^\alpha_\Sp$ and from
its metric, then it would no longer be complete (see Remark~\ref{rem:Holder} below). 
We require the evaluation maps to be proper as we want to prevent the existence of { infinite 
sequences of points that are all pairwise of order $1$ distance apart in $\ST$ 
but whose image under $M$ comes infinitely often arbitrarily close to a given point in $\R^2$} 
(see Remark~\ref{rem:properness} for more details). Since we need this property to be preserved 
when taking limits, we included the third condition above. 

We stress once again that the definition of our topology and the related definition of the 
Brownian Web given in the present paper, is motivated by the 
construction of the Brownian Castle in~\cite{CHbc}.  
In particular, while if we only wanted to provide an $\R$-tree characterisation of the Brownian Web 
the first two points above would have been sufficient, 
the control on the H\"older regularity and the properness will play a crucial role in~\cite{CHbc}. 
Indeed, the covariance of the Brownian motion indexed by the Brownian Web, $\ST^\da_\bw$, 
is fully determined by the metric structure of the $\R$-tree. 
Such a structure is related to the Euclidean distance on $\R^2$ via the evaluation map, which, if H\"older continuous,
provides a mean of comparison between the two metrics. 
On the other hand, the Brownian Castle is a random map on $\R^2$, so that we need a right inverse 
to the evaluation map $M$, called {\it tree map} (see~\cite[Definition 2.12]{CHbc}), 
which assigns to every point in $\R^2$ its ``rightmost'' preimage in the tree. 
In order to retain any control on the regularity properties of the tree map, properness turns out to be essential. {More precisely, the ``Brownian Castle'' studied in \cite{CHbc}
is obtained by taking a Brownian motion indexed by a Brownian Web Tree and then 
view it as a map $\R^2 \to \R$ by precomposing it with the tree map. 
Properness is essential to guarantee that this procedure is stable under approximations.}

\subsection{Alternative topologies and relation to previous characterisations}

Over the years, a variety of topologies on spaces of $\R$-trees  have been considered. 
The one outlined above is similar to those in~\cite{DL,BCK}, with the additional condition about the 
H\"older continuity and properness of the evaluation map. In~\cite{DGP, KL, ALW, GSW}, the authors 
introduce so-called marked metric measure spaces, formed of triplets $(\ST,d,\mu)$ 
in which $(\ST,d)$ is a metric space and $\mu$ is a locally finite measure on $\ST\times I$, for $I$ 
a complete and separable metric space. 
The measure $\mu$ should be thought of as equal to $\nu(\dd x)\otimes\delta_{\kappa(x)}(\dd u)$ 
for $\nu$ a locally finite measure on $\ST$ and $\kappa\colon \ST \to I$ a ``mark function''. 
Upon taking $I=\R^2$, the mark function plays a similar role to the evaluation map $M$ above. 
In our context a natural choice of measure $\nu$ would be the length measure, but this 
is only {\it $\sigma$-finite} and not {\it locally finite} as the above references require. 
In principle, we could artificially add a locally finite measure (necessarily with full support), 
but this would cause additional complications for no benefit in our setting. 

With respect to the classical construction of the Brownian Web, in terms of a family of paths, 
notice that it is not always possible to view a family of paths as an $\R$-tree 
(trivially, the paths might not be coalescing) and, conversely, there is 
no {\it canonical} way to associate a collection of paths to a generic spatial $\R$-tree (think 
of the case in which segments in the tree backtrack so that they cannot be viewed as 
functions of one coordinate). In Definition~\ref{def:CharTree} below,
we define a subset  $\Ch^\alpha_\Sp \subset \T^\alpha_\Sp$ of ``directed trees'' for which 
the association is meaningful and 
prove that, as suggested by the heuristic description above, 
our topology is strictly finer than that in~\cite{FINR} (see Proposition~\ref{p:MapTopo}). 
While this ensures that many of the results obtained for the Brownian Web (existence of a dual, 
its properties, special points) can be translated to the present setting (see Section~\ref{sec:DBW}), 
convergence statements in $\T^\alpha_\Sp$ do not follow from those previously 
established.  
This is remedied in Section~\ref{sec:ConvBW}, where a convergence criterion to $\zeta^\da_\bw$ 
is derived.

As shown in~\cite{CHbc}, there are two main advantages of the characterisation of the 
Brownian Web outlined above. First, it allows to preserve information on the {\it intrinsic metric} 
on the set of trajectories, which in turn is at the basis of the properties and the proof of 
the universality statement for the Brownian Castle in~\cite[Theorem 1.4]{CHbc}. 
Moreover, the $\R$-tree structure (together with local compactness) automatically 
endows $\ST^\da_\bw$ with a $\sigma$-finite 
length measure (see~\cite[Section 4.5.3]{E08}) that can be useful in many contexts and, for example, 
could provide a more direct construction of the {\it marked Brownian Web} of~\cite{FINRb}\footnote{The marked Brownian Web is built from a Poisson marking of the double points of the Brownian Web, i.e. points from which two 
trajectories originate. As these points correspond to points in the skeleton of the dual web, the marking is obtained by considering the Poisson random measure with intensity given by the length measure on
the dual web.}.
Moreover, this is the measure that gives the white noise arising in the construction of the Brownian Castle, which
is also one reason why we do not attempt to distort the tree in a way that could potentially lead to better compactness
properties.

At last, {we mention that in \cite[Sec.~4.2]{A3}, Aldous sketched the construction of a random 
$\R$-tree corresponding to a mesh of the Brownian web as a limit of specific approximations
with nice exchangeability properties. His work builds on a very general and rather
``soft'' construction, but provides relatively little information about the random tree obtained in this way. 
(His random trees are tree-like closed subsets of $\ell^1$, so for example even local compactness is not
guaranteed.)
On the other hand, the present paper provides a global construction of the Brownian Web 
(in space-time, as opposed to the one at fixed times of~\cite{GSW}) as a random $\R$-tree 
satisfying a number of useful properties.}

Many fascinating random $\R$-trees have been studied, such as 
Aldous's CRT in~\cite{A1,A2,A3}, the L\'evy and 
Stable trees of Le Gall and Duquesne and their connection to superprocesses~\cite{DL}, 
and display interesting relations to important
statistical mechanics models, e.g.\
the Brownian Map and random plane quadrangulations~\cite{LG,Mi}, 
the scaling limit of the Uniform Spanning Tree and SLE~\cite{Schramm,BCK}, just to mention a few. 
As expected, the law of the Brownian Web as a random $\R$-tree 
is different from those alluded to above (see Corollary~\ref{cor:BCdim} and Remark~\ref{rem:BCdim}) 
but it would be interesting to explore further this new interpretation in light of the aforementioned works 
to see if extra properties of the Brownian Web itself or the Brownian Castle of~\cite{CHbc} can be derived.

\subsection{Outline of the paper}

In Section \ref{sec:Trees}, we collect all the preliminary results and constructions concerning $\R$-trees 
which will be needed throughout the paper. 
After recalling their basic definitions and geometric features, we introduce, for 
$\alpha\in(0,1)$, the spaces $\T^\alpha_\Sp$, of spatial $\R$-trees, and their ``directed'' subset $\Ch^\alpha_\Sp$. 
We define a metric which makes them complete and separable, and identify a necessary and sufficient condition for 
a subset to be compact. 
In Section~\ref{s:Topo}, we compare the metric above and that of~\cite{FINR}, 
and show that the former is stronger than the latter. 

Section~\ref{sec:BW-BCM} is devoted to the Brownian Web and its periodic version~\cite{CMT}. 
At first (Section~\ref{sec:BW}), we provide another characterisation of its law on $\Ch^\alpha_\Sp$ 
and determine some of its properties as an $\R$-tree, such as box covering dimension and relation to~\cite{FINR}. 
Then, we state and prove a convergence criterion (Section~\ref{sec:ConvBW}) 
and, in Section~\ref{sec:DBW}, we describe its dual and the so-called ``special points''. 

At last, in Section~\ref{sec:DWT} we first show how to make sense of the graphical construction 
of a system of coalescing backward random walks (and its dual) in the present context and 
conclude by deriving its scaling limit.

\subsection*{Notation} 

We will denote by $|\cdot|_e$ the usual Euclidean norm on $\R^d$, $d\geq 1$, and 
adopt the short-hand notation $|x|\eqdef|x|_e$ and $\|x\|\eqdef|x|_e$ for $x\in\R$ and $\R^2$ respectively. 
Let $(\ST,d)$ be a metric space. We define the Hausdorff distance $d_H$ between two non-empty subsets $A,\,B$ of $\ST$ as 
\begin{equ}
d_H(A,B)\eqdef\inf\{\eps\colon A^\eps\subset B\text{ and } B^\eps\subset A\}
\end{equ}
where $A^\eps$ is the $\eps$-fattening of $A$, i.e. $A^\eps=\{\sz\in\ST\colon \exists\, \sw\in A\text{ s.t. } d(\sz,\sw)< \eps\}$.

Let $(\ST,d,\ast)$ be a pointed metric space, i.e. $(\ST,d)$ is as above and $\ast \in \ST$, 
and let $M\colon \ST\to\R^d$ be a map. For $r>0$ and $\alpha\in(0,1)$, we define the $\sup$-norm 
and $\alpha$-H\"older norm of $M$ restricted to a ball of radius $r$ as
\begin{equ}
\|M\|^{(r)}_\infty\eqdef\sup_{\sz\in B_d(\ast,r]}|M(\sz)|_e\,,\qquad \|M\|^{(r)}_\alpha\eqdef\sup_{\substack{\sz,\sw\in B_d(\ast,r]\\d(\sz,\sw)\leq 1}}\frac{|M(\sz)-M(\sw)|_e}{d(\sz,\sw)^\alpha}\,.
\end{equ}
where $B_d(\ast,r]\subset\ST$ is the closed ball of radius $r$ centred at $\ast$, and, for $\delta>0$, 
the modulus of continuity as
\begin{equation}\label{e:MC}
\omega^{(r)}(M,\delta)\eqdef \sup_{\substack{\sz,\sw\in B_d(\ast,r]\\d(\sz,\sw)\leq \delta}} |M(\sz)-M(\sw)|_e\,.
\end{equation}
In case $\ST$ is compact, in all the quantities above, the suprema are taken over the whole space $\ST$ and 
the dependence on $r$ of the notation will be suppressed. 
Moreover, we say that a function $M$ is (locally) {\it little $\alpha$-H\"older continuous} if for all $r>0$, 
$\lim_{\delta\to 0} \delta^{-\alpha} \omega^{(r)}(M,\delta)=0$. 

Let $I\subseteq \R$ be a compact interval and $D(I,\R_+)$ be the space of c\`ad\`ag functions on $I$ with values in 
$\R_+\eqdef[0,\infty)$, endowed with the M1 topology that we now introduce. 
For $f\in D(I,\R_+)$, denote by $\Disc(f)$ the set of discontinuities of $f$ and by 
$\Gamma_f$ its completed graph, i.e. the graph of $f$ to which all the vertical segments joining the points 
of discontinuity are added. Order $\Gamma_f$ by saying that $(x_1,t_1)\leq (x_2,t_2)$ if either $t_1<t_2$ 
or $t_1=t_2$ and $|f(t_1^-)-x_1|\leq |f(t_1^-)-x_2|$.
Let $P_f$ be the set of all parametric representations of $\Gamma_f$, which is the set of all non-decreasing 
(with respect to the order on $\Gamma_f$) functions $\sigma_f\colon I\to \Gamma_f$. 
Then, if $I$ is bounded, we set 
\begin{equ}
\hat d^\com_{\M}(f,g)\eqdef 1\wedge\inf_{\sigma_{f},\sigma_g} \|\sigma_{f}-\sigma_{g}\|_\infty
\end{equ}
(with $\|\cdot\|_\infty$ denoting the supremum norm)
and $d^\com_{\M}(f,g)$ to be a topologically equivalent metric with respect to which 
$D(I,\R_+)$ is complete (see~\cite[Sec.~12.8]{Whitt} for an example of metric which makes the space complete). 
{If instead $I=[-1,\infty)$, we denote by $f^{(t)}$ the restriction of $f$ to $[-1, t]$ and define
\begin{equ}[def:M1metric]
d_{\M}(f,g)\eqdef \int_0^\infty e^{-t} \big( 1\wedge d_\M^{\com}(f^{(t)},g^{(t)})\big)\,\dd t\,.
\end{equ}
which is well defined in view of Theorem 12.9.2 and eq. (9.1) in~\cite{Whitt}.}

At last, we will write $a\lesssim b$ if there exists a constant $C>0$ such that $a\leq C b$  
and $a\approx b$ if $a\lesssim b$ and $b\lesssim a$.

\subsection*{Acknowledgements}

{\small
The authors would like to thank the referees for their very detailed reports which helped to improve the presentation of the paper.
In particular, we are grateful to the referee who suggested the formulation of Theorem~\ref{thm:Main} as stated.
GC would like to thank the Hausdorff Institute in Bonn for the kind hospitality during the programme 
``Randomness, PDEs and Nonlinear Fluctuations'', where part of this work was carried out. 
GC gratefully acknowledges financial support via the EPSRC grant EP/S012524/1 and the UKRI FL Fellowship 
MR/W008246/1.
MH gratefully acknowledges financial support from the Leverhulme trust via a Leadership Award, 
the ERC via the consolidator grant 615897:CRITICAL, and the Royal Society via a research professorship. 
}

\section{Preliminaries}\label{sec:Trees}

In this section, we gather all the results on $\R$-trees which will be necessary in the sequel. 
At first, we summarise some of their geometric properties.  

\subsection{\TitleEquation{\R}{R}-trees in a nutshell}\label{sec:RTrees}

Let us begin by recalling the definition of $\R$-tree given in~\cite[Definition 2.1]{DL}.

%

\begin{definition}\label{def:Rtree}
A metric space $(\ST,d)$ is an $\R$-tree if for every $\sz_1,\sz_2\in\ST$ 
\begin{enumerate}[noitemsep]
\item there is a unique isometric map $f_{\sz_1,\sz_2} :[0, d(\sz_1,\sz_2)]\to\ST$ such that $f_{\sz_1,\sz_2}(0)=\sz_1$ and 
	$f_{\sz_1,\sz_2}(d(\sz_1,\sz_2))=\sz_2$,
\item for every continuous injective map $q\colon [0,1] \to \ST$ such that $q(0)=\sz_1$ and $q(1)=\sz_2$, one has 
\begin{equ}
q([0,1])=f_{\sz_1,\sz_2}([0,d(\sz_1,\sz_2)])\,.
\end{equ}
\end{enumerate}
A {\it pointed $\R$-tree} is a triple $(\ST,\ast,d)$ such that $(\ST,d)$ is an $\R$-tree and $\ast \in \ST$. 
\end{definition}

\begin{remark}
We do not call such spaces {\it rooted} because {in the sequel we will also be considering general 
metric spaces for which there is no notion of root.}
\end{remark}

For an $\R$-tree $(\ST,d)$ and any two points $\sz_1,\sz_2\in\ST$, we define the {\it segment joining $\sz_1$ and $\sz_2$} 
as the range of the map $f_{\sz_1,\sz_2}$ and denote it by $\llb\sz_1,\sz_2\rrb$. 
{For every three points $\sz_1,\sz_2,\sz_3\in\ST$ there exists a unique point $\sw\in\ST$ such that 
$\llb \sz_1,\sz_3\rrb\cap\llb \sz_2,\sz_3\rrb\cap\llb \sz_1,\sz_3\rrb=\{\sw\}$. 
We call $\sw$, the {\it projection of $\sz_i$ onto $\llb\sz_j,\sz_k\rrb$}, for $i,k,j$ distinct elements of $\{1,2,3\}$ 
(see for example~\cite[Chapter 2.1]{Ch} for basic properties of $\R$-trees)}. 

\begin{definition}{\cite[Definition 2]{CMS}}
Let $(\ST,d)$ be an $\R$-tree and $r>0$. A segment $\llb\sz_1,\sz_2\rrb\subset\ST$ has {\it $r$-finite branching} if 
the set of points $\sw\in \llb\sz_1,\sz_2\rrb$ which are the projection of some point $\sz\in\ST$ onto $\llb\sz_1,\sz_2\rrb$
with $d(\sz,\sw)\geq r$ is finite. An $\R$-tree $\ST$ is said to have $r$-finite branching if every segment of $\ST$ does. 
\end{definition}

Given $\sz\in\ST$, the number of connected components of $\ST\setminus\{\sz\}$ is the {\it degree of $\sz$}, $\deg(\sz)$ in short. 
A point of degree $1$ is an {\it endpoint}, of degree $2$, 
an {\it edge point} and  if the degree is $3$ or higher, a {\it branch point}. 
The following lemma is taken from~\cite[Lemma 3]{CMS}. 

\begin{lemma}\label{l:Rtrees}
Let $(\ST,d)$ be an $\R$-tree, $\sz_0\in\ST$ and let $\SS$ be a dense subset of $\ST$. The following statements hold:
\begin{enumerate}[noitemsep]
\item  If $\sz\in\ST$ is not an endpoint for $\ST$, then there exists $\sw\in\SS$ such that $\sz\in\llb\sz_0,\sw\rrb$.
\item If $\SS$ is a sub-$\R$-tree of $\ST$, i.e. $(\SS,d)$ is itself an $\R$-tree,  then every point of $\ST\setminus\SS$ is an endpoint for $\ST$. 
\end{enumerate}
\end{lemma}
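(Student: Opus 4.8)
\textbf{Proof strategy for Lemma~\ref{l:Rtrees}.}

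The plan is to prove the two statements in order, using the defining axioms of an $\R$-tree (unique isometric segments, and uniqueness of injective paths up to range) together with elementary properties of projections and segments collected just before the statement.

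For part~(1), suppose $\sz$ is not an endpoint of $\ST$, so $\ST\setminus\{\sz\}$ has at least two connected components. Since $\sz_0 \ne \sz$, the point $\sz_0$ lies in one such component; pick a point $\sy$ in a \emph{different} component. I claim $\sz \in \llb\sz_0,\sy\rrb$: indeed the segment $\llb\sz_0,\sy\rrb$ is the range of a continuous injective map from $[0,1]$ starting at $\sz_0$ and ending at $\sy$, and by connectedness of $[0,1]$ its image cannot avoid $\sz$, for otherwise it would be a connected subset of $\ST\setminus\{\sz\}$ meeting two distinct components. Hence $\sz \in \llb\sz_0,\sy\rrb$. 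Now $\sz$ is an interior point of this segment (it is neither $\sz_0$ nor $\sy$ by construction), so it is an edge point \emph{within the segment}; in particular $\sz$ lies in the interior of $\llb\sz_0,\sy\rrb$ and is therefore not an endpoint of that sub-$\R$-tree. Writing $\sw_0$ for the projection of $\sy$ onto some segment issuing from $\sz_0$ through points of $\SS$ (using density of $\SS$ to find $\sw \in \SS$ close to $\sy$): more directly, choose $\sw \in \SS$ with $d(\sw,\sy)$ small enough that $\sw$ lies in the same component of $\ST\setminus\{\sz\}$ as $\sy$ — this is possible because that component is open. Then the same argument applied to $\llb\sz_0,\sw\rrb$ forces $\sz \in \llb\sz_0,\sw\rrb$, which is the desired conclusion. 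The only point requiring care is that components of $\ST\setminus\{\sz\}$ are open, which holds because $\ST$ is an $\R$-tree (hence locally arcwise connected) — so small balls around $\sy$ avoiding $\sz$ stay in the same component.

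For part~(2), assume $\SS$ is a subtree (a connected, hence segment-closed, subset) of $\ST$, and let $\sz \in \ST\setminus\SS$; we must show $\deg(\sz)=1$. Suppose not: then by part~(1), applied with any basepoint $\sz_0 \in \SS$ and the dense set $\SS$ itself, there is $\sw \in \SS$ with $\sz \in \llb\sz_0,\sw\rrb$. But $\sz_0,\sw \in \SS$ and $\SS$ is a subtree, so $\llb\sz_0,\sw\rrb \subseteq \SS$, forcing $\sz \in \SS$ — a contradiction. Hence $\sz$ is an endpoint.

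\textbf{Main obstacle.} The delicate step is part~(1): one must rule out that the only $\SS$-points lie ``on the wrong side'' of $\sz$. This is handled precisely by the openness of the connected components of $\ST\setminus\{\sz\}$ together with density of $\SS$, which lets us find $\sw \in \SS$ strictly separated from $\sz_0$ by $\sz$. Establishing that components are open (equivalently, that $\R$-trees are locally path-connected, or that closed balls not containing $\sz$ stay in one component) is the one topological input that needs to be invoked carefully; everything else is a direct manipulation of segments and the uniqueness axioms in Definition~\ref{def:Rtree}.
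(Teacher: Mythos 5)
Your proof is correct. Note that the paper itself offers no argument to compare against: Lemma~\ref{l:Rtrees} is simply quoted from \cite[Lemma 3]{CMS}, so your write-up is a self-contained proof of a cited result rather than an alternative to one in the text. The argument you give is the natural one and it works: for part~(1) the key chain is that $\deg(\sz)\ge 2$ gives a component of $\ST\setminus\{\sz\}$ not containing $\sz_0$, components of this open set are open because balls in an $\R$-tree are connected (they contain the geodesic to their centre, since $\sv\in\llb\sy,\sw\rrb$ forces $d(\sy,\sv)\le d(\sy,\sw)$), density of $\SS$ then places some $\sw\in\SS$ on the far side of $\sz$, and the segment $\llb\sz_0,\sw\rrb$, being connected, must pass through $\sz$. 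Two small points you should tidy up: the degenerate case $\sz=\sz_0$ (trivial, since $\sz_0\in\llb\sz_0,\sw\rrb$ for any $\sw\in\SS$) is silently assumed away when you say ``$\sz_0$ lies in one such component''; and in part~(2) the step ``connected, hence segment-closed'' deserves a sentence — in an $\R$-tree a connected subset is convex, which follows from uniqueness of arcs (a point $\sv\in\llb\sz_0,\sw\rrb$ missing from $\SS$ would separate $\sz_0$ from $\sw$ in $\ST\setminus\{\sv\}$), though with the paper's reading of ``subtree'' as a connected sub-$\R$-tree this is essentially built into the hypothesis. The digression in the middle of part~(1) about $\sz$ being an edge point of the segment and the aborted projection construction is dead weight and should be deleted; the ``more directly'' argument that follows is the actual proof.
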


Notice that the connected components of $\ST\setminus\{\sz\}$ are themselves $\R$-trees, i.e.\ subtrees of $\ST$, 
and they are called {\it directions at $\sz$}. 

\begin{definition}{\cite[Definition 1]{CMS}}
Let $(\ST,d)$ be an $\R$-tree, $\sz\in\ST$ and $\{\ST_i\,:\,i\in I\}$, where $I$ is an index set,  the set of directions at $\sz$.
For $r>0$, we say that $\ST_i$ {\it has length $\geq r$} if there exists $\sw\in\ST_i$ such that $d(\sz,\sw)\geq r$. 
The $\R$-tree $\ST$ is {\it $r$-locally finite at $\sz$} if the set of all directions at $\sz$ of length $\geq r$ is finite, and it is 
{\it $r$-locally finite} if it is  $r$-locally finite at $\sz$ for every $\sz\in\ST$. 
\end{definition}

An important notion for us in the context of $\R$-trees, is that of {\it end}. To introduce it, we follow~\cite[Chapter 2.3]{Ch} 
{ (see also~\cite[Section 2]{EV})}.
A subset $L$ of an $\R$-tree $\ST$ is linear if it is isometric to an interval of $\R$, which could be either bounded 
 or unbounded. 
For $\sz\in\ST$, we write $L_\sz$ for an arbitrary linear subset of $\ST$ having $\sz$ as an endpoint
and we say that $L_\sz$ is a {\it $\ST$-ray from $\sz$} if it is maximal for inclusion. 
We also say that rays $L_\sz$ and $L_{\sz'}$ are equivalent if there exists 
$\sw\in\ST$ such that $L_\sz\cap L_{\sz'}$ is a ray from $\sw$. 
The equivalence classes of $\ST$-rays are the {\it ends} of $\ST$. Clearly, every endpoint determines an end for $\ST$ 
and we will refer to them as {\it closed ends}, while the remaining ends will be called {\it open}. 
By~\cite[Lemma 3.5]{Ch}, for every $\sz\in\ST$ and every open end $\dagger$ of $\ST$, there exists a unique
$\ST$-ray from $\sz$ representing $\dagger$ which we will denote by $\llb\sz,\dagger\rangle$. 
Moreover we say that $\dagger$ is an {\it open end with (un-)bounded rays} if for every $\sz\in\ST$, the map 
$\bar\iota_\sz: \llbracket\sz,\dagger\rangle\to\R_+$ given by 
\begin{equation}\label{e:iota}
\bar\iota_\sz(\sw)=d(\sz,\sw)\,,\qquad\sw\in\llb\sz,\dagger\rangle
\end{equation}
is (un-)bounded.

We conclude this section by showing how the geometric structure of an $\R$-tree is intertwined with its 
metric properties. The following statements summarise (or are easy consequences of) results in~\cite[Theorem 4.14]{Ch},
~\cite[Theorem 2.5.28]{BBI} and~\cite[Theorem 2, Proposition 5]{CMS}. 

\begin{theorem}\label{thm:Rtrees}
The completion of an $\R$-tree is an $\R$-tree
and an $\R$-tree is complete if and only if every open end has unbounded rays.
Let $(\ST,d)$ be a locally compact complete $\R$-tree, then
\begin{enumerate}[label=(\alph*),noitemsep]
\item $\ST$ is proper, i.e. every closed bounded subset is compact,
\item $\ST$ is $r$-locally finite and has $r$-finite branching for every $r>0$,
\item $\ST$ has countably many branch points and every point has at most countable degree. 
\end{enumerate}
\end{theorem}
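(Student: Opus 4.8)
The plan is to prove the three statements in Theorem~\ref{thm:Rtrees} more or less independently, drawing on the geometric dictionary established earlier in this section (segments, projections, directions, ends, and the notions of $r$-local finiteness and $r$-finite branching).

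First I would treat the preliminary assertions. That the completion $\bar\ST$ of an $\R$-tree $\ST$ is again an $\R$-tree is a standard fact: given $\sz_1,\sz_2\in\bar\ST$, approximate them by sequences $\sz_1^{(n)},\sz_2^{(n)}\in\ST$, use that the segments $\llb\sz_1^{(n)},\sz_2^{(n)}\rrb$ are Cauchy in the Hausdorff metric (because segments vary $1$-Lipschitzly in their endpoints in an $\R$-tree), and check the isometry/uniqueness axioms pass to the limit; this is~\cite[Theorem 4.14]{Ch}, so I would just cite it. For the equivalence ``$\ST$ complete $\iff$ every open end has unbounded rays'', the forward direction is immediate since an open end with a bounded ray $\llb\sz,\dagger\rangle$ gives a Cauchy sequence along that ray (via $\iota_\sz$) with no limit in $\ST$; conversely, if $\ST$ is incomplete, take a non-convergent Cauchy sequence $(\sz_n)$, pass to the completion $\bar\ST$ where it converges to some $\sw\notin\ST$, note $\sw$ is an endpoint of $\bar\ST$ by Lemma~\ref{l:Rtrees}(2) applied to the dense subtree $\ST$, and observe that the ray $\llb\sz_0,\sw\rangle$ restricted to $\ST$ is a bounded ray representing an open end of $\ST$.

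Next, the three numbered items under the hypothesis that $(\ST,d)$ is locally compact, complete, and an $\R$-tree. For (a), I would argue that a locally compact complete $\R$-tree is a complete \emph{length} space (the intrinsic metric of an $\R$-tree coincides with $d$, since segments realize distances), hence proper by the Hopf--Rinow theorem~\cite[Theorem 2.5.28]{BBI}: in a complete, locally compact length space, closed bounded sets are compact. For (b) and (c) I would derive $r$-local finiteness and $r$-finite branching from properness together with a compactness/packing argument: fix $\sz$, and suppose for contradiction that infinitely many directions at $\sz$ have length $\geq r$; pick a point $\sw_i$ at distance exactly $r$ from $\sz$ on the $i$-th such direction (using the isometric segment map), and observe $d(\sw_i,\sw_j)\ge r$ for $i\ne j$ since the segment from $\sw_i$ to $\sw_j$ must pass through $\sz$. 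This infinite $r$-separated set lies in the closed ball $B_d(\ast,d(\ast,\sz)+r]$, contradicting compactness. The same separation argument on a segment $\llb\sz_1,\sz_2\rrb$ gives $r$-finite branching. For (c), countability of branch points follows by writing the branch point set as a countable union over $r=1/k$ and over a countable exhaustion by balls of the (finite, by (b)) sets of branch points of ``scale $\geq 1/k$'' visible within each ball; countable degree at each point follows since $\deg(\sz)=\sum_{k}\#\{\text{directions at }\sz\text{ of length}\geq 1/k\}$ is a countable sum of finite numbers — here I would cite~\cite[Theorem 2, Proposition 5]{CMS} for the precise packaging.

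The main obstacle I anticipate is not any single deep step but rather making the ``separated points in a ball'' argument clean and uniform: one must be careful that the points witnessing distinct directions (or distinct branch points at a given scale) are genuinely $r$-separated and genuinely lie in a single compact ball, which requires invoking the segment/projection structure (the fact that geodesics between points in different directions at $\sz$ pass through $\sz$) rather precisely, and then quoting properness from~(a). Since~\cite[Theorem 2, Proposition 5]{CMS} already isolate exactly these implications, I would lean on them and keep the self-contained part of the argument to the Hopf--Rinow application and the short contradiction for~(b).
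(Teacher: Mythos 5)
Your proposal matches the paper's treatment: the paper gives no self-contained proof of Theorem~\ref{thm:Rtrees}, stating only that the claims summarise (or are easy consequences of) \cite[Theorem 4.14]{Ch}, \cite[Theorem 2.5.28]{BBI} and \cite[Theorem 2, Proposition 5]{CMS}, which are exactly the references you invoke, and the glue you add (Hopf--Rinow for (a), the $r$-separated packing argument for (b), countable unions over scales $1/k$ for the degree bound in (c)) is correct. The one loose point is the parenthetical claim that finiteness of the branch points of scale $\geq 1/k$ inside a ball follows ``by (b)'' --- that step needs an extra packing (or dense-set/median) argument beyond $r$-local finiteness and $r$-finite branching --- but this is precisely what your citation of \cite{CMS} is meant to cover, just as in the paper.
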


\subsection{Spatial \TitleEquation{\R}{R}-trees}\label{sec:SpTrees}

Now that we discussed geometric features of $\R$-trees 
we are ready to study the metric properties of the space of $\R$-trees. 
{We will focus on the so-called $\alpha$-spatial $\R$-trees, which is a subset of the 
space of $\alpha$-spatial metric spaces that we now introduce. }

\begin{definition}\label{def:SpTree}
Let $\alpha\in(0,1)$. The space of {\it pointed $\alpha$-spatial metric space} $\Mm^\alpha_\Sp$ is the set 
of equivalence 
classes\footnote{The collection of all quadruplets as described here is not a set, but since every {metric space} $\ST$ in which bounded closed subsets are compact has the cardinality of the continuum, one can see that the collection of equivalence classes is indeed set-sized.} of quadruplets $\zeta=(\ST,\ast, d, M)$ where
\begin{itemize}[noitemsep, label=-]
\item $(\ST,\ast,d)$ is a complete, separable 
pointed metric space {such that every bounded closed subset is compact}, 
\item $M$, the {\it evaluation map}, is a locally little $\alpha$-H\"older continuous proper\footnote{Namely such that $\lim_{\eps \to 0} \sup_{\sz \in K} \sup_{d(\sz,\sz') \le \eps} \|M(\sz)-M(\sz')\| / d(\sz,\sz')^\alpha = 0$ for every compact $K$ and the preimage of every compact set is compact. } map 
from $\ST$ to $\R^2$. For any point $\sz\in\ST$, we define the projections $M_t(\sz),\,M_x(\sz)\in\R$ as 
$M(\sz)= (M_t(\sz),M_x(\sz))\in\R^2$. 
\end{itemize}
We identify $\zeta$ and $\zeta'$ 
if there exists a bijective isometry $\phi:\ST\to\ST'$ such that $\phi(\ast)=\ast'$ and $M'\circ\phi\equiv M$, in short 
(with a slight abuse of notation) $\varphi(\zeta)=\zeta'$.
We denote by $\T^\alpha_\Sp$ the subset of $\Mm^\alpha_\Sp$ whose elements $\zeta$ are 
such that $(\ST,\ast,d)$ is an $\R$-tree\footnote{Note that by Theorem~\ref{thm:Rtrees}(a) in any complete locally compact 
$\R$-tree, closed bounded subsets are compact.}.  
\end{definition}

\begin{remark}\label{rem:Periodic}
We will also consider situations in which the map $M$ is $\R\times\T$-valued, where 
$\T\eqdef\R/\Zint$ is the torus of size $1$ endowed with the usual periodic metric
$d(x,y)= \inf_{k \in \Z} |x-y+k|$. 
We denote by $\Mm^\alpha_{\Sp,\per}$ the space of periodic pointed $\alpha$-spatial metric spaces. 

In what follows, we will denote subsets of $\Mm^\alpha_\Sp$ and $\Mm^\alpha_{\Sp,\per}$ with 
the same notation except for the addition of a subscript ``$\per$'', standing for {\it periodic}, in the latter case.
It will always be immediate to see how the definitions, statements and proofs need to be adapted in order to hold 
not only for the space $\cS$ we are considering but also for its periodic counterpart $\cS_\per$. 
\end{remark}

For any spatial metric space $\zeta=(\ST,\ast,d,M)$,
%
we introduce the {\it properness map} 
$b_\zeta\colon \R\to \R_+$, a map whose role is to ``quantify'' the properness of $M$. 
For $r<0$, $b_\zeta(r) = 0$, while for $r\ge 0$ we set
\begin{equation}\label{def:PropMap}
b_\zeta(r)\eqdef \sup_{\sz\,:\,M(\sz)\in\Lambda_r} d(\ast,\sz)\;,
\end{equation}
where $\Lambda_r\eqdef [-r,r]^2\subset\R^2$ and in the periodic case $\Lambda_r=\Lambda_r^\per\eqdef [-r,r]\times\T$. 

\begin{lemma}\label{l:CadlagPropMap}
Let $\alpha\in(0,1)$ and $\zeta=(\ST,\ast,d,M)\in\Mm^\alpha_\Sp$. 
Then, the properness map $b_\zeta$ in~\eqref{def:PropMap} is non-decreasing and c\`adl\`ag. 
\end{lemma}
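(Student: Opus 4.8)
The plan is to prove the two assertions separately, starting with monotonicity, which is essentially immediate, and then spending most of the effort on the c\`adl\`ag property, where the local compactness of $\ST$ together with properness of $M$ does the real work.

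\textbf{Monotonicity.} For $r < 0$ there is nothing to prove since $b_\zeta \equiv 0$ on $(-\infty,0)$, and $b_\zeta(0) \ge 0$ by definition (the supremum of a nonempty set, as $\ast$ itself maps into $\Lambda_0$ iff $M(\ast)=0$, but in any case $b_\zeta(0) = \sup_{\sz : M(\sz) \in \Lambda_0} d(\ast,\sz) \ge 0$ with the convention $\sup \emptyset = 0$ if this set is empty). For $0 \le r \le r'$ one has $\Lambda_r \subseteq \Lambda_{r'}$, hence $\{\sz : M(\sz) \in \Lambda_r\} \subseteq \{\sz : M(\sz) \in \Lambda_{r'}\}$, and taking suprema of $d(\ast,\cdot)$ over the larger set gives $b_\zeta(r) \le b_\zeta(r')$. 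The same argument works verbatim in the periodic case since $\Lambda_r^\per \subseteq \Lambda_{r'}^\per$.

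\textbf{Finiteness and right-continuity.} First note $b_\zeta(r) < \infty$ for every $r$: $\Lambda_r$ is compact, so $M^{-1}(\Lambda_r)$ is compact by properness, hence bounded, so $\sup_{\sz \in M^{-1}(\Lambda_r)} d(\ast,\sz) < \infty$. For right-continuity at $r_0 \ge 0$, let $r_n \downarrow r_0$. By monotonicity $b_\zeta(r_n) \downarrow \ell$ for some $\ell \ge b_\zeta(r_0)$; I must show $\ell \le b_\zeta(r_0)$. Suppose $\ell > b_\zeta(r_0)$. The key observation is that $\bigcap_n M^{-1}(\Lambda_{r_n}) = M^{-1}\big(\bigcap_n \Lambda_{r_n}\big) = M^{-1}(\Lambda_{r_0})$, since $\bigcap_n \Lambda_{r_n} = \Lambda_{r_0}$ (here $r_n \downarrow r_0$ forces $\bigcap_n [-r_n,r_n]^2 = [-r_0,r_0]^2$). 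Now pick $\sz_n \in M^{-1}(\Lambda_{r_n})$ with $d(\ast,\sz_n) > \ell - 1/n$. All these lie in the compact set $M^{-1}(\Lambda_{r_1})$, so along a subsequence $\sz_{n_k} \to \sz_\infty$, and $d(\ast,\sz_\infty) \ge \ell$. By continuity of $M$, $M(\sz_\infty) \in \Lambda_{r_j}$ for every $j$ (since $\sz_{n_k} \in M^{-1}(\Lambda_{r_j})$, a closed set, for all large $k$), hence $M(\sz_\infty) \in \Lambda_{r_0}$, contradicting $d(\ast,\sz_\infty) \ge \ell > b_\zeta(r_0)$.

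\textbf{Existence of left limits.} This is where local compactness of the $\R$-tree enters. Fix $r_0 > 0$ and let $r_n \uparrow r_0$; by monotonicity $b_\zeta(r_n) \uparrow m$ for some $m \le b_\zeta(r_0)$, and it suffices to exhibit that $m$ is attained as a genuine limit, which it is by monotone convergence — so in fact left limits automatically exist for any monotone function, and there is nothing further to prove for the c\`adl\`ag property beyond right-continuity and monotonicity. Thus the only substantive analytic content is the right-continuity argument above, and \textbf{the main obstacle} is ensuring the compactness extraction is valid: one needs $M^{-1}(\Lambda_{r_1})$ (or any fixed $\Lambda_R$ with $R > r_0$) to be compact, which is exactly properness, and then Theorem~\ref{thm:Rtrees}(a) or simply compactness gives the convergent subsequence. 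I would close by remarking that the periodic case is identical, replacing $\Lambda_r$ with $\Lambda_r^\per = [-r,r] \times \T$ throughout, noting $\bigcap_n \Lambda_{r_n}^\per = \Lambda_{r_0}^\per$ still holds.
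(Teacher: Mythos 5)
Your proof is correct and follows essentially the same route as the paper: monotonicity from the nesting of the sets $\Lambda_r$, left limits coming for free from monotonicity, and right-continuity via properness of $M$ (compactness of $M^{-1}(\Lambda_{r_1})$), extraction of a convergent subsequence and continuity of $M$ to place the limit point in $M^{-1}(\Lambda_{r_0})$. The only cosmetic difference is that the paper first notes the supremum defining $b_\zeta(s)$ is attained and works with exact maximisers in a direct argument, whereas you work with near-maximisers and argue by contradiction.
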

\begin{proof}
The function $b_\zeta$ is non-decreasing by construction, so that at every point $r>0$ it admits left and right limits. 
To show it is c\`adl\`ag, it suffices to prove that $\lim_{s\da r} b_\zeta(s)=b_\zeta(r)$. 

Notice that, for every $s>0$, since $M$ is proper and 
$\Lambda_s$ is closed, there exists $\sz_s\in M^{-1}(\Lambda_s)$ such that $b_\zeta(s)=d(\ast,\sz_s)$. 
Let $s_n$ be a sequence decreasing to $r$ and, without loss of generality, assume 
$M(\sz_{s_n})\in\Lambda_{s_n}\setminus\Lambda_r$. Using the properness of $M$ once again, 
$M^{-1}(\Lambda_{s_0})$ is compact so that $\{\sz_{s_n}\}_n\subset M^{-1}(\Lambda_{s_0})$ 
admits a converging subsequence. 
Let $\bar\sz$ be a limit point. By construction, $d(\ast,\sz_{s_n})\geq d(\ast,\sz_r)$ for all $n$, therefore 
$d(\ast,\bar\sz)\geq d(\ast,\sz_r)$. But $M(\bar\sz)\in\Lambda_r$ since $M$ is continuous, so 
$d(\ast,\bar\sz)\leq  d(\ast,\sz_r)$. {It follows that 
$b_{\zeta_n}(s_n)=d(\ast,\sz_{s_n})\to d(\ast,\bar\sz)=d(\ast,\sz_r)= b_{\zeta_n}(r)$ which completes 
the proof of the statement. }
\end{proof}

To turn $\Mm^\alpha_\Sp$ into a Polish space, we proceed similarly to~\cite{BCK}, 
but we introduce two conditions taking into account the H\"older regularity and the properness 
of $M$ respectively. 
Recall first that a correspondence $\CC$ between two metric spaces $(\ST,d)$, $(\ST',d')$ is a subset of 
$\ST\times\ST'$ such that for all $\sz\in\ST$ there exists at least one $\sz'\in\ST'$ for which $(\sz,\sz')\in\CC$ and vice versa. 
The {\it distortion} of a correspondence $\CC$ is given by 
\begin{equ}[e:dis]
\dis \CC\eqdef \sup\{|d(\sz,\sw)-d'(\sz',\sw')|\,:\, (\sz,\sz'),\,(\sw,\sw')\in\CC\}\;,
\end{equ}
and allows to give an alternative characterisation  of the 
Gromov-Hausdorff metric. 

{Let $\alpha\in(0,1)$ and $\zeta=(\ST,\ast,d,M)$, $\zeta'=(\ST',\ast',d',M')\in\Mm^\alpha_\Sp$. 
Let $\CC$ be a correspondence between $\ST$ and $\ST'$. We set
\begin{equation}\label{e:MetC}
\begin{split}
\uDelta^{\com,\CC}_\Sp(\zeta,\zeta') &\eqdef \frac{1}{2}\dis \CC+\sup_{(\sz,\sz')\in\CC}\|M(\sz)-M'(\sz')\|\\
& +\sup_{n\in\N}\,\,2^{n\alpha}\sup_{(\sz,\sz'),(\sw,\sw')\in\CC}
\|\psi_n(d(\sz,\sw))\,\delta_{\sz,\sw}M-\psi_n(d'(\sz',\sw'))\,\delta_{\sz',\sw'}M'\|
\end{split}
\end{equation}
where for every $n$, $\psi_n(x)\eqdef \psi(2^{n}x)$ and $\psi$ is a smooth function bounded above by $1$, 
which is $1$ on $[1,2]$ and $0$ outside $[2^{-1},4]$
(so that in particular its first derivative $\partial\psi_n$ is uniformly bounded, modulo 
absolute constants, by $2^{n}$). }
%
%
%
%
We can now define 
\begin{equation}\label{e:MetricCompact}
\Delta^\com_\Sp(\zeta,\zeta')\eqdef \uDelta^\com_\Sp(\zeta,\zeta')+d_\M(b_\zeta,b_{\zeta'})
\end{equation}
where $d_\M$ is the metric on the space of c\`adl\`ag functions given in~\eqref{def:M1metric} and
\begin{equation}\label{e:MetricC}
\uDelta^\com_\Sp(\zeta,\zeta')\eqdef\inf_{\CC\,:\,(\ast,\ast')\in\CC} \uDelta^{\com,\CC}_\Sp(\zeta,\zeta')\;.
\end{equation}
In view of Lemma~\ref{l:CadlagPropMap}, the metric above is well-defined.

{Before proceeding let us comment on the summands at the right hand side of~\eqref{e:MetC}. 
As we pointed out above, once we take the infimum over all the correspondences, the first term 
gives us the Gromov-Hausdorff distance between $(\ST,\ast,d)$ and $(\ST',\ast',d')$ by~\cite[Theorem 4.11]{E08}, 
which is a natural way to compare different metric spaces. 
The second term just measures the sup-norm distance between the maps $M$ and $M'$, while the third is 
a generalisation of the usual $\alpha$-H\"older metric. To make a comparison, if $\ST=\ST'$, 
the latter can be easily seen to be equivalent to the more familiar
$\sup_{\sz,\sw\in\ST} d(\sz,\sw)^{-\alpha}\|\delta_{\sz,\sw}M-\,\delta_{\sz,\sw}M'\|$. Now, in the present setting, 
we need to be able to measure the H\"older distance between functions which are not defined on the 
same space. The natural way to go beyond the same metric space case is to use the only 
way we have to ``connect'' $\ST$ and $\ST'$, i.e. the correspondence $\CC$. 
Hence, we replace the supremum over $\sz,\sw\in\ST$ 
with that over couples $(\sz,\sz'),(\sw,\sw')$ in $\CC$. On the other hand, we need to make 
sure we are comparing the increments of $M$ and $M'$ over points whose distance has the same order. 
The functions $\psi_m$'s play exactly this role - they guarantee not only that this is the case 
but further that $d(\sz,\sw),d'(\sz',\sw')\sim 2^{-m}$, 
so that we can substitute the $d(\sz,\sw)^{-\alpha}$ appearing in the classical definition, 
with $2^{m\alpha}\sim d(\sz,\sw)^{-\alpha},d'(\sz',\sw')^{-\alpha}$. 

}


\begin{proposition}\label{p:MetricC}
For $\alpha\in(0,1)$, let $\Mm^\alpha_\com$ (resp. $\T^\alpha_\com$) be the subset of 
$\Mm^\alpha_\Sp$ (resp. $\T^\alpha_\Sp$) consisting of compact metric spaces (resp. $\R$-trees). 
Then, $(\Mm_\com^\alpha,\Delta^\com_\Sp)$ is a complete separable metric space 
{and $\T^\alpha_\com$ is closed in $\Mm^\alpha_\com$.}
\end{proposition}
\begin{proof}
{We begin by verifying that $\Delta^\com_\Sp$ is a metric. 
By definition, it is clearly non-negative and symmetric. 
Concerning the triangle inequality, it holds for the second summand in~\eqref{e:MetricCompact}  
by~\cite[Theorem 12.3.1]{Whitt}, while for the first we argue as follows. 
Let $\zeta_1,\,\zeta_2,\,\zeta_3\in\Mm_\com^\alpha$, and 
$\CC_{1,2}$, $\CC_{2,3}$ be two correspondences between $\ST_1$, $\ST_2$ and $\ST_2$, $\ST_3$ respectively.
Then, upon choosing 
\begin{equ}
\CC_{1,3}\eqdef\{(\sz_1,\sz_3)\in\ST_1\times\ST_3\,:\,\exists\text{ $\sz_2\in\ST_2$ such that }
(\sz_1,\sz_2)\in\CC_{1,2},\,(\sz_2,\sz_3)\in\CC_{2,3}\}
\end{equ}
it is immediate to see that $\uDelta^{\com,\CC_{1,3}}_\Sp(\zeta_1,\zeta_3)\leq \uDelta^{\com,\CC_{1,2}}_\Sp(\zeta_1,\zeta_2)+\uDelta^{\com,\CC_{2,3}}_\Sp(\zeta_2,\zeta_3)$, which easily implies the triangle inequality for $\uDelta^\com_\Sp$. 

In order to show that the metric is positive definite, notice first that 
$\Delta^\com_\Sp(\zeta,\zeta')=0$ implies $\uDelta^\com_\Sp(\zeta,\zeta')=0=d_\M(b_\zeta,b_{\zeta'})$. 
Invoking~\cite[Theorem 12.3.1]{Whitt} once more, we immediately have that $b_\zeta\equiv b_{\zeta'}$. 
Hence, we are left to show that there exists a bijective isometry such that $\phi(\zeta)=\zeta'$, 
for which we argue similarly to~\cite[Lemma 2.1]{CHK}. Since $\uDelta^\com_\Sp(\zeta,\zeta')=0$, 
for every $\eps>0$ there exists a correspondence $\CC^\eps$ such that 
$\uDelta^{\com,\CC^\eps}_\Sp(\zeta,\zeta')<\eps$. Let $T$ be a countable dense 
subset of $\ST$ and let $\phi^\eps\colon T \to \ST'$ be such that $(\sz,\phi^\eps(\sz))\in\CC^\eps$. 
By construction, 
\begin{equ}[e:IsoCond]
|d(\sz_i, \sz_j)-d'(\phi^\eps(\sz_i), \phi^\eps(\sz_j))|<\eps\,,\qquad\text{and}\qquad \|M(\sz)-M'(\phi^\eps(\sz_i))\|<\eps\,.
\end{equ}
Since $\ST'$ is compact, we can find a subsequence in $\eps$ such that for all $\sz\in T$, 
$\phi^\eps(\sz)$ converges to some element $\phi(\sz)\in\ST'$. By~\eqref{e:IsoCond}, we immediately 
deduce that $\phi$ is a distance-preserving map on $T$ such that, for all $\sz\in T$, $M(\sz)=M'(\phi(\sz))$. 
Further, by reversing the roles of $\zeta$ and $\zeta'$ we can find a distance-preserving map $\psi$ from $\ST'$ 
to $\ST$. Since $\phi\circ\psi$ is an isometry from $\ST'$ to itself and $\ST'$ is compact, $\phi\circ\psi$ must be bijective 
(see~\cite[Theorem 1.6.14]{BBI}), which then implies that $\phi$ is itself bijective and satisfies $\phi(\zeta)=\zeta'$.

We now show completeness.
Let $\{\zeta_n\}_n$ be a Cauchy sequence in $(\Mm^\alpha_\com, \Delta^\com_\Sp)$. 
As an immediate consequence, the sequence $\{(\ST_n,\ast_n,d_n)\}_n$ is totally bounded in the space 
of (pointed) compact metric spaces. Moreover, it is not difficult to see that
\begin{equation}\label{e:SupBound1}
\sup_n\big(\|M_n\|_\infty+\sup_m 2^{-m\alpha}\omega(M_n, 2^{-m})\big)=C < \infty
\end{equation}
In order to construct the limit, we proceed as in the proof of~\cite[Theorem 7.4.15]{BBI}. 
Notice that, since the sequence $\{(\ST_n,\ast_n,d_n)\}_n$ is totally bounded, 
by~\cite[Theorem 7.4.15]{BBI},
for any $\eps>0$ there exists $N(\eps)$ such that, for all $n$, $\ST_n$
admits a finite $\eps$-net of at most $N(\eps)$ elements. 
Now, we recursively set $N_1=N(1)$ and $N_k=N_{k-1}+N(1/k)$, 
and for each $n$ we let $S_n=\{\sz_i^n\}_i$ be a countable dense set of 
$\ST_n$ such that the first $N_k$ elements form a $1/k$-net for $\ST_n$ and $\sz_0^n\eqdef\ast_n$. 
Then, passing at most to a subsequence, for every $i,j$, the limits 
$\lim_{n\to\infty}d_n(\sz_i^n,\sz_j^n)$, $\lim_{n\to\infty}M_n(\sz_i^n)$ can be shown to exist via a diagonal argument. 
Let $\tilde \ST=\{\sz_i\}_i$ be an abstract countable set and define a semimetric $d$ and a map $\tilde M$ on it, by imposing
\begin{equation}\label{e:MetricMap}
d(\sz_i,\sz_j)\eqdef\lim_{n\to\infty}d_n(\sz_i^n,\sz_j^n)\qquad \text{and}\qquad \tilde M(\sz_i)\eqdef \lim_{n\to\infty}M_n(\sz_i^n)\;.
\end{equation}
We then set $\ST$ to be the metric space obtained by taking the completion of $\bar \ST$, $\bar \ST$ being the quotient space
on $\tilde \ST$ in which points at distance $0$ are identified. $\ST$ is a compact metric space and is the 
Gromov-Hausdorff limit of $\ST_n$'s. 
It is also easy to see that $\tilde M$ is itself 
little $\alpha$-H\"older continuous 
and we let $M$ be the unique 
little $\alpha$-H\"older extension of $\tilde M$ to the whole of $\ST$. Further, 
the sequence $\{b_{\zeta_n}\}_n$ is Cauchy 
in the space of c\`adl\`ag functions endowed with the $M1$-topology, 
and therefore converges to a c\`adl\`ag function $b$ by~\cite[Theorem 12.8.1 and Theorem 12.9.2]{Whitt}. 

It remains to show that $\{\zeta_n\}_n$ converges to 
$\zeta\eqdef(\ST,\ast,d,M)$, where $\ast\eqdef \sz_0$, and that $b=b_\zeta$. 
Let $k\in\N$, $N_{k}$ be as above and $\eps\eqdef1/k$. 
Set $S_n^\eps\eqdef \{\sz_i^n\,:\,i\leq N_k\}$, $S^\eps\eqdef \{\sz_i\,:\,i\leq N_k\}$ and notice that, as shown in the proof 
of~\cite[Theorem 7.4.15]{BBI}, $S^\eps$ is an $\eps$-net for $\ST$. Define 
$\zeta_n^\eps\eqdef (S_n^\eps,\ast_n,d_n,M_n)$ and $\zeta^\eps\eqdef (S^\eps,\ast,d,M)$. 
By the triangle inequality, we have 
\begin{equation}\label{e:Conv1}
\uDelta^\com_\Sp(\zeta,\zeta_n)\leq \uDelta^\com_\Sp(\zeta,\zeta^\eps)+\uDelta^\com_\Sp(\zeta^\eps,\zeta_n^\eps)+\uDelta^\com_\Sp(\zeta_n^\eps,\zeta_n)=:A_1+A_2+A_3\,.
\end{equation}
Thanks to Lemma~\ref{l:Approx} below, $A_1$ and $A_3$ are converging to $0$ so that we only need to control $A_2$. 
For the latter, let $\CC_n\eqdef\{(\sz_i,\sz_i^n)\,:\,i\leq N_k\}$. 
Then,~\eqref{e:SupBound1} and~\eqref{e:MetricMap} ensure that
the assumptions of Lemma~\ref{l:Holder} are satisfied, so that also $A_2$ converges to $0$. 
At last, since $\uDelta^\com_\Sp(\zeta_n,\zeta)$ converges to $0$, 
Lemma~\ref{l:ContProp} immediately implies that $b=b_\zeta$.}

For separability, note that according to Lemmas~\ref{l:Approx} and~\ref{l:ContProp} below, 
any element $\zeta=(\ST,\ast,d,M)\in\Mm^\alpha_\com$ 
can be approximated in $\Mm^\alpha_\com$ by $\zeta^\eps=(\ST^\eps,\ast,d,M)$, where $\ST^\eps\subset\ST$ is a finite 
$\eps$-net of $\ST$. 
{Hence a countable dense set of $\Mm^\alpha_\com$ can be obtained 
by considering the set of metric spaces with finitely many points whose respective distances are rationals, 
endowed with maps $M$
which are $\Qrat^2$-valued.}

{To complete the proof of the statement, note that, as an immediate consequence of~\cite[Lemma 4.22]{E08}, 
$\T^\alpha_\com$ is a closed subset of $\Mm^\alpha_\com$. }
\end{proof}

\begin{remark}\label{rem:Holder}
As pointed out in~\cite[Remark 3.2]{BCK}, without the H\"older condition in the definition of $\Delta^\com_\Sp$, the space of
spatial metric spaces ($\R$-trees in their setting) with the metric comprising only the first two summands 
at the right hand side of~\eqref{e:MetC} 
is not complete while, if we did not assume the function $M$ to be {\it little} H\"older 
continuous it would lack separability. { Indeed, note that the space of $\alpha$-H\"older continuous functions 
on, say, $\R^d$ endowed with the usual H\"older metric is {\it not} separable. On the other hand, 
the subset of {\it little} $\alpha$-H\"older continuous functions (corresponding to the assumption made 
in Definition~\ref{def:SpTree}) is the closure of smooth functions with respect to the 
usual H\"older metric and thus in particular it is separable. }
\end{remark}

\begin{lemma}\label{l:Approx}
Let $\alpha\in(0,1)$, $\zeta=(\ST,\ast,d,M)\in\Mm_\com^{\alpha}$. 
Let $\delta>0$, $T \subset \ST$ be such that $\ast \in T$ 
and the Hausdorff distance between $T$ and $\ST$ is bounded above by $\delta\in(0,1)$ 
and define $\bar\zeta=(T,\ast,d,M \restr T)$. Then 
\begin{equ}[e:Approx]
\uDelta_\Sp^\com(\zeta,\bar\zeta)\lesssim \delta^{-\alpha/2}\omega(M,\sqrt{\delta})
\end{equ}
\end{lemma}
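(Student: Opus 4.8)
The plan is to build an explicit correspondence $\CC$ between $\ST$ and $T$ realising (up to constants) the bound in \eqref{e:Approx}, and then to estimate each of the three terms in $\uDelta^{\com,\CC}_\Sp(\zeta,\bar\zeta)$ separately. Since $T\subset\ST$, the natural choice is to let $\CC$ contain the diagonal $\{(\sx,\sx):\sx\in T\}$ together with, for each $\sz\in\ST$, a pair $(\sz,\pi(\sz))$ where $\pi(\sz)\in T$ is a (measurably chosen, or just arbitrarily chosen) point with $d(\sz,\pi(\sz))<\delta$; such a point exists because $T$ is a $\delta$-net (indeed $d_H(T,\ST)\le\delta$). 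Note $(\ast,\ast)\in\CC$ since $\ast\in T$. For $\sx\in T$ we may and do take $\pi(\sx)=\sx$, so that $d(\sz,\pi(\sz))<\delta$ for every $\sz\in\ST$ (with $\delta$ replaced by $0$ on $T$), and $d(\ast,\pi(\sz))\le d(\ast,\sz)$ is not needed.

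First, the distortion term. For $(\sz,\pi(\sz)),(\sw,\pi(\sw))\in\CC$ we have $|d(\sz,\sw)-d(\pi(\sz),\pi(\sw))|\le d(\sz,\pi(\sz))+d(\sw,\pi(\sw))<2\delta$ by the triangle inequality, so $\dis\CC\le 2\delta$ and the first summand is $\le\delta$. Since $M$ restricted to $T$ is literally $M\restr T$, we also have $\|M(\sz)-(M\restr T)(\pi(\sz))\|=\|M(\sz)-M(\pi(\sz))\|\le\omega(M,d(\sz,\pi(\sz)))\le\omega(M,\delta)$ by monotonicity of the modulus of continuity in $\delta$, controlling the second summand. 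For the third (H\"older) summand, fix $n\in\N$ and pairs $(\sz,\pi(\sz)),(\sw,\pi(\sw))\in\CC$ with $d(\sz,\sw),d(\pi(\sz),\pi(\sw))\in\CA_n=(2^{-n},2^{-(n-1)}]$; then
\begin{equ}
\|\delta_{\sz,\sw}M-\delta_{\pi(\sz),\pi(\sw)}M\|\le\|M(\sz)-M(\pi(\sz))\|+\|M(\sw)-M(\pi(\sw))\|\le 2\,\omega(M,\delta)\;.
\end{equ}
Because $d(\sz,\sw)>2^{-n}$ and, by the distortion estimate, $d(\sz,\sw)<d(\pi(\sz),\pi(\sw))+2\delta\le 2^{-(n-1)}+2\delta$, the scale index $n$ for which any such pair can occur satisfies $2^{-n}\lesssim d(\sz,\sw)\lesssim 2^{-n}$; more precisely, multiplying by $2^{n\alpha}$ and using $2^{n\alpha}\le (d(\sz,\sw))^{-\alpha}\cdot 2^\alpha$ together with $d(\sz,\sw)\ge 2^{-n}$ one gets a bound of the form $2^{n\alpha}\cdot 2\omega(M,\delta)$, and one must now also handle the convention clause: if there is no pair in $\CC$ at scale $\CA_n$ on one side, one compares within a single tree, but on $T\subset\ST$ the increment $\delta_{\pi(\sz),\pi(\sw)}M$ is just an increment of $M$ over $\ST$, so this case reduces to the diagonal part of $\CC$ and contributes $0$. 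The only genuine point is that the supremum over $n$ does not blow up: this is where we use that pairs at scale $\CA_n$ force $2^{-n}$ comparable to the involved distances, so $2^{n\alpha}\omega(M,\delta)\lesssim \delta^{-\alpha}\omega(M,\delta)$ when the relevant $n$ are bounded below by $\sim\log_2(1/\delta)$; for large scales ($2^{-n}\gg\delta$) one instead writes $\omega(M,\delta)\le (2^{-n})^{-\alpha}\cdot(\text{something})$? — no, more simply, one splits: for $n$ with $2^{-n}\ge 2\delta$ use $2^{n\alpha}\le(2\delta)^{-\alpha}$ directly, and for $2^{-n}<2\delta$ there are no admissible pairs since $d(\sz,\sw)\le d(\pi(\sz),\pi(\sw))+2\delta$ would force comparison scales incompatible with $\CA_n$ — this dichotomy is the heart of the estimate and yields $2^{n\alpha}\|\delta_{\sz,\sw}M-\delta_{\pi(\sz),\pi(\sw)}M\|\lesssim(2\delta)^{-\alpha}\omega(M,2\delta)$ uniformly in $n$.

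Finally, the properness-map term: $b_\zeta=b_{\bar\zeta}$? Not quite — restricting $\ST$ to $T$ can only decrease the supremum defining $b$, and since $T$ is a $\delta$-net the decrease is controlled: for any $\sz$ with $M(\sz)\in\Lambda_r$ there is $\pi(\sz)\in T$ with $M(\pi(\sz))$ within $\omega(M,\delta)$ of $\Lambda_r$, hence $M(\pi(\sz))\in\Lambda_{r+\omega(M,\delta)}$ and $d(\ast,\pi(\sz))\ge d(\ast,\sz)-\delta$; conversely $b_{\bar\zeta}\le b_\zeta$. This gives $|b_\zeta(r)-b_{\bar\zeta}(r')|$ small for $r'$ near $r$, hence $d_\M(b_\zeta,b_{\bar\zeta})\lesssim\delta+\omega(M,\delta)\lesssim(2\delta)^{-\alpha}\omega(M,2\delta)$ (the last inequality because $\omega(M,2\delta)\ge\omega(M,\delta)$ and, for $\delta$ small, $(2\delta)^{-\alpha}\ge 1$, while for $\delta$ bounded away from $0$ the statement is trivial up to adjusting constants); combining the three bounds yields \eqref{e:Approx}. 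The main obstacle is the H\"older summand: one has to check carefully that the supremum over $n\in\N$ in \eqref{e:MetC} stays bounded by $(2\delta)^{-\alpha}\omega(M,2\delta)$, i.e. that no small scale $\CA_n$ with $2^{-n}\ll\delta$ contributes (ruled out because the two matched pairs can differ in distance by at most $2\delta$), and that the convention for empty scales does not spoil the triangle-type splitting.
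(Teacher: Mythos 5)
Your choice of correspondence and your bounds on the first two summands of \eqref{e:MetC} are essentially the paper's (the paper uses the slightly larger correspondence $\CC^\delta=\{(\sz,\sz')\in\ST\times T\,:\,d(\sz,\sz')\le\delta\}$, with the same distortion bound $2\delta$ and the same modulus\slash H\"older bound for $\sup\|M(\sz)-M(\sz')\|$). The gap is in your treatment of the third summand, at the scales $\CA_n$ with $2^{-n}<2\delta$, and it is precisely where the content of the lemma lies. You dispose of these scales by claiming that there are ``no admissible pairs'' and that the convention clause ``reduces to the diagonal part of $\CC$ and contributes $0$''. Neither holds. If $T$ does contain two points at distance in $\CA_n$ (nothing in the hypotheses prevents this: $T$ is only assumed to be $\delta$-dense, and the constraint $|d(\sz,\sw)-d(\pi(\sz),\pi(\sw))|\le2\delta$ excludes no scale), then admissible pairs at scale $\CA_n$ exist, the prefactor $2^{n\alpha}$ can be much larger than $(2\delta)^{-\alpha}$, and your bound $2\,\omega(M,\delta)$ on the cross-difference is too crude; the paper handles this case ($m\le m_T$, $2^{-m}\le2\delta$) by the triangle inequality, bounding \emph{each} increment by the modulus of continuity at scale $2^{-m}$ as in \eqref{e:Hol1}, so that the factor $2^{m\alpha}$ is compensated. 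If instead $T$ has no pair at scale $\CA_n$ (e.g.\ $T$ a finite net whose minimal gap exceeds $2^{-(n-1)}$), your observation is true but does not help: the convention in \eqref{e:MetC} depends only on which scales are realised in $\ST$ and in $T$ (every point appears in some couple of any correspondence), and it then forces the $n$-th term to be $2^{n\alpha}\sup\{\|\delta_{\sz,\sw}M\|\,:\,\sz,\sw\in\ST,\ d(\sz,\sw)\in\CA_n\}$, a one-sided small-scale oscillation of $M$ over $\ST$ which no choice of $\CC$ can remove and which is certainly not zero. This is the dominant term; the paper bounds it in \eqref{e:Hol1}, after noting that a $\delta$-net of a length space has minimal gap at most $2\delta$, so all such scales satisfy $2^{-n}\le2\delta$, and then comparing $2^{n\alpha}\omega(M,2^{-n})$ with $(2\delta)^{-\alpha}\omega(M,2\delta)$. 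Your argument provides no bound at all for it, so the proof is incomplete at its central point.

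Two smaller remarks. The quantity $\uDelta^\com_\Sp$ in \eqref{e:Approx} is the one defined in \eqref{e:MetricC}, without the properness-map term, so your final paragraph on $d_\M(b_\zeta,b_{\bar\zeta})$ is not needed. Also, the question of whether the pure distortion contribution of order $\delta$ is dominated by $(2\delta)^{-\alpha}\omega(M,2\delta)$ is left untouched in your write-up, but the paper's proof does the same, so I do not count it against you.
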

\begin{proof}
{The proof is provided in Appendix~\ref{a:Proofs}. }
%
%
\end{proof}

{ Let us now turn to the non-compact case. 
As we will only ever work with $\R$-trees, from 
now on, apart when explicitly stated, we will formulate and prove the results directly for $\T_\Sp^\alpha$. 
This in particular will allow us to use a number of statements from the literature which have been proved only 
for length spaces. 

We begin by introducing a suitable metric on $\T_\Sp^\alpha$.}
For $\zeta=(\ST,\ast,d,M)\in\T_\Sp^\alpha$ and any $r>0$, let 
\begin{equation}\label{e:rRestr}
\zeta^{(r)}\eqdef (\ST^{(r)}, \ast, d, M^{(r)})
\end{equation} 
where $\ST^{(r)}\eqdef B_d(\ast,r]$ is the closed ball of radius $r$ in $\ST$ 
and $M^{(r)}$ is the restriction of $M$ to $\ST^{(r)}$. We define $\Delta_\Sp$ 
as the function on $\T_\Sp^\alpha\times\T_\Sp^\alpha$ given by 
\begin{equation}\label{e:Metric}
\begin{split}
\Delta_\Sp(\zeta,\zeta')&\eqdef\int_0^{+\infty} e^{-r}\, \Big[1\wedge \uDelta^{\com}_\Sp(\zeta^{(r)},\zeta'^{\,(r)})\Big]\,\dd r+ d_\M(b_\zeta,b_{\zeta'})\\
&=:\uDelta_\Sp(\zeta,\zeta')+d_\M(b_\zeta,b_{\zeta'}).
\end{split}
\end{equation} 
for all $\zeta,\,\zeta'\in\T_\Sp^\alpha$. 

\begin{remark}\label{rem:properness}
The presence of the term $d_\M(b_\zeta,b_{\zeta'})$ in particular rules out the following 
type of example. Consider the $\R$-tree given by one infinite branch $e$ embedded into
 $\R^2$ as $[0,\infty)\times \{0\}$, as well as branches $e_n$ for $n \ge 1$ that are embedded as
 $[0,n]\times \{0\}$ and merge with $e$ at $(n,0)$. This tree lies in the completion of $\T_\Sp^\alpha$
under $\uDelta_\Sp$, but does not lie in $\T_\Sp^\alpha$.
 
In general, properness guarantees that we cannot have a tree $\zeta = (\ST, \ast, d, M) \in \T_\Sp^\alpha$
 admitting a sequence of points $\sz_n \in \ST$ such that $|M(\sz_n)| \le 1$ and $d(\sz_n,\sz_m) \ge 1$
 for all $n,m$. Indeed, since bounded subsets of $\ST$ are precompact (by the definition of local compactness),
 having a sequence such that $d(\sz_n,\sz_m) \ge 1$ for all 
 $(m,n)$ guarantees that $\lim_{n\to \infty} d(\sz_n,\ast) = +\infty$, so $\lim_{n\to \infty} |M(\sz_n)| = \infty$
 by properness.
\end{remark}


\begin{theorem}\label{thm:Metric}
For any $\alpha\in(0,1)$,
\begin{enumerate}[noitemsep, label=(\roman*)]
\item $\Delta_\Sp$ {in~\eqref{e:Metric} is well-defined on $\T^\alpha_\Sp\times\T^\alpha_\Sp$ and} 
is a metric,
\item the space $(\T^\alpha_\Sp,\Delta_\Sp)$ is separable and complete. 
\end{enumerate} 
\end{theorem}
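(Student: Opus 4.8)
\textbf{Proof plan for Theorem~\ref{thm:Metric}.}

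The plan is to treat the two assertions in turn, leveraging the compact case (Proposition~\ref{p:MetricC}) as a black box wherever possible. For part (i), I would first record that $\Delta_\Sp$ is manifestly symmetric and non-negative, being built from $\uDelta^\com_\Sp$ (a pseudometric by Proposition~\ref{p:MetricC}) and $d_\M$ (a genuine metric on c\`adl\`ag functions). The triangle inequality for the first summand follows by integrating the pointwise inequality $1\wedge\uDelta^\com_\Sp(\zeta^{(r)},\zeta''^{\,(r)})\le (1\wedge\uDelta^\com_\Sp(\zeta^{(r)},\zeta'^{\,(r)}))+(1\wedge\uDelta^\com_\Sp(\zeta'^{\,(r)},\zeta''^{\,(r)}))$ against $e^{-r}\,\dd r$; the triangle inequality for $d_\M(b_\zeta,b_{\zeta'})$ is immediate. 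The one genuinely new point is \emph{positivity}: I must show that $\Delta_\Sp(\zeta,\zeta')=0$ forces $\varphi(\zeta)=\zeta'$ in the sense of Definition~\ref{def:SpTree}. If $\Delta_\Sp(\zeta,\zeta')=0$ then $\uDelta^\com_\Sp(\zeta^{(r)},\zeta'^{\,(r)})=0$ for Lebesgue-a.e.\ $r>0$, hence (by monotonicity of the restrictions and a limiting argument, using that $\zeta^{(r)}\to\zeta$ as $r\to\infty$ in the sense that every compact subset of $\ST$ is eventually contained in $\ST^{(r)}$) for a sequence $r_k\to\infty$ the restricted trees $\zeta^{(r_k)}$ and $\zeta'^{\,(r_k)}$ are isometric as spatial trees. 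One then needs to upgrade a compatible sequence of isometries $\phi_k\colon\ST^{(r_k)}\to\ST'^{\,(r_k)}$ to a single isometry $\phi\colon\ST\to\ST'$: this is a standard exhaustion/diagonal argument, using uniqueness of geodesics in $\R$-trees to guarantee that the $\phi_k$ can be chosen consistently (each fixing $\ast$ and intertwining the evaluation maps), and local compactness to pass to the limit. Properness of $M$ and $M'$ guarantees that the $r$-balls $\ST^{(r)}$ exhaust $\ST$, so no point is lost.

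For part (ii), separability is the easier half: the countable dense set exhibited in Proposition~\ref{p:MetricC} for $\T^\alpha_\com$ is again dense in $\T^\alpha_\Sp$, because for any $\zeta\in\T^\alpha_\Sp$ and any $R>0$ the restriction $\zeta^{(R)}$ lies in $\T^\alpha_\com$, can be approximated there by a finite-tree element $\eta$ with rational data, and then the tail $\int_R^\infty e^{-r}[1\wedge\uDelta^\com_\Sp(\zeta^{(r)},\eta^{(r)})]\,\dd r\le e^{-R}$ is negligible for $R$ large (one also needs $b_\eta$ close to $b_\zeta$ in $d_\M$, which can be arranged since $b_\zeta$ is c\`adl\`ag by Lemma~\ref{l:CadlagPropMap} and the finite-tree approximations can be taken to match its properness profile up to a small error). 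Completeness is the substantive part. Given a $\Delta_\Sp$-Cauchy sequence $\{\zeta_n\}$, the sequence $\{b_{\zeta_n}\}$ is $d_\M$-Cauchy and hence converges to some non-decreasing c\`adl\`ag $b$; and for (Lebesgue-a.e., hence by monotonicity all) $r>0$ the restrictions $\{\zeta_n^{(r)}\}$ form a $\uDelta^\com_\Sp$-Cauchy sequence in $\T^\alpha_\com$, which is complete, so $\zeta_n^{(r)}\to\eta_r$ for some $\eta_r\in\T^\alpha_\com$. The consistency $\eta_r=(\eta_{r'})^{(r)}$ for $r<r'$ (up to canonical isometry) follows from the fact that taking $r$-balls is continuous with respect to $\uDelta^\com_\Sp$, which I would extract as a small lemma or cite from the structure of~\cite{BCK}. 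Gluing the increasing family $\{\eta_r\}_{r>0}$ along these consistent isometries produces a pointed length space $(\ST,\ast,d)$ with a map $M$; one checks it is an $\R$-tree (the defining properties in Definition~\ref{def:Rtree} are local and pass to increasing unions), complete (every open end has unbounded rays, by Theorem~\ref{thm:Rtrees}, because that holds in each $\eta_r$ and the $b$-control prevents rays from being ``cut off''), and locally compact (each $\eta_r$ is compact). Finally $M$ is locally little $\alpha$-H\"older because the third term in~\eqref{e:MetC} controls the H\"older modulus uniformly along the approximating sequence at every dyadic scale, and the little-H\"older property (the $\delta^{-\alpha}\omega(M,\delta)\to 0$ requirement) is preserved under uniform-in-scale limits of this kind; and $M$ is proper precisely because $b_{\zeta_n}\to b$ with $b$ finite, so that $M^{-1}(\Lambda_r)\subset B_d(\ast,b(r)]$ is bounded and closed, hence compact by local compactness. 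It remains to check $\Delta_\Sp(\zeta_n,\zeta)\to 0$, which combines $d_\M(b_{\zeta_n},b)\to 0$ with dominated convergence on the integral term, using $1\wedge\uDelta^\com_\Sp(\zeta_n^{(r)},\zeta^{(r)})\le 1$ and pointwise-a.e.\ convergence.

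The main obstacle I anticipate is the gluing step in the completeness proof: assembling the consistent family $\{\eta_r\}$ into a single spatial $\R$-tree and verifying that the limiting evaluation map retains \emph{both} the little-H\"older regularity and properness simultaneously. The delicate interplay is that properness is encoded separately through $b_\zeta$ and the $d_\M$-convergence of the $b_{\zeta_n}$, while the metric-and-$M$ data is encoded through the localized $\uDelta^\com_\Sp$; one has to confirm these two pieces of limiting data are compatible, i.e.\ that the $b$ produced as the $d_\M$-limit genuinely equals $b_\zeta$ for the reconstructed $\zeta$. This requires knowing that $b_{(\cdot)}$ is continuous (or at least lower/upper semicontinuous in the right direction) along the approximating sequence in a way compatible with the a.e.-$r$ convergence of restrictions — essentially a localized version of Lemma~\ref{l:CadlagPropMap}'s compactness argument applied uniformly. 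I would isolate this as the technical heart of the argument and expect it to mirror, with the properness bookkeeping added, the completeness proof of the metric in~\cite{BCK}.
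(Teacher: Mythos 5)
Your treatment of (i) and of separability is essentially the paper's: the paper also reduces positive definiteness to the compact case (via the c\`adl\`ag property of $r\mapsto\uDelta^\com_\Sp(\zeta^{(r)},\zeta'^{(r)})$ and the argument of \cite[Proposition 3.4]{BCK}, which is the citable form of your exhaustion argument), and proves separability by truncating to balls and invoking Proposition~\ref{p:MetricC} together with Lemma~\ref{l:CompactvsNonCompact}. For completeness, however, you take a genuinely different route. The paper does \emph{not} construct the limit by gluing limits of the restrictions $\zeta_n^{(r)}$; instead it shows that a $\Delta_\Sp$-Cauchy sequence satisfies the three conditions of the compactness criterion (Proposition~\ref{p:Compactness}), so that it is relatively compact, and a Cauchy sequence with a convergent subsequence converges. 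All of the heavy lifting you place in the gluing step (building the limit tree and evaluation map, and reconciling the properness data) is thereby outsourced to the already-proven ``$\Longleftarrow$'' direction of Proposition~\ref{p:Compactness}, which constructs the limit from finite nets \`a la \cite{ADH} and uses Lemma~\ref{l:ContProp} to handle $b_\zeta$. Your direct construction is closer in spirit to the completeness proofs of \cite{BCK,ADH}; it is viable, but it forces you to re-prove, by hand, material the paper has packaged into Lemmas~\ref{l:Approx}, \ref{l:ContProp} and~\ref{l:Emb}, whereas the paper's route is shorter precisely because the compactness criterion was established first.

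Two steps in your completeness argument need repair or at least substantial expansion. First, the parenthetical ``Lebesgue-a.e., hence by monotonicity all $r$'' is not justified: the map $r\mapsto\uDelta^\com_\Sp(\zeta_n^{(r)},\zeta_m^{(r)})$ is not monotone in $r$, and Cauchyness of the integral in~\eqref{e:Metric} does not by itself give pointwise Cauchyness of the restrictions, not even for a.e.\ $r$, without either a diagonal extraction over a countable dense set of radii or a quantitative restriction-stability estimate (for trees one can project points of the larger ball back onto the smaller one along geodesics, at the cost of H\"older-type losses in the second and third terms of~\eqref{e:MetC}, in the style of Lemma~\ref{l:Approx}); to be fair, the paper's own proof of (ii) asserts the per-$r$ Cauchy property equally tersely, but your stated justification is wrong as written and your route leans on genuine per-$r$ \emph{convergence}, not just on the uniform bounds the paper needs. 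Second, the gluing of the consistent family $\{\eta_r\}$ is not the routine step you describe: the isometric identifications $\eta_r\cong(\eta_{r'})^{(r)}$ obtained from near-optimal correspondences need not be compatible across different pairs $r<r'$, and making them so requires exactly the diagonal argument of Lemma~\ref{l:Emb} (the remark following that lemma gives a simple example showing why optimal correspondences can disagree with the natural embeddings). Finally, the compatibility you flag as the ``technical heart'' — that the $d_\M$-limit $b$ coincides with $b_\zeta$ for the reconstructed $\zeta$, and that $M$ is proper — is precisely the content of Lemma~\ref{l:ContProp}, which requires the uniform bound $b_{\zeta_n}(r)\le C'(r)$ and a correspondence-transfer argument rather than only the finiteness of the limit $b$; your one-line deduction ``$M^{-1}(\Lambda_r)\subset B_d(\ast,b(r)]$'' presupposes what that lemma proves.
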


We will first show point (i) and separability, then state and prove two 
lemmas, one concerning the properness map while the other the relation between $\Delta^\com_\Sp$ and $\Delta_\Sp$, 
and a characterisation of the compact subsets of $\T^\alpha_\Sp$. At last, we will see how to exploit them in order 
to show completeness. 

\begin{proof}[of Theorem~\ref{thm:Metric}(i)]
{ We begin by proving that $\Delta_\Sp$ is well-defined on $\T^\alpha_\Sp\times\T^\alpha_\Sp$. 
Since $\ST^{(r)}$ and $\ST'^{\,(r)}$ 
are compact as both $\ST$ and $\ST'$ are locally compact by assumption, 
$\zeta^{(r)},\,\zeta'^{(r)}\in\T^\alpha_\com$ so that $\Delta^\com_\Sp(\zeta^{(r)},\zeta'^{(r)})$ makes sense. 
To see that the first summand in~\eqref{e:Metric} is well-defined, we note that the map 
$r\mapsto \uDelta^{\com}_\Sp(\zeta^{(r)},\zeta'^{\,(r)})$ is c\`adl\`ag. 
Indeed, by the triangle inequality we have 
\begin{equs}
\uDelta^{\com}_\Sp(\zeta^{(r+\eps)},\zeta'^{\,(r+\eps)})\leq \uDelta^{\com}_\Sp(\zeta^{(r+\eps)},\zeta^{\,(r)})+\uDelta^{\com}_\Sp(\zeta^{(r)},\zeta'^{\,(r)})+\uDelta^{\com}_\Sp(\zeta'^{\,(r)},\zeta'^{\,(r+\eps)})
\end{equs}
which, by switching the roles of $r$ and $r+\eps$, immediately implies that 
\begin{equ}
|\uDelta^{\com}_\Sp(\zeta^{(r+\eps)},\zeta'^{\,(r+\eps)})-\uDelta^{\com}_\Sp(\zeta^{(r)},\zeta'^{\,(r)})|\leq \uDelta^{\com}_\Sp(\zeta^{(r+\eps)},\zeta^{\,(r)})+\uDelta^{\com}_\Sp(\zeta'^{\,(r)},\zeta'^{\,(r+\eps)})\,.
\end{equ}
Since $\ST^{(r)}$ and $\ST'^{\,(r)}$ are closed by definition, it is not difficult to see 
that the Hausdorff distance between $\ST^{(r+\eps)}$ and $\ST^{(r)}$ 
as well as that between $\ST'^{\,(r)}$ and $\ST'^{\,(r+\eps)}$ 
is going to $0$ as $\eps\to0$. Hence, Lemma~\ref{l:Approx} implies that the right hand side is converging to $0$. 
For the existence of left limits instead, 
let $\tilde \zeta^{(r)}\eqdef (\tilde\ST^{(r)}, \ast,d,\tilde M^{(r)})$ 
be such that $\tilde\ST^{(r)}$ is the closure $\cup_{\eps'>0}\ST^{(r-\eps')}$ and $\tilde M^{(r)}$ 
is the restriction of $M$ to $\tilde\ST^{(r)}$, and define 
$\tilde \zeta'^{\,(r)}\eqdef (\tilde\ST'^{\,(r)}, \ast',d',\tilde M'^{\,(r)})$ similarly. 
Then, replacing $\zeta^{(r)}$ and $\zeta'^{\,(r)}$ 
with $\tilde\zeta^{(r)}$ and $\tilde\zeta'^{\,(r)}$ in the argument 
for the right continuity, we can show that $\uDelta^{\com}_\Sp(\zeta^{(r-\eps)},\zeta'^{\,(r-\eps)})$ 
converges to $\uDelta^{\com}_\Sp(\tilde\zeta^{(r)},\tilde\zeta'^{\,(r)})$. 

We now prove that $\Delta_\Sp$ is indeed a metric.} 
As in the proof of Proposition~\ref{p:MetricC}, we only need to focus on the first summand in~\eqref{e:Metric} 
and show it satisfies the axioms of a metric. 
Positivity and symmetry clearly hold, while the triangle inequality follows by the fact that 
it holds for $\uDelta^\com_\Sp$. {For positive definiteness, we argue as in~\cite[Proposition 5.3]{ADH}. 
Assume $\uDelta^\com_\Sp(\zeta,\zeta')=0$. 
Then, since $r\mapsto \uDelta^{\com}_\Sp(\zeta^{(r)},\zeta'^{\,(r)})$ is c\`adl\`ag, 
$\uDelta^{\com}_\Sp(\zeta^{(r)},\zeta'^{\,(r)})=0$ for every $r\geq 0$ and consequently 
there exists an isometry $\phi_r$ from $\ST^{(r)}$ to $\ST'^{\,(r)}$ for which $\phi_r(\zeta)=\zeta'$. 
For every $n,k$ positive integers, let $\{\sz_i^{n,k}\}_{i\leq N_{n,k}}$ be a $1/k$-net of $\ST^{(n)}$ 
with $\sz_0^{n,k}\eqdef \ast$ and $N_{n,k}<\infty$ be its cardinality. 
Notice that, since for every $m\geq n$, $\phi_m$ is distance preserving, the sequence 
$\{\phi_m(\sz_i^{n,k})\}_{m\geq n}$ is bounded for every $n,k$ and $i\leq N_{n,k}$. 
Via a diagonal argument, it is then possible to find a subsequence such that 
$\phi(\sz_i^{n,k})\eqdef\lim_{m\to\infty} \phi_m(\sz_i^{n,k})$ exists for every $n,k$ and $i\leq N_{n,k}$.
Clearly, $\phi$ is distance preserving on $\{\sz^{n,k}_i\}_{n,k,i}$ and, for any $n,k$ given 
$\phi(\{\sz^{n,k}_i\}_{n,k,i})$ is a $2/k$-net for $\ST'^{\,(n)}$. Hence, $\phi(\{\sz^{n,k}_i\}_{n,k,i})$ 
is dense in $\ST'$ and since $\{\sz^{n,k}_i\}_{n,k,i}$ is dense in $\ST$, 
$\phi$ can be uniquely extended to a bijective isometry on $\ST$. At last, 
the continuity of $M$ and $M'$ imply $M'\circ\phi\equiv M$ which ensures that $\phi(\zeta)=\zeta'$. 
}

To show separability, given $\zeta\in\T^\alpha_\Sp$ and $r>0$, let $R\eqdef \diam(M(\ST^{(r)}))$. 
Then, the definition of the metric implies
$\Delta_\Sp(\zeta, \zeta^{(r)})\lesssim e^{-r}\vee e^{-R}$, so that any element of $\T^\alpha_\Sp$ can be approximated 
arbitrarily well by elements in $\T^\alpha_\com$. Since, in view of Proposition~\ref{p:MetricC}, 
the latter space is separable, 
and thanks to Lemma~\ref{l:CompactvsNonCompact} convergence in 
$\Delta^\com_\Sp$ implies convergence in $\Delta_\Sp$, 
separability on $\T^\alpha_\Sp$ follows.
\end{proof}

\begin{lemma}\label{l:ContProp}
Let $\alpha\in(0,1)$, $\{\zeta_n=(\ST_n,\ast_n,d_n,M_n)\}_{n\in\N}$  
{be in $\T^\alpha_\Sp$ (resp. $\Mm^\alpha_\com$)  and let $\zeta=(\ST,\ast,d,M)$ be such that 
$\uDelta_\Sp(\zeta_n,\zeta)$ (resp. $\uDelta_\com(\zeta_n,\zeta)$), converge to $0$ as $n\to\infty$. 
Assume further that} for every $r>0$
there exists a finite constant $C'=C'(r)>0$ such that
\begin{equation}\label{e:Comb}
b_{\zeta_n}(r)\leq C'\,,
\end{equation}
uniformly over $n\in\N$. Then, { $M$ is proper, so that in particular} $\zeta\in\T^\alpha_\Sp$ 
{(resp. $\zeta\in\Mm^\alpha_\com$)}, 
and $d_\M(b_{\zeta_n},b_\zeta)$ converges to $0$.
\end{lemma}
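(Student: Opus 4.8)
The plan is to first establish that the limiting object $\zeta$ is a genuine element of $\T^\alpha_\Sp$ — i.e.\ that $(\ST,\ast,d)$ is a complete, locally compact pointed $\R$-tree and that $M$ is locally little $\alpha$-H\"older and proper — and only then to upgrade the convergence $\uDelta_\Sp(\zeta_n,\zeta)\to 0$ to the full $\Delta_\Sp$-convergence by adding control of the properness maps. For the first part, the hypothesis $\uDelta_\Sp(\zeta_n,\zeta)\to0$ says precisely that for (Lebesgue-a.e., hence by c\`adl\`ag-ness for all but countably many) $r>0$ we have $\uDelta^\com_\Sp(\zeta_n^{(r)},\zeta^{(r)})\to 0$ in $\T^\alpha_\com$. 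Since $\T^\alpha_\com$ is complete by Proposition~\ref{p:MetricC}, each closed ball $\ST^{(r)}$ is a compact spatial $\R$-tree with $M^{(r)}$ little $\alpha$-H\"older; writing $\ST = \bigcup_{r>0}\ST^{(r)}$ over a sequence $r_k\ua\infty$ avoiding the exceptional set then gives that $\ST$ is a locally compact, complete $\R$-tree (completeness since a Cauchy sequence eventually stays in some ball, which is compact) and that $M$ is locally little $\alpha$-H\"older, as this is a property checked on compacts and $B_d(\ast,r]\subset\ST^{(r')}$ for $r'>r$.

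The key new ingredient, and the step I expect to be the main obstacle, is \emph{properness} of the limiting map $M$, together with the convergence $d_\M(b_{\zeta_n},b_\zeta)\to 0$; this is exactly what the uniform bound \eqref{e:Comb} is designed to deliver. I would argue as follows. Fix $r>0$ and pick $C'=C'(r)$ as in \eqref{e:Comb}, so that $M_n^{-1}(\Lambda_r)\subset B_{d_n}(\ast_n, C']$ for every $n$. Using a near-optimal correspondence $\CC$ realising $\uDelta^\com_\Sp(\zeta_n^{(R)},\zeta^{(R)})$ for some large fixed $R>C'+1$ (again chosen outside the exceptional countable set), the second summand in \eqref{e:MetC} forces $\|M(\sz)-M_n(\sz_n)\|$ small whenever $(\sz,\sz_n)\in\CC$, while the distortion term forces $|d(\ast,\sz)-d_n(\ast_n,\sz_n)|$ small. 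Consequently any $\sz\in\ST$ with $M(\sz)\in\Lambda_r$ has a partner $\sz_n$ with $M_n(\sz_n)$ in a slightly enlarged box $\Lambda_{r+\eta_n}$ (so $d_n(\ast_n,\sz_n)\le b_{\zeta_n}(r+\eta_n)\le C'(r+1)$ for $n$ large, using monotonicity of $b_{\zeta_n}$), hence $d(\ast,\sz)\le C'(r+1)+\eta_n+o(1)$. This shows $b_\zeta(r)<\infty$ for all $r$, and since $\ST$ is a complete locally compact $\R$-tree it is proper by Theorem~\ref{thm:Rtrees}(a), so $M^{-1}(\Lambda_r)$, being closed and bounded, is compact — i.e.\ $M$ is proper and $\zeta\in\T^\alpha_\Sp$.

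Finally, for $d_\M(b_{\zeta_n},b_\zeta)\to 0$, I would show pointwise (M1, or even a.e.) convergence of $b_{\zeta_n}$ to $b_\zeta$ at continuity points and invoke the explicit form \eqref{def:M1metric} of $d_\M$ together with dominated convergence, using \eqref{e:Comb} to dominate. The two-sided bound comes from the same correspondence argument: $\limsup_n b_{\zeta_n}(r)\le b_\zeta(r')$ for any $r'>r$ at which $b_\zeta$ is continuous (any near-optimal partner of the maximiser for $b_{\zeta_n}(r)$ lands in $\Lambda_{r+o(1)}$ and has distance to $\ast$ within $o(1)$ of $d_n(\ast_n,\cdot)$), giving $\limsup_n b_{\zeta_n}(r)\le b_\zeta(r)$ at continuity points of $b_\zeta$; conversely, approximating the maximiser $\sz_r$ for $b_\zeta(r)$ (which exists by local compactness, as in the proof of Lemma~\ref{l:CadlagPropMap}) by correspondence partners gives $\liminf_n b_{\zeta_n}(r')\ge b_\zeta(r)$ for $r'<r$, and hence $\liminf_n b_{\zeta_n}(r)\ge b_\zeta(r^-)=b_\zeta(r)$ at continuity points. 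Since $b_\zeta$ has only countably many discontinuities, $b_{\zeta_n}\to b_\zeta$ Lebesgue-a.e.; as all these functions are non-decreasing and uniformly bounded on compacts by \eqref{e:Comb}, a.e.\ convergence of non-decreasing functions upgrades to M1-convergence on each $[-1,t]$, and then \eqref{def:M1metric} with dominated convergence yields $d_\M(b_{\zeta_n},b_\zeta)\to 0$, completing the proof.
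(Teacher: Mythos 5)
Your plan follows essentially the same route as the paper's proof: properness of the limiting map $M$ is extracted from \eqref{e:Comb} via a near-optimal correspondence between large balls together with Theorem~\ref{thm:Rtrees}(a), and $d_\M(b_{\zeta_n},b_\zeta)\to 0$ is reduced to pointwise convergence at continuity points of $b_\zeta$ through the two-sided sandwich $b_\zeta(r-\eps_n)\leq\cdots\leq b_\zeta(r+\eps_n)$ (the paper then simply invokes Whitt's M1 results for monotone functions instead of redoing that upgrade by hand). Two quantifiers in your write-up need fixing, though neither affects the substance. First, in the properness step you fix a single radius $R>C'+1$ and then claim that \emph{every} $\sz\in M^{-1}(\Lambda_r)$ has a partner in the correspondence between $\ST^{(R)}$ and $\ST_n^{(R)}$; but that correspondence only covers $\ST^{(R)}$, and excluding points of $M^{-1}(\Lambda_r)$ far from $\ast$ is precisely what is at stake, so as written the step is circular. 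The repair is the one the paper uses: fix $\sz$ first, choose $R\geq d(\ast,\sz)\vee\big(C'(r+1)+2\big)$ depending on $\sz$, and note that the resulting bound $d(\ast,\sz)\leq C'(r+1)+2\eps_n$ is independent of $R$, hence uniform over $\sz$, which gives $b_\zeta(r)<\infty$. Second, in the lower bound for $b_{\zeta_n}$ you have $r$ and $r'$ interchanged: transporting the maximiser for $b_\zeta(r')$ with $r'<r$, whose partner has image in $\Lambda_{r'+\eps_n}\subset\Lambda_r$ for $n$ large, yields $\liminf_n b_{\zeta_n}(r)\geq b_\zeta(r')$, and letting $r'\ua r$ gives the claim at continuity points; the inequality $\liminf_n b_{\zeta_n}(r')\geq b_\zeta(r)$ as you state it is false in general. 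With these corrections your argument coincides with the paper's.
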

%
\begin{proof}
{ We only show the statement for $\{\zeta_n=(\ST_n,\ast_n,d_n,M_n)\}_{n\in\N}\subset \T^\alpha_\Sp$, 
as the proof does not rely on the $\R$-tree structure of the metric spaces and the case of 
$\Mm^\alpha_\com$ is the same but simpler. }

We begin by proving that $M$ is proper {(which in the compact case is obvious)}. 
Let {$r>0$ be fixed and} $\sz\in\ST$ be such that 
$M(\sz)\in\Lambda_r$. {Then, by~\eqref{e:Comb} and since $\uDelta_\Sp(\zeta_n,\zeta)\to 0$, 
there exists $R>0$
such that $\sz\in B_d(\ast,R]$ and for every $n$ we have both that
$M_n^{-1}(\Lambda_{r+1})\subset B_{d_n}(\ast_n,R]$ and that there exists 
a correspondence $\CC_n^R$ between $\ST^{(R)}$ and $\ST^{(R)}_n$ for which
$\eps_n\eqdef\uDelta^{\com,\CC_n^R}_\Sp(\zeta^{(R)}_n,\zeta^{(R)})\to 0$}.  
Without loss of generality, we can take $R>C'(r+1)+2$, so that, in view of~\eqref{e:Comb}, 
for every $n$, {$M_n^{-1}(\Lambda_{r+1})\subset B_{d_n}(\ast_n,R]$}. 
Now, let $\CC_n^R$ be a correspondence between $\ST^{(R)}$ and $\ST^{(R)}_n$ such that 
$\eps_n\eqdef\uDelta^{\com,\CC_n^R}_\Sp(\zeta^{(R)}_n,\zeta^{(R)})\to 0$. 
Let $\sz_n\in \ST^{(R)}_n$ be such that $(\sz,\sz_n)\in\CC^R_n$. 
Then, $|M_n(\sz_n)|\leq r+\eps_n$ so that, thanks to~\eqref{e:Comb},
\begin{equ}
d(\sz,\ast)\leq b_{\zeta_n}(r+\eps_n) +2\eps_n\leq C'(r+\eps_n)+2\eps_n\,,
\end{equ}
which implies that $M$ is proper. { Hence, $\zeta\in\T^\alpha_\Sp$ and, by Lemma~\ref{l:CadlagPropMap}, 
$b_\zeta$ is c\`adl\`ag. }

It remains to prove that $b_{\zeta_n}$ converges to $b_\zeta$.~\cite[Theorem 12.9.3 and Corollary 12.5.1]{Whitt} 
ensure that it suffices to show that $b_{\zeta_n}(r)\to b_\zeta(r)$ 
for every $r$ at which $b_\zeta$ is continuous. 
Let $r\in\Disc(b_\zeta)^c$, $R>b_\zeta(r)\vee C'(r)$ and $\CC_n^R$ and $\eps_n$ be as above. 
Notice that 
\begin{equs}
|b_\zeta(r)-b_{\zeta_n}(r)|&=\Big|b_\zeta(r)-\sup_{\substack{\sz_n\,:M_n(\sz_n)\in\Lambda_r\\(\sz,\sz_n)\in\CC^R_n}}d_n(\ast_n,\sz_n)\Big|\leq \Big|b_\zeta(r)-\sup_{\substack{\sz_n\,:M_n(\sz_n)\in\Lambda_r\\(\sz,\sz_n)\in\CC^R_n}}d(\ast,\sz)\Big|+\eps_n\,.
\end{equs}
Now, for $(\sz,\sz_n)\in\CC_n$, if $M(\sz)\in\Lambda_{r-\eps_n}$ then $M_n(\sz_n)\in\Lambda_r$, while if 
$M_n(\sz_n)\in\Lambda_r$, then $M(\sz)\in\Lambda_{r+\eps_n}$ which implies that 
\begin{equs}
b_\zeta(r-\eps_n)-b_\zeta(r)\leq \sup_{\substack{\sz_n\,:M_n(\sz_n)\in\Lambda_r\\(\sz,\sz_n)\in\CC_n}}d(\ast,\sz)-b_\zeta(r)\leq b_\zeta(r+\eps_n)-b_\zeta(r)
\end{equs}
from which the conclusion follows. 
\end{proof}

\begin{lemma}\label{l:CompactvsNonCompact}
For any $\alpha\in(0,1)$, the identity map from $(\T^\alpha_\com,\Delta_\Sp^\com)$ to $(\T^\alpha_\Sp,\Delta_\Sp)$ is continuous. 
\end{lemma}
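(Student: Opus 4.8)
The plan is to take a sequence $\{\zeta_n\}_n\subset\T^\alpha_\com$ with $\Delta^\com_\Sp(\zeta_n,\zeta)\to 0$ for some $\zeta\in\T^\alpha_\com$ and show $\Delta_\Sp(\zeta_n,\zeta)\to 0$. Since both metrics split as $\uDelta^\com_\Sp(\cdot,\cdot)+d_\M(b_\cdot,b_\cdot)$ resp.\ $\uDelta_\Sp(\cdot,\cdot)+d_\M(b_\cdot,b_\cdot)$ and the $d_\M$-summand is literally identical in the two metrics, it suffices to show that $\uDelta^\com_\Sp(\zeta_n,\zeta)\to 0$ implies $\uDelta_\Sp(\zeta_n,\zeta)\to 0$, i.e.\ that
\begin{equ}
\int_0^{+\infty} e^{-r}\,\big[1\wedge\uDelta^\com_\Sp(\zeta_n^{(r)},\zeta^{(r)})\big]\,\dd r\longrightarrow 0\,.
\end{equ}
By dominated convergence (the integrand is bounded by $e^{-r}$), it is enough to prove that for each fixed $r>0$ one has $\uDelta^\com_\Sp(\zeta_n^{(r)},\zeta^{(r)})\to 0$, or at least this along a.e.\ $r$; since the map $r\mapsto\uDelta^\com_\Sp(\zeta_n^{(r)},\zeta^{(r)})$ is c\`adl\`ag it will actually suffice to handle $r$ in a co-countable set, which removes any boundary issue.

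The core step is thus: given a near-optimal correspondence $\CC_n\subset\ST_n\times\ST$ with $(\ast_n,\ast)\in\CC_n$ realising $\uDelta^{\com,\CC_n}_\Sp(\zeta_n,\zeta)=:\eps_n\to 0$, produce from it a correspondence between the closed balls $\ST_n^{(r)}=B_{d_n}(\ast_n,r]$ and $\ST^{(r)}=B_d(\ast,r]$ with small $\uDelta^{\com,\cdot}_\Sp$-cost. The natural candidate is $\CC_n^{(r)}\eqdef\{(\sz,\sw)\in\CC_n: \sz\in\ST_n^{(r)}\ \text{and}\ \sw\in\ST^{(r)}\}$, but one must check this is still a correspondence: if $\sz\in\ST_n^{(r)}$ and $(\sz,\sw)\in\CC_n$ then $d(\ast,\sw)\le d_n(\ast_n,\sz)+\dis\CC_n\le r+2\eps_n$, so $\sw$ may fail to lie in $\ST^{(r)}$. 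This is the only real obstacle. To fix it, one composes with the nearest-point projection $p_r\colon\ST\to\ST^{(r)}$ onto the closed ball in the $\R$-tree (well-defined since $\ST^{(r)}$ is a complete subtree), and likewise $p_r^n$ on $\ST_n$: define $\CC_n^{(r)}$ to consist of all pairs $(p_r^n(\sz),p_r(\sw))$ with $(\sz,\sw)\in\CC_n$. Using that the projection onto a ball of radius $r$ in an $\R$-tree is $1$-Lipschitz and moves a point by at most $\max(0,d(\ast,\cdot)-r)$, one gets: $\dis\CC_n^{(r)}\le\dis\CC_n+4\eps_n$; the evaluation-map mismatch $\|M_n(p_r^n\sz)-M(p_r\sw)\|$ is bounded using near-optimality of $\CC_n$ plus the little-$\alpha$-H\"older modulus $\omega^{(R)}(M_n,4\eps_n)$ and $\omega^{(R)}(M,4\eps_n)$ on a fixed larger ball $\ST^{(R)}$, $R>r$ — here the uniform equicontinuity coming from $\uDelta^\com_\Sp$-convergence (the third, H\"older, summand of \eqref{e:MetC}) is what makes these moduli small uniformly in $n$; and the third (H\"older) summand of $\uDelta^{\com,\CC_n^{(r)}}_\Sp$ is controlled the same way, splitting the scales $\cA_m$ into those with $2^{-m}\gg\eps_n$ (where the pair survives projection essentially unchanged and one invokes near-optimality of $\CC_n$) and those with $2^{-m}\lesssim\eps_n$ (where one bounds the increments directly by the little-H\"older modulus, exactly as in the proof of Lemma~\ref{l:Approx}, absorbing the $2^{m\alpha}$ prefactor).

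Putting these bounds together gives $\uDelta^{\com,\CC_n^{(r)}}_\Sp(\zeta_n^{(r)},\zeta^{(r)})\lesssim\eps_n+\omega^{(R)}(M,4\eps_n)/(4\eps_n)^\alpha\cdot(4\eps_n)^\alpha+\cdots\to 0$, hence $\uDelta^\com_\Sp(\zeta_n^{(r)},\zeta^{(r)})\to 0$ for every $r$ outside the (countable) set of discontinuities of the limiting c\`adl\`ag function, and the dominated-convergence argument above then yields $\uDelta_\Sp(\zeta_n,\zeta)\to 0$, completing the proof. I expect the bookkeeping around the projection step — verifying it genuinely produces a correspondence anchored at $(\ast,\ast')$ and that all three summands of \eqref{e:MetC} degrade only by $O(\eps_n)$ plus a vanishing modulus-of-continuity term — to be the main obstacle; the little-$\alpha$-H\"older hypothesis on the evaluation maps, together with the uniform control it inherits from $\uDelta^\com_\Sp$-convergence, is exactly what is needed to close it.
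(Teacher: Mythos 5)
Your overall plan is sound and is essentially the argument the paper compresses into a citation (it disposes of the $d_\M$-summand, which is common to $\Delta^\com_\Sp$ and $\Delta_\Sp$, and defers the comparison of $\uDelta^\com_\Sp$ with $\uDelta_\Sp$ to the strategy of [BCK, Prop.~3.4]): reduce via dominated convergence to showing $\uDelta^\com_\Sp(\zeta_n^{(r)},\zeta^{(r)})\to 0$ for (almost) every fixed $r$, and obtain this by converting a near-optimal correspondence $\CC_n$ between the full compact trees into a correspondence between the closed balls of radius $r$. The one step whose justification does not hold as written is the projection step in the form you state it, i.e.\ keeping \emph{all} pairs $(p^n_r(\sz),p_r(\sw))$ with $(\sz,\sw)\in\CC_n$. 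For pairs both of whose members lie far outside the respective balls, the displacements $d_n(\sz,p^n_r(\sz))$ and $d(\sw,p_r(\sw))$ are of order one, not $O(\eps_n)$; so ``$1$-Lipschitz plus displacement at most $\max(0,d(\ast,\cdot)-r)$'' does not yield $\dis\CC_n^{(r)}\le\dis\CC_n+4\eps_n$, and your bound on $\|M_n(p^n_r(\sz))-M(p_r(\sw))\|$ via near-optimality of $\CC_n$ plus the little-H\"older modulus at scale $O(\eps_n)$ simply does not apply to such pairs, since $p^n_r(\sz)$ is not $O(\eps_n)$-close to $\sz$.

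The gap is repairable in two ways. Cheapest: retain only the pairs $(\sz,\sw)\in\CC_n$ with $\sz\in\ST_n^{(r)}$ \emph{or} $\sw\in\ST^{(r)}$ and project those; this is still a correspondence between the two balls containing $(\ast_n,\ast)$, and now each retained point is moved by at most $2\eps_n$ (if, say, $\sz\in\ST_n^{(r)}$, then $d(\ast,\sw)\le r+\dis\CC_n\le r+2\eps_n$), so the distortion, the sup-norm mismatch and the H\"older summand of \eqref{e:MetC} are estimated exactly as you indicate, using a little-$\alpha$-H\"older modulus for $\{M_n\}$ that is uniform in $n$, extracted from the H\"older summand by the triangle-inequality argument of \eqref{e:SupBound}--\eqref{e:HolderBound} (do not quote Proposition~\ref{p:Compactness} itself, whose ambient metric is $\Delta_\Sp$). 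Alternatively, keep all pairs but use the tree geometry: in an $\R$-tree one has the identity $d(p_r(\sw),p_r(\sw'))=\min(r,d(\ast,\sw))+\min(r,d(\ast,\sw'))-2\min\bigl(r,\tfrac12(d(\ast,\sw)+d(\ast,\sw')-d(\sw,\sw'))\bigr)$, a Lipschitz function of distances that $\CC_n$ controls because $(\ast_n,\ast)\in\CC_n$, which gives the distortion bound for every $r$; and the four-point property shows that any $\CC_n$-partner of $p^n_r(\sz)$ lies within $O(\eps_n)$ of $p_r(\sw)$, which recovers the evaluation-map estimates from the little-H\"older moduli. With either repair, your scale-splitting for the H\"older term (as in Lemma~\ref{l:Approx}) and the dominated-convergence step close the argument; the restriction to a co-countable set of radii is then not even needed.
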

\begin{proof}
Let $\{\zeta_n\}_n\,,\zeta\subset \T^\alpha_\com$ be such that $\Delta^\com_\Sp(\zeta_n,\zeta)$ 
converges to $0$. In particular, $d_\M(b_{\zeta_n},b_\zeta)\to 0$ as $n\to\infty$ so that
we are left to show that $\lim_n\uDelta_\Sp(\zeta_n,\zeta)=0$. 

{Let $\eps>0$ and $\bar m\in\N$ be such that $2^{\bar m\alpha}\omega(M,2^{-\bar m})\leq \eps$. 
Since $\Delta^\com_\Sp(\zeta_n,\zeta)\to 0$, for $n$ big enough, there exists a correspondence 
$\CC_n$ between $\ST$ and $\ST_n$ such that  
\begin{equ}[e:boundDisMapCont]
\sup_{(\sz,\sz_n), (\sw,\sw_n)\in\CC_n}|d(\sz,\sw)-d_n(\sz_n,\sw_n)|<\eps\,,\qquad\sup_{(\sz,\sz_n)\in\CC_n}\|M(\sz)-M(\sz_n)\|<\eps\,.
\end{equ}
Let $r\geq 0$ be fixed. Our goal is to show that there exists a correspondence $\CC_n^r$ such that 
the assumptions of Lemma~\ref{l:Holder} are satisfied. Note that clearly, since $\Delta^\com_\Sp(\zeta_n,\zeta)\to 0$ 
we have uniform bound on the modulus of continuity of $M^{(r)}_n$ and $M^{(r)}$ so that~\eqref{e:UnifModCont} 
clearly holds. 
For~\eqref{e:dis+M}, we proceed as in~\cite[Proposition 3.4]{BCK}. 
Namely, we define $\CC_n^r$ as the correspondence which contains the pair $(\sz,\sz_n)$ provided 
that either $\sz\in\ST^{(r)},\sz_n\in\ST_n^{(r)}$ and $(\sz,\sz_n)\in\CC_n$, or $\sz_n\in\ST_n^{(r)}$ 
(resp. $\sz\in\ST^{(r)}$) and $\sz$ (resp. $\sz_n$) is the closest point in $\ST^{(r)}$ (resp. $\ST_n^{(r)}$) 
to $\sz'$ (resp. $\sz_n'$) such that $(\sz',\sz_n)\in\CC_n$ (resp. $(\sz,\sz_n')\in\CC_n$). 
To be more precise, since $\ST$ and $\ST_n$ are $\R$-trees and in particular length spaces, 
we will take $\sz$ to be the point on the segment connecting $\ast$ to $\sz'$ such that $d(\ast,\sz)=r$ 
and similarly for $\sz_n$. 

Assume $(\sz,\sz_n)\in\CC^r_n\setminus\CC_n$ is such that $\sz_n\in\ST_n^{(r)}$ and $\sz'$ 
is as above. Then, by the first bound in~\eqref{e:boundDisMapCont}
\begin{equs}
r+d(\sz,\sz')=d(\ast,\sz)+d(\sz,\sz')=d(\ast,\sz')&< d(\ast_n, \sz_n)+\eps\leq r +\eps\,,
\end{equs}
which implies
\begin{equ}[e:newPoint]
d(\sz,\sz')< \eps\,.
\end{equ}
For $(\sz,\sz_n),(\sw,\sw_n)\in\CC^r_n\cap\CC_n$, thanks to~\eqref{e:boundDisMapCont} there is nothing to argue. 
If instead, say, $(\sz,\sz_n)\in\CC^r_n\setminus\CC_n$ is such that $\sz_n\in\ST_n^{(r)}$ and $\sz'$ 
is as above, by~\eqref{e:newPoint}, we obtain 
\begin{equ}
|d(\sz,\sw)-d_n(\sz_n,\sw_n)|\leq d(\sz,\sz')+|d(\sz',\sw)-d_n(\sz_n,\sw_n)|< 2\eps
\end{equ}
and we can argue similarly if also $(\sw,\sw_n)\in\CC^r_n\setminus\CC_n$. The bound on the difference 
of the evaluation maps instead reads
\begin{equs}
\|M(\sz)-M_n(\sz_n)\|&\leq \|\delta_{\sz,\sz'} M\|+\|M(\sz')-M_n(\sz_n)\|\leq\|M\|_\alpha d(\sz,\sz')^\alpha +\eps\lesssim \eps^\alpha
\end{equs}
where we used the $\alpha$-H\"older continuity of $M$. 
Collecting the previous bounds, we immediately see that~\eqref{e:boundDisMap1} holds, 
so that by Lemma~\ref{l:Holder} $\uDelta^\com_\Sp(\zeta_n^{(r)}, \zeta^{(r)})$ 
converges to $0$ and the statement follows at once. }
\end{proof}

\begin{proposition}\label{p:Compactness}
Let $\alpha\in(0,1)$ and $A$ be an index set. A subset $\CA=\{\zeta_a=(\ST_a,\ast_a,d_a,M_a)\,:\,a\in A\}$ of $\T^\alpha_\Sp$ 
is relatively compact if and only if for every $r>0$ and $\eps>0$ there exist
\begin{enumerate}[noitemsep]
\item a finite integer $N(r;\eps)$ such that uniformly over all $a\in A$, 
\begin{equ}[e:EpsNet]
\cN_{d_a}(\ST_a^{(r)},\eps)\leq N(r;\eps)
\end{equ}
where $\cN_{d_a}(\ST_a^{(r)},\eps)$ is the cardinality of the minimal $\eps$-net in $\ST_a^{(r)}$ 
with respect to the metric $d_a$, 
\item a finite constant $C=C(r)>0$ and $\delta=\delta(r,\eps)>0$ such that
\begin{equation}\label{e:Equicont}
\sup_{a\in A}\|M_a\|_{\infty}^{(r)}\leq C\qquad\text{and}\qquad\sup_{a\in A}\delta^{-\alpha}\omega^{(r)}(M_a,\delta)<\eps\,,
\end{equation}
\item a finite constant $C'=C'(r)>0$ such that~\eqref{e:Comb} holds uniformly over $a\in A$. 
\end{enumerate}
\end{proposition}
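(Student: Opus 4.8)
The plan is to prove both directions of the characterisation, with the bulk of the work in the ``sufficiency'' direction where one must extract a convergent subsequence from an arbitrary sequence in $\CA$. Throughout I would work through the restriction maps $\zeta\mapsto\zeta^{(r)}$ of~\eqref{e:rRestr}, reducing (by the integral form of $\uDelta_\Sp$ in~\eqref{e:Metric} together with Lemma~\ref{l:CompactvsNonCompact} and Lemma~\ref{l:ContProp}) the relative compactness of $\CA$ in $\T^\alpha_\Sp$ to: (a) relative compactness of $\{\zeta_a^{(r)}\}_{a\in A}$ in $(\T^\alpha_\com,\Delta^\com_\Sp)$ for each fixed $r>0$, together with (b) a compactness control on the properness maps $\{b_{\zeta_a}\}$ in the $\RM1$-topology provided by condition~3, i.e.~\eqref{e:Comb}. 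The latter is exactly the hypothesis needed to feed Lemma~\ref{l:ContProp} and is handled by the standard characterisation of $\RM1$-relatively-compact sets of c\`adl\`ag functions (monotone, hence the only requirement is the uniform bound~\eqref{e:Comb}, see~\cite{Whitt}); so the crux is the compact case.

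For necessity, I would assume $\CA$ relatively compact and derive the three conditions. Condition~3 follows because $\zeta\mapsto b_\zeta$ is continuous (by definition of $\Delta_\Sp$) into the $\RM1$ space, so $\{b_{\zeta_a}\}$ is relatively compact there, forcing the uniform bound~\eqref{e:Comb} on each $[0,r]$; the argument of Lemma~\ref{l:CadlagPropMap} guarantees $b_\zeta(r)<\infty$ for the limit points. Conditions~1 and~2 are the usual Gromov--Hausdorff-type necessary conditions applied to the compact slices $\zeta_a^{(r)}$: if the $\eps$-entropy $\cN_{d_a}(\ST_a^{(r)},\eps)$ were unbounded, or if $\|M_a\|_\infty^{(r)}$ were unbounded, or if the H\"older moduli $\delta^{-\alpha}\omega^{(r)}(M_a,\delta)$ failed to be uniformly small, one could extract a sequence with no $\Delta_\Sp$-convergent subsequence (using that a $\Delta^\com_\Sp$-convergent sequence has, via the correspondences realising the infimum in~\eqref{e:MetricC}, uniformly bounded entropy and an equi-little-H\"older family of evaluation maps — this is where the third summand of~\eqref{e:MetC} does its job).

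For sufficiency, fix a sequence $\{\zeta_n\}\subset\CA$ and $r\in\N$. Conditions~1--2 say precisely that $\{\zeta_n^{(r)}\}_n$ is an equibounded family of compact pointed $\R$-trees with uniformly bounded covering numbers and equibounded, equi-little-$\alpha$-H\"older evaluation maps; a diagonal/Gromov-type precompactness argument (embed all $\ST_n^{(r)}$ into a common compact space, extract Hausdorff-convergent subsequences of the trees and, by Arzel\`a--Ascoli applied through the correspondences, of the maps $M_n^{(r)}$) yields a subsequence along which $\zeta_n^{(r)}$ converges in $\Delta^\com_\Sp$ — the little-H\"older hypothesis is exactly what upgrades mere $\alpha$-H\"older equicontinuity to convergence of the third term in~\eqref{e:MetC}, as in the separability argument of Proposition~\ref{p:MetricC}. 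Taking $r=1,2,3,\dots$ and diagonalising produces a single subsequence along which $\zeta_n^{(r)}$ converges for every integer $r$; one checks the limits are consistent (the $r$-ball of the $(r+1)$-limit equals the $r$-limit, using that restriction is continuous) and glues them into a pointed $\R$-tree $\zeta=(\ST,\ast,d,M)$, complete and locally compact by construction with $M$ locally little $\alpha$-H\"older. Finally condition~3 lets Lemma~\ref{l:ContProp} conclude that $M$ is proper (so $\zeta\in\T^\alpha_\Sp$) and that $d_\M(b_{\zeta_n},b_\zeta)\to0$; combined with $\uDelta^\com_\Sp(\zeta_n^{(r)},\zeta^{(r)})\to0$ for a.e.\ $r$ and dominated convergence in~\eqref{e:Metric}, this gives $\Delta_\Sp(\zeta_n,\zeta)\to0$.

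The main obstacle I anticipate is the passage in sufficiency from subsequential $\Delta^\com_\Sp$-convergence of every integer slice $\zeta_n^{(r)}$ to the existence of a bona fide limit spatial tree $\zeta$ whose slices are the $\zeta^{(r)}$: one has to realise all the (a priori independent) Gromov--Hausdorff limits simultaneously, reconcile the optimal correspondences across different radii $r$, and verify that the glued metric space is genuinely an $\R$-tree that is locally compact and complete with a globally well-defined little-H\"older proper map. This is where care is needed; I expect to handle it either by the explicit ``tree-gluing'' construction (taking $\ST=\overline{\bigcup_r \ST^{(r)}}$ with consistently defined segments, exactly as in the passage from the compact to the non-compact case promised in Lemma~\ref{l:CompactvsNonCompact}'s setting) or by a single embedding of all $\ST_n$ into a fixed $\sigma$-compact universal $\R$-tree and a one-shot Hausdorff-type extraction. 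Everything else is a routine assembly of Arzel\`a--Ascoli, the $\RM1$-compactness criterion for monotone c\`adl\`ag functions, and the lemmas already proved above.
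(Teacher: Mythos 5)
Your plan is correct in outline and the necessity direction is essentially the paper's argument (condition~1 via total boundedness of the slices as in~\cite[Proposition 7.4.12]{BBI}, condition~3 via the $\M$-compactness theory of~\cite{Whitt}, condition~2 by covering $\CA$ with finitely many balls and running the triangle inequality through a near-optimal correspondence, which is exactly what the third summand of~\eqref{e:MetC} is there for). The sufficiency direction, however, is organised differently from the paper, and the difference matters. You propose to extract a $\Delta^\com_\Sp$-convergent subsequence of the slices $\zeta_n^{(r)}$ separately for each integer $r$, diagonalise, and then glue the slice limits into one spatial tree, checking consistency ``using that restriction is continuous''. That last justification is not available: the paper only establishes (via~\cite[Lemma 3.3]{BCK}) that $r\mapsto\uDelta^\com_\Sp(\zeta^{(r)},\zeta'^{(r)})$ is c\`adl\`ag, and $\Delta_\Sp$-convergence only gives you control of the slice distances for an exceptional-set of radii through the integral in~\eqref{e:Metric}; at a fixed radius $r$ the limit of $\zeta_n^{(r)}$ need not a priori coincide with the $r$-ball of the limit of $\zeta_n^{(r+1)}$. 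This consistency can be repaired (using that $\R$-trees are length spaces, so $B_d(\ast,r]$ is within Hausdorff distance $\delta$ of $B_d(\ast,r-\delta]$, together with the uniform little-H\"older bound of condition~2 and a Lemma~\ref{l:Approx}-type estimate to control the H\"older summand of~\eqref{e:MetC}), but as written it is a gap, and it is precisely the point you flagged as the ``main obstacle''.

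The paper sidesteps the gluing problem altogether: following~\cite[Section 5]{ADH}, it selects for every $n$ a nested system of finite $2^{-k}$-nets of the balls $\ST_n^{(\ell 2^{-k})}$, with cardinalities bounded uniformly in $n$ by condition~1, and passes to a single subsequence along which \emph{all} mutual distances $d_n(\sz^n_u,\sz^n_{u'})$ and values $M_n(\sz^n_u)$ converge. The limit $\zeta=(\ST,\ast,d,M)$ is then defined once and for all as the completion of this abstract countable skeleton (an $\R$-tree, locally compact, by the cited ADH/CHK lemmas), with $M$ the little-H\"older extension guaranteed by condition~2, and convergence $\uDelta^\com_\Sp(\zeta_n^{(r)},\zeta^{(r)})\to 0$ is proved directly by the five-term triangle inequality~\eqref{e:Conv}, comparing each slice to its net via Lemma~\ref{l:Approx} and the nets to each other via the correspondence $\{(\sz_u,\sz^n_u)\}$; condition~3 then enters exactly as in your plan, through Lemma~\ref{l:ContProp}, to get properness of $M$ and $d_\M(b_{\zeta_n},b_\zeta)\to 0$. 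So the ingredients you list are the right ones, but to make your version complete you must either justify the cross-radius consistency along the lines above, or do as the paper does and build the candidate limit from a single multi-scale net extraction rather than from independent slice limits.
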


\begin{proof}
``$\Longleftarrow$'' Let $\{\zeta_n=(\ST_n,\ast_n,d_n,M_n)\}_n\subset \CA$ be a sequence satisfying the three properties above.

We want to extract a converging subsequence for $\{\zeta_n\}_n$ and construct the corresponding limit point. 
{The limit space is built as in~\cite[Sections 5.2.1 and 5.2.2]{ADH} so we briefly recall the construction 
and try to follow the notations therein as closely as possible.
For $r>0$, $\ell,\,k\in\N$ and any $n$, 
let $A_{r,k}(\ST_n)\eqdef \ST_n^{(r)}\setminus\ST_n^{(r-2^{-k})}$ 
and $\ell_k=\ell 2^{-k}$. By~\cite[Lemma 5.4]{ADH} and~\eqref{e:EpsNet}, 
there exists a $2^{-k-1}$-net of $A_{\ell_k,k}(\ST_n)$ with at most $N(\ell_k;2^{-k-2})$ elements, 
let it be $\cG^n_{\ell_k,k}$. 
Let $S^n_{\ell_k,k}$ be the union of $A_{\ell_k,k}(\ST_n)\cap \cG^n_{\ell_k,k}$ for $0\leq k'\leq k$, 
which is a $2^{-k-1}$-net for $A_{\ell_k,k}(\ST_n)$ whose cardinality is bounded above by 
$\tilde N(\ell_k;2^{-k-2})\eqdef\sum_{k'}N(\lceil\ell_k2^{-k'}\rceil2^{k'};2^{-k'-2})$. 
As in~\cite[eq. (5.3)]{ADH}, we write $S^n_{\ell_k,k}$ as
\begin{equ}
S^n_{\ell_k,k}\cup\{\ast_n\}\eqdef\{\sz^n_u\colon u=(k,\ell,i)\in U_{\ell_k,k}\}
\end{equ}
where $U_{\ell_k,k}\eqdef\{(k,\ell,i)\colon 0\leq i\leq  \tilde N(\ell_k;2^{-k-2})\}$ and 
we set $U$ to be the union of all the $U_{\ell_k,k}$ for $k,\ell\in\N$. 

Now, notice that, for every $u,u'\in U$, both the sequences $\{d^n(\sz^n_u,\sz^n_{u'})\}_n$ and $\{M_n(\sz^n_u)\}_n$ 
are bounded - the second claim following by the first condition in~\eqref{e:Equicont}. Hence, 
via a diagonal argument, upon passing to a subsequence, we can ensure that for every $u,u'\in U$ they 
converge.  
Let $\tilde \ST=\{\sz_u\}_{u\in U}$ be an abstract countable set 
and define a semimetric $d$ and a map $\tilde M$ on it, by imposing
\begin{equation}\label{e:MetricMapCom}
d(\sz_u,\sz_{u'})\eqdef\lim_{n\to\infty}d_n(\sz_u^n,\sz_{u'}^n)\qquad \text{and}\qquad \tilde M(\sz_u)\eqdef \lim_{n\to\infty}M_n(\sz_u^n)\;.
\end{equation}
We then set $\ST$ to be the metric space obtained by taking the completion of $\bar \ST$, 
$\bar \ST$ being the quotient space on $\tilde \ST$ in which points at distance $0$ are identified.
~\cite[Lemma 5.7]{ADH} ensures that $\ST$ is a length space, while we can see it is an $\R$-tree 
as the four point condition (see~\cite[Definition 3.9 and Theorem 3.40]{E08}) can be immediately 
shown to hold by~\eqref{e:MetricMapCom} and the fact it holds for each of the $\ST_n$'s. 
As in the first display in~\cite[Section 5.2.2]{ADH}, we set $U_{\ell_k,k}^+$ 
to be the union of $U_{j2^{-k},k}$ for $0\leq j\leq \ell$, and 
\begin{equ}[e:theNets]
S^{+}_{\ell_k,k}\eqdef\{\sz_u\colon u\in U_{\ell_k,k}^+\}\qquad \text{and}\qquad S^{n,+}_{\ell_k,k}\eqdef\{\sz_u^n\colon u\in U_{\ell_k,k}^+\}\,.
\end{equ}
\cite[Lemma 5.6]{ADH} ensures that, for every $\ell,k$, $S^{+}_{\ell_k,k}$ is a $2^{-k}$-net for $\ST^{(\ell_k)}$, 
which in particular implies that $\ST$ is locally compact.  
Moreover, by condition 2. and the second formula in~\eqref{e:MetricMapCom}, we have that 
$\tilde M$ is locally little $\alpha$-H\"older continuous so 
that we can set $M$ to be the 
unique locally little $\alpha$-H\"older continuous extension of $\tilde M$ to $\ST$. 

By condition 3. and Lemma~\ref{l:ContProp}, once we prove that $\uDelta_\Sp(\zeta_n,\zeta)$ converges to $0$, 
where $\zeta\eqdef(\ST,\ast,d,M)$ and $\ast\eqdef \sz_{(0,k,\ell)}$, we are done. 

To do so, let $r>0$ and $k\in\N$ be fixed and define $\ell\eqdef\lceil2^k r\rceil$ and $\eps\eqdef 2^{-k}$. 
Set $\zeta_n^{\ell,k}\eqdef (S^{n,+}_{\ell,k},\ast_n,d_n,M_n)$ and 
$\zeta^{\ell,k}\eqdef (S^+_{\ell,k},\ast,d,M)$. 
By the triangle inequality we have 
\begin{equs}[e:Conv]
\uDelta^\com_\Sp(\zeta^{(r)},\zeta^{(r)}_n)\leq& \uDelta^\com_\Sp(\zeta^{(r)},\zeta^{(\ell_k)})+\uDelta^\com_\Sp(\zeta^{(\ell_k)},\zeta^{\ell,k})+\uDelta^\com_\Sp(\zeta^{\ell,k},\zeta_n^{\ell,k})\\
&\quad+\uDelta^\com_\Sp(\zeta_n^{\ell,k},\zeta^{(\ell_k)}_n)+\uDelta^\com_\Sp(\zeta_n^{(\ell_k)},\zeta_n^{(r)})=:\sum_{i=1}^5 A_i\,.
\end{equs}
As we pointed out above $S^+_{\ell,k}$ and $S^{n,+}_{\ell,k}$ are $\eps$-nets for 
$\ST^{(\ell_k)}$ and $\ST^{(\ell_k)}_n$, respectively. Hence, by Lemma~\ref{l:Approx} and~\eqref{e:Equicont}, 
all the $A_i$'s, for $i\neq 3$, can be controlled by quantities 
which are vanishing as $k\to\infty$, so that we only need to focus on $A_3$. 
For this in turn, the second condition in~\eqref{e:Equicont} implies~\eqref{e:UnifModCont} 
while, upon choosing $\CC^r_n\eqdef\{(\sz^n_u,\sz_u)\colon u\in U_{\ell_k,k}^+\}$, 
we see that~\eqref{e:MetricMapCom} gives~\eqref{e:dis+M}. Hence, 
the assumptions of Lemma~\ref{l:Holder} hold, so that also $A_3\to0$. 

Since for every $r>0$, $\uDelta^\com_\Sp(\zeta^{(r)},\zeta^{(r)}_n)\to 0$, it follows that 
so does $\uDelta_\Sp(\zeta,\zeta_n)$ and the proof of ``$\Longleftarrow$'' is concluded.}

``$\Longrightarrow$'' Let $\cA$ be relatively compact in $\T^\alpha_\Sp$. Then, property~1.\ holds 
by{~\cite[Theorem 7.4.15]{BBI}}, while property~3.\ by~\cite[Theorem 12.12.2]{Whitt} 
{ on the necessary condition for a set to be compact in the strong $M1$ topology on the 
space of c\`adl\`ag functions}. For 
the second property, 
notice that since $\cA$ is totally bounded, 
for any $\eps>0$ and $r>0$ there exist $n \in \N$ and
$\{\zeta_k\,:\,k=1,\dots n\}$ such that $\cA$ is contained in the union of the balls of radius 
$e^{-r}\eps/4$ centred at $\zeta_k$. 
Hence, if $\zeta\in B(\zeta_k,e^{-r}\eps/4)$, then we have
\begin{equation}\label{e:BoundEps}
\uDelta^\com_\Sp(\zeta^{(r)},\zeta_k^{(r)}) < \tfrac{\eps}{4}
\end{equation} 
which implies that there exists a correspondence $\CC$ between $\ST^{(r)}$ and $\ST_k^{(r)}$ such that 
$\Delta^{\com,\CC}_{\Sp}(\zeta^{(r)},\zeta_k^{(r)})<\eps/4$. 
Since $\|M_\zeta\|^{(r)}_\infty\leq \eps/2 +\| M_{\zeta_k}\|^{(r)}_\infty$ by the triangle inequality, 
\begin{equation}\label{e:SupBound}
\sup_{\zeta\in\cA}\|M_\zeta\|^{(r)}_\infty\leq \tfrac{\eps}{4}+ \max_{k=1,\dots,n}\| M_{\zeta_k}\|^{(r)}_\infty\,,
\end{equation}
and the first bound in~\eqref{e:Equicont} follows. 
For the others, let $\delta>0$ and $\bar n\in\N$ the largest integer such that $2^{-\bar n}\leq\delta$. Then, 
\begin{equation}\label{e:HolderBound}
\begin{split}
&\sup_{n>\bar n}\,\,2^{n\alpha}\sup_{(\sz,\sz_k),(\sw,\sw_k)\in\CC}\|\psi_n(d(\sz,\sw))\delta_{\sz,\sw}M_\zeta-\psi_n(d_k(\sz_k,\sw_k))\delta_{\sz_k,\sw_k}M_{\zeta_k}\|\\
&\leq \sup_{n\in\N}\,\,2^{n\alpha}\sup_{(\sz,\sz_k),(\sw,\sw_k)\in\CC}\|\psi_n(d(\sz,\sw))\delta_{\sz,\sw}M_\zeta-\psi_n(d_k(\sz_k,\sw_k))\delta_{\sz_k,\sw_k}M_{\zeta_k}\|<\tfrac{\eps}{4}
\end{split}
\end{equation}
so that, once again, the second bound in~\eqref{e:Equicont} can be obtained by applying the triangle inequality 
and choosing the minimum $\delta$ for which 
$\sup_{k\leq n}\delta^{-\alpha}\omega^{(r)}(M_{\zeta_k},\delta)<\eps/2$\,. 
\end{proof}

\begin{proof}[of Theorem~\ref{thm:Metric}(ii)]
To prove completeness, it suffices to show that, if $\{\zeta_n\}_n$ is a Cauchy sequence in $\T^\alpha_\Sp$ 
then the conditions of Proposition~\ref{p:Compactness} are satisfied. 
Now, if $\{\zeta_n\}_n$ is Cauchy, then for every $r>0$, $\{\zeta_n^{(r)}\}_n$ is Cauchy with respect to 
$\Delta_\Sp^\com$, which implies that the sequence converges so that~1.\ holds in view of~\cite[Proposition 7.4.12]{BBI},~2.\ can be seen to be satisfied by arguing as in~\eqref{e:SupBound} and~\eqref{e:HolderBound}, and~3.\ follows by the fact that $D([-1,\infty),\R_+)$ is complete with respect to $d_\M$. 
\end{proof}

We conclude this section with a lemma that will be useful in the construction and characterisation of the Brownian Web. 
It guarantees that, 
under certain conditions, we can build an $\alpha$-spatial $\R$-tree inductively, by ``patching together'' pieces of branches.

\begin{lemma}\label{l:Emb}
Let $\alpha\in(0,1)$ and $\zeta_n=(\ST_n,\ast_n,d_n,M_n)$ be a relatively compact sequence in $\T^\alpha_\Sp$. 
Assume that for every $n < m\in\N$ there exists an isometric embedding 
$\iota_{n,m}$ of $\ST_n$ into $\ST_{m}$ such that $\iota_{n,m}(\ast_n)=\ast_{m}$, $\iota_{n,k} = \iota_{m,k} \circ \iota_{n,m}$ 
for $n<k<m$ and $M_{m}\circ\iota_{n,m}\equiv M_n$. 
Then, the sequence $\zeta_n$ converges to $\zeta=(\ST,\ast,d,M)$ and 
for every $n\in\N$ there exists an isometric embedding $\iota_n$ of $\ST_n$ into $\ST$ such that $\iota_{n}(\ast_n)=\ast$,
$\iota_n = \iota_{m}\circ \iota_{n,m}$ for $m>n$ and $M\circ\iota_n\equiv M_n$.
Moreover, $\tilde\ST\eqdef\bigcup_n\iota_n(\ST_n)$ is dense in $\ST$ and 
$M$ is the unique continuous extension of $\tilde M$ on $\tilde\ST$, 
the latter being defined by the relation $\tilde M\circ\iota_n\equiv M_n$ for all $n$. 
\end{lemma}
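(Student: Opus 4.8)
The plan is to build the limit tree $\ST$ directly as an abstract completion of the union of the $\ST_n$'s glued along the embeddings $\iota_{n,m}$, then identify it with the limit guaranteed by relative compactness. First I would form the direct limit $\tilde\ST_0\eqdef\bigsqcup_n\ST_n/\!\sim$, where $\sz\in\ST_n$ is identified with $\iota_{n,m}(\sz)\in\ST_m$ for all $m>n$; the compatibility relation $\iota_{n,k}=\iota_{m,k}\circ\iota_{n,m}$ makes this a genuine equivalence relation and gives canonical injections $\iota_n:\ST_n\to\tilde\ST_0$ with $\iota_n=\iota_m\circ\iota_{n,m}$. Since the $\iota_{n,m}$ are isometries fixing the base points, the formula $d(\iota_n(\sz),\iota_n(\sw))\eqdef d_n(\sz,\sw)$ (extended using that any two points lie in a common $\iota_m(\ST_m)$) is a well-defined metric on $\tilde\ST_0$, and $\tilde M$ defined by $\tilde M\circ\iota_n\equiv M_n$ is well-defined because $M_m\circ\iota_{n,m}\equiv M_n$. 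Let $(\ST,d)$ be the completion of $(\tilde\ST_0,d)$ and $\ast$ the common image of the $\ast_n$. One checks $(\ST,d)$ is an $\R$-tree: each $\iota_m(\ST_m)$ is a subtree, a nested union of $\R$-trees is an $\R$-tree, and completions of $\R$-trees are $\R$-trees by Theorem~\ref{thm:Rtrees}.

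Next I would use relative compactness of $\{\zeta_n\}$ to control $M$ and to get local compactness. By Proposition~\ref{p:Compactness}, for each $r$ the maps $M_n$ are uniformly bounded and uniformly (little) $\alpha$-H\"older on $\ST_n^{(r)}$, and the $\eps$-nets of $\ST_n^{(r)}$ have uniformly bounded cardinality. The H\"older bound passes to $\tilde M$ on $\tilde\ST_0$ and hence extends uniquely to a locally little $\alpha$-H\"older map $M:\ST\to\R^2$; properness of $M$ follows from the uniform bound $b_{\zeta_n}(r)\le C'(r)$ together with Lemma~\ref{l:ContProp}. The uniform $\eps$-net bound shows each closed ball $\ST^{(r)}$ is totally bounded, hence (being complete) compact, so $\ST$ is locally compact and $\zeta\eqdef(\ST,\ast,d,M)\in\T^\alpha_\Sp$. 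By construction $\tilde\ST=\bigcup_n\iota_n(\ST_n)=\tilde\ST_0$ is dense in $\ST$, $\iota_n$ is an isometric embedding with $\iota_n(\ast_n)=\ast$, $\iota_n=\iota_m\circ\iota_{n,m}$, and $M\circ\iota_n\equiv M_n$, with $M$ the unique continuous extension of $\tilde M$.

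It remains to show $\zeta_n\to\zeta$ in $\Delta_\Sp$. Fix $r>0$. The correspondence $\CC_n^{(r)}$ between $\ST_n^{(r)}$ and $\ST^{(r)}$ obtained by pairing $\sz$ with $\iota_n(\sz)$ (and fattening slightly so that it is a genuine correspondence between the closed balls, using that $\iota_n(\ST_n^{(r)})\subset\ST^{(r)}$ is $\eps_n$-dense in $\ST^{(r)}$ for some $\eps_n\to0$ — this last point is exactly where relative compactness enters, via the uniform net bound and the density of $\tilde\ST$) has vanishing distortion since $\iota_n$ is an isometry, and $M$ agrees with $M_n$ along it up to the $\eps_n$-perturbation, so by the uniform H\"older control the first two terms and the H\"older term in $\uDelta^{\com,\CC}_\Sp(\zeta_n^{(r)},\zeta^{(r)})$ all tend to $0$; dominated convergence in~\eqref{e:Metric} then gives $\uDelta_\Sp(\zeta_n,\zeta)\to0$. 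Combined with $d_\M(b_{\zeta_n},b_\zeta)\to0$ from Lemma~\ref{l:ContProp}, this yields $\Delta_\Sp(\zeta_n,\zeta)\to0$. The main obstacle is the bookkeeping in this last step: showing that $\iota_n(\ST_n^{(r)})$ is asymptotically dense in $\ST^{(r)}$ (equivalently, that no mass of the limit ball escapes the union), which requires combining the uniform $\eps$-net bounds of Proposition~\ref{p:Compactness} with the density of $\tilde\ST$ and a diagonal argument — conceptually routine but the point where care is needed.
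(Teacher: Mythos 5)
Your proposal is correct, but it takes a genuinely different route from the paper. The paper never builds the direct limit: it first shows the sequence itself is Cauchy (pushing an arbitrary correspondence $\CC_{n,k}$ through $\iota_{m,k}$ to get $\uDelta^\com_\Sp(\zeta_n,\zeta_m)\le\uDelta^\com_\Sp(\zeta_n,\zeta_k)$, then using relative compactness to extract a Cauchy subsequence), obtains the limit $\zeta$ abstractly from completeness of $(\T^\alpha_\Sp,\Delta_\Sp)$, and only afterwards reconstructs the embeddings $\iota_n$ into this abstract limit by a diagonal argument along optimal correspondences $\CC_m$; density of $\tilde\ST$ falls out at the very end. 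You instead take the completion of $\bigsqcup_n\ST_n/\!\sim$ as the candidate limit, which makes the $\iota_n$, their consistency, and $M\circ\iota_n\equiv M_n$ automatic, and then prove convergence directly; the price is exactly the point the paper's remark after the lemma flags, namely that relative compactness in $\T^\alpha_\Sp$ (defined via optimal correspondences) must be transferred to the canonically embedded images. Your plan does close this: the necessity direction of Proposition~\ref{p:Compactness} gives the uniform bound $\cN_{d_n}(\ST_n^{(r)},\eps)\le N(r,\eps)$, and since the sets $\iota_n(\ST_n^{(r)})$ are nested, any $2\eps$-separated subset of their union has cardinality at most $N(r,\eps)$, so the union is totally bounded, the balls $\ST^{(r)}$ are compact, and $\iota_n(\ST_n^{(r)})$ is eventually $\eps$-dense in $\ST^{(r)}$ (for the boundary of the ball use that $\ST$ is geodesic); the estimate on $\uDelta^{\com}_\Sp(\zeta_n^{(r)},\zeta^{(r)})$ is then literally Lemma~\ref{l:Approx} applied to $T=\iota_n(\ST_n^{(r)})$, since $(T,\ast,d,M\restr T)$ is isomorphic to $\zeta_n^{(r)}$, and little H\"older continuity makes the right-hand side of~\eqref{e:Approx} vanish. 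Two small points of bookkeeping: invoke Lemma~\ref{l:ContProp} (for properness of $M$ and $d_\M(b_{\zeta_n},b_\zeta)\to0$) only \emph{after} $\uDelta_\Sp(\zeta_n,\zeta)\to0$ is established, since that convergence is a hypothesis of the lemma (alternatively, properness follows directly from $b_{\zeta_n}(r)\le C'(r)$ and density of $\tilde\ST$); and note that your argument, unlike the paper's, also furnishes an a priori proof that the embedded sequence is Hausdorff--Cauchy in the common space, which is what the paper only observes a posteriori.
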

\begin{remark}
A similar statement was given in~\cite[Lemma 2.7]{EPW}. The formulation is a bit
different since we do not have a common ambient space and the trees we consider are 
spatial. One reason why we cannot directly reuse that result is that it is not clear a priori
that relative compactness in $\T^\alpha_\Sp$ implies relative compactness of the 
images in $\bigcup_n \ST_n / {\sim}$ with the natural equivalence relation induced
by the consistency maps $\iota_{m,n}$ {(and part of our proof consists of showing that this is indeed the case)}. 
This is because the optimal correspondence between
$\ST_n$ and $\ST_m$ may differ from the one given by $\iota_{m,n}$. Take for example
the trees $(\ST,\ast) = ([0,1],1/3)$ and $(\bar \ST, \bar\ast) = ([0,1/3], 1/3)$. Then, for the
natural correspondence $\CC$ suggested by our notation, one has $\dis\CC = 2/3$,
while the correspondence $\bar \CC$ mapping $x \in \bar \ST$ to $2/3-x \in \ST$
is also an isometric embedding but has $\dis \bar \CC = 1/3$. This shows that the condition in
\cite[Lemma 2.7]{EPW} assuming that the $\zeta_n$ are Cauchy as subsets of a common
space in the Hausdorff topology may a priori be stronger than the relative compactness
assumed here. (A posteriori it is not, as demonstrated by the fact that $\tilde \ST$ is
dense in $\ST$.)
\end{remark}

\begin{proof}
We will limit ourselves to the case of $\ST_n$ compact, the general case easily follows from 
the definition of the metric $\Delta_\Sp$. 

{We begin by constructing the limit space. 
Let $\tilde\ST = \big(\bigsqcup_n \ST_n\big)/{\sim}$, where $\sim$
is the smallest equivalence relation such that $\sz \sim \iota_{n,m}(\sz)$ for
every $n \le m$ and every $\sz\in \ST_n$,
and let $\ast\in\tilde \ST$ be the equivalence class containing all the $\ast_n$. 
We define $d$ on $\tilde\ST\times\tilde\ST$ by setting, for $\sz\in\ST_n$, $\sw\in\ST_m$ with $n\le m$, 
$d(\sz,\sw)\eqdef d_m(\iota_{n,m}(\sz),\sw)$, which is clearly a metric on $\tilde\ST$. 
For $n\in\N$, let $\iota_n\colon \ST_n\to\tilde\ST$ be the canonical embedding, which 
can be easily seen to be an isometry such $\iota_n = \iota_{m}\circ \iota_{n,m}$ for all $n<m$, 
and $\tilde\ST=\bigcup_n\iota_n(\ST_n)$. 
At last, let $\tilde M\colon \tilde \ST\to \R^2$ be the map defined as $\tilde M(\sz)\eqdef M_n(\iota_n^{-1}(\sz))$ 
for $\sz\in\iota_n(\ST_n)$, so that $\tilde M\circ\iota_n=M_n$. 

We will first show that $(\tilde\ST,\ast,d)$ is totally bounded and $\tilde M$ is little $\alpha$-H\"older continuous. 
For the first, recall that, since the sequence $\zeta_n$ is relatively compact in $\T^\alpha_\com$, 
by point 1. of Proposition~\ref{p:Compactness}, for every $\eps>0$ there exists $N(\eps)\in\N$ 
such that 
\begin{equ}[e:Compact]
\sup_n \cN_{d_n}(\ST_n,\eps)\leq N(\eps)\,.
\end{equ}
We now make the following claim.
\begin{enumerate}[label=(\Alph*)]
\setcounter{enumi}{2}
\item\label{i:Claim} for every $\eps>0$ there exists $n_\eps\in\N$ such that 
for all $n> n_\eps$ and $\sz\in\iota_n(\ST_n)$ there is $\sw\in\iota_{n_\eps}(\ST_{n_\eps})$ for which $d(\sz,\sw)<\eps$.
\end{enumerate}
To prove~\ref{i:Claim}, assume by contradiction that there exists $\bar\eps>0$ such that the claim fails, so that there
exists a sequence $\sz_n$ with $\sz_n \in\iota_n(\ST_n)$ and such that 
$d(\sz_i,\sz_j)>\bar\eps$ for all $i\neq j$. 
Setting $k=N(\bar\eps)+1$, this yields a $\bar\eps$-separated set for 
$\ST_{k}$ whose cardinality is greater than $N(\bar\eps)$, thus yielding a contradiction.

We now show that $\tilde\ST$ is relatively compact, i.e.\ that for every $\eps>0$ it has a finite $\eps$-net. 
Let $\eps>0$ be fixed, $n_{\eps}$ be as in~\ref{i:Claim}, and 
$S_{\eps}$ be an $\eps$-net for $\ST_{n_{\eps}}$, 
which can be chosen finite since $\ST_{n_{\eps}}$ is compact. 
It then follows from~\ref{i:Claim} that $\iota_{n_\eps}(S_{\eps})$ 
is a $2\eps$-net for $\tilde\ST$. 

To show the little H\"older continuity of $\tilde M$, let $\sz,\sw\in\tilde\ST$ be such that $d(\sz,\sw)\leq \delta$. 
Then, there exist $m,n$, with $n\leq m$, 
such that $\sz\in\iota_n(\ST_n)\subset \iota_m(\ST_m)$ and $\sw\in\iota_m(\ST_m)$, so that, in particular, 
$d_m(\iota_m^{-1}(\sz),\iota_m^{-1}(\sw))=d(\sz,\sw)\leq \delta$. Hence, 
\begin{equs}[e:EmbHolder]
\delta^{-\alpha}\|\tilde M(\sz)-\tilde M(\sw)\|&=\delta^{-\alpha}\|M_m(\iota_m^{-1}(\sz))-M_m(\iota_m^{-1}(\sw))\|\\
&\leq \delta^{-\alpha}\omega(M_m, \delta)\leq \delta^{-\alpha}\sup_m\omega(M_m, \delta)
\end{equs}
and, since $\{\zeta_n\}_n$ is relatively compact by assumption, 
the right hand side converges to $0$ as $\delta\to 0$ in view of point 2 of Proposition~\ref{p:Compactness}.

We are now ready to construct the limit point $\zeta$. 
Let $\ST$ be the completion of $\tilde \ST$ with respect to the metric $d$, $M$ the unique 
little H\"older continuous extension of $\tilde M$ to $\ST$, and set 
$\zeta\eqdef(\ST,\ast,d,M)$, which, by the above discussion, belongs to $\T^\com_\Sp$. 
It remains to show that the sequence $\{\zeta_n\}_n$ converges to $\zeta$. 
We apply Lemma~\ref{l:Holder}. Condition~\eqref{e:UnifModCont} is implied 
by~\eqref{e:EmbHolder}, so that we only need to find a correspondence for which~\eqref{e:dis+M} holds. 

For $n > 0$, we then set $\eps_n = \inf\{\eps>0\,:\, n_\eps < n\}$, with $n_\eps$ as
in~\ref{i:Claim}, and we define $\CC_n = \{(\sz,\sz')\in\ST_{n}\times \ST\,:\,
d(\iota_n(\sz),\sz') \le \eps_n\}$. This is a correspondence by the definition of $\eps_n$
and one has
 $\dis\CC_{n}\lesssim \eps_n$. 
The second part of~\eqref{e:dis+M} follows from \eqref{e:EmbHolder}, 
so that the proof is complete.}
\end{proof}

\subsection{Directed \TitleEquation{\R}{R}-trees and the radial map}\label{sec:CharTrees}

As mentioned in the introduction, we would like to view the backward Brownian Web as a flow. 
More specifically, at {\it any} time $t$ and position $x$, 
we want to be able to {\it follow} a backward Brownian trajectory starting at $x$ at time $t$. 
These trajectories will be encoded by the branches of our $\R$-tree and should not be allowed to cross. 

In the following definition we identify a subset of the space of $\alpha$-spatial $\R$-trees whose elements possess
a notion of direction in time and satisfy a {\it monotonicity} assumption, both imposed at the level of the evaluation map $M$. 
Henceforth we use the following shorthand notation. Given an $\R$-tree $\ST$, elements
$\sz_0, \sz_1 \in \ST$, and $s \in [0,1]$, we write $\sz_s$ for the unique element of $\llb\sz_0,\sz_1\rrb$ with
$d(\sz_0,\sz_s) = s\,d(\sz_0,\sz_1)$.  

\begin{definition}\label{def:CharTree}
For $\alpha\in(0,1)$, we define the space of {\it directed $\alpha$-spatial $\R$-trees}, $\Ch^\alpha_\Sp\subset \T^\alpha_\Sp$ consisting of those elements $\zeta=(\ST,\ast,d,M)$, 
whose evaluation map $M$ 
satisfies the following additional conditions\footnote{Recall Definition~\ref{def:SpTree} for the definitions of
$M_t$ and $M_x$.}.
\begin{enumerate}[noitemsep, label=(\arabic*)]
\item\label{i:Back} {\it Monotonicity in time.} For every $\sz_0,\sz_1\in\ST$ and $s \in [0,1]$ one has
\begin{equation}\label{e:Back}
M_t(\sz_s)=
\bigl(M_t(\sz_0)-d(\sz_0,\sz_s)\bigr) \vee \bigl(M_t(\sz_1)-d(\sz_s,\sz_1)\bigr)\;.
\end{equation}
\item\label{i:MonSpace} {\it Monotonicity in space.} Assuming (1) holds, for every $s< t$, interval $I = (a,b)$
and any four elements $\sz_0,\bar \sz_0, \sz_1, \bar \sz_1$ 
such that $M_t(\sz_0)=M_t(\bar \sz_0) = t$, $M_t(\sz_1)=M_t(\bar \sz_1) = s$, $M_x(\sz_0)< M_x(\bar \sz_0)$,
and $M(\llb\sz_0,\sz_1\rrb), M(\llb\bar \sz_0,\bar\sz_1\rrb) \subset [s,t] \times (a,b)$  
, we have
\begin{equation}\label{e:MonSpace}
M_x(\sz_r)\leq M_x(\bar \sz_{r})
\end{equation}
for every $r \in [0,1]$.
\item\label{i:Spread} There exists an isometry $\iota_* \colon \R_+ \to \ST$ such that $\iota_\ast(0) = \ast$ and
\begin{equ}[e:Spread]
M_t(\iota_\ast(s)) = M_t(\ast)-s\,.
\end{equ}
\end{enumerate}
Note that \ref{i:MonSpace} also makes sense in the periodic case if we restrict to 
intervals $(a,b)$ that do not wrap around the whole torus.
\end{definition}

\begin{remark}\label{rem:CharTree}
The first condition guarantees that geodesics are $\vee$-shaped and that the ``time'' coordinate moves at unit speed. 
Together with the first, the second condition enforces the statement that ``characteristics cannot cross''. 
They are still allowed (and forced, 
in our case) to coalesce but their spatial order must be preserved.
The last requirement says that the tree is oriented and has a direction 
which corresponds to the direction of time, i.e. the characteristics move indeed backward in time. 
\end{remark}

\begin{remark}\label{rem:CharTreeFor}
We denote by $\hat\Ch^\alpha_\Sp$ the subspace of $\T^\alpha_\Sp$ defined in exactly the same way but
with $\vee$ replaced by $\wedge$ in~\ref{i:Back} and in~\ref{i:Spread} 
the isometry $\iota_\ast$ such that $\iota_\ast(0) = \ast$ and $M_t(\iota_\ast(s)) = s-M_t(\ast)$ for $s\geq  M_t(\ast)$. 
Note that $\zeta=(\ST,\ast,d,M)\mapsto -\zeta\eqdef(\ST,\ast,d,-M)\in\hat\Ch^\alpha_\Sp$ is an isometric involution. 
\end{remark}


First notice that it is not difficult to show that the properties in the previous definition 
are consistent with the equivalence relation in Definition~\ref{def:SpTree}, 
i.e. if there exists a bijective isometry $\varphi$ such that $\varphi\circ\zeta=\zeta'$ and 
$\zeta$ satisfies the conditions above then so does $\zeta'$. 
In other words, the space $\Ch^\alpha_\Sp$ is a well-defined subset of $\T^\alpha_\Sp$. 
In the next proposition, we study important properties of $\Ch^\alpha_\Sp$. 

\begin{proposition}\label{p:EndProp}
Let $\alpha\in(0,1)$. Then, 
\begin{enumerate}[noitemsep]
\item the set $\Ch^\alpha_\Sp$ is closed in $\T^\alpha_\Sp$, 
\item any $\zeta=(\ST,\ast,d,M)\in\Ch^\alpha_\Sp$ is such that
$\ST$ has a unique open end $\dagger$ which satisfies 
\begin{equ}[e:Ray]
M_t(\sw)=M_t(\sz)-d(\sz,\sw)\;.
\end{equ}
for all $\sz\in\ST$ and $\sw\in\llb\sz,\dagger\rangle$. 
\end{enumerate}
\end{proposition}
\begin{proof}
We first prove that $\Ch^\alpha_\Sp$ is closed. 
Let $\{\zeta^n\}_n \subset \Ch^\alpha_\Sp$ be a sequence converging to $\zeta\in\T^\alpha_\Sp$. 
Since $\zeta$ is directed if and only if, for every $R>0$, $\zeta^{(R)}$ is monotone 
and~\ref{i:Spread} holds for every $s\in[0,R]$, and since 
$\uDelta^\com_\Sp(\zeta^{n,(R)},\zeta^{(R)})\to 0$ for every $R > 0$,
it suffices to restrict to the compact case. 

We start with monotonicity in time.
Take $\sz_0, \sz_1 \in \ST$, let $\CC_n$ be a sequence of correspondences such that 
$\lim_n \uDelta^{\com,\CC_n}_\Sp(\zeta^n,\zeta)\to 0$ and let $\sz_i^n$ be such that 
$(\sz_i^n, \sz_i)\in \CC_n$. 
For any $s \in [0,1]$, {let $\sz_s\in\ST$ and $\sz_s^n$ be defined as above.  
For every $n$, let $\bar\sz_s(n)\in\CC_n$ be a point for which $(\sz_s^n, \sz_s(n))\in\CC_n$. Then, 
by the triangle inequality and the definition of correspondence, $\sz_s(n)$ belongs to 
a compact ball centred at, say, $\sz_0$. Hence, 
the sequence $\{\sz_s(n)\}_n$ is precompact.  
Now, for any limit point $\bar\sz_s\in\ST$ we have
\begin{equs}
|d(\sz_i,\bar\sz_s)-d(\sz_i,\sz_s)|\leq& |d(\sz_i,\bar\sz_s)-d(\sz_i,\sz_s(n))|+\\
&|d(\sz_i,\sz_s(n))-d^n(\sz_i^n, \sz_s^n)|+|d^n(\sz_i^n, \sz_s^n)-d(\sz_i,\sz_s)|\\
=&|d(\sz_i,\bar\sz_s)-d(\sz_i,\sz_s(n))|+\\
&|d(\sz_i,\sz_s(n))-d^n(\sz_i^n, \sz_s^n)|+f_i(s)|d^n(\sz_i^n, \sz_1^n)-d(\sz_i,\sz_1)|
\end{equs}
where $f_0(s)=s$ while $f_1(s)=1-s$ and in the last step we used the definition of $\sz_s$ and $\sz_s^n$. 
Now, all the terms at the right hand side are converging to $0$, which implies that 
$\bar\sz_s$ must be such that $d(\sz_i,\bar\sz_s)=f_i(s)d(\sz_0,\sz_1)$. 
The point $\bar\sz_s$ is therefore unique and given by $\sz_s$, so that the sequence $\{\sz_s(n)\}_n$ 
converges to $\sz_s$. 
It then follows that} 
%
%
%
%
\begin{equ}
M_t(\sz_s) = \lim_{n \to \infty} M_t(\bar \sz_s(n)) = \lim_{n \to \infty} M_t^n(\sz_s^n)\;.
\end{equ}
Since furthermore $\lim_{n \to \infty} d^n(\sz_0^n,\sz_1^n) = d(\sz_0,\sz_1)$ and 
$\lim_{n \to \infty} M_t^n(\sz_i^n) = M_t(\sz_i)$
by the definition of $\uDelta^{\com,\CC_n}_\Sp$, the claim follows.

Regarding monotonicity in space, we perform the same construction, whence we get
\begin{equ}
M_x(\sz_s) = \lim_{n \to \infty} M_t^n(\sz_s^n) \le \lim_{n \to \infty} M_t^n(\sz_{s'}^n)
=  M_x(\sz_{s'})\;,
\end{equ}
as required.

For the last property, let $s\in[0,R]$ and $\sz_s^n\in\ST$ be such that 
$(\iota_\ast^n(s),\sz_s(n))\in\CC^n$. Then, arguing as above, there exists $\sz_s\in\ST$ 
such that {$d(\sz_s(n), \sz_s)$ converges to $0$ as $n\to\infty$}, which, as $(\ast^n,\ast)\in\CC^n$, satisfies 
\begin{equ}
d(\ast,\sz_s)=\lim_{n\to\infty}d(\ast,\sz_s(n))=\lim_{n\to\infty} d^n(\ast^n,\iota_\ast^n(s))=s\,.
\end{equ}
Further, by continuity of $M$, $M_t(\sz_s(n))$ and $M_t(\ast^n)$ respectively converge to 
$M_t(\sz_s)$ and $M_t(\ast)$, we also have 
\begin{equ}
M_t(\sz_s)=M_t(\ast)-s\,.
\end{equ}
Therefore, upon defining the map $\iota_\ast(s)\eqdef \sz_s$, we immediately have that $\iota_\ast$ 
is an isometry and satisfies~\eqref{e:Spread}.

We now move to the second part of the statement. The third property in Definition~\ref{def:CharTree} implies 
that any directed $\R$-tree $\zeta=(\ST,\ast,d,M)$ is unbounded,
since $M$ is continuous and $\ST$ is complete. Therefore, $\ST$ must have at least one unbounded open end. 
{By property~\ref{i:Spread}, the isometry $\iota_\ast$ is such that $L_\ast\eqdef\iota_\ast(\R_+)$ 
is an unbounded ray in $\ST$. As ends are equivalence classes of rays, let $\dagger$ be the unique (necessarily) 
open end such that $L_\ast=\llb\ast,\dagger\rangle$. Now,~\eqref{e:Spread} implies that}
~\eqref{e:Ray} holds for $\sz=\ast$ and any $\sw\in\llb\ast,\dagger\rangle$. 
It then suffices to apply the monotonicity in time, i.e. property~\ref{i:Back}, to see that it must hold for any $\sz\in\ST$.
%
%
%
%
\end{proof}

Thanks to the previous proposition, we can introduce, in the context of directed trees, the {\it radial map}. This is a map
on the $\R$-tree that allows to move along the rays. 

\begin{definition}\label{def:RadMap}
Let $\alpha\in(0,1]$, $\zeta=(\ST,\ast,d,M)\in\Ch^\alpha_\Sp$ and $\dagger$ the open end with unbounded 
rays such that~\eqref{e:Ray} holds. The {\it radial map} $\rho \colon \ST \times \R \to \ST$ associated to $\zeta$ is uniquely defined by postulating that 
\begin{equ}\label{e:RadMap}
\rho(\sz,s) \in \llb\sz,\dagger\rangle\;,\qquad
M_t(\rho(\sz,s)) = s \wedge M_t(\sz)\;.
\end{equ}
If instead $\zeta\in\hat\Ch^\alpha_\Sp$, the radial map $\hat\rho$ 
is defined in the same way but with $\vee$ instead of $\wedge$.
\end{definition}

%

\subsection{Alternative topologies}\label{s:Topo}

Before detailing our alternative construction of the Brownian Web, 
we show how the topology introduced above relates to that of~\cite{FINR}. 
To describe the latter, let first $\R^2_\com$ be the completion of $\R^2$ with respect to the metric {$\rho$ 
in~\cite[eq. (6.1)]{SSS} given by} 
\begin{equ}
\rho((t_1,x_1),(t_2,x_2))\eqdef |\tanh(t_1)-\tanh(t_2)|\vee\Big|\frac{\tanh(x_1)}{1+|t_1|}-\frac{\tanh(x_2)}{1+|t_2|}\Big|
\end{equ}
for all $(t_1,x_1),\,(t_2,x_2)\in\R^2$. (See \cite[Fig.~3]{BWKill} for a cartoon illustrating the geometry of the
resulting compactification of $\R^2$.)
A backward path $\pi$ in $\R^2_\com$ with starting time $\sigma_\pi\in[-\infty,\infty]$ is a continuous map 
$\R \ni t\mapsto(t,\pi(t))\in \R^2_\com$ with $\pi(t) = \pi(\sigma_\pi)$ for all $t \ge \sigma_\pi$. 
We define a metric $d$ on the space $\Pi$ of such paths by 
\begin{equ}[e:TopoCom]
d_\Pi(\pi_1,\pi_2)\eqdef |\tanh(\sigma_{\pi_1})-\tanh(\sigma_{\pi_2})|\vee\sup_{t\leq \sigma_{\pi_1}\wedge\sigma_{\pi_2}}\Big|\frac{\tanh(\pi_1(t))}{1+|t|}-\frac{\tanh(\pi_2(t))}{1+|t|}\Big|
\end{equ}
for all $\pi_1,\pi_2\in\Pi$. Since $(\Pi,d_\Pi)$ is a Polish space, so is the space $\cH$ of compact 
subsets of $\Pi$ endowed with the Hausdorff metric {(see~\cite[Lemma B.2]{SSS2010})}. 

Let $\alpha\in(0,1)$, $\zeta=(\ST,\ast,d,M)\in\Ch_\Sp^\alpha$, 
and $\rho$, $\zeta$'s radial map defined according to~\eqref{e:RadMap}. For $\sz\in\ST$, define
\begin{equ}[e:CompPath]
\pi_\sz(t)\eqdef M_x(\rho(\sz,t))\,,\qquad \text{for all $t\leq M_t(\sz)$.}
\end{equ}
Since $\pi_\sz\in\Pi$ by continuity of $M$, we have a map
\begin{equ}[e:CompZeta]
\Ch^\alpha_\Sp \ni \zeta \mapsto K(\zeta) \eqdef {\overline{\{\pi_\sz\,:\,\sz\in\ST\}}} \subset \Pi\;,
\end{equ}
where the bar denotes closure with respect to the metric in~\eqref{e:TopoCom}. 

\begin{proposition}\label{p:MapTopo}
Let $\alpha\in(0,1)$. For every $\zeta\in\Ch_\Sp^\alpha$, $K(\zeta)$ is compact and the map 
$\zeta\mapsto K(\zeta)$ is continuous from $\Ch_\Sp^\alpha$ to $\cH$. 
\end{proposition}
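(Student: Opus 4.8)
The plan is to prove the two assertions separately, writing $\Phi_\zeta\colon\ST\to\Pi$, $\sz\mapsto\pi_\sz$, for the map of~\eqref{e:CompZeta} and using throughout Proposition~\ref{p:EndProp}: $\rho(\sz,\cdot)$ traverses the ray $\llb\sz,\dagger\rangle$ at unit $d$-speed with $M_t(\rho(\sz,t))=t$, so that $\sigma_{\pi_\sz}=M_t(\sz)$ and $\pi_\sz,\pi_\sw$ coincide on $(-\infty,M_t(\sw')]$ whenever $\llb\sz,\dagger\rangle\cap\llb\sw,\dagger\rangle=\llb\sw',\dagger\rangle$. The first step is to show $\Phi_\zeta$ is continuous: if $\sz_n\to\sz$ in $\ST$, take $\sw_n$ the median of $\sz,\sz_n,\dagger$; since $\sw_n\in\llb\sz,\sz_n\rrb$ one gets $d(\sz,\sw_n)\le d(\sz,\sz_n)\to0$ and $M_t(\sw_n)=M_t(\sz)-d(\sz,\sw_n)\to M_t(\sz)$ by~\eqref{e:Ray}; the paths agree for $t\le M_t(\sw_n)$, while for $M_t(\sw_n)<t\le M_t(\sz_n)\wedge M_t(\sz)$ one has $d(\rho(\sz_n,t),\rho(\sz,t))=2(t-M_t(\sw_n))\le2d(\sz,\sz_n)$, so local little $\alpha$-H\"older continuity of $M$ on a ball $B_d(\ast,r_0]$ containing the tail of $(\sz_n)$ bounds $|\pi_{\sz_n}(t)-\pi_\sz(t)|$ by $\|M\|^{(r_0)}_\alpha(2d(\sz,\sz_n))^\alpha$ there; since $\tfrac1{1+|t|}\le1$ and $M_t(\sz_n)\to M_t(\sz)$, this gives $d(\pi_{\sz_n},\pi_\sz)\to0$.

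For compactness of $K(\zeta)$ the key is that $\Phi_\zeta(\ST)$ is uniformly close, near both ``ends of time'', to degenerate paths, which reduces the problem to a compact core supplied by properness of $M$. Indeed, $\{\pi\in\Pi:\sigma_\pi\le-N\}$ has $d$-diameter $\le(1-\tanh N)\vee\tfrac2{1+N}\to0$ because the supremum in~\eqref{e:TopoCom} then runs over $t\le-N$, where the weight is $\le\tfrac1{1+N}$; and if $M_t(\sz)>T$ then $\pi_\sz$ and $\pi_{\rho(\sz,T)}$ agree for $t\le T$, so $d(\pi_\sz,\pi_{\rho(\sz,T)})\le1-\tanh T$ while $M_t(\rho(\sz,T))=T$. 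Hence, given $\eps>0$, after fixing $N,T$ it suffices to cover $\{\pi_\sz:M_t(\sz)\in[-N,T]\}$ by finitely many $\eps$-balls. Since $M$ is proper, $M^{-1}([-N,T]\times[-R,R])$ is compact, so its image under the continuous map $\Phi_\zeta$ is totally bounded; and for $\sz$ with $|M_x(\sz)|>R$, the damping $\tfrac{\tanh(\cdot)}{1+|t|}$ of the metric of $\R^2_\com$ at large spatial values, together with properness and the non-crossing encoded in~\ref{i:MonSpace}, forces such paths to lie within $O(\tfrac1R)$ of one of the two ``boundary'' paths along $x=\pm\infty$ from the same starting time (truncated again in $M_t(\sz)$), so they too cluster into finitely many balls. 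Assembling the reductions gives total boundedness of $K(\zeta)$, whence (with a short check of accumulation points) compactness via completeness of $(\Pi,d)$. An equivalent, arguably cleaner, route is to extend $\Phi_\zeta$ to a continuous map on the end compactification of $\ST$, which is compact because $\ST$ is proper by Theorem~\ref{thm:Rtrees}, and to read $K(\zeta)$ off as its image.

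For the continuity of $\zeta\mapsto K(\zeta)$, let $\zeta_n\to\zeta$ in $\Ch^\alpha_\Sp$ and fix $\eps>0$. The set $\{\zeta_n\}\cup\{\zeta\}$ is relatively compact, so~\eqref{e:Comb} holds uniformly along it; combined with the two time-truncations above this reduces the matching of $K(\zeta_n)$ and $K(\zeta)$ to paths started from $B_{d_n}(\ast_n,r]$ and $B_d(\ast,r]$ for $r$ large. Picking correspondences $\CC_n$ with $\uDelta^{\com,\CC_n}_\Sp(\zeta_n^{(r)},\zeta^{(r)})\to0$ and using that the rays $\llb\,\cdot\,,\dagger\rangle$ are characterised by~\eqref{e:Ray} (the direction of steepest decrease of $M_t$), which is stable under $\dis\CC_n\to0$ and the convergence of the $M$-increments, one shows that for $(\sz,\sz_n)\in\CC_n$ the points $\rho_n(\sz_n,t)$ and $\rho(\sz,t)$ are $o(1)$-close on bounded time windows, hence $d(\pi_{\sz_n},\pi_\sz)\to0$, and symmetrically for points of $\ST_n$; so $d_H(K(\zeta_n),K(\zeta))\to0$. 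I expect the genuinely delicate point, in both parts, to be precisely the large-$|M_x|$ regime: because $\|M\|^{(r)}_\alpha$ is not uniform in $r$, the family $\{\pi_\sz\}_\sz$ is not equicontinuous as $\R^2$-valued paths, and the needed relative compactness has to be wrung out of the interplay of the compactification $\R^2_\com$ (which compresses large spatial values) with properness of $M$ (which ties large $M_x$ to large $d$-balls) and the monotonicity of characteristic trees; I would isolate this as a separate lemma giving a uniform modulus for $\sz\mapsto\bigl(M_t(\sz),\tfrac{\tanh M_x(\sz)}{1+|M_t(\sz)|}\bigr)$ along rays, and invoke it on both sides.
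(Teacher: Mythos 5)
Your overall skeleton matches the paper's: continuity of $\sz\mapsto\pi_\sz$, truncation in time using the bounds \eqref{e:boundsCompact}, properness plus local H\"older to produce a compact core, and a ray-comparison under correspondences with small distortion (your ``stability of \eqref{e:Ray}'' step is in substance the paper's Lemma~\ref{l:VicPath}). The genuine gap sits exactly where you suspected, and your proposed patch does not close it. For compactness, your claim that every path with $|M_x(\sz)|>R$ lies within $O(1/R)$ of a boundary path ``from the same starting time'' is false: since $\|M\|^{(r)}_\alpha$ is not uniform in $r$, a ray starting at a huge spatial coordinate may re-enter $[-R,R]$ at moderate times, and then it is neither close to a degenerate path nor does its starting point lie in your compact core $M^{-1}([-N,T]\times[-R,R])$. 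Spatial monotonicity~\ref{i:MonSpace} is not the relevant mechanism here (the paper's proof never uses it). The correct dichotomy is by whether the \emph{path} ever enters the box $[-R,R]^2$, not by where it starts: if it does, properness plus the tree structure (a point of $M^{-1}(\Lambda_R)$ lies in $B_d(\ast,b_\zeta(R)]$, and the ray through it stays in a comparable ball on the relevant time window) puts the time-stopped path into an equicontinuous family; if it does not, it is $O(1/R)$-close to one of finitely many degenerate paths. This is precisely the content of the paper's Lemma~\ref{l:MapTopo} with the stopped paths $\pi^R$. (Both you and the paper are brief about closedness of $K(\zeta)$, so I do not count that against you.)

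The same gap reappears, untreated, in your continuity argument: uniform properness \eqref{e:Comb} together with the time-truncations does \emph{not} reduce the Hausdorff matching to paths started in $B_{d_n}(\ast_n,r]$ and $B_d(\ast,r]$, because $K(\zeta_n)$ contains paths started at arbitrarily large spatial positions with moderate times, outside every fixed ball; and since the matching you attempt is two-sided, you must show that each such path of $K(\zeta_n)$ is close to \emph{some} path of $K(\zeta)$, which requires a separate argument at spatial infinity. The paper avoids this altogether by a structurally different device: the uniform bounds of Proposition~\ref{p:Compactness} give relative compactness of $\bigcup_n K(\zeta_n)$ in $\Pi$, hence of $\{K(\zeta_n)\}_n$ in $\cH$ (\cite[Lemma B.3]{SSS2010}), and then \cite[Lemma B.1]{SSS2010} reduces convergence in $\cH$ to the \emph{one-sided} statement that every $\pi_\sz\in K(\zeta)$ is approximated by some $\pi_{\sz_n}\in K(\zeta_n)$ --- which is exactly what the ray comparison of Lemma~\ref{l:VicPath}, applied to a correspondence $\CC_n$ on large balls, delivers. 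So either import that one-sided criterion, or supply the missing spatial-infinity lemma you allude to at the end; as written, both halves of your argument leave that regime open.
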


\begin{remark}\label{rem:MapTopoFor}
Defining $\hat \Pi$ and $\hat \cH$ in the same way, except that 
now $\pi(t) = \pi(\sigma_\pi)$ for all $t \le \sigma_\pi$ and $\le$ is replaced by $\ge$ in
the right-hand side of \eqref{e:TopoCom}, we also have a map
$\hat K \colon \hat\Ch_\Sp^\alpha \to \hat \cH$ given by $\hat K(\zeta) = -K(-\zeta)$.
\end{remark}

For the proof of the previous proposition we will need the following two lemmas. For the first, define
\begin{equ}
\Pi^R\eqdef \{\pi\in \Pi\,:\,\exists\,\,t\leq\sigma_\pi\text{ s.t. } (t,\pi(t))\in [-R,R]^2\}\;,
\end{equ}
and, for $\pi \in \Pi$, write $\pi^R \in \Pi$ for the stopped path such that
\begin{equ}
\sigma_{\pi^R} = \sigma_\pi\;,\qquad \pi^R(t) = 
\left\{\begin{array}{cl}
	\pi(R) & \text{if $t \ge R$,} \\
	\pi(-R) & \text{if $t \le -R$,} \\
	\pi(t) & \text{otherwise.}
\end{array}\right.
\end{equ}

\begin{lemma}\label{l:MapTopo}
Let $\sK$ be a subset of $\Pi$ and, for $R>0$, let $\sK^R \subset \Pi$ be defined as
\begin{equ}[e:CompRestr]
\sK^R\eqdef\{\pi^R\,:\, \pi\in \sK \cap \Pi^R\}\,.
\end{equ}
If for all $R>0$, the family of paths in $\sK^R$ is equicontinuous then $\sK$ is relatively compact. 
\end{lemma}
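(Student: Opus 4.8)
The plan is to use the standard criterion that a subset of a Polish space is relatively compact if and only if it is totally bounded, together with a diagonal/exhaustion argument over the scales $R \in \N$. Recall that $\Pi$ is Polish with the metric $d$ in~\eqref{e:TopoCom}, so it suffices to show that $\sK$ is totally bounded. The key observation is that the metric $d$ only ``sees'' a path $\pi$ through its behaviour on $(-\infty,\sigma_\pi]$, and, because of the $\tanh$ and the $1/(1+|t|)$ factors, the contribution to $d(\pi_1,\pi_2)$ coming from times $|t| > R$ is uniformly small once $R$ is large: for $t$ large the factor $\tanh(\pi_i(t))/(1+|t|)$ is $O(1/R)$ regardless of $\pi_i$, and similarly for the starting-time term $|\tanh(\sigma_{\pi_1}) - \tanh(\sigma_{\pi_2})|$ when both starting times exceed $R$ or are below $-R$. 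Hence $d(\pi,\pi^R) \to 0$ uniformly over $\pi \in \Pi$ as $R \to \infty$, with a rate depending only on $R$.

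\textbf{Step 1: reduce to the stopped paths.} Fix $\eps>0$ and choose $R$ so large that $d(\pi,\pi^R) < \eps/3$ for every $\pi\in\Pi$ (this uses only the decay of the weights in~\eqref{e:TopoCom}, not any property of $\sK$). Then an $\eps/3$-net for $\{\pi^R : \pi \in \sK\}$ in the $d$-metric yields an $\eps$-net for $\sK$. Note that paths $\pi \in \sK$ with $\pi \notin \Pi^R$ still have a well-defined stopping $\pi^R$; but in fact for such paths the trajectory never enters $[-R,R]^2$, and one checks that the set of all such $\pi^R$ is itself $d$-totally bounded for a possibly larger $R'$ — alternatively, and more cleanly, one enlarges $R$ further so that these paths are all within $\eps/3$ of a fixed small finite collection (their relevant part lies near the ``boundary at infinity'' of $\R^2_\com$). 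So without loss of generality we may restrict attention to $\sK^R = \{\pi^R : \pi \in \sK \cap \Pi^R\}$.

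\textbf{Step 2: apply Arzelà--Ascoli to $\sK^R$.} By hypothesis the family $\sK^R$ is equicontinuous, and every $\pi^R \in \sK^R$ is constant outside $[-R,R]$ with value $\pi^R(\pm R) = \pi(\pm R)$; since $\pi \in \Pi^R$, the trajectory passes through $[-R,R]^2$, and combining this with equicontinuity on the compact interval $[-R,R]$ gives a uniform bound $\sup_{\pi^R \in \sK^R}\sup_{t}|\pi^R(t)| \le C(R)$. Likewise the starting times $\sigma_\pi$ of such paths, while possibly infinite, contribute to $d$ only through $\tanh(\sigma_\pi) \in [-1,1]$, a precompact set. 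Thus $\sK^R$ is a uniformly bounded, equicontinuous family of functions on the compact interval $[-R,R]$ (extended by constants), with precompact starting-time data; by the Arzelà--Ascoli theorem it is relatively compact in the topology induced by $d$, hence totally bounded. Producing a finite $\eps/3$-net for it completes the argument.

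\textbf{Main obstacle.} The routine part is Arzelà--Ascoli; the part requiring care is Step 1 — precisely, handling paths in $\sK$ whose trajectory never meets $[-R,R]^2$ and/or whose starting time is $\pm\infty$, since for these $\pi^R$ need not be close to any ``honest'' path and equicontinuity is not assumed. The clean way around this is to note that the definition of $d$ forces any such path to be $d$-close, uniformly, to one of finitely many ``boundary'' configurations once $R$ is taken large: the weight $1/(1+|t|)$ crushes the spatial coordinate at large $|t|$, so the only information surviving is a coarse description of where the trajectory escapes to infinity, and there are only finitely many such regions to resolution $\eps$. Making this last point precise — essentially re-deriving the relevant features of the FINR compactification $\R^2_\com$ — is where the real work lies; everything else is bookkeeping with the exhaustion $R = 1, 2, 3, \dots$ and a standard diagonal extraction if one prefers to argue via sequences rather than total boundedness.
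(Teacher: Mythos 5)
Your proposal is correct and follows essentially the same route as the paper's proof: reduce to total boundedness, use the uniform bound $d(\pi,\pi^R)\lesssim 1/R$ to pass to stopped paths, apply Arzel\`a--Ascoli to the equicontinuous family $\sK^R$, and absorb the paths never meeting $[-R,R]^2$ into finitely many balls around ``boundary'' configurations. The step you defer as the main obstacle is handled in the paper by the elementary bounds~\eqref{e:boundsCompact}: a path avoiding $[-R,R]^2$ must, by continuity, stay on one spatial side on $[-R,\sigma_\pi\wedge R]$, hence lies within $2/R$ of $\pi^{+}_{\sigma_\pi}$ or $\pi^{-}_{\sigma_\pi}$, and these in turn are covered by finitely many $\eps$-balls centred at $\pi^{\pm}_t$ with $t\in\eps\Z\cap[-4\eps^{-1},4\eps^{-1}]$.
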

\begin{proof}
Our main ingredient then is the fact that, since $|1-\tanh R|\le e^{-R}$, one has the bounds
\begin{equs}[3]
x &\ge R &&\Rightarrow& \rho((t,x),(t,\infty)) &\le e^{-R} \quad\forall t\;,\\
x &\le -R &\quad&\Rightarrow&\quad \rho((t,x),(t,-\infty)) &\le e^{-R}\quad\forall t\;, \label{e:boundsCompact}\\
|t| &\ge R &&\Rightarrow& \rho((t,x),(t,y)) &\le {2\over R}\quad\forall x,y\;.
\end{equs}
Writing $\pi^\pm_t$ for the path with $\sigma_{\pi^\pm_t} = t$ and $\pi^\pm_t(s) = \pm \infty$, it follows that
for every $\pi \in \Pi$ and every $R\ge 1$
one has $d_\Pi(\pi,\pi^R) \le 2/R$. If furthermore
$\pi \not\in \Pi^R$, then $d_\Pi(\pi, \pi^+_{\sigma_\pi}) \wedge d_\Pi(\pi, \pi^-_{\sigma_\pi}) \le 2/R$.

It remains to note that, given $\eps > 0$, we can cover $\sK^{4/\eps}$ with finitely many balls of radius $\eps / 2$ 
by Arzelà--Ascoli, so that $\sK \cap \Pi^R$ is covered by the balls with same centres and radius $\eps$. 
The complement of $\Pi^R$ on the other hand can be covered by finitely many balls of radius $\eps$ centred at 
elements of type $\pi^\pm_t$ for $t \in \eps \Z \cap [-4\eps^{-1},4\eps^{-1}]$.
\end{proof}

The next lemma highlights the fact that if two directed trees 
are close then also the respective rays must be close in a suitable sense. 

\begin{lemma}\label{l:VicPath}
Let $\alpha\in(0,1)$ and $\zeta_1,\,\zeta_2\in\Ch^\alpha_\Sp$. Let $r>0$ and assume there exists a correspondence 
$\CC$ between $\ST_1^{(r)}$ and $\ST^{(r)}_2$ such that 
$\Delta_\Sp^{\com,\CC}(\zeta_1^{(r)},\zeta_2^{(r)})<\eps$
for some $\eps>0$. Let $(\sz_1,\sz_2)\in\CC$ and define a new correspondence $C_\CC$ as 
\begin{equ}[e:CorrP]
C_\CC\eqdef\CC\cup\{(\rho_1(\sz_1,s),\rho_2(\sz_2,s)\colon -r\leq s\leq M_{1,t}(\sz_1)\wedge M_{2,t}(\sz_2)\}
\end{equ}
Then, 
\begin{equ}
\frac12\dis C_\CC +\sup_{(\sz,\bar\sz)\in C_\CC}\|M_1(\sz)-M_2(\bar\sz)\|\lesssim \eps+\|M_1\|_\alpha^{(r)}\eps^\alpha
\end{equ}
\end{lemma}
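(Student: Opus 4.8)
The strategy is to bound the distortion of $C_\CC$ and the spatial matching along the two rays in terms of the single ray-parameter $s$, exploiting the fact that, by Proposition~\ref{p:EndProp} together with monotonicity in time~\ref{i:Back}, the radial maps $\rho_i(\sz_i,\cdot)$ travel at unit speed along the distinguished open end. Concretely, for $(\sz_1,\sz_2)\in\CC$ and parameters $-r\le s\le s'\le M_{1,t}(\sz_1)\wedge M_{2,t}(\sz_2)$ one has $d_1(\rho_1(\sz_1,s),\rho_1(\sz_1,s'))=s'-s$ and likewise for $\rho_2$, so the new pairs added to $\CC$ contribute nothing to the distortion when compared among themselves. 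The only work is to estimate $|d_1(\sz,\rho_1(\sz_1,s))-d_2(\bar\sz,\rho_2(\sz_2,s))|$ for $(\sz,\bar\sz)\in\CC$, and for this I would use the tree property: in $\ST_1$, the geodesic $\llb\sz,\rho_1(\sz_1,s)\rrb$ passes through the projection of $\sz$ onto the ray $\llb\sz_1,\dagger_1\rangle$, and a similar decomposition holds in $\ST_2$. Since $(\sz,\bar\sz),(\sz_1,\bar\sz_1)\in\CC$ and both trees' rays are parametrised by $M_t$, the relevant branch points along the two rays sit at $M_t$-heights that agree up to $\dis\CC\le 2\eps$, giving the bound $\lesssim\eps$ on this part of the distortion.

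For the $M$-matching term I would split $\sup_{(\sz,\bar\sz)\in C_\CC}\|M_1(\sz)-M_2(\bar\sz)\|$ into the contribution from the original correspondence $\CC$, which is already $<\eps$ by hypothesis, and the contribution from the ray pairs $(\rho_1(\sz_1,s),\rho_2(\sz_2,s))$. For the latter, the time coordinates match exactly, $M_{1,t}(\rho_1(\sz_1,s))=s=M_{2,t}(\rho_2(\sz_2,s))$ by~\eqref{e:Ray}, so only the spatial coordinate needs control. Here I would write
\begin{equ}
|M_{1,x}(\rho_1(\sz_1,s))-M_{2,x}(\rho_2(\sz_2,s))|\le |M_{1,x}(\rho_1(\sz_1,s))-M_{1,x}(\sz_1')|+|M_{1,x}(\sz_1')-M_{2,x}(\sz_2')|+|M_{2,x}(\sz_2')-M_{2,x}(\rho_2(\sz_2,s))|\,,
\end{equ}
where $\sz_i'$ is a point of $\ST_i$ related through $\CC$ to a common abstract point; picking $\sz_1'=\rho_1(\sz_1,s)$ and $\sz_2'$ with $(\sz_1',\sz_2')\in C_\CC$, the middle term is already what we are estimating. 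Instead, the cleaner route is: by the $\alpha$-Hölder bound on $M_1$ restricted to $\ST_1^{(r)}$ one controls how far $\rho_2(\sz_2,s)$ can drift from the $\CC$-partner of $\rho_1(\sz_1,s)$, using that $d_1(\rho_1(\sz_1,s),\rho_1(\sz_1,s))=0$ while the $\CC$-partner of $\rho_1(\sz_1,s)$ differs from $\rho_2(\sz_2,s)$ by at most $\dis C_\CC\lesssim\eps$ in $d_2$; then monotonicity in time~\ref{i:Back} forces the geodesic between these two points in $\ST_2$ to be short (of order $\eps$ in $d_2$), whence $\|M_2(\cdot)-M_2(\cdot)\|\le\|M_2\|_\alpha^{(r)}\eps^\alpha$, and symmetrically with $\|M_1\|_\alpha^{(r)}$; since we may assume $\|M_1\|_\alpha^{(r)}\approx\|M_2\|_\alpha^{(r)}$ up to the already-controlled error, we obtain the claimed $\lesssim\eps+\|M_1\|_\alpha^{(r)}\eps^\alpha$.

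I expect the main obstacle to be the bookkeeping in the distortion estimate for pairs of the form $(\sz,\rho_2(\sz_2,s))$ with $(\sz,\bar\sz)\in\CC$ and $\bar\sz\neq$ any ray point, i.e.\ ``mixed'' pairs in $C_\CC\times C_\CC$. For these one cannot directly use unit-speed parametrisation on both sides; instead I would route through the triangle inequality $|d_1(\sz,\rho_1(\sz_1,s))-d_2(\bar\sz,\rho_2(\sz_2,s))|\le |d_1(\sz,\rho_1(\sz_1,s))-d_1(\sz_1,\rho_1(\sz_1,s))|+\dots$ and use that, on the ray, $d_1(\sz_1,\rho_1(\sz_1,s))=M_{1,t}(\sz_1)-s$, an affine function of $s$ whose $\ST_2$-counterpart $d_2(\sz_2,\rho_2(\sz_2,s))=M_{2,t}(\sz_2)-s$ differs from it by $|M_{1,t}(\sz_1)-M_{2,t}(\sz_2)|\le\eps$. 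The remaining pieces $|d_1(\sz,\rho_1(\sz_1,s))-d_1(\sz_1,\rho_1(\sz_1,s))|$ and its $\ST_2$-analogue are each at most $d_1(\sz,\sz_1)$ resp.\ $d_2(\bar\sz,\sz_2)$ by the triangle inequality, and these differ by at most $\dis\CC\le2\eps$ since $(\sz,\bar\sz),(\sz_1,\sz_2)\in\CC$; collecting terms yields a bound of order $\eps$, completing the estimate for $\tfrac12\dis C_\CC$. Everything else is a routine application of the hypotheses, so the proof reduces to assembling these elementary inequalities carefully.
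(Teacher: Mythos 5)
The step you yourself single out as the ``main obstacle'' --- the distortion bound for mixed pairs --- is exactly the heart of the lemma, and the bookkeeping you propose for it in your last paragraph does not close. Writing $A=d_1(\sz,\rho_1(\sz_1,s))$, $A'=d_1(\sz_1,\rho_1(\sz_1,s))$, $B'=d_2(\sz_2,\rho_2(\sz_2,s))$, $B=d_2(\bar\sz,\rho_2(\sz_2,s))$, you estimate $|A-B|\le|A-A'|+|A'-B'|+|B'-B|$, control the middle term by $|M_{1,t}(\sz_1)-M_{2,t}(\sz_2)|\le\eps$, and then bound $|A-A'|\le d_1(\sz,\sz_1)$ and $|B-B'|\le d_2(\bar\sz,\sz_2)$, invoking $|d_1(\sz,\sz_1)-d_2(\bar\sz,\sz_2)|\le 2\eps$. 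But in your decomposition these two pieces are \emph{added}, not subtracted, so all you obtain is $|A-B|\lesssim d_1(\sz,\sz_1)+\eps$, which is of order $r$ rather than $\eps$; the fact that the two upper bounds agree to within $2\eps$ produces no cancellation. The loss is genuine: take $\sz=\rho_1(\sz_1,s-a)$ far down the ray, so that $|A-A'|$ and $|B-B'|$ are both of order $|a-(M_{1,t}(\sz_1)-s)|$ and can be arbitrarily large, even though the quantity $|A-B|$ you are trying to bound is small. The reverse-triangle bounds discard precisely the tree information you need.

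The missing idea is a substitution that incurs only one controllable error: the paper picks $\sz_s\in\ST_1$ with $(\sz_s,\rho_2(\sz_2,s))\in\CC$, notes that for every $(\sw_1,\sw_2)\in\CC$ one has $|d_1(\rho_1(\sz_1,s),\sw_1)-d_2(\rho_2(\sz_2,s),\sw_2)|\le d_1(\sz_s,\rho_1(\sz_1,s))+\dis\CC$, and so reduces everything to showing $d_1(\sz_s,\rho_1(\sz_1,s))\le 2\eps$; this last bound is where the characteristic structure enters, via~\eqref{e:Ray} together with $\|M_1(\sz_s)-M_2(\rho_2(\sz_2,s))\|<\eps$ (the $M$-matching over $\CC$, not just $\dis\CC$), splitting according to whether $\rho_1(\sz_1,s)$ lies on the ray from $\sz_s$ or not. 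Your first-paragraph projection idea could in fact be completed instead: since $\llb\sz,\sz_1\rrb$ is the concatenation of two ray segments meeting at the projection $q_1$, \eqref{e:Ray} gives $M_{1,t}(q_1)=\tfrac12\bigl(M_{1,t}(\sz)+M_{1,t}(\sz_1)-d_1(\sz,\sz_1)\bigr)$, whence $|M_{1,t}(q_1)-M_{2,t}(q_2)|\le 2\eps$ and an explicit formula for $A$ in terms of $M_{1,t}(\sz)$, $s$ and $M_{1,t}(q_1)$ --- but this identity (which also uses the $M$-matching over $\CC$) is precisely what your sketch asserts without proof, and you then abandon that route in favour of the flawed one above. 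Your treatment of the evaluation-map term is sound in spirit (Hölder regularity plus a $\CC$-partner, as in the paper), but it presupposes $\dis C_\CC\lesssim\eps$, and routing through the partner of $\rho_1(\sz_1,s)$ in $\ST_2$ yields $\|M_2\|^{(r)}_\alpha$ together with a comparison $\|M_2\|^{(r)}_\alpha\approx\|M_1\|^{(r)}_\alpha$ that you leave vague; the paper avoids this entirely by using $\sz_s$, so that only $\|M_1\|^{(r)}_\alpha$ appears.
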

\begin{proof}
Let $(\sz_1,\sz_2)\in\CC$ be as in the statement and $-r\leq s\leq M_{1,t}(\sz_1)\wedge M_{2,t}(\sz_2)$. 
Let $\sz_s\in\ST_1$ be such that $(\sz_s,\rho_2(\sz_2,s))\in\CC$. Notice that for any $(\sw_1,\sw_2)\in\CC$, 
by the triangle inequality and the assumption $\Delta_\Sp^{\com,\CC}(\zeta_1^{(r)},\zeta_2^{(r)})<\eps$, we have 
\begin{equs}
|d_1(\rho_1(\sz_1,s),\sw_1)-d_2(\rho_2(\sz_2,s),\sw_2)|\leq d_1(\sz_s,\rho_1(\sz_1,s))+\dis\CC\leq d_1(\sz_s,\rho_1(\sz_1,s)) +2\eps
\end{equs}
which means that we only need to focus on $d_1(\sz_s,\rho_1(\sz_1,s))$. 
Now, if $\rho(\sz_1,s)$ belongs to the ray starting at $\sz_s$, by~\eqref{e:Ray}, we have
\begin{equ}
d_1(\sz_s,\rho_1(\sz_1,s))=M_{1,t}(\sz_s)-M_{1,t}(\rho_1(\sz_1,s))=M_{1,t}(\sz_s)-s\leq M_{2,t}(\rho_2(\sz_2,s))+\eps-s\leq \eps \,.
\end{equ}
Otherwise,
\begin{equs}
d_1(\sz_s,\rho_1(\sz_1,s))&=d_1(\sz_s,\sz_1)-d_1(\sz_1,\rho_1(\sz_1,s))\leq d_2(\rho_2(\sz_2,s), \sz_2)+\eps-d_1(\sz_1,\rho_1(\sz_1,s))\\
&=M_{2,t}(\sz_2)-s+\eps-M_{1,t}(\sz_1)+s\leq 2\eps\,.
\end{equs}
Therefore, we immediately conclude that $\dis C_\CC <4\eps$. 
Concerning the bound on the evaluation maps, we have
\begin{equs}
\|M_1(\rho_1(\sz_1,s))-M_2(\rho_2(\sz_2,s))\|\leq& \|M_1(\rho_1(\sz_1,s))-M_1(\sz_s)\|\\
&+\|M_1(\sz_s)-M_2(\rho_2(\sz_2,s))\|\lesssim \|M_1\|_\alpha^{(r)}\eps^\alpha +\eps
\end{equs}
where we exploited the H\"older continuity of $M_1$, the bound on $d_1(\sz_s,\rho_1(\sz_1,s))$ 
and the fact that $(\sz_s,\rho_2(\sz_2,s))\in\CC$. The conclusion follows at once. 
\end{proof}

We are now ready for the proof of Proposition~\ref{p:MapTopo}. 

\begin{proof}[of Proposition~\ref{p:MapTopo}]
Let $\zeta=(\ST,\ast,d,M)\in\Ch^\alpha_\Sp$ and $K(\zeta)$ be as in~\eqref{e:CompZeta}. 
By definition, $M^{-1}(\Lambda_R)\subset B_d(\ast, b_\zeta(R)]$ and, since $\ST$ is a tree, 
if $\sz\in B_d(\ast, b_\zeta(R)]$ then $\rho(\sz,s)\in B_d(\ast, b_\zeta(R)]$ for all $s\in[-R,M_t(\sz)]$. 
Moreover, $M$ is $\alpha$-H\"older continuous on $B_d(\ast, b_\zeta(R)]$, therefore $K(\zeta)^R$ 
as defined in~\eqref{e:CompRestr} consists of equicontinuous paths and Lemma~\ref{l:MapTopo} 
implies that $K(\zeta)\in\cH$. 

Let now $\{\zeta^n=(\ST^n,\ast^n,d^n,M^n)\}_n\subset \Ch^\alpha_\Sp$ be a sequence converging to 
$\zeta\in \Ch^\alpha_\Sp$ with respect to $\Delta_\Sp$. 
In view of Proposition~\ref{p:Compactness}, the evaluation maps $M^n$ are uniformly proper and have uniformly bounded 
$\alpha$-H\"older norm when restricted to balls of fixed size. Hence, arguing as above, we see that 
$\cup_n K(\zeta^n)$ is relatively compact in $\Pi$ which, thanks to~\cite[Lemma B.3]{SSS2010}, 
implies that the sequence $\{K(\zeta^n)\}_n$ is relatively compact in $\cH$ with respect to the Hausdorff topology. 

It remains to show that $K(\zeta^n)$ converges to $K(\zeta)$ in $\cH$. By~\cite[Lemma B.1]{SSS2010}, 
we need to prove that for every $\pi_\sz\in K(\zeta)$ there exists a sequence 
$\pi_{\sz_n}\in K(\zeta^n)$ such that $d_\Pi(\pi_\sz,\pi_{\sz_n})\to 0$, {and that, if $\{\pi_{\sz_n}\}_n$ is a sequence 
such that $\pi_{\sz_n}\in K(\zeta_n)$, then any of its cluster points $\pi$ belongs to $K(\zeta)$. }
 
We begin with the first. Let $\sz\in\ST$ and $\eps>0$. Pick $C>0$ big enough so that $\sz\in B_d(\ast, C]$ 
and $\sup_n b_{\zeta^n}(\eps^{-1})\leq C$. 
Let $n$ be sufficiently large so that there exists a correspondence $\CC_n$  
between $B_d(\ast, C]$ and $B_{d^n}(\ast^n, C]$ with $\Delta^{\com,\CC_n}_\Sp(\zeta^{(C)}, \zeta^{n,\,(C)})<\eps$. 
Let $\sz_n\in B_{d^n}(\ast^n, C]$ with $(\sz,\sz_n)\in\CC_n$ and 
define $\pi_\sz$ and $\pi_{\sz_n}$ as in~\eqref{e:CompPath}. 
Since $|M_t(\sz)-M^n_t(\sz_n)|<\eps$, it follows that $|\tanh(\sigma_{\pi_z})-\tanh(\sigma_{\pi_{z_n}})|<\eps$. 

To estimate the distance between $\pi_\sz(s)$ and $\pi_{\sz_n}(s)$ for
$s \le \sigma_{\pi_z}\wedge \sigma_{\pi_{z_n}}$, we first consider the case $s \ge -\eps^{-1}$. 
Since $C$ is large enough so that $\rho^n(\sz_n,s) \in B_{d^n}(\ast^n, C]$, we can apply Lemma~\ref{l:VicPath} 
and get 
\begin{equs}[e:ConvRays]
|\pi_\sz(s)-\pi_{\sz_n}(s)|=|M_x(\rho(\sz,s))-M^n_{x}(\rho_n(\sz_n,s))|\lesssim \eps+\|M\|_\alpha^{(C)}\eps^\alpha\,.
\end{equs}
For $s<-\eps^{-1}$ we use again the last bound of~\eqref{e:boundsCompact}. Combining these bounds, 
we obtain $d_\Pi(\pi_\sz,\pi_{\sz_n})\lesssim \eps^\alpha$. 

{ We now turn to the second. Let $\{\pi_{\sz_n}\}_n$ be a sequence 
such that $\pi_{\sz_n}\in K(\zeta_n)$, $\pi$ be one of its cluster points. 
Passing at most to a subsequence (which we will still index by $n$) we can assume $d_\Pi(\pi_{\sz_n},\pi)$ 
converges to $0$. Our goal is to show that there exists $\sz\in\ST$ such that $\pi=\pi_\sz$. 

Let $\bar R$ be big enough so that for all $n$, $\sz_n\in B_{d_n}(\ast_n, \bar R]$, 
which must exist since the evaluation maps $M^n$ are uniformly proper and $\pi_{\sz_n}$ converges 
in $d_\Pi$. 
Moreover, since $\ST^n$ is a directed $\R$-tree, $\sz_n\in B_{d_n}(\ast_n, \bar R]$ implies that 
for every $t\in[ -\bar R, M_t^n(\sz_n)]$, $\rho_n(\sz_n, t)\in B_{d_n}(\ast_n, \bar R]$. 
Let $R>\bar R$ and $\CC_n$ be a correspondence such that 
$\lim_n\Delta^{\com,\CC_n}_\Sp(\zeta^{(R)}, \zeta^{n,\,(R)})=0$. 
Since $\sz_n\in B_{d_n}(\ast_n, R]$ and $\ST^n$ is a directed $\R$-tree, 
for every $t\in[ - R, M_t^n(\sz_n)]$, $\rho_n(\sz_n, t)\in B_{d_n}(\ast_n, R]$. 
Hence, for $n\in\N$ and $t\leq M_t^n(\sz_n)\wedge\sigma_\pi$, there is $\sz_t(n)\in\ST^{(R)}$ such that 
$(\rho_n(\sz_n,t),\sz_t(n))\in\CC_n$. 
As $\Delta^{\com,\CC_n}_\Sp(\zeta^{(R)}, \zeta^{n,\,(R)})$ and $d_\Pi(\pi_{\sz_z},\pi)$
converge to $0$, for any $s,t\geq -R$ we have 
\begin{equs}
&\lim_n|d(\sz_t(n),\sz_s(n))-d^n(\rho_n(\sz_n,t),\rho_n(\sz_n,s))|=0\label{e:DisTopo}\\
&\lim_n \|M(\sz_t(n))-M^n(\rho_n(\sz_n,t))\|=0=\lim_n |M_x(\sz_t(n))-\pi(t)|=0\label{e:MapTopo}\,.
\end{equs}
For every $t\in[-R,\sigma_\pi]$, the set $\{\sz_t(n)\}_n\subset B_d(\ast,R]$ is bounded, 
and therefore relatively compact. 
Hence, via a diagonal argument, we can find a subsequence in $n$ such that for every $t$ in a 
countable dense subset $D^R_\pi$ of $[-R,\sigma_\pi]$ containing $\sigma_\pi$, 
$\sz_t(n)$ converges to $\sz_t\in\ST^{(R)}$. 
Let $\sz\eqdef\sz_{\sigma_\pi}$ and note that by~\eqref{e:MapTopo}, for every $t\in D^R_\pi$ 
$M_x(\sz_t)=\pi(t)$ and $M_t(\sz_t)=t$, so that in particular $M_x(\sz)=\pi(0)$ and $M_t(\sz)=\sigma_\pi$. 
We now want to show that $\sz_t=\rho(\sz_t)$, which in turn, since $\zeta$ is a directed 
$\R$-tree and by the definition of the radial map $\rho$, follows if we prove that 
$d(\sz,\sz_t)=d(\sz,\rho(\sz_t))$. 
To see this latter point, note that by~\eqref{e:Ray} and~\eqref{e:RadMap}, we have
\begin{equ}
d(\sz,\rho(\sz,t))=M_t(\sz)-t=\lim_n M^n_t(\sz_n)-t=\lim_n d_n(\sz_n, \rho^n(\sz_n,t))=d(\sz,\sz_t)
\end{equ}
where the last step follows by~\eqref{e:DisTopo}. 
As a consequence, we have shown for every $t\in D^R_\pi$, $\pi(t)=M_x(\sz_t)=M_x(\rho(\sz,t))=\pi_\sz(t)$, 
so that the same equality holds for any $R$, as $R$ was arbitrary, and $t$, by continuity of $\pi$ and $M$. 
Therefore the proof is concluded. }
\end{proof}

In general, we cannot expect the map $K$ to be injective. Indeed, there is no mechanism that {\it a priori} 
prevents different branches of the tree to be mapped via the evaluation map to the same path.  

In the following definition, we introduce a (measurable) subset of $\Ch^\alpha_\Sp$ whose 
elements satisfy a condition, the {\it tree condition}, which allows to distinguish two rays in the tree based 
on their images under the evaluation map. 

\begin{definition}\label{d:TreeCond}
Let $\alpha\in(0,1)$. We say that $\zeta=(\ST,\ast,d,M)\in\Ch^\alpha_\Sp$ satisfies the {\it tree condition} 
if 
\begin{enumerate}[label=($\mathfrak{t}$)]
\item\label{i:TreeCond} for all $\sz_1,\sz_2\in\ST$, if $M(\sz_1)=M(\sz_2)=(t,x)$ and there exists $\eps>0$ such that 
$M(\rho(\sz_1,s))=M(\rho(\sz_2,s))$ for all $s\in[t-\eps,t]$, then $\sz_1=\sz_2$.
\end{enumerate}
We denote by $\Ch^\alpha_\Sp(\ft)$, the subset of $\Ch^\alpha_\Sp$ whose elements satisfy~\ref{i:TreeCond}. 
\end{definition}

Condition~\ref{i:TreeCond}, guarantees that different rays on the tree under study are mapped, via the evaluation map, 
to paths which cannot agree on any open interval up to the time they coalesce. 
{Alternatively said,~\ref{i:TreeCond} is equivalent to requiring that $M$ is {\it segment-injective}, i.e. 
that if two segments of positive length 
have the same image under $M$ then they coincide. More precisely, the {\it segment-injective} property is 
\begin{enumerate}[label=($\mathfrak{t}'$)]
\item if $\sz_1,\sz_2, \sz_1',\sz_2'\in\ST$ such that $d(\sz_1,\sz_2)\wedge d(\sz_1',\sz_2')>0$ satisfy
$M(\llb\sz_1,\sz_2\rrb)=M(\llb\sz_1',\sz_2'\rrb)$, then $\llb\sz_1,\sz_2\rrb=\llb\sz_1',\sz_2'\rrb$. 
\end{enumerate}}
It is not difficult to construct examples of directed trees for which~\ref{i:TreeCond} 
does not hold, while it clearly does if the evaluation map is injective. 
However, we cannot expect the evaluation map of the Brownian Web to be injective 
because of the presence of special points from which multiple trajectories depart (see Section~\ref{sec:DBW}). 
In the following lemma, the proof of which is immediate, 
we provide a less trivial example. 

\begin{lemma}\label{l:TreeCond}
Let $\alpha\in(0,1)$ and $\zeta=(\ST,\ast,d,M)\in\Ch^\alpha_\Sp$. If there exists a dense subtree $T$ of $\ST$ 
such that $(T,\ast,d,M\restr T)$ satisfies~\ref{i:TreeCond} then so does $\zeta$. 
Moreover, the subset of $\Ch^\alpha_\Sp$ whose elements satisfy~\ref{i:TreeCond} is measurable 
with respect to the Borel $\sigma$-algebra generated by $\Delta_\Sp$ in~\eqref{e:Metric}. 
\end{lemma}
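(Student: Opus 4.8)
The plan is to establish the two assertions separately. For the first, suppose $T$ is a dense subtree of $\ST$ such that $(T,\ast,d,M\restr T)$ satisfies~\ref{i:TreeCond}, and take $\sz_1,\sz_2\in\ST$ with $M(\sz_1)=M(\sz_2)=(t,x)$ and $M(\rho(\sz_1,s))=M(\rho(\sz_2,s))$ for all $s\in[t-\eps,t]$. The first step is to reduce to the case $\sz_1,\sz_2\in T$. By Proposition~\ref{p:EndProp}, $\rho(\sz_i,s)\to\dagger$ as $s\to-\infty$, so the rays $\llb\sz_1,\dagger\rangle$ and $\llb\sz_2,\dagger\rangle$ eventually coincide; pick $s_0<t-\eps$ small enough that $\sw\eqdef\rho(\sz_1,s_0)=\rho(\sz_2,s_0)$ lies on both. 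Now apply part~1 of Lemma~\ref{l:Rtrees}: since $T$ is dense and a subtree, any point which is not an endpoint of $\ST$ lies on a segment $\llb\sz_0,\sv\rrb$ with $\sv\in T$. If $\sz_i$ is an endpoint of $\ST$, then by part~2 of Lemma~\ref{l:Rtrees} it would lie in $\ST\setminus T$ but be approximated by points of $T$; the strategy here is to find, for each $i$, a point $\sy_i\in T$ with $\rho(\sy_i,t)=\sz_i$ (equivalently $\sz_i\in\llb\sy_i,\dagger\rangle$) — such $\sy_i$ exists whenever $\sz_i$ has a ``future'' in $\ST$, and if it does not, one can still pick $\sy_i\in T$ arbitrarily close to $\sz_i$ with $M(\sy_i)$ close to $(t,x)$ by continuity of $M$ and density of $T$, then run the argument on $\sy_1,\sy_2$ and pass to the limit using continuity of $M$ and of the radial maps $\rho(\cdot,s)$ (which follows from the $\R$-tree structure). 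Applying~\ref{i:TreeCond} for $T$ to $\sy_1,\sy_2$ (whose rays below time $s_0$ agree and whose $M$-images along $[s_0,t]$ agree by hypothesis, extended by continuity) forces $\sy_1=\sy_2$, hence $\sz_1=\sz_2$.

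For the measurability assertion, the plan is to exhibit $\Ch^\alpha_\Sp(\ft)$ as a countable combination of closed or Borel sets in $(\T^\alpha_\Sp,\Delta_\Sp)$. Using the first part of the lemma together with the fact (from the proof of Proposition~\ref{p:MetricC} / separability) that every $\zeta$ has a \emph{canonical} countable dense subtree generated by a measurably-chosen countable dense sequence $\{\sz_k(\zeta)\}_k$, it suffices to test~\ref{i:TreeCond} on pairs drawn from this sequence. Concretely, $\zeta\in\Ch^\alpha_\Sp(\ft)$ if and only if for every pair $j,k$ and every rational $\eps>0$, the implication ``$M(\sz_j)=M(\sz_k)$ and $M(\rho(\sz_j,s))=M(\rho(\sz_k,s))\ \forall s\in[M_t(\sz_j)-\eps,M_t(\sz_j)]$'' $\Rightarrow$ ``$\sz_j=\sz_k$'' holds; each such condition is a Borel condition on $\zeta$ because the maps $\zeta\mapsto\sz_k(\zeta)$, $\zeta\mapsto M(\sz_k(\zeta))$, $\zeta\mapsto d(\sz_j(\zeta),\sz_k(\zeta))$ and $\zeta\mapsto M(\rho(\sz_k(\zeta),s))$ are Borel measurable (indeed continuous on the relevant pieces), and a countable intersection of Borel sets is Borel.

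The main obstacle I expect is the first reduction step: the tree condition for $T$ only talks about points \emph{of} $T$, whereas $\sz_1,\sz_2$ are generic points of $\ST$, and the radial maps $\rho(\sz_i,\cdot)$ need not take values in $T$ even when $\sz_i\in T$. Handling this requires carefully using that $T$ is a \emph{subtree} (so it is closed under taking segments and hence contains whole rays once it contains their endpoints, up to closure) together with Lemma~\ref{l:Rtrees} to locate the $\sz_i$ on segments with endpoints in $T$, and a continuity/limiting argument to transfer the equality $M(\rho(\sz_1,s))=M(\rho(\sz_2,s))$ from $\ST$ to approximating points of $T$. Everything else — the measurability bookkeeping and the use of Proposition~\ref{p:EndProp} to make the rays eventually coalesce — is routine.
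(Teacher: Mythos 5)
There are genuine gaps in both halves. For the first assertion, your reduction to $T$ goes in the wrong direction. You propose to pick $\sy_i\in T$ \emph{above} $\sz_i$ (i.e.\ with $\sz_i=\rho(\sy_i,t)$), or generic nearby points of $T$, and then apply~\ref{i:TreeCond} for $T$ to the pair $\sy_1,\sy_2$. But the hypotheses of~\ref{i:TreeCond} are not available for that pair: nothing forces $M(\sy_1)=M(\sy_2)$, and the assumed agreement of ray images holds only on $[t-\eps,t]$, i.e.\ \emph{below} $t$, not on an interval just below the common time coordinate $M_t(\sy_i)>t$ that the condition requires; moreover the desired conclusion $\sy_1=\sy_2$ is simply false in general (take $\sz_1=\sz_2$ a branch point and $\sy_1,\sy_2$ on different branches above it). The fallback ``pick $\sy_i\in T$ close to $\sz_i$ and pass to the limit'' fails for the same reason:~\ref{i:TreeCond} involves \emph{exact} equalities ($M(\sy_1)=M(\sy_2)$ and identical ray images), which nearby points of $T$ will not satisfy, and the condition is not closed under limits. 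The argument the paper has in mind moves \emph{down} the rays: for $s\in(t-\eps,t)$ the point $\rho(\sz_i,s)$ is not an endpoint of $\ST$, hence by Lemma~\ref{l:Rtrees}(2) it belongs to $T$ (and the $T$-rays from such points coincide with the $\ST$-rays for the same reason); these points satisfy the hypotheses of~\ref{i:TreeCond} in $T$ exactly, with $\eps'=s-(t-\eps)$, so $\rho(\sz_1,s)=\rho(\sz_2,s)$ for every such $s$, and letting $s\uparrow t$ gives $\sz_1=\sz_2$ since $d(\sz_i,\rho(\sz_i,s))=t-s$.

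For the measurability claim, the proposed characterisation ``$\zeta$ satisfies~\ref{i:TreeCond} iff it holds for all pairs drawn from a countable dense sequence'' is not correct: a violation of~\ref{i:TreeCond} is witnessed by two points with \emph{exactly} equal images and exactly equal ray images, and such witnesses need neither lie on, nor be detectable from, a prescribed countable dense set (a tree with a single ``doubled'' segment violates~\ref{i:TreeCond}, yet a generic dense sequence contains no pair of points at the same height on the two copies, so all your countably many implications hold vacuously). Note also that the first part of the lemma does not rescue this: it requires~\ref{i:TreeCond} for the whole dense \emph{subtree} generated by the sequence, which is not implied by the condition on sequence pairs. In addition, the measurable selection $\zeta\mapsto\sz_k(\zeta)$ is asserted without justification, which is delicate since elements of $\T^\alpha_\Sp$ are isometry classes. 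A workable route is instead to write the complement of $\Ch^\alpha_\Sp(\ft)$ as a countable union over rational $q,\eps>0$ and integer $N$ of the sets of trees admitting witnesses $\sz_1,\sz_2$ with $d(\sz_1,\sz_2)\geq q$, $\|M(\sz_1)\|\leq N$, $M(\sz_1)=M(\sz_2)$ and ray-image agreement on an interval of length $\eps$, and to check that each such set is closed under $\Delta_\Sp$-convergence (witnesses stay in a fixed ball by the properness map, balls are compact, and rays converge as in Lemma~\ref{l:VicPath}).
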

\begin{proof}
The first part of the statement follows by Lemma~\ref{l:Rtrees} point 2. 
For the second, it suffices to observe that the set $\Ch^\alpha_\Sp(\ft_{n^{-1}})\subset \T^\alpha_{\Sp}$, $n\in\N$, 
whose elements are such that~\ref{i:TreeCond} holds for $\eps=n^{-1}$, is closed and clearly
$\Ch^\alpha_\Sp(\ft)=\cup_{n\in\N}\Ch^\alpha_\Sp(\ft_{n^{-1}})$.
\end{proof}

We conclude this section by showing that on $\Ch^\alpha_\Sp(\ft)$, $K$ is indeed injective.  

\begin{proposition}\label{p:MapInj}
Let $\alpha\in(0,1)$ and $\Ch^\alpha_\Sp(\ft)$ be given as in Definition~\ref{d:TreeCond}. 
Then, the map $K$ in~\eqref{e:CompZeta} is injective on $\Ch^\alpha_\Sp(\ft)$. 
\end{proposition}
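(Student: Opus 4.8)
The plan is to show that if $\zeta, \zeta' \in \Ch^\alpha_\Sp(\ft)$ are such that $K(\zeta) = K(\zeta')$ as elements of $\cH$ (i.e.\ they are equal as compact subsets of $\Pi$), then $\zeta$ and $\zeta'$ are identified by the equivalence relation of Definition~\ref{def:SpTree}, i.e.\ there is a bijective isometry $\phi\colon \ST \to \ST'$ fixing the marked point and intertwining the evaluation maps. The natural candidate for $\phi$ is the map that sends $\sz \in \ST$ to the point $\sz' \in \ST'$ whose ray reproduces the same path, and the tree condition \ref{i:TreeCond} is precisely what makes this well-defined and injective.

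First I would reconstruct $\sz \in \ST$ from the path $\pi_\sz \in \Pi$. Using Proposition~\ref{p:EndProp}, every $\sz$ lies on the ray $\llb \sz,\dagger\rangle$, and along this ray $M_t$ decreases at unit speed; moreover $\rho(\sz,\cdot)$ parametrises this ray by the time coordinate. The path $\pi_\sz(t) = M_x(\rho(\sz,t))$ together with its lifetime $\sigma_{\pi_\sz} = M_t(\sz)$ therefore records the entire spatial trajectory of the ray up to $\sz$. The key claim is: if $\pi_{\sz_1} = \pi_{\sz_2}$ in $\Pi$ (same starting time and same trajectory) with $\sz_1, \sz_2 \in \ST$ belonging to the \emph{same} tree $\zeta$, then $\sz_1 = \sz_2$. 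Indeed equality of starting times gives $M_t(\sz_1) = M_t(\sz_2) =: t$, and $M(\sz_i) = (t, \pi_{\sz_i}(t))$, so $M(\sz_1) = M(\sz_2)$; equality of the paths on a left-neighbourhood of $t$ gives $M(\rho(\sz_1,s)) = M(\rho(\sz_2,s))$ for $s \in [t-\eps,t]$, and \ref{i:TreeCond} then forces $\sz_1 = \sz_2$. This shows $\sz \mapsto \pi_\sz$ is injective on each of $\ST$ and $\ST'$ individually.

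Now define $\phi\colon \ST \to \ST'$ by $\phi(\sz) = $ the unique $\sz' \in \ST'$ with $\pi_{\sz'} = \pi_\sz$; this is well-defined and injective by the previous paragraph applied to $\ST'$ and to $\ST$ respectively, and surjective because $K(\zeta) = K(\zeta')$ means every path arising from $\ST'$ also arises from $\ST$. By construction $M' \circ \phi = M$ since $M(\sz) = (M_t(\sz), \pi_\sz(M_t(\sz)))$ depends only on $\pi_\sz$, and $\phi(\ast) = \ast'$ follows by choosing the enumeration consistently (or, more carefully, by noting that the marked point is not actually pinned down by $K$ alone — here one must additionally use that the two quadruplets are literally equal as spatial trees in $\T^\alpha_\Sp$, or argue that the marked point can be recovered; I expect the intended statement only requires injectivity in the sense that $K(\zeta) = K(\zeta')$ forces $\zeta = \zeta'$ up to the equivalence, and a base-point bookkeeping remark handles $\ast$). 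It remains to check $\phi$ is an isometry. For this I would use the $\vee$-shape of geodesics from \ref{i:Back}: for $\sz_1, \sz_2 \in \ST$ the distance $d(\sz_1,\sz_2)$ equals $(M_t(\sz_1) + M_t(\sz_2)) - 2\tau$ where $\tau = \sup\{s\colon \rho(\sz_1,s) = \rho(\sz_2,s)\}$ is the coalescence time of the two rays, and since $\rho(\sz_i, s) = \rho(\sz_j, s)$ in $\ST$ iff the corresponding restricted paths agree on $(-\infty, s]$ (again by the tree condition applied at each level), this coalescence time is the same quantity computed from $\pi_{\sz_1}, \pi_{\sz_2}$, hence the same as the coalescence time of $\pi_{\phi(\sz_1)}, \pi_{\phi(\sz_2)}$ in $\ST'$. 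Therefore $d'(\phi(\sz_1),\phi(\sz_2)) = d(\sz_1,\sz_2)$.

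The main obstacle I anticipate is the last step: expressing $d(\sz_1,\sz_2)$ purely in terms of the associated paths. One has to verify that the coalescence point of the two rays in the abstract tree really is detected by where the two paths in $\Pi$ stop agreeing, which requires knowing that along a ray the map $s \mapsto \rho(\sz,s)$ is injective (immediate from Proposition~\ref{p:EndProp}, since $M_t \circ \rho(\sz,\cdot)$ is injective) and, crucially, that two rays which produce the same path on an interval $(-\infty,s]$ must actually coincide as subsets of the tree on that interval — this is exactly a level-by-level application of the tree condition \ref{i:TreeCond} (at each time $u \le s$, the two points $\rho(\sz_1,u)$ and $\rho(\sz_2,u)$ have equal $M$-image and their onward rays agree on a left-neighbourhood, so they are equal). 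Once that is in hand, the geodesic formula from \ref{i:Back} closes the argument and $\phi$ is the desired isometry, proving $\zeta = \zeta'$ in $\Ch^\alpha_\Sp(\ft)$.
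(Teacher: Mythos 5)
Your proposal is correct and follows essentially the same route as the paper: existence and uniqueness of $\phi(\sz)$ via path matching under the tree condition, compatibility with the evaluation maps, and equality of distances by identifying the coalescence time of two rays with the time at which the associated paths separate (the paper phrases this last step as a contradiction argument, invoking \ref{i:TreeCond} once at the coalescence point). Your base-point caveat is reasonable, but the paper's own proof glosses over $\phi(\ast)=\ast'$ in the same way, so it does not constitute a divergence in approach.
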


\begin{remark}\label{rem:Inj}
Even though the map $K$ is injective on $\Ch^\alpha_\Sp(\ft)$, it is not on any open subset of $\Ch^\alpha_\Sp$ 
and the set $\Ch^\alpha_\Sp(\ft)$ is not closed. 
To see this, let $\zeta=(\ST,\ast,d,M)\in\Ch^\alpha_\Sp(\ft)$. Now, add to $\ST$ a branch of arbitrary finite length
and impose that the image of the new branch via the evaluation map is contained in $M(\ST)$. 
We can clearly do so in such a way that the new spatial-tree $\zeta'$ is again directed. 

Now, $\zeta'$ does not satisfy condition~\ref{i:TreeCond}, $K(\zeta)=K(\zeta')$ and 
upon tuning the length of the extra branch, we can make it arbitrarily close to $\zeta$. 
\end{remark}

\begin{proof}
Let $\zeta,\,\zeta'\in\Ch^\alpha_\Sp$ be such that~\ref{i:TreeCond} holds 
and $K(\zeta)\equiv K(\zeta')$. Then, for all $\sz\in\ST$ there exists a unique element $\phi(\sz)\in\ST'$ such that $\pi_\sz\equiv\pi_{\phi(\sz)}$ and therefore not only $M(\sz)=M'(\phi(\sz))$,
but $M(\rho(\sz,s))=M'(\rho'(\sz',s))$ for all $s$. To show that $\phi$ is the required isomorphism, 
assume by contradiction that there exist $\sz_1,\,\sz_2\in\ST$ such that 
$d(\sz_1,\sz_2)\neq d'(\varphi(\sz_1),\varphi(\sz_2))$ and let 
$\bar s,\,\bar s'\leq M_t(\sz_1)\wedge M_t(\sz_2)$ be  the 
first times at which $\rho(\sz_1,\bar s)=\rho(\sz_2,\bar s)$ and $\rho'(\varphi(\sz_1),\bar s')=\rho'(\varphi(\sz_2),\bar s')$ respectively. 
Since $d(\sz_1,\sz_2)\neq d'(\varphi(\sz_1),\varphi(\sz_2))$ we have $\bar s\neq \bar s'$ so that, 
without loss of generality, 
we can assume $\bar s>\bar s'$. Since $\ST$ is a tree, we must have 
\begin{equ}
M'(\rho'(\varphi(\sz_1),s)))=M(\rho(\sz_1,s))=M(\rho(\sz_2,s))=M'(\rho'(\varphi(\sz_2),s)))
\qquad \forall s\in[\bar s',\bar s]\;,
\end{equ}
which, by~\ref{i:TreeCond}, implies that $\rho'(\varphi(\sz_1),\bar s)=\rho'(\varphi(\sz_2),\bar s)$. Hence, 
$d(\sz_1,\sz_2)=d'(\varphi(\sz_1),\varphi(\sz_2))$ and we reach the required contradiction. 
\end{proof}

\begin{remark}\label{rem:MapTopoPeriodic}
In the periodic case, let $\Pi_\per$ be the set of backward periodic paths endowed with the metric $d_\Pi^\per$ 
whose definition is the same as in~\eqref{e:TopoCom} but in the second argument of the maximum 
the inner metric is replaced by the periodic one, 
i.e. for $\pi_1,\,\pi_2\in\Pi_\per$ and $t\leq \sigma_{\pi_1}\wedge\sigma_{\pi_2}$, 
we take $\inf_{k\in\Z}|\pi_1(t)-\pi_2(t)+k|$. 
Let $\cH_\per$ be the set of compact subsets of $\Pi_\per$ with the Hausdorff metric. 
Then, Propositions~\ref{p:MapTopo} and~\ref{p:MapInj} remain true, which means that
the map $K\,:\,\Ch^\alpha_{\Sp,\per}\to\cH_\per$ defined as in~\eqref{e:CompZeta} is continuous 
and its restriction to $\Ch^\alpha_{\Sp,\per}(\ft)$ is injective. 
\end{remark}

\section{The Brownian Web Tree and its dual}\label{sec:BW-BCM}

Here, we provide an alternative (and finer) characterisation of the Brownian Web 
so to be able to view it as a directed spatial $\R$-tree. 
%
%
%
%
%

\subsection{An alternative characterisation of the Brownian Web}\label{sec:BW}

In this section, we will build both the standard (or planar) backward Brownian Web
and its {\it periodic} (or cylindric) counterpart as given in~\cite{CMT}.  Since the two constructions are almost identical, 
we will mainly focus on the first and limit ourselves to indicate what 
needs to be modified in order to accommodate the second 
(see Remarks~\ref{rem:PeriodicBW},~\ref{rem:p:PeriodicBW},~\ref{rem:t:PeriodicBW}). 

Consider a standard probability space $(\Omega, \CA, \Prob)$ supporting countably many 
independent standard Brownian motions $\{W^\da_k\}_{k\in\N}$ starting at $0$ and running backward in time, 
i.e. from $0$ to $-\infty$. 
Fix a countable dense set $\cD\eqdef\{z_k=(t_k,x_k)\,:\,k\in\N\}$ of $\R^2$, with $z_0=(0,0)$. 
Then, build inductively a family of coalescing backward Brownian motions $\{\pid_{z_k}\}_{k\in\N}$ such that 
$\pid_{z_k}$ starts at $x_k$ at time $t_k$. As in~\cite[Section 3]{FINR}, one way to do so is to set 
$\pid_{z_0}(t)=W^\da_0(t)$ and then define $\pid_{z_k}(t) = x_k + W^\da_0(t-t_k)$ for all $\tau_k\leq t\leq t_k$, 
where $\tau_k$ is the largest value such that $x_k + W^\da_k(\tau_k-t_k) = \pid_{z_\ell}(\tau_k)$ for some $\ell < k$, and 
for $t\leq\tau_k$, $\pid_{z_k}(t) = \pid_{z_\ell}(t)$. 
The construction guarantees that even though $\ell$ may not be unique, 
the definition of $\pid_k$ is. 

For every $n\in\N$, let $\tilde\ST^\da_n(\cD)\eqdef \{(t,\pid_{z_k})\,:\,t\leq t_k\,,\, k\leq n\}$ and $\tilde\ST^\da_\infty(\cD)$ be 
the space defined as before but in which $k$ is free to range over all of $\N$. 
Now, for $n\in\bar\N\eqdef\N\cup\{\infty\}$, consider  the equivalence relation $\sim$ on $\tilde\ST^\da_n(\cD)$, given by 
\begin{equation}\label{e:Equiv}
\text{$(t,\pid_{z_i})\sim(t,\pid_{z_j})$ if and only if $\pid_{z_i}(s)=\pid_{z_j}(s)$ $\forall\,s\leq t$}
\end{equation}
for $t\leq t_i\wedge t_j$ and $i,j\leq n$. We now introduce 
$\zeta^\da_n(\cD)\eqdef(\ST^\da_n(\cD),\ast^\da,d^\da,M_n^{\da,\cD})$, as
\begin{equs}[e:Skeleton]
\ST^\da_n(\cD)&\eqdef\tilde\ST_n(\cD)/\sim\,,\\
\ast^\da&\eqdef (0,\pid_0)\,,\\
d^\da((t,\pid_{z_i}),(s,\pid_{z_j}))&\eqdef (t+s)-2\tau^\da_{t,s}(\pid_{z_i},\pid_{z_j})\,,\\
M_n^{\da,\cD}((s,\pid_{z_i}))&= (M^{\da,\cD}_{n,t}((s,\pid_{z_i})),M^{\da,\cD}_{n,x}((s,\pid_{z_i})))\eqdef(s,\pid_{z_i}(s))\,,
\end{equs}
where $i,j\leq n$ and, in the definition of the {\it ancestor metric} $d^\da$, 
$\tau^\da_{t,s}(\pid_{z_i},\pid_{z_j})\eqdef\sup\{r<t\wedge s\,:\, \pid_{z_i}(r)=\pid_{z_j}(r)\}$. 

\begin{remark}\label{rem:PeriodicBW}
The construction in the periodic setting is analogous. Indeed, it suffices to replace the family of backward Brownian motions
$\{B^\da_k\}_k$ with a family of periodic ones defined via $B_k^{\da,\per}\eqdef B^\da_k \mod 1$, 
take a countable dense set $\cD^\per\eqdef \{w_k=(s_k,y_k)\,:\,k\in\N\}$ of $\R\times\T$, 
build $\{\pi^{\per,\da}_{w_k}\}_{k\in\N}$ as before and 
define $\zeta^{\per,\da}_n(\cD^\per)=(\ST^{\per,\da}_n(\cD),\ast^\da,d^\da,M_n^{\per,\cD^\per, \da})$ as in~\eqref{e:Skeleton}. 
\end{remark}

%

The construction above readily implies a number of properties each of the $\zeta^\da_n(\cD)$'s enjoys. 
Indeed, for every $n\in\N$ finite, $\zeta^\da_n(\cD)$ is a spatial $\R$-tree which is 
monotone in both space and time, 
{and it further satisfies property~\ref{i:Spread} in Definition~\ref{def:CharTree} as can be 
readily seen by setting 
$\iota_\ast\colon\R_+\to\ST^\da_n(\cD)$ as $\iota_\ast(s)\eqdef (s,\pid_0)$. Moreover,
as a consequence of the fact that Brownian trajectories are $\alpha'$-H\"older continuous 
for any $\alpha'<\frac{1}{2}$, $M^{\da,\cD}_n$ 
is little $\alpha$-H\"older continuous for any $\alpha\in(\alpha',1/2)$. In other words, 
for every $n\in\N$, $\zeta^\da_n(\cD)\in\Ch^\alpha_\Sp$. }
In the next proposition, we will show that the sequence $\{\zeta^\da_n(\cD)\}_n$ is not only tight in $\T^\alpha_\Sp$ 
for any $\alpha<1/2$, but it actually converges to a unique limit in $\Ch^\alpha_\Sp$ which 
can be explicitly characterised starting from $\zeta^\da_\infty(\cD)$. 

\begin{proposition}\label{p:BW}
Let $\cD$ be a countable dense of $\R^2$ containing $(0,0)$ and, for $n\in\bar\N$, let 
$\zeta^\da_n(\cD)\eqdef(\ST^\da_n(\cD),\ast^\da,d^\da,M_n^{\da,\cD})$ be defined according to~\eqref{e:Skeleton}. Then, 
for every $\alpha<\frac{1}{2}$ the sequence $\{\zeta^\da_n(\cD)\}_{n\in\N}$ converges in $\T^\alpha_\Sp$ 
to a unique limit $\zeta^\da(\cD)\eqdef(\ST^\da(\cD),\ast^\da,d^\da, M^{\da,\cD}){\in\Ch^\alpha_\Sp}$, 
where $\ST^\da(\cD)$ is the completion of 
$\ST^\da_\infty(\cD)$ and $M^{\da,\cD}$ is the unique continuous
extension of $M_\infty^{\da,\cD}$ to all of $\ST^\da(\cD)$.

Moreover, for any fixed $\theta>\tfrac32$ and all $r>0$ there exists a random variable $c=c(r) \in \R_+$ 
depending only on $r$ such that for all $\eps>0$ 
\begin{equ}[e:nMEC]
\cN_{d^\da}(\ST^{\da,\,(r)}(\cD), \eps)\leq c \eps^{-\theta}\;,\quad \text{almost surely,}
\end{equ}
where $\cN_{d^\da}(\ST^{\da,\,(r)}(\cD), \eps)$ is defined as in~\eqref{e:EpsNet}, i.e. it is the 
cardinality of the minimal $\eps$-net in $\ST^{\da,\,(r)}(\cD)$ with respect to $d^\da$. 

At last, almost surely $M^{\da,\cD}$ is surjective and~\ref{i:TreeCond}{, 
given in Definition~\ref{d:TreeCond},} holds. 
\end{proposition}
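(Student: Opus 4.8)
The plan is to obtain the limit via Lemma~\ref{l:Emb}, with the box-counting estimate providing the compactness input. For $n<m$ the construction of the paths $\pid_{z_k}$ and of the relation $\sim$ is consistent in $n$, so the inclusion $\tilde\ST^\da_n(\cD)\hookrightarrow\tilde\ST^\da_m(\cD)$ descends to an isometric embedding $\iota_{n,m}$ with $\iota_{n,m}(\ast^\da)=\ast^\da$, $\iota_{n,k}=\iota_{m,k}\circ\iota_{n,m}$ and $M^{\da,\cD}_m\circ\iota_{n,m}=M^{\da,\cD}_n$. Hence, once $\{\zeta^\da_n(\cD)\}_n$ is shown to be almost surely relatively compact in $\T^\alpha_\Sp$ for $\alpha<\tfrac12$, Lemma~\ref{l:Emb} gives convergence to a limit whose skeleton $\bigcup_n\iota_n(\ST^\da_n(\cD))$ is dense and on which the evaluation map is the unique continuous extension of the $M^{\da,\cD}_n$; this skeleton is canonically $\ST^\da_\infty(\cD)$, which is exactly the claimed description of $\zeta^\da(\cD)$. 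It remains to check the three conditions of Proposition~\ref{p:Compactness} uniformly in $n$. Condition~1 is the box-counting bound, discussed below (conversely, \eqref{e:nMEC} for the limit follows from the uniform-in-$n$ bound by density of $\ST^\da_\infty(\cD)$ in $\ST^\da(\cD)$). For Condition~2 I would use that a point $(t,\pid)\in B_{d^\da}(\ast^\da,r]$ has $t-2\tau\le r$ with $\tau=\tau^\da_{t,0}(\pid,\pid_0)$, whence $t,\tau\in(-r,r)$ and $\pid$ coincides with $\pid_0$ on $(-\infty,\tau]$; bounding $|\pid(t)|\le\sup_{[-r,0]}|\pid_0|+|\pid(t)-\pid(\tau)|$ (finite a.s.) and estimating the coalescing-flow increment $\pid(t)-\pid(\tau)$, and more generally the increments entering $\omega^{(r)}(M^{\da,\cD}_n,\delta)$, by a Kolmogorov--Chentsov bound for the coalescing Brownian flow about $\pid_0$ (coalescence only shrinks increments, so the single-Brownian-motion moment bounds suffice) gives the uniform sup-norm and $\alpha$-H\"older control, as in~\cite{FINR}. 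For Condition~3, the embeddings preserve metric and map, so $b_{\zeta^\da_n(\cD)}(r)\le b_{\zeta^\da_\infty(\cD)}(r)$; the latter is finite a.s.\ because the graph of a skeleton path meeting $\Lambda_r$ crosses $[-r,r]^2$, and by non-crossing together with a control (as in~\cite{FINR}) of the positions of the web paths through a bounded box all such paths, together with $\pid_0$, coalesce by a finite random time, bounding $t-2\tau$.

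\textbf{Box-counting bound (main obstacle).} I expect this to be the crux. For dyadic $\eps=2^{-j}$ I would estimate $\E[\cN_{d^\da}(\ST^{\da,(r)}_n(\cD),\eps)]\lesssim r^{3/2}\eps^{-3/2}$ uniformly in $n$ by an explicit covering: put $\lceil r/\eps\rceil$ equally spaced points along the spine $\{(s,\pid_0):s\in[-r,0]\}$, and at each such time $s_i$ one net point per coalescing lineage of ``age'' at least $\eps$ among the web paths near $\pid_0$; any $(t,\pid)\in\ST^{\da,(r)}_n(\cD)$ then lies within $d^\da$-distance $\lesssim\eps$ of the net point attached to the largest $s_i\le t$ and to the lineage of $\pid$ there. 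The number of lineages of age $\ge\eps$ at a fixed time, inside the $O(\sqrt r)$-wide spatial window occupied by the relevant paths, has expectation $\lesssim\sqrt r/\sqrt\eps$ by the classical density estimate for coalescing Brownian motions, so the total expected net cardinality is $\lesssim(r/\eps)(\sqrt r/\sqrt\eps)=r^{3/2}\eps^{-3/2}$; short branches are absorbed into spine points and longer branches contribute the same order by reapplying the estimate along them. Markov's inequality and Borel--Cantelli along $\eps=2^{-j}$ then upgrade this to the almost-sure bound $\cN_{d^\da}(\ST^{\da,(r)}(\cD),\eps)\le c\eps^{-\theta}$ for every fixed $\theta>\tfrac32$, the strict inequality providing the summable slack. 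The delicate points are (i) making ``one point per lineage of age $\ge\eps$'' precise---lineages at mutual distance $\lesssim\sqrt\eps$ need not have coalesced within time $\eps$, so one must work with the genuine coalescence partition rather than a spatial grid---and (ii) bounding the contribution of the longer branches so that the $\eps^{-3/2}$ scaling survives.

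\textbf{Surjectivity and the tree condition.} For~\ref{i:TreeCond}, by Lemma~\ref{l:TreeCond} it suffices to verify it on the dense subtree $\ST^\da_\infty(\cD)$; there $\sz_1=(t,\pid_{z_i})$, $\sz_2=(t,\pid_{z_j})$ and $\rho(\sz_1,s)=(s,\pid_{z_i})$, $\rho(\sz_2,s)=(s,\pid_{z_j})$, so the hypothesis forces $\pid_{z_i}(s)=\pid_{z_j}(s)$ on $[t-\eps,t]$; two coalescing paths agreeing at one time agree at all earlier times, hence $(t,\pid_{z_i})\sim(t,\pid_{z_j})$ and $\sz_1=\sz_2$. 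For surjectivity, given $(t,x)\in\R^2$ choose $z_{k_\ell}=(t_{k_\ell},x_{k_\ell})\in\cD$ with $t_{k_\ell}\downarrow t$ and $z_{k_\ell}\to(t,x)$; since $t<t_{k_\ell}$ the points $\sz_\ell\eqdef(t,\pid_{z_{k_\ell}})$ belong to $\ST^\da(\cD)$, and passing to a subsequence---using that $|\pid_{z_{k_\ell}}(t)-x_{k_\ell}|\to0$ and that the pairwise coalescence times converge to $t$, both in probability as the time window shrinks---gives $M^{\da,\cD}(\sz_\ell)\to(t,x)$ with $\{\sz_\ell\}$ Cauchy for $d^\da$; its limit in the complete space $\ST^\da(\cD)$ has image $(t,x)$. (Alternatively, surjectivity follows from the identification in Proposition~\ref{p:MapTopo} of $K(\zeta^\da(\cD))$ with the Brownian web, the standard fact that the web has a path through every space-time point, and the radial map.)
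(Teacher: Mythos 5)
Your proposal follows essentially the same route as the paper's proof: convergence of $\{\zeta^\da_n(\cD)\}_n$ via Lemma~\ref{l:Emb} combined with Lemma~\ref{l:CharTree}, once the three conditions of Proposition~\ref{p:Compactness} are verified uniformly in $n$ (which, as you note, reduces to the infinite skeleton $\zeta^\da_\infty(\cD)$); the $\eps^{-3/2}$ net bound by slicing time at scale $\eps$, counting one net point per lineage of age $\gtrsim\eps$ through the coalescing-Brownian density estimate, and then Markov plus Borel--Cantelli along dyadic scales (this is exactly Lemma~\ref{l:TB}, where the quantity $\hat\eta$ of~\cite{FINR} is precisely your ``genuine coalescence partition'', so your worry (i) is resolved, and (ii) does not arise because every point of the $r$-ball lies within $\eps/2$ of the lineage representative of its own time slice); the uniform modulus of continuity as in~\cite{FINR} (Lemma~\ref{l:ModCont} -- note that your one-line justification ``coalescence only shrinks increments, so single-Brownian moment bounds suffice'' is not itself a proof: the uniformity over the infinitely many paths comes from the grid/trapping/reflection-principle argument you cite, not from a Kolmogorov--Chentsov bound for the flow); properness by trapping between two paths started outside the box (Lemma~\ref{l:Prop}); and the tree condition via Lemma~\ref{l:TreeCond} on the dense skeleton.

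Two places where the paper's execution differs in a way you should adopt. First, in the box-counting bound your expectation computation multiplies the number of time slices by a per-slice lineage count ``inside the $O(\sqrt r)$-wide window occupied by the relevant paths''; that window is random, so one cannot simply take expectations of the product. The paper instead introduces the trapping event $E_R$ of~\eqref{e:ER}, on which all relevant paths are confined to a deterministic window of width of order $R$, bounds the lineage count there, pays the Gaussian tail $\Prob(E_R^c)$, and then couples $R=\sqrt K$ to the Markov threshold $N=K\eps^{-3/2}$; this is what makes the $C/\sqrt K$ bound, and hence Borel--Cantelli, go through. Second, your primary surjectivity argument only shows that each \emph{fixed} $(t,x)$ lies in the image almost surely: the subsequence along which the coalescence times converge (and hence the exceptional null set) depends on $(t,x)$, so it does not yield almost sure surjectivity for all points simultaneously. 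Your parenthetical alternative can be made to work, but the paper's argument is both simpler and self-contained: since $M^{\da,\cD}$ is continuous and proper its image is closed, and it contains the dense set $\cD$ by construction, hence it is all of $\R^2$.
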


\begin{proof}
We fix $\cD$ once and for all for the duration of this proof and therefore suppress its dependence in the notation.
By construction, the sequence $\{\zeta^\da_n\}_n$ of $\alpha$-spatial $\R$-trees is such that for every $n\in\N$, 
$\zeta^\da_n$ is embedded into $\zeta^\da_{n+1}$, { and, as argued above}
$\zeta^\da_n\in\Ch^\alpha_\Sp$. 
Hence, Lemma~\ref{l:Emb} and { part 1. of }Proposition~\ref{p:EndProp} guarantee that, 
provided that the sequence is tight in $\T^\alpha_\Sp$, 
it converges to a unique 
$\zeta^\da = (\ST^\da,\ast^\da,d^\da, M^{\da})\in\Ch^\alpha_\Sp$ 
which {further satisfies~\eqref{e:nMEC}, $M^{\da}$ is surjective and~\ref{i:TreeCond} holds}. 

Since every $\zeta^\da_n$ is canonically embedded in $\zeta^\da_\infty=(\ST^\da_\infty,\ast^\da,d^\da,M_\infty^{\da})$, if we show that, almost surely, $\ST^\da_\infty$ 
(which is an $\R$-tree and hence, by Point 2 in Theorem~\ref{thm:Rtrees} so is its completion) is locally compact and 
$M^{\da}_\infty$ is proper and uniformly little $\alpha$-H\"older continuous on bounded balls, 
then we have a bound uniform in $n$ on both the size of 
the $\eps$-nets of balls in $\ST^\da_n$ and the local modulus of continuity of $M^{\da}_n$, so that tightness of 
the sequence 
follows readily from Proposition~\ref{p:Compactness}.

Let $r\ge 1$. We start by introducing an event on which $\ST^{\da,\,(r)}_\infty$ is enclosed between two paths. 
Let $R>r$, $Q_R^\pm$ be two squares of side $1$ centred at $(r+1,\pm(2R+1))$ and 
$z^\pm=(t^\pm,x^\pm)$ be two points in $\cD\cap Q_R^\pm$, respectively. 
By the non-crossing property of our coalescing paths, on the event
\begin{equ}\label{e:ER}
E_R\eqdef\{\sup_{0\geq s\geq -r}|\pid_0(s)|\leq R\,,\,\sup_{t^\pm\geq s\geq -r}|\pid_{z^\pm}(s)-x^\pm|\leq R\}
\end{equ}
any element $(s,\pid_z)\in\ST^{\da,\,(r)}_\infty$ with $z=(t,x)\in\cD$ is necessarily such that 
$s\in[-r, t\wedge r]$ and $\pid_{z^-}(s)<\pid_z(s)<\pid_{z^+}(s)$. 
Moreover, by the reflection principle, we have
\begin{equation}\label{e:RP}
\Prob(E_R^c)\leq C_1 \frac{\sqrt{r}}{R} e^{-\frac{R^2}{2r}}
\end{equation}
where $E^c_R$ is the complement of $E_R$ in $\Omega$, and $C_1$ is a positive constant independent of $r$ and $R$.

Now, in order to show that, almost surely, $\ST^{\da,\,(r)}_\infty$ is relatively compact, note that { 
we can brutally bound}
\begin{equ}
\Prob \bigl(\exists \eps \in (0,1]\,:\,\cN_{d}(\ST^{\da,\,(r)}_\infty,\eps) \ge K\eps^{-\theta}\bigr)
\le \sum_{n \ge 1} \Prob \bigl(\cN_{d}(\ST^{\da,\,(r)}_\infty,2^{-n}) \ge K2^{\theta (n-1)}\bigr)\;.
\end{equ}
The following lemma implies~\eqref{e:nMEC} (and in fact that $\P(c > K)\lesssim 1/\sqrt K$) and consequently 
relative compactness. 

\begin{lemma}\label{l:TB}
There exists a constant $C=C(r)>0$ such that
\begin{equation}\label{e:TB}
\P \big(\cN_{d}(\ST^{\da,\,(r)}_\infty,\eps) > K \eps^{-3/2}\big) \le {C\over \sqrt K}
\end{equation}
uniformly over $\eps\in(0,1]$ and $K\geq 1$.
\end{lemma}
\begin{proof}
Let $R>r$ and set $\tilde R\eqdef 3R+1$. For $t_0,t_1\in\R$, $t_0>t_1$, we define
\begin{equation}\label{e:CPS}
\Xi_R(t_0,t_1) \eqdef \left\{\rho(\sz,t_1)\,:\, M_{\infty,t}^{\da}(\sz)>t_0
\text{ and }M_{\infty,x}^{\da}(\rho(\sz,t_1))\in[-\tilde R,\tilde R]\right\}
\end{equation}
where $\rho$ is the radial map of $\ST^\da_\infty$ defined as in~\eqref{e:RadMap}, and set 
$\eta_R(t_0,t_1)$ to be the cardinality of $\Xi_R(t_0,t_1)$. 
By the definition of $\ST^\da_\infty$, $\eta^R(t_0,t_1)$ has the same distribution as 
the quantity $\hat\eta(t_0,t_1;-\tilde R,\tilde R)$ of~\cite[Definition 2.1]{FINR}, 
which is almost surely finite by~\cite[Proposition 4.1]{FINR}. 

Consider the numbers $L_\eps$ and the times $t_k^\eps$ given by 
\begin{equation}\label{def:partition}
L_\eps \eqdef \Big\lceil \frac{8r}{\eps}\Big\rceil + 1,\,\qquad t_k^\eps\eqdef r-k\frac{\eps}{4}\,,\qquad k=0,\dots,L_\eps-1,
\end{equation}  
where, for $x\in\R$, $\lceil x\rceil$ is the least integer greater than $x$. 
We now claim that, on the event $E_R$,
\begin{equ}[e:claimBoundNeps]
\cN_{d}(\ST^{\da,\,(r)}_\infty,\eps) \le \sum_{k=0}^{L_\eps-1}\eta^R(t_k^\eps,t_{k+1}^\eps)\;.
\end{equ}
Indeed, if $(t,\pid_z)\in\ST^{\da,\,(r)}_\infty$ for some $t \in \R$ and $z\in\cD$, then by definition of the metric $t\in[-r,r]$ and 
$\pid_{z^-}(t)<\pid_z(t)<\pid_{z^+}(t)$, since we are on $E_R$. 
Then, there exists $k\in\{0,\dots,L_\eps-1\}$ such that $t\in[t_{k+1}^\eps,t_{k}^\eps]$ and, consequently, 
a unique element $\sz\in\Xi_R(t_{k+1},t_{k+2})$, necessarily belonging to $\ST^{\da,\,(r)}_\infty$, 
such that, by the coalescing property, $\rho((t,\pid_z),t_{k+2}^\eps) = \sz$. Since $d^\da((t,\pid_z),\sz)\leq\eps/2<\eps$,
\eqref{e:claimBoundNeps} follows.
Therefore, we obtain 
\begin{equs}\label{e:CardNet}
\Prob\big(\cN_{d}&(\ST^{\da,\,(r)}_\infty,\eps) \ge N\big) \le 
\Prob\Big(E_R^c\cup \Big\{\sum_{k=1}^{L_\eps} \eta^R(t_k^\eps,t_{k+1}^\eps)> N\Big\}\Big)\\
&\leq \Prob(E_R^c)+ N^{-1} \,\sum_{k=1}^{L_\eps} \E [\eta^R(t_k^\eps,t_{k+1}^\eps)] \leq C_1 \frac{\sqrt{r}}{R} e^{-\frac{R^2}{2r}} + C_2 \frac{L_\eps R}{\sqrt{\eps}N}\;,
\end{equs}
for some constant $C_2 > 0$,
where the last inequality follows from~\cite[Proposition 6.2.7]{SSS}. 
Setting $N = K\eps^{-3/2}$, it suffices to choose  $R = \sqrt K$ to obtain~\eqref{e:TB}.
\end{proof}

We now focus on the H\"older continuity of the map $M_\infty^{\da}\restr \ST^{\da,\,(r)}_\infty$. 
In this case, it suffices to show that 
\begin{equation}\label{e:EC}
\limsup_{\eps\to0}\Prob\big(\sup \{\|M_\infty^{\da}(\sz)-M_\infty^{\da}(\sz')\|\,:\,\sz, \sz'\in\ST^{\da,\,(r)}_\infty
\text{ s.t. }d^\da(\sz, \sz')\leq\eps\}\leq\eps^{\alpha}\big)=1\;,
\end{equation}
for any fixed $\alpha<1/2$ (then taking at most an even smaller $\alpha$ one deduces the {\it little} H\"older property). 
We claim that, on the event $E_R$, $M_\infty^{\da}\restr \ST^{\da,\,(r)}_\infty$ is $\alpha$-H\"older continuous 
provided that 
the paths $\pid_z$, $z\in\cD$, restricted to the box $\Lambda_{r,R}\eqdef [-r,r]\times[-\tilde R,\tilde R]$ 
satisfy a suitable equi-H\"older continuity condition. 
The latter can be stated in terms of a modulus of continuity of the form (see also the proof of~\cite[Theorem 6.2.3]{SSS})
\begin{equ}
\Psi_{\ST^\da_\infty,R,r}(\eps)\eqdef \sup\{|\pid_z(s)-\pid_z(t)|\,:\,z\in\cD\,,M_\infty^{\da}(s,\pid_z)\in\Lambda_{r,R}\,,\,t\in[s,s+\eps]\}
\end{equ} 
for $\eps\in(0,1)$. Indeed, on $E_R$, assume $\Psi_{\ST^\da_\infty,R,r}(\eps)\leq \eps^\alpha/2$ and let 
$(s,\pid_z),\,(t,\pid_{z'})\in\ST^{\da,\,(r)}_\infty$ be such that $d^\da((s,\pid_z),(t,\pid_{z'}))\leq\eps$. 
Then, necessarily, $M_\infty^{\da}(s,\pid_z)$, $M_\infty^{\da}(s,\pid_{z'})$ $\in \Lambda_{r,R}$ and both 
$s-\tau^\da_{s,t}(\pid_z,\pid_{z'})$ and $t-\tau^\da_{s,t}(\pid_z,\pid_{z'})\leq \eps$. 
Therefore, by the coalescing property,
\begin{equs}
\|M_\infty^{\da}&(s,\pid_z)-M_\infty^{\da}(s,\pid_{z'})\|= |\pid_z(s)-\pid_{z'}(t)|\vee|t-s|\\
&\leq\Big(|\pid_z(\tau^\da_{s,t}(\pid_z,\pid_{z'}))-\pid_z(s)|+|\pid_{z'}(\tau^\da_{s,t}(\pid_z,\pid_{z'}))-\pid_{z'}(t)|\Big)\vee|t-s|\leq\eps^\alpha\,.
\end{equs} 
The following lemma concludes the proof of~\eqref{e:EC}. 

\begin{lemma}\label{l:ModCont}
There exists a constant $C=C(r)>0$ such that 
\begin{equ}[e:ProbMC]
\P\Big(\Psi_{\ST^\da_\infty,R,r}(\eps)>\frac{\eps^\alpha}{2}\Big)\leq \frac{C}{\eps^{2\alpha+\frac{1}{2}}}e^{-\eps^{2\alpha-1}}
\end{equ}
uniformly over $\eps\in(0,1]$. 
\end{lemma}
\begin{proof}
We proceed similarly to what was done in the proofs of~\cite[Proposition B.1 and B.3]{FINR} and 
in~\cite[Theorem 6.2.3]{SSS}. 
We introduce the grid $G^{\eps}_{r,R}\eqdef\{(n\,\eps,m\,\eps^\alpha/4)\,:\,m,n\in\Zint\}\cap\Lambda_{r,R}$. 
For any $z_0=(t_0,x_0)\in G^{\eps}_{r,R}$, we define the rectangles 
$R_{z_0}^-=[t_0+\eps/4,t_0+\eps/2]\times[x_0-7\eps^\alpha/32,x_0-5\eps^\alpha/32]$ and 
$R_{z_0}^+=[t_0+\eps/4,t_0-\eps/2]\times[x_0+5\eps^\alpha/32,x_0+7\eps^\alpha/32]$ and 
consider two points $z_0^\pm\in\cD\cap R_{z_0}^\pm$. 
Let $\pid_{z_0^\pm}$ be the backward Brownian motions starting from $z_0^\pm$ respectively. 

Assume now that $\Psi_{\ST^\da_\infty,R,r}(\eps)>\eps^\alpha/2$, 
then there exists a path $\pid_z$, $z\in\cD$ such that $|\pid_z(s)-\pid_z(t)|>\eps^\alpha/2$, 
for some $s$ for which $(\pid_z(s),s)\in\Lambda_{r,R}$ and $t\in[s-\eps,s]$. 
Then, pick the closest point $z_0=(t_0,x_0)\in G_{R,r}^{\eps}$, 
for which $|\pid_z(s)-x_0|\leq\eps^\alpha/8$ and $|s-t_0|\leq\eps$. 
By the coalescing property of our paths, it follows that necessarily one between $\pid_{z_0^\pm}$ must be such that
\begin{equ}
\sup_{h\in[0,2\eps]}|\pid_{z_0^\pm}(t_0-h)-x_0|\geq\eps^\alpha/32\,.
\end{equ}
Let $E_{R,r}^{\eps}(z_0)\eqdef \{\sup_{h\in[0,2\eps]}|\pid_{z_0^\pm}(t_0-h)-x_0|\leq\eps^\alpha/32\}$, then, again by the reflection principle we have
\begin{equs}
\Prob(E_R^c\cup(E_{R,r}^{\eps})^c)&\leq \Prob(E_R^c)+\sum_{z_0\in G_{R,r}^{\eps}}\Prob((E_{R,r}^{\eps}(z_0))^c)\\
&\leq C_1 \frac{\sqrt{2r +1}}{R} e^{-\frac{R^2}{2r+1}}+ C_3 \frac{Rr}{\eps^{2\alpha-1/2}} e^{-\eps^{2\alpha-1}}
\end{equs}
and upon taking $R=\eps^{-1}$,~\eqref{e:ProbMC} follows. 
\end{proof}

We now want to show properness of $M_\infty^{\da}$, which is a direct consequence of the following lemma. 

\begin{lemma}\label{l:Prop}
There exists a constant $c>0$ independent of $r$ such that for any $K>0$ sufficiently large
\begin{equ}[e:Prop]
\Prob\Big(b_{\zeta^\da_\infty}(r)\geq K\Big)\leq c\frac{r}{\sqrt[4]{K}}
\end{equ}
where $b_{\zeta^\da_\infty}$ is the properness map given in~\eqref{def:PropMap}.  
\end{lemma}
\begin{proof}
Let $R>1$ and consider two squares $\tilde Q_R^\pm$ of side $1$ centred at $(r+1,\pm(r+R+1))$. 
Let $\tilde z^\pm=(\tilde t^\pm, \tilde x^\pm)$ be two points respectively belonging to $\tilde Q_R^\pm\cap\cD$, 
and without loss of generality, assume $\tilde t^+=\tilde t^-=\tilde t$.
Let $\pi^\da_{\tilde z^\pm}$ be the two paths starting from $\tilde z^\pm$. 
For $K>4r$, we introduce the event
\begin{equ}\label{e:EvProp}
\tilde E_{R,r}^K\eqdef\Big\{\sup_{-r\leq s\leq \tilde t} \pi^\da_{\tilde z^-}(s)<-r\,,\,\inf_{-r\leq s\leq \tilde t} \pi^\da_{\tilde z^+}(s)>r\,,\,\tau^\da(\pi^\da_{\tilde z^+},\pi^\da_{\tilde z^-})>r-\frac{K}{2}\Big\}\,,
\end{equ}
where in $\tau^\da$ we omitted the subscript since we imposed $\tilde t^+=\tilde t^-$. 
Notice that on $\tilde E_{R,r}^K$, for any point  $(s,\pi^\da_z)\in\ST_\infty^\da$ such that 
$M_\infty^{\da}(s,\pi^\da_z)\in \Lambda_r$, by the coalescing property, 
the trajectories of both $\pi^\da_0$ and $\pi^\da_z$, after time $s$ must be confined between those of 
$\pi^\da_{\tilde z^+}$ and $\pi^\da_{\tilde z^-}$. 
Therefore, $d^\da((\pi^\da_z,s),(\pi^\da_0,0))\leq 2r-2\tau^\da(\pi^\da_{\tilde z^+},\pi^\da_{\tilde z^-})<K$, 
so that one has
\begin{equ}
\Prob\Big(b_{\zeta^\da_\infty}(r)\geq K\Big) \le \P((\tilde E_{R,r}^K)^c)\;,
\end{equ}
independently of the choice of $R$. The reflection principle 
yields a bound of the type
\begin{equ}[e:PropQ]
\P((\tilde E_{R,r}^K)^c)
\le C \frac{\sqrt{r}}{R}e^{-\frac{R^2}{r}} + C \frac{R+r+1}{\sqrt{K}}\;,
\end{equ}
for some constant $C>0$, and \eqref{e:Prop} follows at once, upon choosing $R\eqdef \sqrt[4]{K}$. 
\end{proof}

{Since $\cD$, which is dense in $\R^2$, is contained in $M^{\da,\cD}(\ST^\da)$, and $M^{\da}$ is proper, }
surjectivity follows. At last,~\ref{i:TreeCond} {in Definition~\ref{d:TreeCond}} 
is a direct consequence of the fact that, almost surely, it holds for 
$\zeta^\da_\infty(\cD)$ by construction and Lemma~\ref{l:TreeCond}. 
\end{proof}

\begin{remark}\label{r:Hom}
Almost surely, the map $M^{\da,\cD}$ is continuous and proper on $\ST^\da_\infty(\cD)$. 
Moreover, it is bijective on its image (endowed with the usual Euclidean topology) by construction, 
hence $M^{\da,\cD}:\ST^\da_\infty(\cD)\to M^{\da,\cD}(\ST^\da_\infty(\cD))$ is a homeomorphism. 
\end{remark}

\begin{remark}\label{rem:p:PeriodicBW}
The previous proposition remains true if instead of the sequence $\zeta^\da_n(\cD)$
we take $\zeta^{\per,\da}_n(\cD^\per)$, $\cD^\per$ being a countable dense set of $\R\times\T$. 
The proof is actually simpler since it is not necessary to introduce the event in~\eqref{e:ER}. 
In the periodic setting, the convergence happens in $\T^\alpha_{\Sp,\per}$, the limit
$\zeta^{\per,\da}(\cD^\per)=(\ST^{\per,\da}_n(\cD),\ast^\da,d^\da,M^{\per,\da,\cD^\per})$ belongs to $\Ch^\alpha_{\Sp,\per}$ and 
$M^{\per,\da,\cD^\per}(\ST^{\per,\da}_n(\cD))=\R\times\T$. 
\end{remark}

The next theorem introduces and uniquely characterises the law on the space of directed trees 
of the random variable which in the sequel we will refer to as the {\it Brownian Web tree}. 
%
%

\begin{theorem}\label{thm:BW}
Let $\alpha<\tfrac12$. There exists a $\Ch^\alpha_\Sp$-valued random variable 
$\zeta^\da_\bw=(\ST^\da_\bw,\ast^\da_\bw,d^\da_\bw,M^\da_\bw)$ with radial map $\rho^\da$, such that
\begin{enumerate}
\item for any deterministic point $w=(s,y)\in\R^2$ there exists almost surely a unique point 
$\sw_w\in\ST^\da_\bw$ such that $M^{\da}_\bw(\sw_w)=w$, 
\item for any deterministic $n\in\N$ and $w_1=(s_1,y_1),\dots,w_n=(s_n,y_n)\in\R^2$, the joint distribution 
of $( M^{\da}_{\bw,x}(\rho^\da(\sw_{w_i},\cdot)))_{i=1,\dots,n}$ 
is that of $n$ coalescing backward Brownian motions starting at $w_1,\dots, w_n$, 
\item for any deterministic countable dense set $\cD$ such that $0\in\cD$, 
let $\tilde\ast^\da$ be the point determined in 1. associated to $0$. 
Define $\tilde\zeta_\infty^\da(\cD)=(\tilde\ST_\infty^\da(\cD),\tilde\ast^\da,d^\da,\tilde M_\infty^{\,\da,\cD})$ as
\begin{equation}\label{e:Coupling}
\begin{split}
\tilde\ST_\infty^\da(\cD)&\eqdef\{\rho^\da(\sw_w,t)\,:\,w=(s,y)\in\cD\,,\,t\leq s\}\\
\tilde M_\infty^{\,\da,\cD}(\rho^\da(\sw_w,t))&\eqdef M_\bw(\rho^\da(\sw_w,t))
\end{split}
\end{equation}
and $d^\da$ to be the ancestral metric in~\eqref{e:Skeleton}. 
Let $\tilde\ST^\da(\cD)$ be the completion of $\tilde\ST_\infty^\da(\cD)$ under $d^\da$,  
$\tilde M^{\,\da,\cD}$ be the unique little $\alpha$-H\"older continuous extension of $\tilde M_\infty^{\,\da,\cD}$ and 
$\tilde\zeta^\da(\cD)\eqdef(\tilde\ST^\da(\cD),\ast^\da,d^\da, \tilde M^{\,\da,\cD})$. Then,  
$\tilde \zeta^\da(\cD)\eqlaw\zeta^\da_\bw$.
\end{enumerate}
{The law of $\zeta^\da_\bw$ is uniquely characterised by the properties above} and 
almost surely, $\zeta^\da_\bw$ satisfies~\eqref{e:nMEC} for all $\theta>3/2$, 
$M^\da_\bw$ is surjective and~\ref{i:TreeCond} in Definition~\ref{d:TreeCond} holds. 
\end{theorem}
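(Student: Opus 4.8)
The plan is to fix once and for all a deterministic countable dense set $\cD_0\subset\R^2$ with $0\in\cD_0$, set $\zeta^\da_\bw\eqdef\zeta^\da(\cD_0)$ for the object produced by Proposition~\ref{p:BW}, and observe that the three ``moreover'' assertions — the bound~\eqref{e:nMEC}, surjectivity of $M^\da_\bw$, and property~\ref{i:TreeCond} — are then literally the last statements of Proposition~\ref{p:BW}. All that remains is to verify properties 1.--3. and the uniqueness of the law, and the backbone for everything is an \emph{independence of the dense set} statement: for any two countable dense $\cD,\cD'\ni 0$ one has $\zeta^\da(\cD)\eqlaw\zeta^\da(\cD')$. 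To obtain this I would use that $\zeta^\da(\cD)\in\Ch^\alpha_\Sp(\ft)$ almost surely by Proposition~\ref{p:BW}, while $K$ is continuous and injective on $\Ch^\alpha_\Sp(\ft)$ by Propositions~\ref{p:MapTopo} and~\ref{p:MapInj}; by Lusin--Souslin $K$ then has a Borel inverse on its image, so $\zeta^\da(\cD)=K^{-1}(K(\zeta^\da(\cD)))$ and it suffices to identify $K(\zeta^\da(\cD))$ in law. Since $\sz\mapsto\pi_\sz$ is continuous and $\ST^\da(\cD)$ is the $d^\da$-completion of $\ST^\da_\infty(\cD)$, $K(\zeta^\da(\cD))$ is the closure in $\Pi$ of the coalescing backward motions $\pid_z$, $z\in\cD$, truncated at all times $t\le t_z$; every such truncation lies in the FINR backward web (the web contains a path from $(t,\pid_z(t))$, which must then agree with $\pid_z$ below $t$ by the non-crossing/coalescing property), and conversely that web is the closure of $\{\pid_z:z\in\cD\}$, so $K(\zeta^\da(\cD))$ is the backward Brownian web of~\cite{FINR}, whose law does not depend on $\cD$ — and hence neither does that of $\zeta^\da(\cD)$.

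Granting this, I would deduce the three properties as follows. For property 1, given deterministic $w\in\R^2$ I enlarge $\cD_0$ to $\cD'\eqdef\cD_0\cup\{w\}$; since $\zeta^\da(\cD')\eqlaw\zeta^\da_\bw$ and ``$w$ has a unique preimage'' is a measurable event, it is enough to prove this for $\zeta^\da(\cD')$. Existence of a preimage is surjectivity; for uniqueness, almost surely no $\pid_z$ with $z\in\cD'\setminus\{w\}$ passes through the deterministic point $w$, so in the skeleton $\ST^\da_\infty(\cD')$ the only preimage of $w$ is $(t_w,\pid_w)$, any further preimage in the completion is an endpoint by Lemma~\ref{l:Rtrees}(2), and by~\ref{i:TreeCond} it would yield a \emph{second} web path issued from $w$, which is excluded because the Brownian web a.s.\ carries a single path from any deterministic point (\cite{FINR}, see also~\cite{SSS}). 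For property 2 with deterministic $w_1,\dots,w_n$ I argue identically with $\cD'\eqdef\cD_0\cup\{w_1,\dots,w_n\}$: the unique preimage of $w_i$ in $\zeta^\da(\cD')$ is $(t_{w_i},\pid_{w_i})$, whence $M^{\da,\cD'}_x(\rho^\da((t_{w_i},\pid_{w_i}),\cdot))=\pid_{w_i}$, and $(\pid_{w_1},\dots,\pid_{w_n})$ is a family of coalescing backward Brownian motions by the construction~\eqref{e:Skeleton}; transporting this along $\zeta^\da(\cD')\eqlaw\zeta^\da_\bw$ gives property 2.

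For property 3, fixing a deterministic countable dense $\cD\ni 0$ and forming $\tilde\zeta^\da(\cD)$ out of $\zeta^\da_\bw$ as in~\eqref{e:Coupling}, property 2 (now known for $\zeta^\da_\bw$) together with the fact that a countable family of paths is determined in law by its finite-dimensional marginals shows that $(M^\da_{\bw,x}(\rho^\da(\sw,\cdot)))_{w\in\cD}$ is a system of coalescing backward Brownian motions started from $\cD$; since the metric and evaluation map in~\eqref{e:Coupling} are the same explicit functionals of these paths as in~\eqref{e:Skeleton}, the skeleton $\tilde\zeta^\da_\infty(\cD)$ has the law of $\zeta^\da_\infty(\cD)$, and passing to completions — a measurable operation, realised e.g.\ as the $\Delta_\Sp$-limit of finite truncations as in Proposition~\ref{p:BW} — gives $\tilde\zeta^\da(\cD)\eqlaw\zeta^\da(\cD)\eqlaw\zeta^\da_\bw$. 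Finally, for uniqueness of the law: if $\zeta=(\ST,\ast,d,M)$ is any $\Ch^\alpha_\Sp$-valued random variable satisfying 1.--3., the same finite-marginal argument applied through 1.\ and 2.\ forces the skeleton $\tilde\zeta^\da_\infty(\cD)$ attached to $\zeta$ via~\eqref{e:Coupling} to have the law of $\zeta^\da_\infty(\cD)$, so its completion $\tilde\zeta^\da(\cD)$ has a law independent of $\zeta$, and by 3.\ so does $\Law(\zeta)$, which therefore equals $\Law(\zeta^\da_\bw)$ since $\zeta^\da_\bw$ itself satisfies 1.--3. The only genuinely non-formal ingredients are the identification $K(\zeta^\da(\cD))=$ FINR web and the input that a deterministic point of the Brownian web carries a single path; the remaining difficulty is purely bookkeeping around measurability of the completion map and of finite-dimensional marginals.
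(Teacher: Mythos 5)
Your overall architecture (take $\zeta^\da_\bw\eqdef\zeta^\da(\cD_0)$ from Proposition~\ref{p:BW}, prove independence of the dense set, adjoin deterministic points to get 1.--2., and run the marginal/completion argument for 3.\ and uniqueness) is a genuinely different route from the paper's, which proves 1.--3.\ directly by coupling and limiting arguments on its own construction and only \emph{afterwards} identifies $K(\zeta^\da_\bw)$ with the web of~\cite{FINR} (Corollary~\ref{cor:Topologies}). The last two thirds of your argument (property 2 via $\cD_0\cup\{w_1,\dots,w_n\}$, property 3 and uniqueness via finite-dimensional marginals plus Lemma~\ref{l:Emb}-type passage to completions) essentially reproduces the paper's own treatment of 3.\ and of uniqueness, and is fine modulo the measurability bookkeeping you acknowledge.

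The genuine gap is the step on which everything else rests: the identification $K(\zeta^\da(\cD))=$ FINR backward web. The inclusion ``web $\subseteq K(\zeta^\da(\cD))$'' is clear, but for the converse you need every truncation of $\pid_z$, $z\in\cD$, at an arbitrary time $t\le t_z$ to lie in the closure of $\{\pid_{z'}:z'\in\cD\}$, and your justification --- ``the web contains a path from $(t,\pid_z(t))$, which must then agree with $\pid_z$ below $t$ by the non-crossing/coalescing property'' --- is not valid as stated: at points of type $(1,2)$ (which occur along every path) there is a web path issued from a point lying on $\pid_z$ that separates from $\pid_z$ immediately below, without crossing it, so non-crossing alone forces nothing. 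What is true, and what you actually need, is that the \emph{continuation} of an incoming path below any point on it coincides with one of the outgoing web paths; this is a nontrivial fact, available either from the special-points analysis of~\cite{FINRb} (external to this paper, so no circularity) or by an approximation argument (choose $z_n\in\cD$ just above $(t,\pid_z(t))$, control the coalescence time via Borel--Cantelli for a countable dense set of truncation times, then use closedness of the web and continuity of $\sz\mapsto\pi_\sz$). But that is precisely the kind of wedge/triangle work the paper performs directly in its proof of property 1.\ (via the event of~\cite[Proposition~3.1]{FINRa}), so your detour through $K$, Proposition~\ref{p:MapInj} and Lusin--Souslin does not avoid the hard step --- it relocates it and then skips it. Note also that your uniqueness argument for property 1.\ additionally invokes property $(o)$ of~\cite[Theorem~2.1]{FINR} (a single path from each deterministic point), which is legitimate as a citation once the identification is actually proved, whereas the paper's direct argument needs no such input. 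With the truncation-closure step properly argued (or correctly cited), your proof goes through; as written, it does not.
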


\begin{proof}
Let $\cD$ be a countable dense set of $\R^2$ containing $0$. 
Thanks to Proposition~\ref{p:BW}, for $\alpha<\tfrac12$, $\zeta^\da(\cD)$ almost surely belongs to $\Ch^\alpha_\Sp$ 
so, if we show that it satisfies properties 1.-3. above, then the existence part of the statement 
follows. 

In order to prove 1., let $w=(s,y)\notin\cD$, and consider two sequences of points $z_n^\pm=(t_n^\pm,x_n^\pm)\in\cD$ 
for which there exist two constants $c^\pm>0$ such that 
\begin{equs}
y-\frac{c_1}{n^2}<x_n^-<\,&y<x_n^+<y-\frac{c_2}{n^2}\\
s<t_n^-<s+|x_n^-|^3\quad&\text{and }\,\, s<t_n^+<s+|x_n^+|^3\,.
\end{equs}
For every $n\in\N$, let $\pid_{z_n^\pm}$ be the two backward Brownian motions starting at $z_n^\pm$ respectively. 
Denote by $\tau_n=\tau^\da_{t_n^-,t_n^+}(\pid_{z_n^-},\pid_{z_n^+})$ and $X_n=\pid_{z_n^-}(\tau_n)=\pid_{z_n^+}(\tau_n)$ 
the time and spatial point at which they coalesce. 
Define $\Delta_n$ as the triangular region in $\R^2$ with vertices $z_n^\pm$ and $(\tau_n,X_n)$, 
the base being given by the segment joining $z_n^-$ and $z_n^+$, while the sides by the paths
$(r,\pid_{z_n^-}(r))_{t_n^-\geq r\geq \tau_n}$, $(r,\pid_{z_n^+}(r))_{t_n^+\geq r\geq \tau_n}$.

In the proof of~\cite[Proposition 3.1]{FINRa} the authors show that the event 
\begin{equ}
E_n\eqdef\Big\{\pid_{z_n^-}(s)<y<\pid_{z_n^+}(s)\,,\,\tau_n\geq s-1/n\,,\,|X_n-y|<n^{-1/4}\Big\}
\end{equ}
occurs infinitely often. Hence, for any sequence $z_m=(t_m,x_m)\in\cD$ converging to $w$, 
for all $n\in\N$ large enough there exists $m_n\in\N$ such that, for all $m\geq m_n$, $z_m\in \Delta_n$.  
The coalescing property then implies that for every $m_1,\,m_2\geq m_n$, 
\begin{equ}
d^\da((t_{m_1},\pid_{z_{m_1}}), (t_{m_2},\pid_{z_{m_2}}))\leq (t_{m_1}+t_{m_2})-2\tau_n\leq (t_{m_1}-s)+(t_{m_2}-s)+2/n\,.
\end{equ} 
In other words, for any $z_m=(t_m,x_m)\in\cD$ converging to $w$, $(t_m,\pid_{z_m})_{m\in\N}$ is Cauchy in 
$\ST_\infty^\da(\cD)$ therefore it converges in $\ST^\da(\cD)$ to a unique point $\sw_w$ which, by continuity of 
$M^{\da,\cD}$, is necessarily such that $M^\cD(\sw_w)=w$. 

Moreover, by construction we know that $\rho^\da((t_m,\pid_{z_m}),t)=(\pid_{z_m},t)$ for all $t\leq t_m$ and, 
since at $\tau_n$ the ray starting at $\sw_w$ must have coalesced with that starting at $(\pid_{z_n},t_n)$, 
we must have $\rho^\da(\sw,t)=\rho^\da((\pid_{z_n},t_n),t)$ for any $t\leq\tau_n$. 
Hence, the sequence of paths 
$(-\infty,t_m]\ni t\mapsto M^{\da,\cD}_x(\rho^\da((t_m,\pid_{z_m}),t))=M^{\da,\cD}_x(t,\pid_{z_m})$ converges to 
$(-\infty,s]\ni t\mapsto M^{\da,\cD}_x(\rho^\da(\sw_w,t))$ in $\Pi$, where $\Pi$ is given as in Section~\ref{s:Topo}. 
Since $( M^{\da,\cD}_x(\rho^\da((t_m,\pid_{z_m}),t)))_{t\leq t_m}$ is distributed according to a backward Brownian motion 
starting at $z_m$, $(M^{\da,\cD}_x(\rho^\da(\sw_w,t))_{t\leq s}$ is itself distributed according 
to a backward Brownian motion, but starting at $w$. 

For 2., let $w_1,\dots,w_n$ be $n$ deterministic points in $\R^2$ and $\sw_{w_1},\dots,\sw_{w_n}$ 
be the points in $\ST^\da(\cD)$ determined by applying 1.
Thanks to the last part of the proof of 1., if $z_{m_i}=(t_{m_i},x_{m_i})$ is a sequence in $\cD$ converging to 
$w_i$, $i\in[n]$, then the paths 
$(M^{\da,\cD}_x(\cdot,\pid_{z_{m_i}}))_{i\in[n]}$ 
converge to $(M^{\da,\cD}_x(\rho^\da(\sw_{w_i},\cdot))_{i\in[n]}$ in $\Pi^n$. 
Since the first are distributed as coalescing backward Brownian motions starting from $(z_{m_1},\dots,z_{m_n})$, 
it is easy to see that the limit will be also distributed according to coalescing Brownian motions starting 
from $(w_1,\dots,w_n)$.

We now prove 3., for which we proceed as follows. Let $\cD'$ be another countable dense set in 
$\R^2$ containing $(0,0)$. We want to determine a suitable coupling of $\zeta^\da(\cD)$ and $\tilde\zeta^\da(\cD')$ 
under which they are almost surely equal. 
We first construct $\zeta^\da(\cD')$ as in~\eqref{e:Skeleton} and Proposition~\ref{p:BW}.  Then, we build 
$\tilde\zeta_\infty^\da(\cD')=(\tilde\ST_\infty^\da(\cD'),\tilde\ast^\da,d^\da,\tilde M_\infty^{\,\da,\cD'})$ inside $\zeta^\da(\cD)$ 
according to~\eqref{e:Coupling}. 
Obviously $\zeta^\da(\cD')$ and $\tilde\zeta^\da(\cD')$ are equal in distribution, 
and the latter is such that $\tilde\ST^\da(\cD')\subseteq \ST^\da(\cD)$, $\tilde\ast^\da=\ast^\da$ and 
$M^{\da,\cD}\restr \tilde\ST^\da(\cD')= \tilde M^{\,\da,\cD'}$. 
Therefore, if we are able to show that $\tilde\ST^\da(\cD')$ coincides with $\ST^\da(\cD)$, we are done. 
We claim that if $z\in\cD$ and {$\sz_z\in\ST^\da(\cD)$ is the unique point such that $M^{\da,\cD}(\sz_z)=z$ 
(which holds by 1.) then $\sz_z$} also belongs to $\tilde\ST^\da(\cD')$. 
Notice that if this is the case then 
for {\it all} $z\in\cD$, $\sz_z\in\tilde\ST^\da(\cD')$. 
It follows that all the rays starting from these $\sz_z$'s are contained in $\tilde\ST^\da(\cD')$ and 
hence also the closure of their union, 
which by construction is $\ST^\da(\cD)$. {Hence, the proof of 3. is complete 
once we show the claim}. 

Let $z\in\cD$ and $\sz_z\in\ST^\da(\cD)$ be as above. 
Let $w_n=(s_n,y_n)$ be a sequence in $\cD'$ converging to $z$ in $\R^2$. 
By 1., we know that for all $n$ there exists a unique point $\sz_{w_n}$ in $\ST^\da(\cD)$ such that 
$M^{\da,\cD}(\sw_n)=w_n$ and since $\tilde\ST_\infty^\da(\cD')\subseteq\ST^\da(\cD)$ and, by construction, there is 
a unique point in $\tilde\ST_\infty^\da(\cD')$ whose image is $w_n$, it follows that $\sz_{w_n}\in\tilde\ST_\infty^\da(\cD')$. 
Now, the map $M^{\da,\cD}$ is proper and the sequence $\{w_n\}_n$ is bounded, 
therefore the sequence $\{\sz_{w_n}\}_n$ is also bounded and it converges along subsequences. 
Fix one of these subsequences (that, with a slight abuse of notation, will still be indexed by $n$) and 
notice that by continuity of $M^{\da,\cD}$ and uniqueness of $\sz_z$, 
we necessarily have that $(\sz_{w_n})_n$ converges to $\sz_z$ in $\ST^\da(\cD)$. 
Now, since $\{\sz_{w_n}\}_n$ converges, it is Cauchy and since it is contained in $\tilde\ST_\infty^\da(\cD')$,
the limit must belong to $\tilde\ST^\da(\cD')$.

It remains to argue uniqueness and the properties of the limit. 
{Uniqueness immediately follows since conditions 1-3 above imply conditions 1-4 in 
Theorem~\ref{thm:Main}. On the other hand, we have just shown that $\zeta^\da(\cD)$ 
satisfies 1-3 and, by Proposition~\ref{p:BW} 
also the other claimed properties, so that the proof of the statement is concluded. }
\end{proof}

\begin{remark}\label{rem:t:PeriodicBW}
The theorem above remains true upon replacing conditions $1.$-$3.$ with $1_\per.$, $2_\per.$ and $3_\per.$, 
obtained from the former by adding the word ``periodic'' before any instance of ``Brownian motion'', and taking 
the periodic version of all objects and spaces in the statement. 
\end{remark}

\begin{definition}\label{def:BW}
Let $\alpha<\tfrac12$. We define  {\it backward Brownian Web 
Tree} and {\it periodic backward Brownian Web tree}, the $\Ch^\alpha_\Sp$ and $\Ch^\alpha_{\Sp,\per}$ 
random variables  $\zeta^\da_\bw=(\ST^\da_\bw,\ast^\da_\bw,d^\da_\bw,M^\da_\bw)$ and 
$\zeta^{\per,\da}_\bw=(\ST_\bw^{\per,\da},\ast_\bw^{\per,\da}, d_\bw^{\per,\da},M_\bw^{\per,\da})$ 
whose distributions is uniquely characterised by properties $1.$-$3.$ in Theorem~\ref{thm:BW} and 
$1_\per.$, $2_\per.$ and $3_\per.$ in Remark~\ref{rem:t:PeriodicBW}. 
We will denote their respective laws by $\Theta^\da_\bw(\dd \zeta)$ 
and $\Theta_\bw^{\per,\da}(\dd\zeta)$.
\end{definition}

As a first property of the Brownian Web tree, which can be deduced by Theorem~\ref{thm:BW} 
and the results stated therein, we determine its Minkowski, also known as box-covering, dimension. 
Recall that the box-covering dimension of a (compact) metric space $(T,d)$ is given by 
\begin{equ}[e:BoxDim]
\dim_{\mathrm{box}} (T)\eqdef \lim_{\eps\to 0} \frac{\log\cN_d(T,\eps)}{\log \eps^{-1}}
\end{equ}
when this limit exists. 

\begin{corollary}\label{cor:BCdim}
Let $\zeta^\da_\bw=(\ST^\da_\bw,\ast^\da_\bw,d^\da_\bw,M^\da_\bw)$ be 
the Brownian Web tree of Definition~\ref{def:BW}. Then, almost surely $\ST^\da_\bw$ 
has box-covering dimension $\tfrac32$. 
\end{corollary}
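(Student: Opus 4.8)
The plan is to establish matching upper and lower bounds for $\cN_{d^\da_\bw}(\ST^{\da,(r)}_\bw,\eps)$ as $\eps\to 0$, for every fixed $r>0$, and then take the limit defining $\dim_{\mathrm{box}}$. Strictly speaking, $\ST^\da_\bw$ is unbounded, so ``box-covering dimension'' must be read as that of each closed ball $\ST^{\da,(r)}_\bw$ (equivalently, one works with the restricted metric measure structure); I would open with a sentence clarifying this, since \eqref{e:BoxDim} was stated for compact spaces. The upper bound is essentially already done: by Theorem~\ref{thm:BW} (or directly Proposition~\ref{p:BW}, via the equality in law $\zeta^\da_\bw\eqlaw\zeta^\da(\cD)$) we have, almost surely, for every $\theta>\tfrac32$ and every $r>0$ a constant $c=c(r)$ with $\cN_{d^\da_\bw}(\ST^{\da,(r)}_\bw,\eps)\le c\,\eps^{-\theta}$ for all $\eps\in(0,1]$; hence $\limsup_{\eps\to0}\log\cN_{d^\da_\bw}(\ST^{\da,(r)}_\bw,\eps)/\log\eps^{-1}\le\theta$, and letting $\theta\da\tfrac32$ gives the bound $\le\tfrac32$.

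For the lower bound I would show that almost surely, for every $r>0$ and every $\theta'<\tfrac32$, one has $\cN_{d^\da_\bw}(\ST^{\da,(r)}_\bw,\eps)\ge c'\eps^{-\theta'}$ for all small $\eps$. The natural route is to produce, for each small $\eps$, a large family of points in $\ST^{\da,(r)}_\bw$ that are pairwise $d^\da_\bw$-separated by at least $\eps$. Fix a small time window, say times in $[-1,0]$ for concreteness inside $\ST^{\da,(r)}_\bw$ with $r\ge 1$, and consider, via property~2 of Theorem~\ref{thm:BW}, the trajectories $\pi^\da_{\sw}$ through a deterministic net of starting points $w=(s_0,y)$ with $y$ ranging over $\eps^{1/2}\Z\cap[-1,1]$ at a fixed starting time $s_0$ slightly above $0$. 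Two such points $\rho^\da(\sw,s_0)$ and $\rho^\da(\sw',s_0)$ are at $d^\da_\bw$-distance $2(s_0-\tau^\da)$ where $\tau^\da$ is their coalescence time; so the number of $\eps$-separated points is comparable to the number of the corresponding coalescing Brownian motions that have \emph{not} yet coalesced by time $s_0-\eps/2$. The key quantitative input is the classical fact that, starting $N=\eps^{-1/2}$ coalescing Brownian motions spaced $\eps^{1/2}$ apart and running them for time of order $\eps$, a positive fraction (in expectation $\asymp\eps^{-1/2}$, i.e.\ $\asymp N$) survive distinct — this is exactly the kind of first/second-moment computation underlying $\Exp[\hat\eta]\asymp R/\sqrt\eps$ used in Lemma~\ref{l:TB} via \cite[Proposition 2.7]{SSS}. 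Summing over $\asymp\eps^{-1}$ disjoint time slabs of width $\eps$ inside $[-r,r]$ gives $\asymp\eps^{-3/2}$ separated points, hence the matching lower exponent.

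The main obstacle is the almost-sure (rather than in-expectation) lower bound: one needs concentration, uniformly down a sequence $\eps=2^{-n}$, for the count of surviving non-coalesced particles, so that a Borel--Cantelli argument applies. I would handle this by a second-moment/Chebyshev estimate on $\eta^R(t_0,t_1)$ (the density of non-coalesced trajectories is known very precisely — it is $\asymp(t_0-t_1)^{-1/2}$ with Gaussian-type tails, cf.\ \cite{FINR,SSS}), combined with the observation that contributions from well-separated time slabs are close to independent, or more simply by using the exact one-point density plus a crude variance bound within each slab and then Borel--Cantelli along $2^{-n}$ together with the monotonicity of $\eps\mapsto\cN_{d^\da_\bw}(\cdot,\eps)$ to fill in intermediate scales. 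One should also confine everything to the high-probability event $E_R$ of \eqref{e:ER} (with $R$ growing slowly in $n$) so that all the relevant trajectories stay inside $\ST^{\da,(r)}_\bw$, exactly as in the proof of Lemma~\ref{l:TB}. Once both bounds hold on a single almost-sure event, \eqref{e:BoxDim} yields $\dim_{\mathrm{box}}(\ST^{\da,(r)}_\bw)=\tfrac32$ for every $r$, which is the assertion.
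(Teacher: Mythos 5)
Your proposal follows essentially the same route as the paper: the upper bound is read off from~\eqref{e:nMEC} in Theorem~\ref{thm:BW}, and the lower bound is obtained by exhibiting order $\eps^{-3/2}$ pairwise $\eps$-separated points via the counts of non-coalesced trajectories used in Lemma~\ref{l:TB} (the paper only sketches this step, deferring to the ideas of Proposition~\ref{p:BW} and Lemma~\ref{l:TB}). Your additional care about the almost-sure concentration (second moment, Borel--Cantelli along dyadic scales, monotonicity in $\eps$) and about localising to the ball $\ST^{\da,(r)}_\bw$ is exactly the kind of detail the paper's argument implicitly relies on, so the proposal is correct.
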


\begin{remark}
In~\cite{DEFKZ}, the authors consider a family of coalescing particles starting from every point on the circle 
and, in Theorem 11.2 of that article, show that, as a metric space, it has Hausdorff dimension equal to $1/2$. 
Translated to our setting this says 
that the periodic Brownian Web at a fixed time slice has Hausdorff dimension equal to $1/2$. 
\end{remark}

\begin{proof}
According to~\eqref{e:BoxDim}, it suffices to determine almost sure upper and lower bounds for 
$\cN_{d^\da_\bw}(\ST^{\da,\,(r)}_\bw,\eps)$ of the same order, for all $r>0$. 
Now, the upper bound follows by the fact that, by Theorem~\ref{thm:BW}, 
almost surely $\ST^{\da}_\bw$ satisfes~\eqref{e:nMEC} {for all $\theta>3/2$}. 
{For the lower bound, we need to show that almost surely for all $r>0$, $\kappa>0$ 
there exists a random constant $C=C(r,\kappa)>0$ such that $\cN_{d^\da_\bw}(\ST^{\da,\,(r)}_\bw,\eps)\geq C\eps^{\kappa-\frac32}$. 
This in turn follows at once, provided we prove that for all $\delta>0$, $r>0$ and $\kappa>0$, 
there exists $K>0$ such that 
\begin{equ}[e:LB]
\P(\cN_{d^\da_\bw}(\ST^{\da,\,(r)}_\bw,\eps)\leq K\eps^{\kappa-\frac32})\leq \delta\,,\qquad\text{for all $\eps\in(0,1]$.}
\end{equ}
We now fix $\delta,r,\kappa$. 
To control the probability on the left-hand side of~\eqref{e:LB}, by the reflection principle, we know that 
we can find $t<0$ and $x>0$ such that the event 
\begin{equ}
E_{t,x}\eqdef \Big\{\sup_{s\in[t,0]}|\pi^\da_{(0,0)}(s)|<\tfrac{x}{4}\,,\,\sup_{s\in[t,0]}|\pi^\da_{(0,x)}(s)-x|<\tfrac{x}{4}\,,\,\tau_{0,0}^\da(\pi^\da_{(0,0)}, \pi^\da_{(0,x)})\leq \tfrac{r}{2}\Big\}
\end{equ}
has probability bigger than $1-\delta/2$. Notice that, on $E_{t,x}$, 
$(M^\da_\bw)^{-1}([t,0]\times [ x/4,  3x/4])\subset \ST^{\da,(r)}_\bw$. 
Define $L_\eps=\lceil t/\eps\rceil + 1$ and $t_k^\eps\eqdef -k\eps$, $k=0,\dots, L_\eps-1$. 
Therefore, arguing as in the proof of~\eqref{e:claimBoundNeps} we have 
\begin{equ}
\cN_{d^\da_\bw}(\ST^{\da,\,(r)}_\bw,\eps)\geq \sum_{k=0}^{L_\eps-1}\eta^x(t_k^\eps,t_{k+1}^\eps)
\end{equ}
where, for $b<a$, $\eta^x(a,b)$ is the cardinality of $\Xi_{x}(t_0,t_1)$  defined 
as in~\eqref{e:CPS}, but with the interval $[-\tilde R,\tilde R]$ replaced by 
$[ x/4,  3x/4]$. 
Then, we have 
\begin{equs}
\P(\cN_{d^\da_\bw}(\ST^{\da,\,(r)}_\bw,\eps)\leq C\eps^{\kappa-\frac32})&\leq \frac\delta2+\P(E_{t,x}\cap\{\cN_{d^\da_\bw}(\ST^{\da,\,(r)}_\bw,\eps)\leq C\eps^{\kappa-\frac32}\})\\
&\leq \frac\delta2+\P\Big(\sum_{k=0}^{L_\eps-1}\eta^x(t_k^\eps,t_{k+1}^\eps)\leq C\eps^{\kappa-\frac32}\Big)\,.
\end{equs}
For the right-hand side, we notice that for any $\tilde K>0$ we have
\begin{equs}\label{e:Variance}
\P&\Big(\sum_{k=0}^{L_\eps-1}\eta^x(t_k^\eps,t_{k+1}^\eps)\leq C\eps^{\kappa-\frac32}\Big)\\
&=\P\Big(\sum_{k=0}^{L_\eps-1}\eta^x(t_k^\eps,t_{k+1}^\eps)\leq C\eps^{\kappa-\frac32}\,,\,\Big|\sum_{k=0}^{L_\eps-1}(\eta^x(t_k^\eps,t_{k+1}^\eps)-\E[\eta^x(t_k^\eps,t_{k+1}^\eps)])\Big|>\tilde K\Big)\\
&\quad+\P\Big(\sum_{k=0}^{L_\eps-1}\eta^x(t_k^\eps,t_{k+1}^\eps)\leq C\eps^{\kappa-\frac32}\,,\,\Big|\sum_{k=0}^{L_\eps-1}(\eta^x(t_k^\eps,t_{k+1}^\eps)-\E[\eta^x(t_k^\eps,t_{k+1}^\eps)])\Big|\leq\tilde K\Big)\,.
\end{equs}
Now, the first summand is bounded above by 
\begin{equs}[e:1]
\P\Big(\Big|\sum_{k=0}^{L_\eps-1}&(\eta^x(t_k^\eps,t_{k+1}^\eps)-\E[\eta^x(t_k^\eps,t_{k+1}^\eps)])\Big|>\tilde K\Big)\leq \frac{{\rm Var}\Big(\sum_{k=0}^{L_\eps-1}\eta^x(t_k^\eps,t_{k+1}^\eps)\Big)}{\tilde K^2}\\
&\lesssim \frac{\eps^{-1}}{\tilde K^2}\sum_{k=0}^{L_\eps-1}{\rm Var}(\eta^x(t_k^\eps,t_{k+1}^\eps))\leq 2\frac{\eps^{-1}}{\tilde K^2}\sum_{k=0}^{L_\eps-1}\E[\eta^x(t_k^\eps,t_{k+1}^\eps)]\lesssim  \frac{\eps^{-5/2}}{\tilde K^2}
\end{equs}
where in the penultimate step we used that $\eta_x$ is negatively 
correlated \cite[Lemma~C.4]{GSW}, 
so that~\cite[Lemma~C.5]{GSW} implies that its variance is bounded above by twice its mean, 
and in the last step we exploited~\cite[Proposition 6.2.7]{SSS}. 
At this point, suitably choosing $\tilde K =O(\delta^{-1/2}\eps^{-5/4})$ 
we see that the right-hand side of~\eqref{e:1} is bounded above by $\delta/2$ while the second summand 
in~\eqref{e:Variance} vanishes. 

Collecting the estimates obtained so far,~\eqref{e:LB}, and consequently the lower bound, follow at once. }
\end{proof}

\begin{remark}\label{rem:BCdim}
The previous corollary shows in particular that the law of the Brownian Web trees 
on the space of $\R$-trees is, as expected, singular with respect to that of 
the scaling limit of the Uniform Spanning Tree in two dimensions. 
Indeed, the latter has Hausdorff dimension $5/8$~\cite{BCK} and 
the Hausdorff dimension is always greater than or equal to the box-counting one (see e.g.~\cite[Chapter 1]{E}). 
\end{remark}

In the following Corollary, we establish the relation between the Brownian Web Tree of 
Definition~\ref{thm:BW} and the Brownian Web constructed in~\cite{FINR}, which 
is a simple consequence of Theorem~\ref{thm:BW} and the results in Section~\ref{s:Topo}. 

\begin{corollary}\label{cor:Topologies}
Let $\zeta^\da_\bw$ and $\zeta^{\per,\da}_\bw$ be the backward and backward periodic Brownian Web trees 
of Theorem~\ref{thm:BW} and Remark~\ref{rem:t:PeriodicBW}, and $K$ be the map defined in~\eqref{e:CompZeta}. 
Then, $K(\zeta^\da_\bw)$ is a backward Brownian Web 
according to~\cite[Theorem 2.1]{FINR} and $K(\zeta^\da_{\bw,\per})$ is a backward cylindric Brownian Web 
according to~\cite[Theorem 2.3]{CMT}. 
\end{corollary}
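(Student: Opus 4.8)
The plan is to show that $K(\zeta^\da_\bw)$ satisfies the three properties which, by \cite[Theorem~2.1]{FINR} (in its time‑reversed, i.e.\ backward, form), uniquely characterise the law of the Brownian Web on $\cH$: \emph{(i)} for every deterministic $z\in\R^2$ there is almost surely a unique path in $K(\zeta^\da_\bw)$ starting at $z$; \emph{(ii)} for deterministic $z_1,\dots,z_k\in\R^2$, the paths of $K(\zeta^\da_\bw)$ starting at $z_1,\dots,z_k$ are jointly distributed as $k$ coalescing backward Brownian motions; and \emph{(iii)} for every deterministic countable dense $\cD\subset\R^2$, almost surely $K(\zeta^\da_\bw)=\overline{\{\pi_\sz:z\in\cD\}}$, where $\sz$ denotes the (a.s.\ unique) point of $\ST^\da_\bw$ over $z$. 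That $K(\zeta^\da_\bw)$ is a genuine $\cH$-valued random variable is immediate: $\zeta^\da_\bw$ is $\Ch^\alpha_\Sp$-valued by Theorem~\ref{thm:BW}, and $\zeta\mapsto K(\zeta)$ takes values in $\cH$ and is continuous, hence measurable, by Proposition~\ref{p:MapTopo}. Once \emph{(i)}--\emph{(iii)} are established, the identification follows from the uniqueness assertion of \cite[Theorem~2.1]{FINR}.

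For \emph{(i)} and \emph{(ii)} I would first record the elementary fact that, for $\sz\in\ST^\da_\bw$, the path $\pi_\sz$ of~\eqref{e:CompPath} has starting time $\sigma_{\pi_\sz}=M^\da_{\bw,t}(\sz)$ and, since $\rho^\da(\sz,M^\da_{\bw,t}(\sz))=\sz$, takes the value $M^\da_{\bw,x}(\sz)$ there; hence $\pi_\sz$ starts at $z$ if and only if $M^\da_\bw(\sz)=z$, and any two points yielding paths starting at the same point coincide. Property~\emph{(i)} is then precisely the first defining property of $\zeta^\da_\bw$ in Theorem~\ref{thm:BW}, while property~\emph{(ii)} is precisely the second one, because the path of $K(\zeta^\da_\bw)$ starting at $z_i$ is exactly $\pi_{\sw_i}=M^\da_{\bw,x}(\rho^\da(\sw_i,\cdot))$ with $\sw_i$ the point determined by $z_i$.

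The bulk of the argument is \emph{(iii)}. The inclusion $\overline{\{\pi_\sz:z\in\cD\}}\subseteq K(\zeta^\da_\bw)$ is trivial since $K(\zeta^\da_\bw)$ is compact and contains every $\pi_\sz$. For the reverse I would use two ingredients. First, the map $\ST^\da_\bw\ni\sz\mapsto\pi_\sz\in\Pi$ is continuous: this follows from Lemma~\ref{l:VicPath} applied with $\zeta_1=\zeta_2=\zeta^\da_\bw$ and the correspondence of pairs at $d^\da_\bw$-distance at most $\delta$, using the little‑H\"older continuity of the evaluation map on bounded balls (as in Lemma~\ref{l:Approx}) and the last bound of~\eqref{e:boundsCompact} to control large negative times, exactly as in the final step of the proof of Proposition~\ref{p:MapTopo}. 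Second, by the third defining property of $\zeta^\da_\bw$ in Theorem~\ref{thm:BW}, almost surely the points lying on the rays $\llb\sz,\dagger\rangle$ with $z$ ranging over $\cD$ are dense in $\ST^\da_\bw$. Combining these gives $K(\zeta^\da_\bw)=\overline{\{\pi_{\rho^\da(\sz,t)}:z\in\cD,\ t\le M^\da_{\bw,t}(\sz)\}}$, so it remains only to check that each restarted path $\pi_{\rho^\da(\sz,t)}$ lies in $\overline{\{\pi_{\sz'}:z'\in\cD\}}$. This is where the ``wedge'' argument enters: choosing $z'_n\in\cD$ converging to $(t,\pi_\sz(t))$ while squeezed between two $\cD$-paths that coalesce with $\llb\sz,\dagger\rangle$ before time $t$ --- exactly as in the proof of the first property of Theorem~\ref{thm:BW}, which rests on \cite[Proposition~3.1]{FINRa} --- one finds that $\rho^\da(\sz'_n,\cdot)$ coalesces with $\rho^\da(\sz,\cdot)$ at times converging to $t$, so that $\sz'_n\to\rho^\da(\sz,t)$ and hence $\pi_{\sz'_n}\to\pi_{\rho^\da(\sz,t)}$ by the continuity just established. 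I expect this last point --- upgrading the tree‑level density statement of Theorem~\ref{thm:BW} to the path‑level closure property of \cite{FINR}, i.e.\ handling restarted paths --- to be the main obstacle, although it is essentially already contained in the arguments used to prove Theorem~\ref{thm:BW}.

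Finally, the periodic statement is obtained by running the same three steps in the cylindric setting: Remark~\ref{rem:MapTopoPeriodic} gives compactness and continuity of $K\colon\Ch^\alpha_{\Sp,\per}\to\cH_\per$, the periodic defining properties of $\zeta^{\per,\da}_\bw$ of Remark~\ref{rem:t:PeriodicBW} replace those of Theorem~\ref{thm:BW} (with backward Brownian motions replaced by periodic ones), and one concludes via the characterisation of the cylindric Brownian Web in \cite[Theorem~2.3]{CMT}.
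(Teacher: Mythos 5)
Your proposal is correct and follows essentially the same route as the paper: the paper's proof just verifies conditions $(o)$, $(i)$, $(ii)$ of \cite[Theorem 2.1]{FINR} (and their periodic analogues in \cite[Theorem 2.3]{CMT}) directly from the definition of $K$, Proposition~\ref{p:MapTopo}, and properties 1.--3. of Theorem~\ref{thm:BW}, declaring this an immediate consequence. The only difference is that you spell out the verification of the closure property (continuity of $\sz\mapsto\pi_\sz$ and the approximation of restarted paths by skeleton paths via the wedge argument), details the paper leaves implicit.
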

\begin{proof}
To prove the statement it suffices to verify that $K(\zeta^\da_\bw)$ and $K(\zeta^\da_{\bw,\per})$ satisfy 
$(o)$, $(i)$ and $(ii)$ in~\cite[Theorem 2.1]{FINR} and~\cite[Theorem 2.3]{CMT}, respectively. 
This is in turn an immediate consequence of the definition of  $K$ and properties 
1.-3. in Theorem~\ref{thm:BW} and $1_\per.$-$3_\per.$ in Remark~\ref{rem:t:PeriodicBW}. 
\end{proof}

\subsection{A convergence criterion to the Brownian Web tree}\label{sec:ConvBW}

In this section, we want to derive a criterion that allows to conclude that the 
limit law for tight sequences of directed spatial $\R$-trees is $\Theta^\da_\bw$.

\begin{theorem}\label{thm:ConvBW}
Let $\alpha\in(0,1)$ and $\{\zeta_n\}_n$ be a tight sequence of  random 
variables in $\Ch^\alpha_\Sp$ with laws  $\Theta_n$ and assume that the following holds.
\begin{enumerate}[label=(\Roman*)]
\item\label{i:Conv1} For any $k\in\N$ and (deterministic) $z^1,\dots,z^k\in\R^2$ there exist sequences 
$\sz^i_n\in\ST_n$, $i=1,\dots, k$ such that $\lim_{n \to \infty} M_{n}(\sz_n^i)=z^i$ almost surely and such that  
$(M_n(\rho_n(\sz_n^i,\cdot)))_i$ converges in law to $k$ coalescing backward Brownian motions.
\item\label{i:Conv2} For every $h>0$ \begin{equation}\label{e:ConvCond}
\frac{1}{\eps}\limsup_{n\to\infty}\sup_{(t,x)\in\R^2} \Theta_n\left(\#\{\rho(\sw,t-h)\,:\,\sw\in M^{-1}(I_{t,x,\eps})\}\geq 3\right)\overset{\eps\to 0}{\longrightarrow} 0
\end{equation}
where $I_{t,x,\eps}\eqdef \{t\}\times(x-\eps,x+\eps)$. 
\end{enumerate}
Then $\Theta_n$ converges weakly to $\Theta^\da_\bw$. 
\end{theorem}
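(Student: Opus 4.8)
The plan is to show that any weak limit point of the tight sequence $\{\Theta_n\}$ coincides with $\Theta^\da_\bw$, identifying it through the three characterising properties in Theorem~\ref{thm:BW}. Since $\{\zeta_n\}$ is tight in $\Ch^\alpha_\Sp$ (which is closed in $\T^\alpha_\Sp$ by Lemma~\ref{l:CharTree}), by Prokhorov it suffices to fix an arbitrary subsequence along which $\Theta_n\Rightarrow\Theta$ for some law $\Theta$ on $\Ch^\alpha_\Sp$ and prove $\Theta = \Theta^\da_\bw$. By Skorokhod's representation theorem I would realise $\zeta_n\to\zeta$ almost surely on a common probability space, with $\zeta\sim\Theta$, and work with this coupling throughout. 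The main point is to read off properties 1., 2.\ (and consequently 3.) of Theorem~\ref{thm:BW} for $\zeta$ from hypotheses~\ref{i:Conv1} and~\ref{i:Conv2}.

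First I would establish property~2.\ of Theorem~\ref{thm:BW} at the level of finitely many deterministic starting points $z^1,\dots,z^k\in\R^2$, \emph{assuming} the uniqueness statement in property~1. Given the sequences $\sz^i_n\in\ST_n$ from~\ref{i:Conv1}, I would pass to the limit: using that $\zeta_n\to\zeta$ in $\T^\alpha_\Sp$ together with the properness bounds of Proposition~\ref{p:Compactness} and Lemma~\ref{l:VicPath}, the rays $\rho_n(\sz^i_n,\cdot)$ (equivalently the paths $\pi_{\sz^i_n}$ of~\eqref{e:CompPath}) converge in $\Pi$ to paths $\pi_{\sz^i}$ for suitable limit points $\sz^i\in\ST$ with $M(\sz^i)=z^i$. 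Continuity of $K$ (Proposition~\ref{p:MapTopo}) guarantees these limiting paths lie in $K(\zeta)$; combined with~\ref{i:Conv1}, the family $(M_x(\rho(\sz^i,\cdot)))_i$ has the law of $k$ coalescing backward Brownian motions. This gives property~2.\ once we know $\sz^i$ is the \emph{unique} preimage of $z^i$.

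The uniqueness in property~1.\ is where hypothesis~\ref{i:Conv2} enters, and I expect this to be the main obstacle. Fix a deterministic $w=(s,y)$. Suppose for contradiction that with positive probability there are two distinct points $\sw,\sw'\in\ST$ with $M(\sw)=M(\sw')=w$; by the tree condition~\ref{i:TreeCond}, which holds on $\Ch^\alpha_\Sp$-limits via Lemma~\ref{l:TreeCond} (or directly, since $\zeta\in\Ch^\alpha_\Sp$ is closed — one shows $\ref{i:TreeCond}$ passes to the limit along the same lines), the rays $\rho(\sw,\cdot)$ and $\rho(\sw',\cdot)$ have images that differ on $(s-\eta,s]$ for some $\eta>0$, hence $M_x(\rho(\sw,s-h))\ne M_x(\rho(\sw',s-h))$ for small $h>0$. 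Now for $\eps>0$ small, $M^{-1}(I_{s,y,\eps})$ contains (projections near) both $\sw$ and $\sw'$, so $\#\{\rho(\sw,s-h):\sw\in M^{-1}(I_{s,y,\eps})\}\ge 2$; to force a contradiction with~\eqref{e:ConvCond} I would instead perturb: consider the points $\sz_\eps$ obtained from a third starting point $z=(s,y')$ with $y'\in(y-\eps,y)$ whose characteristic, by monotonicity in space (property~\ref{i:MonSpace}) and the coalescing structure, is squeezed between the two branches through $w$; this produces a \emph{third} distinct ray value at time $s-h$ inside $I_{s,y,\eps}$ for all small $\eps$, so the event in~\eqref{e:ConvCond} has probability bounded below by a constant, contradicting that $\eps^{-1}$ times it vanishes. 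The Portmanteau theorem applied to the open event $\{\#\{\cdots\}\ge 3\}$ transfers the bound from $\Theta_n$ to $\Theta$. This yields almost sure uniqueness, i.e.\ property~1.

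Finally, property~3.\ follows from properties~1.\ and~2.\ exactly as in the proof of Theorem~\ref{thm:BW}: fixing a countable dense $\cD\ni 0$, the points $\sw$ associated with $w\in\cD$ generate a subtree $\tilde\ST^\da_\infty(\cD)$ via their rays, whose completion (equipped with the ancestral metric and the restriction of $M$) is, by the finite-dimensional coalescing Brownian law from property~2.\ together with Lemma~\ref{l:Emb}, distributed as $\zeta^\da(\cD)$ of Proposition~\ref{p:BW}; density of $\tilde\ST^\da_\infty(\cD)$ in $\ST$ (which one checks using surjectivity-type arguments as in Theorem~\ref{thm:BW}, or simply that every ray is a limit of rays through points of $\cD$) then gives $\zeta\eqlaw\zeta^\da_\bw$. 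Since every subsequential limit equals $\Theta^\da_\bw$, the full sequence converges, completing the proof.
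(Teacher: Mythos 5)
There are two genuine gaps, both located exactly where the real work of this theorem lies. First, you assume that the limit $\zeta$ satisfies the tree condition~\ref{i:TreeCond} ``via Lemma~\ref{l:TreeCond}'' or because ``$\Ch^\alpha_\Sp$ is closed''. Lemma~\ref{l:TreeCond} only says that \ref{i:TreeCond} propagates from a dense subtree to its closure; it says nothing about limits of sequences of trees, and $\Ch^\alpha_\Sp(\ft)$ is only claimed to be measurable, not closed \dash indeed the paper points out that there are characteristic trees violating~\ref{i:TreeCond}, and one can easily approximate a tree with a ``duplicated'' branch by trees in $\Ch^\alpha_\Sp(\ft)$. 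Proving \ref{i:TreeCond} for the limit is precisely the step for which hypothesis~\ref{i:Conv2} is needed beyond path-level information, and it is the core of the paper's argument: one fixes rational $t,x,\eps,h$, discretises $(x-\eps,x+\eps)$ into $2N$ intervals of width $\eps/N$, uses uniqueness of paths from the deterministic gridpoints together with spatial monotonicity~\ref{i:MonSpace} to bound the probability that the number of tree points $\rho(\sw,t-h)$ exceeds the number of their $M$-images by $N\sup_{(t,y)}\Theta_n\bigl(\#\{\rho(\sw,t-h):\sw\in M_n^{-1}(I_{t,y,\eps/N})\}\ge 3\bigr)$, and the $1/\eps$ normalisation in \eqref{e:ConvCond} is exactly what makes this vanish as $N\to\infty$. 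Without \ref{i:TreeCond} in hand, your appeal to injectivity-type reasoning and the density of the $\cD$-generated subtree needed for property 3.\ of Theorem~\ref{thm:BW} are unsupported.

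Second, your perturbation argument for uniqueness (property 1.) does not work. If $\sw\neq\sw'$ both map to $w=(s,y)$, the two rays emanate from the \emph{same} spatial location $y$; a characteristic started at $(s,y')$ with $y'<y$ is, by monotonicity in space, weakly to the left of \emph{both} rays, not squeezed between them, and will generically coalesce with the leftmost one by time $s-h$. So you do not obtain three distinct rays through $M^{-1}(I_{s,y,\eps})$, and certainly not with probability bounded below uniformly in $\eps$: even for the Brownian Web itself the probability of the ``$\geq 3$'' event on an interval of width $\eps$ is of order $\eps$, which is the very reason \eqref{e:ConvCond} carries the factor $1/\eps$; a contradiction cannot be extracted by evaluating \ref{i:Conv2} at a single scale with an $\eps$-independent lower bound (and the transfer of such a bound from $\Theta$ to $\Theta_n$ via Portmanteau also needs an argument, since the counting functional involves the fixed time level $\{t\}$ and is not obviously semicontinuous). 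Note that the paper's proof avoids proving property 1.\ of Theorem~\ref{thm:BW} altogether: it checks conditions (I1) and (B2) of the convergence criterion of~\cite{FINR} for $K(\zeta_n)$, deduces that $K(\zeta)$ is a backward Brownian Web by continuity of $K$ (Proposition~\ref{p:MapTopo}), establishes \ref{i:TreeCond} for $\zeta$ as described above, and concludes by injectivity of $K$ on $\Ch^\alpha_\Sp(\ft)$ (Proposition~\ref{p:MapInj}). If you want to keep your route through the characterisation of Theorem~\ref{thm:BW}, you would still have to supply both the uniqueness of tree preimages of deterministic points and the density of the rays through $\cD$, and both of these lean on exactly the missing \ref{i:TreeCond}-type input.
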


\begin{remark}
In view of Corollary~\ref{cor:Topologies}, the Brownian Web tree and the Brownian Web are strictly connected 
so that it is not surprising that the convergence criterion stated above is extremely similar to~\cite[Theorem 6.6.5]{SSS}. 
As a matter of fact, requiring the sequence $\zeta_n$ to be made of directed trees allows us to talk about paths, 
while the fact that we are dealing with monotone trees enforces 
the non-crossing condition. 
That said, even though Proposition~\ref{p:MapTopo} guarantees continuity of the map $K$ assigning to 
any directed tree 
a compact subset of $\Pi$, as highlighted in Remark~\ref{rem:Inj}, $K$ is not injective in any open subset of 
$\Ch^\alpha_\Sp$ but only on $\Ch^\alpha_\Sp(\ft)$ and 
the inverse map is not continuous, even when restricted to $\Ch^\alpha_\Sp(\ft)$. 
This means that we cannot infer convergence in $\Ch_\Sp^\alpha$ from \cite{SSS} 
(see Remark~\ref{rem:exam} for an example illustrating this). 
\end{remark}

\begin{proof}
Let $K$ be the map defined in~\eqref{e:CompZeta}. { At first, we want to show that the sequence
$\{K(\zeta_n)\}_n$ converges in law to the backward Brownian Web. 
To do so, notice that Proposition~\ref{p:MapTopo} implies that,
since $\{\zeta_n\}_n$ is tight, so is $\{K(\zeta_n)\}_n$. 
Further, as $\zeta_n\in\Ch^\alpha_\Sp$ for every $n$, $K(\zeta_n)$ is supported on compact subsets of $\Pi$ 
formed of non-crossing paths. 
Hence, by~\cite[Theorem 6.6.5]{SSS}, the convergence of $\{K(\zeta_n)\}_n$ to the (backward) Brownian Web 
is guaranteed, provided we show that $(I)$ and $(B2)$ therein hold. 
The former is a direct consequence of~\ref{i:Conv1} in the present statement. 
The latter instead follows by~\ref{i:Conv2} since
\begin{equ}[e:Cardrho]
\#\{\rho_n(\sw,t-h)\,:\,\sw\in M_n^{-1}(I_{t,x,\eps})\} \ge \#\{M_n(\rho_n(\sw,t-h))\,:\,\sw\in M_n^{-1}(I_{t,x,\eps})\}\,,
\end{equ}
and the right hand side above equals $\eta_{K(\zeta_n)}(t,\eps;x-\eps,x+\eps)\geq \eta_{K(\zeta_n)}(t,\eps;x,x+\eps)$, 
where $\eta$ is defined in~\cite[eq. (6.52)]{SSS}. 

Now,} since the sequence $\zeta_n$ is tight by assumption,  it converges along some subsequence. 
Let $\zeta=(\ST,\ast,d,M)$ be a limit point, $\rho$ its radial map and denote by $\Theta$ its law on $\Ch^\alpha_\Sp$. 
$K$ is continuous by Proposition~\ref{p:MapTopo}, hence, $K(\zeta_n)$ converges 
to $K(\zeta)$, which by the above is a backward Brownian Web. 
Further, by Proposition~\ref{p:MapInj} $K$ is injective on $\Ch^\alpha_\Sp(\ft)$, so that 
by Proposition~\ref{p:MapTopo}, it remains to show that $\zeta$ satisfies~\ref{i:TreeCond}. 

{Since $K(\zeta)$ is a Brownian Web, almost surely for every point $z\in\Q^2$ 
there exists a unique path $\pi_z\in K(\zeta)$ starting at $z$ 
and therefore, on an event of probability $1$, 
$K(\zeta)$ can be taken to be the closure of $\{\pi_z\colon z\in\Q^2\}$ in $\Pi$. 
For every $z\in\Q^2$, we choose $\sz_z\in\ST$ such that $M(\sz_z)=z$. This point 
clearly exists but {\it a priori} might not be unique - if this is not the case we only pick one. 
Then, we define $\tilde \ST$ as the closure of $T=\{\rho(\sz_z, s)\colon z=(x,t)\in\Q^2,\,s\leq t\}\subset \ST$ 
with respect to the metric $d$ on $\ST$ and $\tilde M$ as the restriction of $M$ to $\tilde \ST$. 
Note that, by construction $(T,\ast,d,M\restr T)$ satisfies~\ref{i:TreeCond}, 
hence so does $\tilde \zeta\eqdef (\tilde \ST,\ast,d,\tilde M)$ in view of Lemma~\ref{l:TreeCond}. 
Furthermore, the definition of $K$ in~\eqref{e:CompZeta} together with the fact that 
$K(\zeta)$ is the closure of $\{\pi_z\colon z\in\Q^2\}$ in $\Pi$, implies that $K(\tilde\zeta)=K(\zeta)$. 
In particular, the conclusion then follows if we show that $\tilde\ST=\ST$. 

We claim that if $\tilde \ST\subsetneq\ST$, then there exist $a,b,t,h\in\Q$ such that 
\begin{equ}[e:Cardalpha]
\#\{\rho(\sw,t-h)\,:\,\sw\in M^{-1}(I_{t;a,b})\}>\#\{\rho(\sw,t-h)\,:\,\sw\in (\tilde M)^{-1}(I_{t;a,b})\}
\end{equ}
where $I_{t;a,b}\eqdef\{t\}\times[a,b]$. 
Indeed, let $\sz\in\ST$, $z=(s,y)=M(\sz)$ and $r>0$. Since $\zeta\in\Ch^\alpha_\Sp$,  
$M$ is locally $\alpha$-H\"older continuous which implies that there exists $C>0$ such that 
\begin{equ}
|M(\rho(\sz,s-h))-M(\sz)|\leq C h^\alpha\,,\qquad\text{for all $h\leq \bar h$}
\end{equ}
where $\bar h$ is chosen in such a way that $r\geq C\bar h^\alpha$. 
Let $y^+_n,\,y^-_n,\,s_n$ and $h_n$ be sequences in $\Q$ such that $y^\pm_n$ converges to $y\pm r$, 
$s_n$ converges to $s$, 
$h_n$ converges to $0$ and, for all $n$, $y_n^-\leq y-C\bar h^\alpha$, $y_n^+\geq y+C\bar h^\alpha$ and 
$s-\bar h/2\leq s_n-h_n<s$. 
Then, $M(\rho(\sz,s_n))\in\{s_n\}\times[y^-_n,y_n^+]$. 
Now, if for all $a,b,t,h\in\Q$,~\eqref{e:Cardalpha} were an equality, then for all $n$, 
$\rho(\sz,s_n-h_n)\in \{\rho(\sw,s_n-h_n)\,:\,\sw\in (\tilde M)^{-1}(I_{s_n;y_n^-,y_n^+})\}\subset \tilde\ST$. 
But the sequence $\{\rho(\sz,s_n-h_n)\}_n$ is Cauchy in $\tilde\ST$ and since the latter is complete, 
$\rho(\sz,s)\in\tilde\ST$ for every $s$ sufficiently small, so that taking $s$ to $0$, we get that $\sz\in\tilde\ST$. 

The previous claim implies that the probability that $\ST\setminus\tilde\ST\neq\emptyset$ is bounded above by the 
probability that there exist $a,b,t,h\in\Q$ such that~\eqref{e:Cardalpha} holds.
Hence, if we show that for every $a,b,t,h\in\Q$ fixed, the probability of~\eqref{e:Cardalpha} is $0$ we are done. 
Fix $a,b,t,h\in\Q$, $a<b$.
For $N\in\N$, let 
\begin{equ}
x_j^N\eqdef a+j\eps,\,\quad \text{for $j=0,\dots,N$ and } \eps\eqdef\frac{b-a}{N}\,,
\end{equ}
and $z^N_j=(t,x_j)$. By construction, there exist unique points $\sz^N_j\in\tilde\ST$ 
such that $\tilde M(\sz^N_j)=M(\sz^N_j)=z^N_j$ for all $j=0,\dots,N$. 
Hence, 
\begin{equs}
\Theta&\left(\#\{\rho(\sw,t-h)\,:\,\sw\in M^{-1}(I_{t;a,b})\}>\#\{\rho(\sw,t-h)\,:\,\sw\in (\tilde M)^{-1}(I_{t;a,b})\} \right)\\
&\leq \lim_{N\to\infty}\Theta\left(\#\{\rho(\sw,t-h)\,:\,\sw\in M^{-1}(I_{t;a,b})\}>\#\{\rho(\sz^N_j,t-h)\,:\,j=0,\dots,N\}\right)\,.
\end{equs}
Moreover, since as soon as two rays in an $\R$-tree touch, they coalesce (otherwise one could form a cycle), 
$\#\{\rho(\sw,t-h)\,:\,\sw\in M^{-1}(I_{t;a,b})\}>\#\{\rho(\sz^N_j,t-h)\,:\,j=0,\dots,N\}$ 
if and only if there exists $i=1,\dots,N$ such that $\#\{\rho(\sw,t-h)\,:\,\sw\in M^{-1}(I_{t,y_i^N,\eps})\}\geq 3$, 
where, for $i=1,\dots,N$, $y^N_i$ denotes the mid-point of the interval $(x^N_{i-1},x^N_i)$. 
In other words, 
\begin{equs}
\Theta\big(\#\{&\rho(\sw,t-h)\,:\,\sw\in M^{-1}(I_{t,x,\eps})\}>\#\{\rho(\sz^N_j,t-h)\,:\,|j|\le N\}\big)\\
&\leq \sum_{i=1-N}^N\Theta\left(\#\{\rho(\sw,t-h)\,:\,\sw\in M^{-1}(I_{t,y_i^N,{\eps \over N}})\}\geq 3\right)\\
&\lesssim N \sup_{(t,y)\in\R^2}\Theta\left(\#\{\rho(\sw,t-h)\,:\,\sw\in M^{-1}(I_{t,y,{\eps \over N}})\}\geq 3\right)\\
&\lesssim N\limsup_{n\to\infty}\sup_{(t,y)\in\R^2} \Theta_n\left(\#\{\rho(\sw,t-h)\,:\,\sw\in M_n^{-1}(I_{t,y,{\eps \over N}})\}\geq 3\right)\;,
\end{equs}
which converges to $0$ as $N \to \infty$ by~\eqref{e:ConvCond}, and the conclusion follows at once.} 
\end{proof}

\begin{remark}\label{rem:exam}
The first part of the proof above shows that, given that $\{\zeta_n\}_n$ is tight and satisfies
conditions~\ref{i:Conv1} and~\ref{i:Conv2}, the sequence $\{K(\zeta_n)\}_n$ converges to the 
Brownian Web. In light of Propositions~\ref{p:MapTopo} and~\ref{p:MapInj}, 
one might wonder whether tightness in $\Ch^\alpha_\Sp$ of a sequence 
$\{\zeta_n\}_n\subset \Ch^\alpha_\Sp(\ft)$ 
together with convergence of $\{K(\zeta_n)\}_n$ in $\Pi$ 
can directly imply convergence of $\{\zeta_n\}_n$ in $\Ch^\alpha_\Sp$. 

The answer is no as the following example shows. 
For all $n$ odd, let $\zeta_n$ be the directed tree given by one infinite branch $e$ embedded into
 $\R^2$ as $\{0\}\times(-\infty,0]$, while for $n$ even let $\zeta_n$ be the directed tree 
 given by the same $e$ together with a branch $e_n$ embedded as $\{(\frac{1-t}{n},-t)\colon t\in[0,1]\}$. 
Clearly, the sequence  $\{\zeta_n=(\ST_n,\ast_n,d_n,M_n)\}_n\subset\Ch^\alpha_\Sp(\ft)$ and is tight 
in $\Ch^\alpha_\Sp$ but it does not converge \dash the odd subsequence is constant while 
the even one converges to the directed tree formed by two branches $e\cup e_\infty$ where 
$e_\infty$ is embedded as $\{0\}\times[-1,0]$ and the two branches meet at $(0,-1)$. At the same time,  
$\{K(\zeta_n)\}_n$ converges in $\Pi$ to the set $\{\pi_{(0,t)}\colon t\leq 0\}$ for $\pi_{(0,t)}$ identically equal to $0$ on 
$(-\infty,t]$. 
\end{remark}

\subsection{The double Brownian Web tree and special points}\label{sec:DBW}

A crucial aspect of the backward Brownian Web is that it comes naturally associated with a dual (see e.g.~\cite{TW,FINRb}), 
which is given by a family of forward coalescing Brownian motions starting from every point in $\R^2$ or $\R\times\T$, 
in the periodic case. 
In the next theorem we will see how it is possible to devise such a duality in the present context 
and characterise the joint law of the Brownian Web Tree in Definition~\ref{def:BW} and its dual. 

\begin{theorem}\label{thm:DBW}
Let $\alpha<1/2$. There exists a $\Ch^\alpha_\Sp\times\hat\Ch^\alpha_\Sp$-valued random variable 
$\zeta^{\uda}_\bw\eqdef(\zeta^\da_\bw,\zeta^\ua_\bw)$, 
$\zeta^{\dotp}_\bw=(\ST^{\dotp}_\bw,\ast^{\dotp}_\bw,d^{\dotp}_\bw,M^{\dotp}_\bw)$, $\dotp\in\{\da,\ua\}$, whose 
law is uniquely characterised by the following properties
\begin{enumerate}[label=(\roman*)]
\item\label{i:Dist} Both $-\zeta^\ua_\bw\eqdef (\ST^\ua_\bw,\ast^\ua_\bw,d^\ua_\bw,-M^\ua_\bw)$ and $\zeta^\da_\bw$ are 
distributed as the backward Brownian Web Tree in Definition~\ref{def:BW}.
\item\label{i:Cross} Almost surely, for any $\sz^\da\in\ST^\da_\bw$ and $\sz^\ua\in\ST^\ua_\bw$, the paths 
$M^\da_{\bw}(\rho^\da(\sz^\da,\cdot))$ and $M^\ua_{\bw}(\rho^\ua(\sz^\ua,\cdot))$ do not cross, i.e. 
for all $M^\ua_{\bw,t}(\sz^\ua)\leq s_1<s_2\leq M^\da_{\bw,t}(\sz^\da)$ 
	\begin{equ}[e:NonCrossing]
	\prod_{i=1,2}(M^\ua_{\bw,x}(\rho^\ua(\sz^\ua, s_i))-M^\da_{\bw, x}(\rho^\da(\sz^\da, s_i))) \geq 0\,,
	\end{equ}
where $\rho^\da$ (resp. $\rho^\ua$) is the radial map of $\zeta^\da_\bw$ (resp. $\zeta^\ua_\bw$). 
\end{enumerate}
Moreover, almost surely $\zeta^{\uda}_\bw\in\Ch^\alpha_\Sp(\ft)\times\hat\Ch^\alpha_\Sp(\ft)$ and 
$\zeta^\ua_\bw$ is determined by $\zeta^\da_\bw$ and vice-versa.
Finally, $(K(\zeta^{\da}_\bw),\hat K(\zeta^{\ua}_\bw)) $ is distributed according to the double Brownian Web of~\cite[Theorem 6.2.4]{SSS}. 
\end{theorem}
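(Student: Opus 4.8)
The plan is to derive the final statement from the characterisation of the double Brownian Web by the non-crossing property, namely \cite[Theorem 2.1]{SSS}, which I will use in the following form: the law on $\cH\times\hat\cH$ of a pair $(\cW,\hat\cW)$ such that $\cW$ is distributed as a backward Brownian Web, $\hat\cW$ as a forward (dual) Brownian Web, and almost surely no path of $\cW$ crosses a path of $\hat\cW$, is unique and equals that of the double Brownian Web. (Equivalently, the dual web is a.s.\ the unique closed set of non-crossing forward paths compatible with $\cW$.) Thus the task reduces to checking, for the pair $(K(\zeta^\da_\bw),\hat K(\zeta^\ua_\bw))$, the two marginal laws and the non-crossing property. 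The genuinely substantial work — namely the earlier parts of Theorem~\ref{thm:DBW} (construction of $\zeta^{\uda}_\bw$, properties~\ref{i:Dist} and~\ref{i:Cross}, membership in $\Ch^\alpha_\Sp(\ft)\times\hat\Ch^\alpha_\Sp(\ft)$ and mutual measurability) — is established beforehand and is used freely here, so the final statement is a comparatively soft consequence.

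First I would identify the two marginals. By Definition~\ref{def:BW} and Corollary~\ref{cor:Topologies}, $K(\zeta^\da_\bw)$ is a backward Brownian Web in the sense of \cite[Theorem 2.1]{FINR}, and in particular an $\cH$-valued random variable (compactness being Proposition~\ref{p:MapTopo}). For the second coordinate, recall from Remark~\ref{rem:MapTopoFor} that $\hat K(\zeta^\ua_\bw)=-K(-\zeta^\ua_\bw)$, which takes values in $\hat\cH$. By property~\ref{i:Dist}, $-\zeta^\ua_\bw$ is distributed as the backward Brownian Web tree, so $K(-\zeta^\ua_\bw)$ is a backward Brownian Web by Corollary~\ref{cor:Topologies}; applying the space-time point reflection $(t,x)\mapsto(-t,-x)$, under which (together with invariance under $x\mapsto -x$) the law of the backward Brownian Web is sent to that of the forward (dual) Brownian Web and backward paths become forward paths, we conclude that $\hat K(\zeta^\ua_\bw)$ is a forward (dual) Brownian Web. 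Hence both marginals are as required.

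Next I would check non-crossing. Unwinding the definition of $K$ in~\eqref{e:CompZeta} shows that the elements of $K(\zeta^\da_\bw)$ are exactly the paths $M^\da_\bw(\rho^\da(\sz^\da,\cdot))$, $\sz^\da\in\ST^\da_\bw$; similarly, using Remark~\ref{rem:RadMapFor} for the radial map of the forward tree $\zeta^\ua_\bw$, the elements of $\hat K(\zeta^\ua_\bw)=-K(-\zeta^\ua_\bw)$ are exactly the forward paths $M^\ua_\bw(\rho^\ua(\sz^\ua,\cdot))$, $\sz^\ua\in\ST^\ua_\bw$. Therefore the non-crossing property~\ref{i:Cross} of $\zeta^{\uda}_\bw$ says precisely that almost surely no path of $K(\zeta^\da_\bw)$ crosses a path of $\hat K(\zeta^\ua_\bw)$.

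Having verified the two marginal laws and non-crossing, \cite[Theorem 2.1]{SSS} yields that $(K(\zeta^\da_\bw),\hat K(\zeta^\ua_\bw))$ is distributed as the double Brownian Web, which completes the proof. I expect the only real obstacle to be bookkeeping: one must make sure the cited characterisation of the double Brownian Web is available exactly in the ``marginals $+$ non-crossing'' form used above (equivalently, that the dual web is a.s.\ determined by the primal as the unique non-crossing one), and that the forward/backward and time-reversal conventions of \cite{SSS,FINR} line up with those employed here — which is harmless, since the law of the double Brownian Web is invariant, up to swapping its two coordinates, under time reversal.
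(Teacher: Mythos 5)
There is a genuine gap, and it is one of scope: you prove only the last sentence of the theorem and explicitly take ``the earlier parts'' (existence of $\zeta^{\uda}_\bw$, the characterisation of its law by~\ref{i:Dist}--\ref{i:Cross}, a.s.\ membership in $\Ch^\alpha_\Sp(\ft)\times\hat\Ch^\alpha_\Sp(\ft)$, and mutual determination) as ``established beforehand''. They are not: nothing earlier in the paper constructs the forward tree $\zeta^\ua_\bw$ or the joint law --- indeed the forward Brownian Web tree is only \emph{defined} (Definition~\ref{def:DBW}) as an output of this very theorem. So existence, the uniqueness of a law satisfying~\ref{i:Dist}--\ref{i:Cross}, the tree-condition membership and the determination claim --- the bulk of the statement --- are left unproven, and your argument for the final claim becomes circular as a proof of the theorem as stated.

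For comparison, the paper runs the argument in the opposite direction: it starts from the $\cH\times\hat\cH$-valued double Brownian Web $(W^\da,W^\ua)$ of \cite[Theorem 2.1]{SSS}, observes via Corollary~\ref{cor:Topologies} and Remark~\ref{rem:MapTopoFor} that $W^\da\eqlaw K(\zeta^\da_\bw)$ and $W^\ua\eqlaw \hat K(-\zeta^\da_\bw)$, so that a.s.\ $(W^\da,W^\ua)$ lies in $K(\Ch^\alpha_\Sp(\ft))\times\hat K(\hat\Ch^\alpha_\Sp(\ft))$, and then uses the injectivity of $K$ on $\Ch^\alpha_\Sp(\ft)$ (Proposition~\ref{p:MapInj}) to pull the pair back and thereby \emph{construct} $\zeta^{\uda}_\bw$; properties~\ref{i:Dist}--\ref{i:Cross}, the mutual determination (inherited from the corresponding property of $(W^\da,W^\ua)$ through the invertible map) and the final distributional identity then follow, while uniqueness of the law is obtained by showing that the image under $(K,\hat K)$ of any pair satisfying~\ref{i:Dist}--\ref{i:Cross} verifies \cite[Theorem 2.1(a)--(b)]{SSS} and invoking the uniqueness part of that theorem together with injectivity. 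Your verification that $(K(\zeta^\da_\bw),\hat K(\zeta^\ua_\bw))$ has the right marginals and is non-crossing essentially reproduces this last step (and your handling of the second marginal via $\hat K(\zeta)=-K(-\zeta)$ and reflection is consistent with the paper's conventions), but without the pull-back construction and the uniqueness argument the theorem itself is not established. To repair the proposal you would need to either reproduce the paper's $K^{-1}$-construction or build the dual tree directly (e.g.\ from a perfectly coalescing/reflecting system as the paper remarks after Definition~\ref{def:DBW}), and then add the uniqueness step.
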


\begin{remark}
Here, given a random variable $(X,Y)$ on some product Polish space 
$\CX \times \CY$, we say that $X$ is determined by $Y$ 
if the conditional law of $X$ given $Y$ is almost surely given by a Dirac mass.
\end{remark}

\begin{proof}
Throughout the proof, we will adopt the notation and conventions of Section~\ref{s:Topo}. 

Notice at first that, by Theorem~\ref{thm:BW}, any $\Ch^\alpha_\Sp\times\hat\Ch^\alpha_\Sp$-valued 
random variable for which~\ref{i:Dist} holds, almost surely belongs to $\Ch^\alpha_\Sp(\ft)\times\hat\Ch^\alpha_\Sp(\ft)$. 

Now, let $(W^\da,W^\ua)$ be the $\cH\times\hat\cH$-valued random variable constructed in~\cite[Theorem 6.2.4]{SSS} and 
$K$ the map in~\eqref{e:CompZeta}. 
Since $W^\da$ is distributed as the backward Brownian Web, 
by Corollary~\ref{cor:Topologies}, $W^\da\eqlaw  K(\zeta^\da_\bw)$ and
$W^\ua\eqlaw -W^\da\eqlaw -K(\zeta^\da_\bw)=\hat K(-\zeta^\da_\bw)$, 
where the first equality is  due to~\cite[Theorem 6.2.4(a)]{SSS} and the last is a consequence of Remark~\ref{rem:MapTopoFor}. 
Therefore, 
$(W^\da, W^\ua)\in K(\Ch^\alpha_\Sp(\ft))\times \hat K(\hat\Ch^\alpha_\Sp(\ft))$ almost surely
so that, by Proposition~\ref{p:MapTopo} and Remark~\ref{rem:MapTopoFor}, there exists a unique 
couple $(\zeta_{W^\da},\zeta_{W^\ua})\in \Ch^\alpha_\Sp(\ft)\times \hat\Ch^\alpha_\Sp(\ft)$ such that 
$(K(\zeta_{W^\da}),\hat K(\zeta_{W^\ua}))=(W^\da, W^\ua)$. 
By Proposition~\ref{p:BW} and Theorem~\ref{thm:BW} we also have  $\zeta^\da_\bw\in\Ch^\alpha_\Sp(\ft)$ almost 
surely so that, since $K(\zeta_{W^\da})\eqlaw  K(\zeta^\da_\bw)$ and 
$K(-\zeta_{W^\ua})\eqlaw K(\zeta^\da_\bw)$,
$(\zeta_{W^\da},\zeta_{W^\ua})$ satisfies~\ref{i:Dist}. 
The definition of the map $K$ in~\eqref{e:CompPath} and~\eqref{e:CompZeta} combined with~\cite[Theorem 6.2.4(b)]{SSS}
ensures that~\ref{i:Cross} holds for $(\zeta_{W^\da},\zeta_{W^\ua})$. The fact that $\zeta_{W^\ua}$ 
is determined by $\zeta_{W^\da}$ is a direct consequence of the fact that this is known to be true for 
$W^\da$ and $W^\ua$ and that $K$ is invertible on $\Ch^\alpha(\ft)$. 

We argue uniqueness. Let $(\zeta,\zeta')$ be another  random variable in $\Ch^\alpha_\Sp\times\hat\Ch^\alpha_\Sp$ 
which satisfies~\ref{i:Dist} and~\ref{i:Cross}. Now,~\ref{i:TreeCond} holds for 
both $\zeta$ and $\zeta'$, while~\ref{i:Dist},~\ref{i:Cross} and~\eqref{e:CompZeta} 
ensure that $(K(\zeta),\hat K(\zeta'))$ satisfies~\cite[Theorem 6.2.4 (a)-(b)]{SSS}. Hence, the conclusion follows 
by the uniqueness part of~\cite[Theorem 6.2.4]{SSS} and Proposition~\ref{p:MapTopo}. 
\end{proof}

\begin{remark}\label{rem:PeriodicDBW}
In the periodic setting Theorem~\ref{thm:DBW} remains true 
upon replacing all the objects and spaces appearing in the statement 
with their periodic counterparts. The proof follows the exact same lines but uses 
Remarks~\ref{rem:t:PeriodicBW} and~\ref{rem:MapTopoPeriodic} 
instead of Theorem~\ref{thm:BW} and Proposition~\ref{p:MapTopo}. 
%
%
%
\end{remark}

\begin{definition}\label{def:DBW}
Let $\alpha<\tfrac12$. We define the {\it double Brownian Web tree} and {\it double periodic Brownian Web tree} 
as the $\Ch^\alpha_\Sp\times\hat\Ch^\alpha_\Sp$ and $\Ch^\alpha_{\Sp,\per}\times\hat\Ch^\alpha_{\Sp,\per}$-valued 
random variables $\zeta^{\uda}_\bw\eqdef(\zeta^{\da}_\bw,\zeta^{\ua}_\bw)$ and 
$\zeta^{\per,\uda}_\bw\eqdef(\zeta^{\per,\da}_\bw, \zeta^{\per,\ua}_\bw)$ given by 
Theorem~\ref{thm:DBW} and Remark~\ref{rem:PeriodicDBW}. 
We will refer to $\zeta^\ua_\bw$ and $ \zeta^{\per,\ua}_\bw$ as the {\it forward} (or dual) and {\it forward periodic 
Brownian Web trees}.

We  denote their laws
by $\Theta^{\uda}_\bw(\dd(\zeta^\da\times\zeta^\ua))$ and $\Theta^{\per,\uda}_\bw(\dd(\zeta^\da\times\zeta^\ua))$, 
 with marginals $\Theta^{\da}_\bw(\dd\zeta)$, $\Theta^{\ua}_\bw(\dd\zeta)$ and 
$\Theta^{\per,\da}_\bw(\dd\zeta)$, $\Theta^{\per,\ua}_\bw(\dd\zeta)$ respectively. 
\end{definition}

\begin{remark}
The proof of Theorem~\ref{thm:DBW} heavily relies on the results of~\cite{FINRb} 
(summarised in~\cite{SSS}). Clearly, it would have been possible to construct the double Brownian Web tree 
directly starting from a countable family of (independent) forward and backward standard Brownian motion, 
turning it into a perfectly coalescing\slash reflecting system (see~\cite[Section 3.1.1]{STW}) and follow the 
same procedure as in~\eqref{e:Skeleton}, Proposition~\ref{p:BW} and Theorem~\ref{thm:BW}. 
\end{remark}

As a first consequence of the duality the Brownian Web tree enjoys we show that each of the $\R$-trees $\ST^\ua_\bw$ and
$\ST^\da_\bw$ has a {\it unique} open end with unbounded rays. 
This end should be thought of as the point at ($\pm$)$\infty$ 
where all the Brownian motions coalesce. We will see in Proposition~\ref{p:TwoEnds} below that 
the periodic Brownian Web tree, instead, has (exactly) {\it two} open ends 
with unbounded rays which are connected by a unique bi-infinite edge.  

\begin{proposition}\label{prop:OneEnd}
Let $\zeta^\ua_\bw$ 
and $\zeta^\da_\bw$ 
be respectively the forward and backward Brownian Web trees. 
Then, almost surely, the $\R$-trees $\ST^\da_\bw$ and $\ST^\ua_\bw$ have precisely one open end 
with unbounded rays, 
which we denote by $\dagger^\ua$ and $\dagger^\da$ respectively. These are precisely the ends of 
Proposition~\ref{p:EndProp}, so that in particular
\begin{equ}
\lim_{\sz\to\dagger^\da} M_t^\da(\sz)=-\infty \qquad\text{and}\qquad \lim_{\sz\to\dagger^\ua} M_t^\ua(\sz)=+\infty\,.
\end{equ}

\end{proposition}
\begin{proof}
We prove the result for $\ST^\ua_\bw$, the other being analogous by duality. 
Notice that the statement follows if we show that for every $r>0$ almost surely there exists a compact 
$\sK\subset \ST^\ua_\bw$ with
$\ST_\bw^{\ua,\,(r)}\subset \sK$, such that for all $\sz,\sz'\in \sK^c$ the path connecting 
$\sz$ and $\sz'$ does not intersect $\ST_\bw^{\ua,\,(r)}$.
Thanks to the double Brownian Web tree we are able to exhibit an explicit compact set for which the latter claim holds. 
Let $r>0$ be fixed, $\cD$ be a countable dense set in $\R^2$ containing $0$ and 
recall that, with probability one, $\zeta^{\da}_\bw=\zeta^{\da}(\cD)$. 

Using the same notation and conventions as in the proof of Proposition~\ref{p:BW}, 
let $\tilde E^N_{R,r}$ be defined according to~\eqref{e:EvProp}. 
Set $\tau\eqdef\tau^\da(\pi^\da_{\tilde z^+},\pi^\da_{\tilde z^-})$, 
$X\eqdef \pid_{\tilde z^+}(\tau_{n})= \pid_{\tilde z^-}(\tau_{n})$ 
and let $\Delta_N$ be the triangular region of $\R^2$ 
with vertices $\tilde z^\pm$ and $(\tau,X)$, base given by the segment joining $\tilde z^+$ and $\tilde z^-$, 
and sides formed by the paths $(s,\pid_{\tilde z^-}(s))_{\tilde t^-\geq s\geq \tau}$, $(s,\pid_{z_n^+}(s))_{t_n^+\geq s\geq \tau}$.
On $\tilde E^N_{R,r}$, $\Delta_N$ is compact and the properness of $M^\ua_\bw$ guarantees that so is 
$\sK_N\eqdef (M^\ua_\bw)^{-1}(\Delta_N)$. 
By point~\ref{i:Cross} in Theorem~\ref{thm:DBW} paths in the forward and backward Web trees do not cross, therefore 
$\ST_\bw^{\ua,\,(r)}\subset \sK_N$ and the path connecting any two points in $\sK_N^c$ cannot intersect 
$\ST_\bw^{\ua,\,(r)}$. Hence, it remains to argue that there is an almost surely finite $N$ for which the 
realisation of $\zeta^{\da}_\bw$ belongs to $\tilde E^N_{R,r}$. This in turn is a direct consequence of~\eqref{e:PropQ} 
and a standard application of Borel--Cantelli. 
\end{proof}

We are now interested in deriving properties of the inverse maps $(M^{\dotp}_\bw)^{-1}$ and $(M^{\per,\dotp}_\bw)^{-1}$, 
for $\dotp\in\{\ua,\da\}$, and how these are related to the degrees of points in the $\R$-trees $\ST^{\dotp}_\bw$ and 
$\ST^{\per,\dotp}_\bw$. 
 We begin with the following proposition, which is a translation in the language of the present paper 
 of~\cite[Proposition 3.10]{FINRb}. 

\begin{proposition}\label{p:CardDeg}
Let $\zeta_\bw^{\uda}=(\zeta^\ua_\bw,\zeta^\da_\bw)$ and $\zeta_\bw^{\uda,\per}=(\zeta^{\ua,\per}_\bw,\zeta^{\da,\per}_\bw)$  
be the double and double periodic Brownian Web trees. 
Then, almost surely for every point $z=(t,x)\in\R^2$
\begin{equ}\label{e:CardDeg}
|(M^\ua_\bw)^{-1}(z)|-1=\sum_{i=1}^{|(M^\da_\bw)^{-1}(z)|} (\deg(\sz^\da_i)-1)
\end{equ}
where $\{\sz^\da_i\}_i$ are the points in $(M^\da_\bw)^{-1}(z)$ and 
$|(M_\bw^{\dotp})^{-1}(z)|$ denotes the cardinality of $(M_\bw^{\dotp})^{-1}(z)$. 
The relation~\eqref{e:CardDeg} holds as well with the arrows $\ua\,,\da$ reversed and for their periodic counterpart. 
\end{proposition}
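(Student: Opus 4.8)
The plan is to transfer the known identity for the FINR Brownian web, \cite[Proposition 3.10]{FINRb}, through the homeomorphism between characteristic trees and webs established in Section~\ref{s:Topo}. First I would recall that by Theorem~\ref{thm:DBW} the pair $(K(\zeta^\da_\bw),\hat K(\zeta^\ua_\bw))$ is a double Brownian web in the sense of~\cite{SSS}, and by Proposition~\ref{p:MapInj} the maps $K$ and $\hat K$ are injective on $\Ch^\alpha_\Sp(\ft)$ and $\hat\Ch^\alpha_\Sp(\ft)$, where $\zeta^\da_\bw, \zeta^\ua_\bw$ almost surely live. So it suffices to interpret both sides of~\eqref{e:CardDeg} in terms of the associated web.

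The key identification is: for a deterministic-or-not point $z=(t,x)$, the points $\sz\in(M^\da_\bw)^{-1}(z)$ correspond, via $\sz\mapsto \pi_\sz$ in~\eqref{e:CompPath}, to the \emph{distinct} backward paths in $K(\zeta^\da_\bw)$ that pass through $z$ at time $t$; this uses the tree condition~\ref{i:TreeCond} to guarantee that distinct $\sz$'s give paths that differ on every left-neighbourhood of $t$, so $|(M^\da_\bw)^{-1}(z)| = m_{\mathrm{in}}(z)$, the number of incoming backward paths at $z$ in FINR's classification. Similarly $|(M^\ua_\bw)^{-1}(z)| = m_{\mathrm{out}}(z)$, the number of outgoing forward dual paths. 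Next I would show that for each $\sz^\da_i$, the quantity $\deg(\sz^\da_i)-1$ counts the number of distinct backward paths \emph{emanating downward} from $\sz^\da_i$, i.e.\ the number of directions at $\sz^\da_i$ whose image under $M^\da_\bw$ moves forward in time away from $z$: by monotonicity in time~\ref{i:Back} exactly one direction at $\sz^\da_i$ goes ``down'' (toward the end $\dagger^\da$ of Proposition~\ref{p:EndProp}, decreasing $M_t$), and the remaining $\deg(\sz^\da_i)-1$ directions all increase $M_t$, each carrying a path through $z$ that is locally distinct from the others by~\ref{i:MonSpace} and~\ref{i:TreeCond}. Summing over $i$ then gives the total number of backward paths entering $z$ counted with outgoing multiplicity, which by the structure of special points (the ``$(m_{\mathrm{in}},m_{\mathrm{out}})$'' classification recalled in~\cite[Section~2.6]{SSS}) equals exactly $m_{\mathrm{out}}(z)-1$; this combinatorial relation between incoming-with-branching and outgoing paths at a point of the web is precisely the content of~\cite[Proposition~3.10]{FINRb}.

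So the main steps, in order, are: (i) reduce to the a.s.\ event where $\zeta^\da_\bw,\zeta^\ua_\bw$ satisfy~\ref{i:TreeCond} and where the web $(K(\zeta^\da_\bw),\hat K(\zeta^\ua_\bw))$ is a double Brownian web; (ii) prove the dictionary $|(M^\da_\bw)^{-1}(z)| = \#\{\text{distinct incoming backward paths at }z\}$ and the analogous statement for the dual; (iii) prove $\deg(\sz^\da_i)-1 = \#\{\text{distinct backward paths continuing forward in time through }\sz^\da_i\}$, using the $\vee$-shape of geodesics and non-crossing; (iv) invoke~\cite[Proposition~3.10]{FINRb} to conclude~\eqref{e:CardDeg}, and (v) note the reversed-arrow and periodic cases are identical, using Remark~\ref{rem:CharTreeFor} (the involution $\zeta\mapsto-\zeta$) and Remark~\ref{rem:PeriodicDBW}.

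The main obstacle, I expect, is step~(iii): carefully matching the \emph{tree degree} $\deg(\sz^\da_i)$ — a purely metric-topological quantity on $\ST^\da_\bw$ — with the \emph{number of outgoing branches of the web} at $z$. One has to check that every direction at $\sz^\da_i$ other than the downward one actually reaches time values $>t$ (so that it contributes a genuine ``outgoing'' path and not a path that immediately coalesces back), and conversely that two distinct outgoing web-paths at $z$ cannot come from the same direction at $\sz^\da_i$; both rely on combining monotonicity in space~\ref{i:MonSpace}, monotonicity in time~\ref{i:Back}, and the tree condition~\ref{i:TreeCond}, together with the local structure of the Brownian web near special points. A secondary subtlety is that~\eqref{e:CardDeg} is asserted for \emph{every} $z\in\R^2$ simultaneously (not just deterministic $z$), so the dictionary in steps (ii)--(iii) must be established pathwise and uniformly, which is why one works on the full-measure event where $M^\da_\bw$ is surjective and~\ref{i:TreeCond} holds (Theorem~\ref{thm:BW}) rather than point by point.
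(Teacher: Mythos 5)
Your proposal is correct and follows essentially the same route as the paper: translate the tree quantities at $z$ into FINR-style path counts via the map $K$ (using the tree condition~\ref{i:TreeCond} and Theorem~\ref{thm:DBW} to identify $(K(\zeta^\da_\bw),\hat K(\zeta^\ua_\bw))$ with the double Brownian web), observe that exactly one direction at each $\sz^\da_i$ decreases $M_t$ so that $\sum_i(\deg(\sz^\da_i)-1)$ counts the incoming backward paths at $z$, and then invoke \cite[Proposition 3.10]{FINRb}, with the reversed and periodic cases following by symmetry. The only slip is the dictionary entry ``$|(M^\da_\bw)^{-1}(z)|=m_{\mathrm{in}}(z)$'': the preimage of $z$ under $M^\da_\bw$ counts the distinct \emph{outgoing} backward paths (i.e.\ $m^b_{\mathrm{out}}(z)$ in the paper's notation, as is clear for a generic point where the preimage is a singleton but no backward path enters $z$), but since this cardinality only serves to index the sum in~\eqref{e:CardDeg} the mislabelling does not affect the argument.
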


\begin{proof}
As usual we will focus on the non-periodic case, the other being analogous.  

We claim that for all $z=(t,z)\in\R^2$, $|(M^\da_\bw)^{-1}(z)|=m^b_{\mathrm{out}}(z)$ and 
the right-hand side of~\eqref{e:CardDeg} coincides with $m^b_{\mathrm{in}}(z)$, 
where $m^b_{\mathrm{out}}(z)$ and $m^b_{\mathrm{in}}(z)$ are defined according to~\cite[(3.11) and (3.10)]{FINRb} 
and respectively represent the number of distinct paths ``leaving'' and ``entering'' the point $z$ for the 
backward Brownian Web (by removing the superscript $b$ and reverting the arrows 
the same holds for the forward by duality).

Indeed, for every $\sz^\da\in(M^\da_\bw)^{-1}(z)$, denoting by $\rho^\da$ the radial map associated to $\zeta^\da_\bw$, 
we have that $(-\infty,t]\ni s\mapsto M^\da_{\bw,x}(\rho^\da(\sz^\da,s))$ is a path from $z$.
On the other hand, $\deg(\sz^\da)-1$ corresponds to the number of rays in the tree which coalesce at or reach $\sz$. 
Notice that, since almost surely $\zeta^\da_\bw$ satisfies~\ref{i:TreeCond}, 
the image of the rays coalescing or reaching $\sz$ as well as that of the rays from points in $(M^\da_\bw)^{-1}(z)$ are distinct 
so that the claim follows. 

Now, by Theorem~\ref{thm:DBW} $(K(\zeta^{\da}_\bw),\hat K(\zeta^{\ua}_\bw))$ is distributed as the double Brownian Web 
and almost surely $\zeta_\bw^{\uda}\in\Ch^\alpha_\Sp(\ft)\times\hat\Ch^\alpha_\Sp(\ft)$. 
Since moreover the
restriction of $K$ to $\Ch^\alpha_\Sp(\ft)$ is bijective on its image 
thanks to Proposition~\ref{p:MapTopo},~\eqref{e:CardDeg} is a direct consequence of~\cite[Proposition 3.10]{FINRb}. 
\end{proof}

We are now ready to classify the different points in $\R^2$ or in $\R\times\T$ based on the meaning they have 
for the (periodic) Brownian Web tree (and its dual) as we constructed it. 

\begin{definition}\label{def:Type}
Let $\zeta_\bw^{\uda}=(\zeta^\ua_\bw,\zeta^\da_\bw)$ be the double Brownian Web tree. 
For $\dotp\in\{\ua,\da\}$, the type of a point $z\in\R^2$ for $\zeta^{\dotp}_\bw$ is $(i,j)\in\N^2$, where 
\begin{equ}
i=\sum_{i=1}^{|(M^{\dotp}_\bw)^{-1}(z)|}(\deg(\sz_i^{\dotp})-1)\quad\text{and}\quad j=|(M^{\dotp}_\bw)^{-1}(z)|\,.
\end{equ} 
Above, $\{\sz^{\dotp}_i\,:\,i\in\{1,\dots, |(M^{\dotp}_\bw)^{-1}(z)|\}\}=(M^{\dotp}_\bw)^{-1}(z)$. 
We define $S^\ua_{i,j}$ (resp. $S^\da_{i,j}$) as the subset of $\R^2$ containing all points of type $(i,j)$
for the forward (resp. backward) Brownian Web tree. 
For the periodic Brownian Web $\zeta_\bw^{\per,\uda}=(\zeta^{\per,\ua}_\bw,\zeta^{\per,\da}_\bw)$, 
the definition is the same as above and the set of all of points in $\R\times\T$ of type $(i,j)$ for the backward (resp. forward) 
periodic Brownian Web tree, will be denoted by 
$S^{\per,\da}_{i,j}$ (resp. $S^{\per,\ua}_{i,j}$).
\end{definition}

\begin{theorem}\label{thm:Types}
For the backward and backward periodic Brownian Web trees $\zeta^\da_\bw$ and $\zeta^{\da,\per}_\bw$, 
almost surely, every $z\in\R^2$ (resp. $\R\times\T$) is of one of the following types, 
all of which occur: $(0,1),\,(1,1),\,(2,1),\,(0,2),\,(1,2)$ and $(0,3)$. 
Moreover, almost surely, for every $t\in\R$
\begin{itemize}[noitemsep,label=-]
\item $S^\da_{0,1}$ has full Lebesgue measure on $\R^2$ and 
$S^\da_{0,1}\cap\{t\}\times\R$ has full Lebesgue measure in $\{t\}\times\R$,
\item $S^\da_{1,1}$ and $S^\da_{0,2}$ have Hausdorff dimension $3/2$ while 
$S^\da_{1,1}\cap\{t\}\times\R$ and $S^\da_{0,2}\cap\{t\}\times\R$ are both countable 
and dense in $\{t\}\times\R$,
\item $S^\da_{1,2}$ has Hausdorff dimension $1$, $S^\ua_{2,1}$ and $S^\ua_{0,3}$ 
are countable and dense while $S^\da_{2,1}\cap\{t\}\times\R$, $S^\da_{1,2}\cap\{t\}\times\R$ 
and $S^\da_{0,3}\cap\{t\}\times\R$ have each cardinality at most 1. 
\end{itemize}
For deterministic times $t$, $S^\da_{2,1}\cap\{t\}\times\R$, $S^\da_{1,2}\cap\{t\}\times\R$ 
and $S^\da_{0,3}\cap\{t\}\times\R$ are almost surely empty. 
Upon reversing all arrows, the properties above hold for the forward and forward periodic Brownian Web trees.
\end{theorem}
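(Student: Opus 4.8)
The plan is to deduce the whole statement from the classical description of the special points of the Brownian web (see~\cite[Section 3]{FINR}, \cite{FINRb} and the review~\cite{SSS}) via the dictionary already built in Proposition~\ref{p:CardDeg}. Concretely, by Theorem~\ref{thm:DBW} the pair $(K(\zeta^\da_\bw),\hat K(\zeta^\ua_\bw))$ is a double Brownian web in the sense of~\cite[Theorem 2.1]{SSS}, and the proof of Proposition~\ref{p:CardDeg} shows that for every $z\in\R^2$ the type of $z$ for $\zeta^\da_\bw$ in the sense of Definition~\ref{def:Type} coincides with the pair $(m^b_{\mathrm{in}}(z),m^b_{\mathrm{out}}(z))$ of in- and out-multiplicities of $z$ for the backward Brownian web $K(\zeta^\da_\bw)$; the same applies to $S^\ua_{i,j}$ with $\hat K(\zeta^\ua_\bw)$, which by Theorem~\ref{thm:DBW}\ref{i:Dist} is a forward Brownian web. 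Since $M^\da_\bw$ is surjective by Theorem~\ref{thm:BW}, this assigns a type to \emph{every} point of $\R^2$, and $S^\da_{i,j}$ is exactly the classical set of $(i,j)$-points of $K(\zeta^\da_\bw)$.

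Granting this identification, the list of admissible types $(0,1),(1,1),(2,1),(0,2),(1,2),(0,3)$, the fact that all of them occur almost surely, and all the statements on Lebesgue-negligibility, Hausdorff dimension, countability and density — both on $\R^2$ and on fixed horizontal slices $\{t\}\times\R$, including the almost-sure emptiness of the rarest sets on deterministic slices — are precisely the classification theorem and the dimension computations for the Brownian web recalled in~\cite{FINR,FINRb,SSS,TW}, so I would simply quote them; note that since the $S^{\dotp}_{i,j}$ are subsets of $\R^2$ with its Euclidean metric there is nothing to transport through $M^\da_\bw$, and the classical $\R^2$-results apply verbatim. The only point one must check by hand is that the cited statements, phrased in those references in terms of paths entering/leaving a point, do match Definition~\ref{def:Type}: for the out-multiplicity this is the equality $j=|(M^\da_\bw)^{-1}(z)|$, and for the in-multiplicity the identity $i=\sum_i(\deg(\sz^\da_i)-1)$, both of which are exactly what is established inside the proof of Proposition~\ref{p:CardDeg} using that, almost surely, $\zeta^\da_\bw$ satisfies the tree condition~\ref{i:TreeCond}.

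The periodic case is handled identically, replacing Theorem~\ref{thm:BW}, Proposition~\ref{p:CardDeg} and~\cite[Theorem 2.1]{SSS} by their periodic analogues (Remark~\ref{rem:t:PeriodicBW}, the periodic part of Proposition~\ref{p:CardDeg} and~\cite[Theorem 2.3]{CMT}), together with the fact that the local structure of the cylindric Brownian web — hence the classification of its special points and their dimensions/densities — is the same as in the planar case. The main obstacle, and the reason the argument is not a pure triviality, is the passage through the non-injective map $K$: a priori several branches of $\ST^\da_\bw$ could be collapsed onto the same path in $\Pi$, so that the tree-theoretic degree would not see the analytic in-multiplicity. This is resolved by~\ref{i:TreeCond}, valid almost surely for $\zeta^\da_\bw$ by Theorem~\ref{thm:BW} (and propagated through Lemma~\ref{l:TreeCond}), which forces distinct rays to have almost-everywhere distinct images; once this is in place, everything else is bookkeeping and citation.
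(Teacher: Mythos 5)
Your proposal matches the paper's argument: the paper likewise proves this by ``arguing as in the proof of Proposition~\ref{p:CardDeg}'' (i.e.\ identifying the tree-type $(i,j)$ of Definition~\ref{def:Type} with the classical in-/out-multiplicities of the path-valued web via $K$ and the tree condition~\ref{i:TreeCond}) and then quoting the special-points classification of~\cite{FINRb} (Theorems 3.11, 3.13 and 3.14), with the periodic case handled by the same translation. Your write-up is essentially an expanded version of that proof, so nothing further is needed.
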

\begin{proof}
Arguing as in the proof of Proposition~\ref{p:CardDeg}, 
the statement follows immediately by~\cite[Theorems 3.11, 3.13 and 3.14]{FINRb}. 
\end{proof}

Thanks to the classification above, we can now prove one of the features 
that distinguishes the Brownian Web tree and its periodic version. 
In the next proposition, whose conclusion was first noted in~\cite{CMT}, 
we show that the periodic Brownian Web tree possesses a unique bi-infinite path connecting its 
{\it two} open ends with unbounded rays. 

\begin{proposition}\label{p:TwoEnds}
For $\dotp\in\{\da,\ua\}$, let $\zeta^{\per,\dotp}_\bw = (\ST^{\per,\dotp}_\bw, \ast_\bw^{\per,\dotp}, d^{\per,\dotp}_\bw, 
M^{\per, \dotp}_\bw)$ be the periodic backward and forward Brownian Web trees of Definition~\ref{def:DBW}. 
Then, almost surely, each $\ST^{\per,\da}_\bw$ and $\ST^{\per,\ua}_\bw$ has exactly two open ends with unbounded rays and 
a unique bi-infinite edge connecting them. 
\end{proposition}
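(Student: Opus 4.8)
The plan is to reduce to the backward tree, classify the unbounded rays according to the behaviour of $M_t$, and then isolate the one genuinely new phenomenon on the cylinder: the existence of a unique bi-infinite backward trajectory. Since the statement concerns only the underlying $\R$-tree, it suffices to treat $\ST^{\per,\da}_\bw$, because $\zeta^{\per,\ua}_\bw\eqlaw -\zeta^{\per,\da}_\bw$ by Theorem~\ref{thm:DBW}\ref{i:Dist} (periodic version, Remark~\ref{rem:PeriodicDBW}) and Remark~\ref{rem:CharTreeFor}, and the involution $\zeta\mapsto-\zeta$ does not change the tree. Write $(\ST,\ast,d,M)\eqdef\zeta^{\per,\da}_\bw$ with radial map $\rho$. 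I would work on the full-measure event of Theorem~\ref{thm:BW} on which $\zeta^{\per,\da}_\bw=\zeta^{\per,\da}(\cD^\per)$ for a fixed countable dense $\cD^\per\ni(0,0)$, on which $\zeta^{\per,\da}_\bw\in\Ch^\alpha_{\Sp,\per}$ and satisfies \ref{i:TreeCond}, and on which $K(\zeta^{\per,\da}_\bw)$ is a backward cylindric Brownian web (Corollary~\ref{cor:Topologies}). Since $\ST$ is complete, Theorem~\ref{thm:Rtrees} says every open end has unbounded rays, so it is enough to show there are exactly two open ends and to identify the (automatically unique) bi-infinite geodesic joining them.

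First I would classify the rays. Along any geodesic segment, monotonicity in time \ref{i:Back} makes $M_t$ a $\vee$-shaped function, so along a ray $\llb\sz,\dagger\rangle$ the coordinate $M_t$ is eventually strictly monotone, and by~\eqref{e:Ray} and its $\wedge$-analogue it is eventually affine of slope $\pm1$; call $\dagger$ \emph{descending} if $M_t\to-\infty$ and \emph{ascending} if $M_t\to+\infty$. If two rays from the same point are both descending and split at $\sw$, their disjoint tails have $M_x$-images that are two backward paths of the cylindric Brownian web $K(\zeta^{\per,\da}_\bw)$; two such paths coalesce a.s., and once they agree from some time on, \ref{i:TreeCond} upgrades this to equality of the corresponding points of $\ST$, contradicting disjointness past $\sw$. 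Hence there is a \emph{unique} descending end $\dagger^-$ (it exists, being the end of Proposition~\ref{p:EndProp}). Symmetrically, an ascending end $\dagger$ determines, past the tip of a ray towards it, a sub-ray $\{a_t:t\ge t_0\}\subset\ST$ with $M(a_t)=(t,\pi_\dagger(t))$, where $\pi_\dagger$ is a \emph{bi-infinite} backward web path (i.e.\ $\sigma_{\pi_\dagger}=+\infty$); and two distinct ascending ends give distinct such paths (if $\pi_\dagger\equiv\pi_{\dagger'}$ then $M(\rho(a_t,s))=M(\rho(b_t,s))$ for $s$ near $t$, so $a_t=b_t$ by \ref{i:TreeCond}, forcing $\dagger=\dagger'$). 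Thus the number of ascending ends equals the number of distinct bi-infinite backward web paths.

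The heart of the argument is therefore to prove that, almost surely, there is exactly one bi-infinite backward path $\pi^*$. I would do this via the ``coming down to a single trajectory'' property on the compact circle: for $T>0$ set $Y_T\eqdef\{M_{\bw,x}^{\per,\da}(\rho(\sz,0)):M_{\bw,t}^{\per,\da}(\sz)\ge T\}\subset\T$, the positions at time $0$ of backward web paths alive at time $T$. Adapting the estimates of~\cite[Prop.~4.1]{FINR} (or using~\cite{CMT}), $Y_T$ is a.s.\ finite; one has $Y_{T'}\subseteq Y_T$ for $T'\ge T$; and $\Prob(\#Y_T\ge2)\to0$ as $T\to\infty$, since the number of surviving coalescing Brownian motions on the unit circle is $O(1)$ w.h.p.\ already after one time unit and then coalesces to one over the remaining $T-1$ units. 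Hence $\bigcap_T Y_T$ is an a.s.\ nonempty finite set of cardinality one; moreover, as in the previous paragraph, any two backward web paths alive at a large time $T$ agree at time $0$, hence (by coalescence and \ref{i:TreeCond}) pass through a common point $\sz^*_0\in\ST$ at time $0$ and share their descending rays. Using time-stationarity of the cylindric Brownian web together with consistency ($\rho(\sz^*_{t'},t)=\sz^*_t$ for $t<t'$), the points $\{\sz^*_t\}_{t\in\R}$ form a bi-infinite line $L$ with $M(\sz^*_t)=(t,\pi^*(t))$, and $\pi^*$ is the unique bi-infinite backward web path (any bi-infinite path is alive at all times, so at each $t$ its value is forced to be $\pi^*(t)$). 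Consequently there is exactly one ascending end $\dagger^+$, distinct from $\dagger^-$ since $M_t\to-\infty$ on one tail of $L$ and $M_t\to+\infty$ on the other; so $\ST$ has exactly two open ends. Finally, $\sz\in L$ iff removing $\sz$ separates $\dagger^-$ from $\dagger^+$, so $L=\llb\dagger^-,\dagger^+\rangle$ is the bi-infinite geodesic joining them, unique by the $\R$-tree property (and it is precisely the image of the unique bi-infinite backward path).

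The main obstacle is the third step: establishing the almost sure existence and uniqueness of the bi-infinite backward path on the cylinder — i.e.\ that coalescing Brownian motions on the circle ``come down to one'' — and transporting this from the path picture $K(\zeta^{\per,\da}_\bw)$ back to the tree through the tree condition \ref{i:TreeCond}. This is exactly what fails in the planar case (there, by the parenthetical in Proposition~\ref{p:EndProp}, every unbounded ray is descending), and it is here that the compactness of $\T$ enters; a secondary point requiring care is the time-translation invariance of the law of $\zeta^{\per,\da}_\bw$, which should be recorded from the construction in~\cite{CMT}.
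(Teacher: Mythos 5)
Most of your outline is sound: the uniqueness of the descending end, the injection from ascending ends into bi-infinite backward trajectories, the uniqueness of such a trajectory via the sets $Y_T$, and the identification of the line $L$ with the geodesic between the two ends would all go through. The genuine gap is in the \emph{existence} step, i.e.\ in the construction of the consistent family $\{\sz^*_t\}_t\subset\ST$ (equivalently of the ascending end and of the bi-infinite edge). Knowing $\#Y_T=1$ only tells you that all paths alive at time $T$ occupy the same \emph{position} at time $0$; it does not give you a common \emph{tree point}. The tree condition~\ref{i:TreeCond} identifies two tree points only when their radial images agree on a whole time interval $[-\eps,0]$, whereas points of $\R\times\T$ with two or three tree preimages (types $(0,2)$, $(1,2)$, $(0,3)$, which by Theorem~\ref{thm:Types} are dense) have radial paths that separate immediately below, and non-crossing does not exclude that the old paths split between two such preimages over the funnelling position at the fixed time $0$ (agreement of positions at one, or even two, times does not force agreement in between). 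For the same reason the consistency relation $\rho(\sz^*_{t'},t)=\sz^*_t$, which is exactly what makes the union of the segments a single ascending ray rather than a ``comb'' of finite upward branches, is asserted but not proved. Note that existence cannot be waved away: in the planar case the tree is also unbounded upward in the sense that branches of arbitrary height exist, yet there is no ascending end (Proposition~\ref{prop:OneEnd}), so this is precisely the point where a real argument is needed.

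The paper closes exactly this hole by a different mechanism. It works with the \emph{forward} periodic web: the stopping times $\tau_k$ (taken from the proof of \cite[Theorem 3.1]{CMT}, with $\tau_k\to\infty$ a.s.) mark successive times at which all forward paths born before $\tau_{k-1}$ have merged into one tree point, and the last merger forces the existence of a space-time point $z_{k-1}$ at level $\tau_{k-1}$ with at least two forward-tree preimages. The degree--duality formula \eqref{e:CardDeg} of Proposition~\ref{p:CardDeg}, combined with the classification of Theorem~\ref{thm:Types} (which rules out two preimages of degree $\ge 2$ over the same point), then yields a \emph{unique} point $\sz_k$ of the backward tree of degree at least $2$ over $z_{k-1}$; gluing the radial rays $\rho^\da_\per(\sz_k,\cdot)$ on the intervals $(\tau_{k-1},\tau_k]$, which is consistent by the non-crossing property of Theorem~\ref{thm:DBW}\ref{i:Cross}, produces the canonical bi-infinite line $\beta^\da(\R)$ and hence the second end. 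To repair your proof you should either import this duality/degree step, or otherwise show directly that all sufficiently old backward paths agree on a time \emph{interval} below each funnelling time, before invoking~\ref{i:TreeCond}.
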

\begin{proof}
Since $\ST^{\per,\da}_\bw$ and $\ST^{\per,\ua}_\bw$ are periodic directed trees, we already know they have one 
open end with unbounded rays, and this is the one for which~\eqref{e:Back} holds 
(for the forward periodic Web see Remark~\ref{rem:CharTreeFor}). Denote them by $\dagger^\da$ and $\dagger^\ua$ and 
let $\rho_\per^\da$ and $\rho_\per^\ua$ be the radial maps introduced in Definition~\eqref{def:RadMap}. 
Similarly to~\eqref{e:CPS}, for $t_0,t_1\in\R$, $t_0<t_1$, we introduce 
\begin{equs}
\Xi^\ua_\T(t_0,t_1)&\eqdef \{\rho_\per^\ua(\sz,t_1)\,:\, \sz\in\ST^{\per,\ua}_\bw\text{ and } M^{\per,\ua}_{t,\bw}(\sz)\leq t_0\}\\
\Xi^\da_\T(t_1,t_0)&\eqdef \{\rho_\per^\da(\sz,t_0)\,:\, \sz\in\ST^{\per,\da}_\bw\text{ and }M^{\per,\da}_{t,\bw}(\sz)\geq t_1\}
\end{equs}
and set $\eta_\T^\ua(t_0,t_1)$ and $\eta_\T^\da(t_1,t_0)$ to be the cardinality of $\Xi^\ua_\T(t_0,t_1)$ and 
$\Xi^\da_\T(t_1,t_0)$ respectively. We inductively define the sequence of stopping times
\begin{equs}
\tau_1&\eqdef \inf\{t>0\,:\,\eta_\T^\ua(0,t)=1\}\\
\tau_k&\eqdef \inf\{t>\tau_{k-1}\,:\,\eta_\T^\ua(\tau_{k-1},t)=1\}\,.
\end{equs}
These stopping times coincide (in distribution) with those in the proof of~\cite[Theorem 3.1]{CMT}, where it is 
further showed that almost surely $\lim_{k\to\infty}\tau_k=+\infty$. 

Now, by definition, for every $k\geq 1$, there must exist a point $z_{k-1}\in\T\times\{\tau_{k-1}\}$ such that 
$|(M^{\per,\ua}_\bw)^{-1}(z_{k-1})|\geq 2$
and the distance of (at least) two elements in $(M^{\per,\ua}_\bw)^{-1}(z_{k-1})$ is $2(\tau_k-\tau_{k-1})$. 
By~\eqref{e:CardDeg} and Theorem~\ref{thm:Types}, it follows that there exists 
exactly one point $(M^{\per,\da}_\bw)^{-1}(z_{k-1})$ whose degree is greater or equal to $2$. Denote it by $\sz_k$.
Then the map $\beta^\da:\R\to\ST^{\per,\da}_\bw$ given by 
\begin{equ}
\beta^\da(s)\eqdef 
\begin{cases}
\rho^\da_\per(\sz_k,s)\,,&\text{for $s\in(\tau_{k-1},\tau_k]$}\\
\rho^\da_\per(\sz_0,s)\,&\text{for $s<0$.}
\end{cases}
\end{equ}
is not only well-defined by Theorem~\ref{thm:DBW}\ref{i:Cross} but also {\it uniquely} defined since so is the choice 
of the point $\sz_k$. The map $\beta^\da$ shows that there are exactly two open ends with unbounded rays, 
and $\beta^\da(\R)$ is the unique linear subtree of $\ST^{\per,\da}_\bw$ satisfying the properties 
in~\cite[Lemma 3.7(i)]{Ch}.
\end{proof}

\section{The Discrete Web Tree and convergence}\label{sec:DWT}

In this section, we introduce the discrete web and its dual, and show that, as a couple, they converge to 
the Double Brownian Web Tree of Definition~\ref{def:DBW}. 

\subsection{The Double Discrete Web Tree}
\label{sec:graphical}

 We begin our analysis with the spatial tree representation of 
a family of coalescing backward random walks and its dual. 
The construction below will directly provide a coupling between forward and backwards paths under which 
one is determined by the other and the two satisfy the non-crossing property of Theorem~\ref{thm:DBW}\ref{i:Cross}. 

Let $\delta\in(0,1]$ and $(\Omega,\cA,\P_\delta)$ be a standard probability space supporting 
four Poisson random measures, $\mu_{\gamma}^L$, $\mu_{\gamma}^R$, $\hat\mu_{\gamma}^L$ and $\hat\mu_{\gamma}^R$.  
The first two, $\mu_{\gamma}^L$ and $\mu_{\gamma}^R$, live on $\D^\da_\delta\eqdef\R\times\delta\Z$, are independent and 
have both intensity $\gamma\lambda$, 
where, for every $k\in\delta\Z$, $\lambda(\dd t, \{k\})$ is a copy of the Lebesgue measure on $\R$ and
throughout the section 
\begin{equ}[e:gamma]
\gamma=\gamma(\delta)\eqdef\frac{1}{2\delta^2}\,.
\end{equ} 
The others live on $\D^\ua_\delta\eqdef\R\times\delta(\Z+1/2)$, and are obtained from the formers by setting, 
for every measurable $A\subset\D^\ua_\delta$
\begin{equ}[e:DualPoisson]
\hat\mu^L_\gamma(A)\eqdef\mu^R_\gamma(A-\delta/2)\qquad\text{and}\qquad 
\hat\mu^R_\gamma(A)\eqdef\mu^L_\gamma(A+\delta/2)\,.
\end{equ}
Here, $A\pm\delta/2$ is the translate of $A$ in the spatial direction, i.e. $A\pm\delta/2\eqdef\{z\pm(0,\delta/2)\,:\,z\in A\}$. 

From now on, we will adopt the convention of writing $z\in\mu_{\gamma}^{\dotp}$, $\dotp\in\{R,L\}$, 
if {$\mu_{\gamma}^{\dotp}(\{z\})=1$}. 
We represent the Poisson points of $\mu_{\gamma}^L$, $\mu_{\gamma}^R$, $\hat\mu_{\gamma}^L$ and 
$\hat\mu_{\gamma}^R$ with arrows as follows. 
If $z\in\mu_{\gamma}^L$ (resp. $\mu_{\gamma}^R$) then we draw an arrow 
from $z$ to $z-\delta$ (resp. $z+\delta$), and similarly for  $\hat\mu^L_\gamma$ and $\hat\mu^R_\gamma$, 
as shown in Figure~\ref{f:0BDandDual}. We also define
\begin{equ}[e:defmuT]
\mu_\gamma^T = \{z -\delta\,:\, z \in \mu_\gamma^L\} \cup \{z +\delta\,:\, z \in \mu_\gamma^R\}\;,
\end{equ}
and similarly for $\hat\mu_\gamma^T$.
(Here, $T$ stands for ``tip'' since $\mu_\gamma^T$ denotes the collection of all tips of arrows.)

\begin{figure}[t]
\setlength{\unitlength}{0.27cm}

\begin{center}
\begin{tikzpicture}[scale=0.7,baseline=0.85cm]
\draw[->] (0,0) -- (8,0);
\draw[black!25] (0,4) -- (8,4);
\foreach \x in {1,...,7}
{
	\draw (\x,0) -- (\x,4.5);
}
\foreach \x in {1,...,8}
{
	\draw[blue!25] (\x-0.5,0) -- (\x-0.5,4.5);
}
\foreach \x/\y in {1/2,1/2.8,2/1.1,3/2,3/3.2,4/2.5,5/0.5,6/1.8,6/2.2,6/3.7}
{
	\draw[black!50,->] (\x,\y) -- (\x+1,\y);
}
\foreach \x/\y in {1/2,1/2.8,2/1.1,3/2,3/3.2,4/2.5,5/0.5,6/1.8,6/2.2,6/3.7}
{
	\draw[blue!25,->] (\x+0.5,\y-0.02) -- (\x-0.5,\y-0.02);
}
\foreach \x/\y in {2/0.2,3/3.1,4/1.5,5/1,5/3.5,6/0.8,6/2.7,7/0.3}
{
	\draw[black!50,->] (\x,\y) -- (\x-1,\y);
}
\foreach \x/\y in {2/0.2,3/3.1,4/1.5,5/1,5/3.5,6/0.8,6/2.7,7/0.3}
{
	\draw[blue!25,->] (\x-0.5,\y-0.02) -- (\x+0.5,\y-0.02);
}
\draw[very thick,dr!80] (4,0) -- (4,1) -- (5,1) -- (5,2.5) -- (4,2.5) -- (4,3.2) -- (3,3.2) -- (3,4);
\draw[very thick,blue!80] (3.5,0) -- (3.5,1.48) -- (4.5,1.48) -- (4.5,2.48) -- (3.5,2.48) -- (3.5,3.18) -- (2.5,3.18) -- (2.5,4);

\node[fill=white,above] at (3,4) {$y$};
\node[fill=white,below] at (3.5,0) {$\hat y$};
\node[left] at (0,0) {$0$};
\node[left] at (0,4) {$t$};
\node[inner sep=0pt,minimum size=1mm] at (3,4) [circle,fill=dr!80] {};
\node[inner sep=0pt,minimum size=1mm] at (3.5,0) [circle,fill=blue!80] {};
\end{tikzpicture}\qquad 
\begin{tikzpicture}[scale=0.7,baseline=0.85cm]
\draw[->] (0,0) -- (8,0);
\draw[black!25] (0,4) -- (8,4);
\foreach \x in {1,...,7}
{
	\draw (\x,0) -- (\x,4.5);
}
\foreach \x/\y in {1/2,1/2.8,2/1.1,3/2,3/3.2,4/2.5,5/0.5,6/1.8,6/2.2,6/3.7}
{
	\draw[black!50,->] (\x,\y) -- (\x+1,\y);
}
\foreach \x/\y in {2/0.2,3/3.1,4/1.5,5/1,5/3.5,6/0.8,6/2.7,7/0.3}
{
	\draw[black!50,->] (\x,\y) -- (\x-1,\y);
}
\draw[very thick,dr!80] (4,0) -- (4,1) -- (5,2.5) -- (4,3.2) -- (3.4,4);
\draw[very thick,dr!80] (4,0) -- (4,1) -- (5,2.5) -- (4,3.2) -- (4,3.5) -- (4.7,4);

\node[fill=white,above] at (3.4,4) {$x$};
\node[fill=white,above] at (4.7,4) {$z$};
\node[left] at (0,0) {$0$};
\node[left] at (0,4) {$t$};
\node[inner sep=0pt,minimum size=1mm] at (3.4,4) [circle,fill=dr!80] {};
\node[inner sep=0pt,minimum size=1mm] at (4.7,4) [circle,fill=dr!80] {};
\end{tikzpicture}

\end{center}
\vspace{-1em}\caption{On the left: graphical representation of the realisation of the 
Poisson processes $\mu^L$ and $\mu^R$, 
and their dual $\hat\mu^L$ and $\hat\mu^R$ which respectively live on 
$\D^\da_\delta$ and $\D_\delta^\ua$. 
The red and blue lines illustrate the restrictions of the backward and forward paths $\pidd_{(t,y)}$ and 
$\piud_{(0,\hat y)}$ to the interval $[0,t]$. 
On the right: the paths starting from $x$ and $z$ in the interpolated tree.}\label{f:0BDandDual}
\end{figure}

Let us now introduce two families of random walks. 
We define $\{\pidd_z(s)\}_{s\leq t}$, for $z=(t,y)\in\D^\da_\delta$, as the random walk going backwards in time, 
``following the arrows'' determined by $\mu_{\gamma}^L$ and $\mu_{\gamma}^R$, 
and, for $z=(t,y)\in\D^\ua_\delta$, $\{\piud_z(s)\}_{s\geq t}$ 
as the forward random walk which follows those of $\hat\mu^L$ and $\hat\mu^R$, 
as shown in Figure~\ref{f:0BDandDual}. (By convention, if $z$ is the start of an arrow, then
$\pidd_z$ and $\piud_z$ start by going downwards / upwards.)
These are almost surely well-defined  $\mu_{\gamma}^L$ and $\mu_{\gamma}^R$ are disjoint with probability one
and, for all $z\in\D^\da_\delta$ and $\hat z\in\D^\ua_\delta$,  
$\pidd_z$ is c\`agl\`ad (or c\`adl\`ag if we run time backwards from $+\infty$ to $-\infty$), 
while $\piud_{\hat z}$ is c\`adl\`ag. 
Moreover, $\{\pidd_z\}_z$ and $\{\piud_{\hat z}\}_{\hat z} $ are coalescing families of paths 
starting from every point in $\D^\da_\delta$ and $\D^\ua_\delta$ respectively, which do not cross. 

\begin{definition}\label{def:DWT}
Let $\delta\in(0,1]$, $\gamma$ as in~\eqref{e:gamma}, 
$\mu_{\gamma}^L$ and $\mu_{\gamma}^R$ be two independent Poisson random measures on
$\D^\da_\delta$ of intensity $\gamma\lambda$, $\hat\mu^L$ and $\hat\mu^R$ be given as in~\eqref{e:DualPoisson} 
and $\{\pidd_z\}_{z\in\D^\da_\delta}$ and $\{\piud_{\hat z}\}_{\hat z\in \D^\ua_\delta} $ be the 
families of coalescing random walks introduced above. We define the {\it Double Discrete Web Tree} as 
the couple $\zeta^{\uda}_\delta\eqdef(\zeta^\da_\delta, \zeta^\ua_\delta)$, in which
\begin{itemize}[noitemsep,label=-]
\item $\zeta^{\da}_\delta\eqdef(\ST^{\da}_\delta, \ast^{\da}_\delta, d^{\da}_\delta, M^{\da}_\delta)$ is given by setting $\ST^{\da}_\delta = \D^\da_\delta$, $\ast^{\da}_\delta = (0,0)$, $M^{\da}_\delta$ the canonical inclusion, and 
\begin{equ}[e:defdd]
d^{\da}_\delta(z,\bar z) = t + t' - 2 \sup \{ s \le t \wedge t' \,:\, \pidd_z(s) = \pidd_{\bar z}(s)\}\;.
\end{equ}
\item $\zeta^{\ua}_\delta\eqdef(\ST^{\ua}_\delta, \ast^{\ua}_\delta, d^{\ua}_\delta, M^{\ua}_\delta)$ is built 
similarly, but with $\ast^\ua_{\delta} = (0,\delta/2)$ 
and the supremum in \eqref{e:defdd} replaced by $\inf \{ s \ge t \vee t' \,:\, \piud_z(s) = \piud_{\bar z}(s)\}$.
\end{itemize}
\end{definition}

Notice that neither the Discrete Web Tree $\zeta^\da_\delta$ nor its dual are directed spatial $\R$-trees. 
Indeed, even though they satisfy the conditions of Definition~\ref{def:CharTree} and Remark~\ref{rem:CharTreeFor}
the evaluation maps are discontinuous {(but $(\ST^{\dotp}_\delta, \ast^{\dotp}_\delta, d^{\dotp}_\delta)$ is still 
a complete random $\R$-tree as $\pidd$ and $\piud$ are c\`agl\`ad and c\`adl\`ag, respectively)}. 

To circumvent this technical issue, we introduce two connected subsets of $\R^2$, $\cS_\delta^\da$ and $\cS^\ua_\delta$, 
obtained by interpolating the Poisson points of $\mu^{\dotp}_\gamma$ and 
$\hat\mu^{\dotp}_\gamma$, $\dotp\in\{L,R\}$, 
and which will represent the image of modified evaluation maps. 
Fix a realisation of $\mu^{\dotp}_\gamma$, $\dotp\in\{L,R\}$, and 
consider $\mu^{T}_\gamma$ as in \eqref{e:defmuT}. 
Given $z = (t,x) \in \D^\da_\delta$, we then define $z^\da$ as follows. Let $t^\da = \sup\{s < t\,:\, (s,x) \in \mu^{R}_\gamma \cup \mu^L_\gamma \cup \mu^T_\gamma\}$ and set { $z^\da = (t^\da, \pidd_z(t^\da))$}. 
We then define $\cS_\delta^\da$ as the union of all closed line segments joining $z$ to $z^\da$
with $z \in \mu^{R}_\gamma \cup \mu^L_\gamma \cup \mu^T_\gamma$. Given $z = (t,x) \in \D^\da_\delta$ and
setting $z^\ua = (t^\ua,x)$ with $t^\ua = \inf\{s \ge t\,:\, (s,x) \in \mu^{R}_\gamma \cup \mu^L_\gamma \cup \mu^T_\gamma\}$, we write $\tilde M_\delta^\da(z) \in \cS^\da_\delta$ for the unique element on the line segment
joining $z^\ua$ to $z^\da$ with the same time coordinate as $z$ (see Figure~\ref{f:0BDandDual} on the left).
The set $\cS^\ua_\delta$ is defined
similarly, but with time reversed.
It is immediate to see that, almost surely, the sets $\cS^\da_\delta$ and $\cS^\ua_\delta$ are well-defined and connected. 
With the previous construction at hand we are ready for the following definition. 

\begin{definition}\label{def:IDWT}
In the same setting as Definition~\ref{def:DWT}, we define the {\it Interpolated Double Discrete Web Tree} 
as the couple $\tilde\zeta^{\uda}_\delta\eqdef(\tilde\zeta^\da_\delta, \tilde\zeta^\ua_\delta)$ in which 
$\tilde\zeta^{\dotp}_\delta\eqdef(\ST^{\dotp}_\delta, \ast^{\dotp}_\delta, d^{\dotp}_\delta, \tilde M^{\dotp}_\delta)$, 
$\dotp\in\{\ua,\da\}$, and $(\ST^{\dotp}_\delta,\ast^{\dotp}_\delta,d^{\dotp}_\delta)$ coincides with that of $\zeta^{\dotp}_\delta$, 
while the evaluation map $\tilde M^{\dotp}_\delta$ is defined as just described. 
\end{definition}

\begin{proposition}\label{p:DWTisChar}
For any $\delta\in(0,1]$ and $\alpha\in(0,1)$, almost surely the interpolated double Discrete Web tree 
$\tilde\zeta^{\uda}_\delta$ in Definition~\ref{def:IDWT} belongs to $\Ch^\alpha_\Sp\times\hat\Ch^\alpha_\Sp$ 
and the evaluation maps $\tilde M^{\dotp}_\delta$, $\dotp\in\{\ua,\da\}$ are bijective on $\cS^{\dotp}_\delta$. 
Moreover, it satisfies the following two properties
\begin{enumerate}[label=(\roman*$_\delta$)]
\item\label{i:Distd} $-\tilde\zeta^\ua_\delta+\delta/2\eqlaw\tilde\zeta^\da_\delta$ where 
	$-\tilde\zeta^\ua_\delta+\delta/2\eqdef (\ST^\ua_\delta, \ast^\ua_\delta, d^\ua_\delta, -\tilde M^\ua_\delta+\delta/2)$
\item\label{i:Crossd} almost surely, for every $\sz^\da\in\ST^\da_\delta$ and $\sz^\ua\in\ST^\ua_\delta$ 
	there exists $c\in\{+1,-1\}$ such that for all 
	$\tilde M^\ua_{\delta,t}(\sz^\ua)\leq s_1<s_2\leq \tilde M^\da_{\delta,t}(\sz^\da)$ 
	\begin{equ}[e:AlmostNonCrossing]
	\prod_{i=1,2}(\tilde M^\ua_{\delta,x}(\rho^\ua(\sz^\ua, s_i))-\tilde M^\da_{\delta,x}(\rho^\da(\sz^\da, s_i))+{2}c\delta) \geq 0
	\end{equ}
\end{enumerate}
At last, almost surely, for $\dotp\in\{\ua,\da\}$
\begin{equ}[e:Dist_p_Mp]
\sup_{\sz\in\ST^{\dotp}_\delta}\|\tilde M^{\dotp}_\delta(\sz)-M^{\dotp}_\delta(\sz)\|\leq \delta
\end{equ}
where $M^{\dotp}_\delta$ are the evaluation maps of the double Discrete Web Tree in Definition~\ref{def:DWT}. \end{proposition}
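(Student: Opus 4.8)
The plan is to verify the claims one at a time, starting from the structural statements and ending with the quantitative bound~\eqref{e:Dist_p_Mp}, which is really the cheapest of them. First I would establish that $\tilde\zeta^{\uda}_\delta \in \Ch^\alpha_\Sp \times \hat\Ch^\alpha_\Sp$. Since $\ST^\da_\delta = \D^\da_\delta = \R \times \delta\Z$ with the ancestral metric $d^\da_\delta$ of~\eqref{e:defdd}, and the coalescing-walk structure is that of a discrete forest, $(\ST^\da_\delta,\ast^\da_\delta,d^\da_\delta)$ is a complete and locally compact $\R$-tree (local compactness because there are only finitely many Poisson arrows in any bounded space-time window, so any ball of radius $r$ around $\ast^\da_\delta$ is built from finitely many segments). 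The new evaluation map $\tilde M^\da_\delta$ is, by construction, piecewise linear along the line segments joining consecutive Poisson points, hence continuous; moreover on each such segment it is $1$-Lipschitz in the time coordinate and has controlled (indeed locally bounded) slope in the spatial coordinate, so it is locally Lipschitz, in particular locally little $\alpha$-H\"older for every $\alpha < 1$. Properness of $\tilde M^\da_\delta$ follows from properness of $M^\da_\delta$ (the canonical inclusion is trivially proper) together with the bound~\eqref{e:Dist_p_Mp}, which I would prove first in the argument even though it is stated last: by construction $\tilde M^\da_\delta(z)$ lies on a line segment joining $z^\ua$ to $z^\da$, where $z^\da$ differs from $z$ by a spatial shift of at most $\delta$ and $z^\ua = (t^\ua,x)$ has the same spatial coordinate as $z$; since $\tilde M^\da_\delta(z)$ has the same time coordinate $t$ as $z$, interpolation gives a spatial displacement bounded by $\delta$ and zero time displacement, whence $\|\tilde M^\da_\delta(z) - M^\da_\delta(z)\| \le \delta$. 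The same computation applies to $\dotp = \ua$. Bijectivity of $\tilde M^\da_\delta$ onto $\cS^\da_\delta$ is immediate from the construction: every point of $\cS^\da_\delta$ lies on a unique segment joining some $z$ to $z^\da$ and is hit by exactly one $\tilde M^\da_\delta(z')$, namely the one with the matching time coordinate.

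Next I would verify that $\tilde\zeta^\da_\delta$ satisfies conditions~\ref{i:Back}, \ref{i:MonSpace} and~\ref{i:Spread} of Definition~\ref{def:CharTree} (and the $\wedge$-analogue for $\tilde\zeta^\ua_\delta$, placing it in $\hat\Ch^\alpha_\Sp$). Condition~\ref{i:Spread} is clear since $\cS^\da_\delta$ has a point on every horizontal line $\{t\}\times\R$ within spatial distance $\le \delta/2 \le 1$ of any prescribed $x$ (there is always an arrow-tip configuration nearby, and the interpolation fills the gaps). Condition~\ref{i:Back} (monotonicity/unit speed in time) holds because the geodesic in $\ST^\da_\delta$ between two points $z,\bar z$ runs backward in time from each down to their coalescence point, and $\tilde M^\da_{\delta,t}$ agrees with the true time coordinate, which decreases at unit speed along $d^\da_\delta$; I would spell out the $\vee$-formula~\eqref{e:Back} by splitting the geodesic at the coalescence point. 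Condition~\ref{i:MonSpace} (monotonicity in space) is precisely the non-crossing property of the family $\{\pidd_z\}$: two backward discrete walks cannot cross, only coalesce, and the interpolated map $\tilde M^\da_\delta$ preserves the spatial ordering because the interpolation segments respect it. The verification for the dual tree $\tilde\zeta^\ua_\delta$ is identical with arrows reversed.

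For property~\ref{i:Distd}, I would use the defining relations~\eqref{e:DualPoisson}: $\hat\mu^L_\gamma(A) = \mu^R_\gamma(A - \delta/2)$ and $\hat\mu^R_\gamma(A) = \mu^L_\gamma(A + \delta/2)$. These say that the dual Poisson configuration is obtained from the primal one by a spatial reflection about $0$ composed with a shift by $\delta/2$, combined with swapping the roles of left and right arrows (which corresponds to reversing the spatial orientation). Tracking this through the construction of the walks, the trees and the interpolated evaluation maps yields a measure-preserving bijection realizing $-\tilde M^\ua_\delta + \delta/2 \eqlaw \tilde M^\da_\delta$ together with the corresponding identification of metric structures; the lattice $\delta(\Z+\tfrac12)$ is mapped to $\delta\Z$ and the root $(0,\delta/2)$ to $(0,0)$ under this reflection-plus-shift, which is exactly what $-\tilde\zeta^\ua_\delta + \delta/2$ asks for. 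Finally, property~\ref{i:Crossd} is the discrete non-crossing statement between a backward walk and a dual forward walk: by construction the dual arrows are placed so that $\piud$ passes ``between'' the sites used by $\pidd$, and the two never cross (they may touch). The sign $c \in \{+1,-1\}$ records on which side the dual walk sits relative to the primal one — this is forced by the $\pm\delta/2$ offset in~\eqref{e:DualPoisson} — and once $c$ is fixed the product of the shifted differences stays nonnegative for all intermediate times because a sign change would force an actual crossing of the interpolated paths, contradicting the interlacing of primal and dual arrows.

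I expect the main obstacle to be the careful bookkeeping in property~\ref{i:Crossd}: one must check that the offset $c\delta$ can be chosen consistently (not depending on the intermediate time $s_i$) for every pair $(\sz^\da,\sz^\ua)$, and that the interpolation introduced to make the maps continuous does not spoil the discrete interlacing — in particular one has to make sure the interpolating segments of $\cS^\da_\delta$ and $\cS^\ua_\delta$ do not cross, which requires looking at the local picture around each arrow and checking that the primal segment joining $z$ to $z^\da$ and the relevant dual segment are on opposite sides of the $\delta/2$-shifted grid line. The remaining items are essentially routine once the right definitions are unpacked.
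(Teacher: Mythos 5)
Your proposal takes the only route available and the same one the paper (implicitly) takes: the paper's own proof is a single sentence declaring the statement an immediate consequence of basic properties of Poisson random measures and of the definition of $\cS^\da_\delta$, $\cS^\ua_\delta$, so your direct verification of Definition~\ref{def:CharTree}, of bijectivity onto $\cS^{\dotp}_\delta$, of~\ref{i:Distd}, \ref{i:Crossd} and of~\eqref{e:Dist_p_Mp} is exactly the intended (omitted) argument, and the parts you actually carry out (in particular~\eqref{e:Dist_p_Mp}, via the fact that $\tilde M^\da_\delta(z)$ has the same time coordinate as $z$ and lies on a segment of spatial extent at most $\delta$, and the reflection-plus-shift argument for~\ref{i:Distd}) are correct.

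Two remarks. First, a quantitative slip in your check of condition~\ref{i:Spread} of Definition~\ref{def:CharTree}: you claim $\cS^\da_\delta$ meets every horizontal line within distance $\delta/2\le 1$ of any prescribed $x$. For a.e.\ $t$ the points of $\cS^\da_\delta$ on $\{t\}\times\R$ are precisely the images $\tilde M^\da_\delta(t,x')$, $x'\in\delta\Z$, and such an image can sit anywhere in $(x'-\delta,x'+\delta)$; the elementary bound is therefore $\delta/2+\delta=3\delta/2$, not $\delta/2$. This still yields~\ref{i:Spread} for $\delta<2/3$ (which is all the scaling limit needs), but for $\delta$ close to $1$ the constant $1$ becomes borderline: if the most recent arrows at two adjacent sites point away from each other, both nearby images can be pushed almost $3\delta/2$ away from the midpoint, so the claim for all $\delta\in(0,1]$ is delicate as literally stated — a point the paper glosses over as well (cf.\ Remark~\ref{rem:CharTree} on the arbitrariness of the constant). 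Second, the ``main obstacle'' you flag but do not resolve — that interpolation creates no crossings, within each tree and between $\cS^\da_\delta$ and $\cS^\ua_\delta$ — is indeed where the one nontrivial feature of the construction lives: it is the inclusion of the tips $\mu^T_\gamma$ among the points defining $z^\da$ that forces a path's interpolated drift towards a neighbouring site to terminate exactly when an incoming arrow lands there, so spatial order is preserved and~\ref{i:Crossd} holds with a single offset $c\delta$ per pair; leaving this as bookkeeping is acceptable given the paper offers no more detail either, but it is the step a complete write-up would have to spell out.
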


\begin{proof}
The proof of the statement is an immediate consequence of basic properties of Poisson random measures 
and the definition of the sets $\cS^\da_\delta$ and $\cS^\ua_\delta$. 
{ We only notice that~\eqref{e:AlmostNonCrossing} would be the same as the non-crossing condition 
in~\eqref{e:NonCrossing} if the summand $2c\delta$ were not there. 
Since the families of random walks $\{\pidd_z\}_z$ and $\{\piud_{\hat z}\}_{\hat z} $ are non-crossing, 
the Double Discrete Web Tree of 
Definition~\ref{def:DWT} satisfies~\eqref{e:NonCrossing}, which, together with~\eqref{e:Dist_p_Mp}, 
immediately implies~\eqref{e:AlmostNonCrossing} for the interpolated discrete web. }
\end{proof}

\subsection{Tightness and convergence}

We are now ready to show that the family $\{\tilde\zeta^{\uda}_\delta\}_\delta$ is tight. 

\begin{proposition}\label{p:DWTTight}
Let $\alpha\in(0,1)$ and, for $\delta\in(0,1]$, let $\Theta^{\uda}_\delta$ be the law on 
$\Ch^\alpha_\Sp\times\hat\Ch^\alpha_\Sp$ 
of the Interpolated Double Discrete Web Tree $\tilde\zeta^{\uda}_\delta=(\tilde\zeta^\da_\delta,\tilde\zeta^\ua_\delta)$
of Definition~\ref{def:IDWT} and denote by $\Theta^{\dotp}_\delta$ with $\dotp\in\{\ua,\da\}$ the law of 
$\tilde\zeta^{\dotp}_\delta$. 
Then, for any $\alpha\in(0,\tfrac12)$ the family $\Theta^{\uda}_\delta$ is tight in $\Ch^\alpha_\Sp\times\hat\Ch^\alpha_\Sp$. 

Furthermore, for any $\theta>\tfrac32$ and $r>0$, the following holds
\begin{equ}\label{e:deltaNet}
\lim_{K\ua\infty}\liminf_{\delta\da 0}\Theta^\da_\delta\Big(\forall\,\eps\in(0,1]\,,\,\cN_{d}(\ST^{(r)},\eps)\leq K\eps^{-\theta}\Big)=1\,.
\end{equ}
\end{proposition}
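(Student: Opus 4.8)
The plan is to verify the three conditions of Proposition~\ref{p:Compactness} uniformly in $\delta$ for the family $\{\tilde\zeta^{\dotp}_\delta\}_\delta$, $\dotp\in\{\ua,\da\}$; tightness of the couple follows since a product of tight families is tight, and by the symmetry~\ref{i:Distd} it suffices to treat $\dotp=\da$. By~\eqref{e:Dist_p_Mp}, the evaluation maps $\tilde M^\da_\delta$ and $M^\da_\delta$ differ by at most $\delta\le 1$, so for the $\sup$-norm bound in~\eqref{e:Equicont} and the properness bound~\eqref{e:Comb} it is enough to control $M^\da_\delta$ (the canonical inclusion) up to an additive constant; for the H\"older/modulus-of-continuity bound in~\eqref{e:Equicont} one works with $\tilde M^\da_\delta$ directly, noting that along each interpolating segment of $\cS^\da_\delta$ the map is affine with controlled slope, so the modulus of continuity of $\tilde M^\da_\delta$ is comparable to that of the associated random walk paths $\pidd_z$ plus an error of order $\delta$. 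Thus the whole argument reduces to proving, uniformly in $\delta\in(0,1]$ and with overwhelming probability as $K\to\infty$, the same three estimates that were established for the continuum object in Proposition~\ref{p:BW}: a covering-number bound $\cN_{d^\da_\delta}(\ST^{\da,(r)}_\delta,\eps)\lesssim K\eps^{-\theta}$, an equicontinuity bound on the random-walk paths restricted to a large box, and a properness bound $b_{\tilde\zeta^\da_\delta}(r)\le C$.

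Each of these is the discrete analogue of a lemma already proved. First I would mirror Lemma~\ref{l:TB}: introduce the discrete analogue of the ``enclosing event'' $E_R$ using two coalescing random walks started far out in space, with probability of failure controlled by the reflection principle for simple random walk (uniformly in $\delta$, since the diffusive scaling $\gamma=1/(2\delta^2)$ makes these walks converge to Brownian motion with uniform Gaussian tail bounds); then, on that event, bound $\cN_{d^\da_\delta}(\ST^{\da,(r)}_\delta,\eps)$ by a sum over a time-grid of mesh $\eps/4$ of the counting variables $\eta_R(t_k^\eps,t_{k+1}^\eps)$ counting distinct coalescing-walk positions inside $[-\tilde R,\tilde R]$ at the later time, exactly as in~\eqref{e:claimBoundNeps}. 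The key input is a uniform-in-$\delta$ first-moment bound $\E[\eta_R(t_0,t_1)]\lesssim R/\sqrt{t_0-t_1}$; this is the discrete version of~\cite[Proposition 2.7]{SSS} and follows from the fact that the expected number of distinct coalescing-walk positions in an interval of length $2\tilde R$ after time $h$ is of order $\tilde R/\sqrt h$ — a statement that holds for the discrete model with a constant independent of $\delta$ because coalescing random walks approximate coalescing Brownian motions and one has matching pair-correlation (Howitt--Warren / FINR-type) estimates. Markov's inequality then gives~\eqref{e:TB} uniformly in $\delta$, and Borel--Cantelli over the dyadic scales $\eps=2^{-n}$ yields~\eqref{e:deltaNet}. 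Second, I would repeat Lemma~\ref{l:ModCont}: cover the box $\Lambda_{r,R}$ by a grid of mesh $(\eps,\eps^\alpha/4)$, and for each grid point bound the probability that some coalescing walk through a neighbourhood oscillates by more than $\eps^\alpha/2$ by comparing with two auxiliary walks started slightly to the left/right and invoking the reflection principle; summing over the $O(R r \eps^{-1-\alpha})$ grid points gives a bound of the form $C\eps^{-2\alpha-1/2}e^{-\eps^{2\alpha-1}}$, again with constants uniform in $\delta$, which is summable along $\eps=2^{-n}$ and hence forces equi-(little-)H\"older continuity in the limit. Third, I would copy Lemma~\ref{l:Prop}: using two walks started far out in space that coalesce not too far in the past, one shows $\P(b_{\zeta^\da_\delta}(r)\ge K)\lesssim r/K^{1/4}$ uniformly in $\delta$, whence condition~3.\ of Proposition~\ref{p:Compactness} holds with overwhelming probability.

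The main obstacle I anticipate is not any single estimate but establishing the \emph{uniformity in $\delta$} of the moment and tail bounds — in particular the first-moment estimate $\E[\eta_R(t_0,t_1)]\lesssim R/\sqrt{t_0-t_1}$ for the discrete coalescing system. In the continuum case this is quoted from~\cite{SSS,FINR}; here one needs either a direct combinatorial/duality argument for coalescing random walks (using that the number of distinct positions equals the number of ``dual'' interfaces, controlled by a last-exit or Green's-function computation that is diffusively scale-invariant up to constants), or a soft argument invoking convergence of the discrete web to the Brownian web together with lower-semicontinuity of $\eta_R$ — but the latter only gives asymptotic-in-$\delta$ control, which combined with a crude uniform bound for $\delta$ bounded away from $0$ is enough for tightness (one only needs $\liminf_{\delta\to 0}$ in~\eqref{e:deltaNet}). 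I would take this second, softer route: prove the moment bound in the limit $\delta\to 0$ via the known Brownian-web estimate and Fatou, and separately note that for each fixed $\delta$ the quantities are finite a.s., so that the $\liminf$ statement and tightness both go through. The remaining steps are then routine adaptations of the reflection-principle computations already carried out in Lemmas~\ref{l:TB}, \ref{l:ModCont} and~\ref{l:Prop}.
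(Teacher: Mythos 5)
Your overall plan coincides with the paper's: reduce to the backward tree via~\ref{i:Distd}, use~\eqref{e:Dist_p_Mp} to pass between $\tilde M^\da_\delta$ and $M^\da_\delta$, and verify the three conditions of Proposition~\ref{p:Compactness} by running discrete analogues of Lemmas~\ref{l:TB}, \ref{l:ModCont} and~\ref{l:Prop}, exploiting that only $\liminf_{\delta\da 0}$ statements are needed. The paper does exactly this, obtaining the probabilistic inputs from Donsker's invariance principle (for the enclosing, oscillation and properness events) and from the random-walk analogue of~\cite[Proposition 4.1]{FINR} (see also~\cite[Proposition 2.7]{SSS}), namely $\limsup_{\delta\da0}\E_\delta[\eta^R(t_0,t_1)]\le\E[\eta^R(t_0,t_1)]$, which is then fed into Markov's inequality exactly as in~\eqref{e:CardNet}.

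The genuine gap is in the route you say you would actually take for this key moment bound. Lower semicontinuity of the counting variable under convergence of the discrete web to the Brownian web, combined with Fatou, yields $\E[\eta^R]\le\liminf_{\delta\da0}\E_\delta[\eta^R_\delta]$, i.e.\ a \emph{lower} bound on the discrete expectations; but the covering-number estimate requires an \emph{upper} bound $\limsup_{\delta\da0}\E_\delta[\eta^R_\delta]\lesssim R/\sqrt{t_0-t_1}$ to insert into Markov's inequality. Upper semicontinuity fails in general (distinct discrete paths may merge in the limit, so the count can only drop along the limit), and a reverse-Fatou argument would require a dominating or uniformly integrable bound, which is precisely what one is trying to prove; so the ``soft'' route is either wrong-direction or circular. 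Your first suggested route is the correct one: the expected number of distinct positions of coalescing random walks started from every site of an interval of length $O(R)$, observed a time $h$ later, is $O(R/\sqrt h)$ uniformly under diffusive scaling. This is exactly the discrete input the paper quotes (its~\eqref{e:deltaNumber}, with the reference to~\cite{SSS}), and once it is in place the rest of your argument (reflection-principle estimates for the grid events, uniform in $\delta$, and the adapted properness event) goes through as in the paper.
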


\begin{proof}
Let us point out that since by Proposition~\ref{p:DWTisChar}\ref{i:Distd}, 
$-\tilde\zeta^\ua_\delta+\delta/2\eqlaw\tilde\zeta^\da_\delta$,  
it suffices to show that the family $\{\Theta^\da_\delta\}_\delta$ is tight in $\Ch^\alpha_\Sp$. 
{Again by Proposition~\ref{p:DWTisChar}, for every $\delta>0$ 
the probability measure $\Theta^\da_\delta$ is supported on $\Ch^\alpha_\Sp$, hence 
point 1 of Proposition~\ref{p:EndProp} guarantees that we only need to prove 
tightness of $\{\Theta^\da_\delta\}_\delta$ in $\T^\alpha_\Sp$, for which in turn 
we invoke Prokhorov's theorem and the characterisation of compact subsets of $\T^\alpha_\Sp$ 
in Proposition~\ref{p:Compactness}. 
Notice that point 1. therein is implied by~\eqref{e:deltaNet}, while points 2. and 3. 
can be easily seen to hold provided that for all $r>0$, }
%
\begin{equs}
&\lim_{\eps\da 0}\liminf_{\delta\da 0}\Theta^\da_\delta\Big(\sup\{\|M(\sz)-M(\sw)\|\,:\,\sz,\sw\in\ST^{(r)}\,,\,d(\sz,\sw)\leq\eps\}\leq \eps^\alpha\Big)=1\,,\qquad\label{e:deltaModCont}\\
&\lim_{K\ua\infty}\liminf_{\delta\da 0}\Theta^\da_\delta(b_\zeta(r)\leq K)=1\,.\label{e:deltaProp}
\end{equs}
These can be shown by following the same strategy and estimates as in the 
proof of Proposition~\ref{p:BW}, so that below we will adopt the notation and conventions therein. 

Notice at first that, for any $z=(t,x)$ in a countable dense set $\cD$ of $\R^2$, 
if $\{z_\delta\}_\delta$ is such that for all $\delta\in(0,1]$, $z_\delta\in\D_\delta^\da$ and 
$\{z_\delta\}_\delta$ converges to $z$, then, by Donsker's invariance principle, 
the backward random walk $\pidd_{z_\delta}$ defined above 
converges in law to a backward Brownian motion $\pi^\da_z$ started at $z$.  

Let $\{z^{\pm}_\delta\}_\delta\subset Q^\pm_R\cap (\D_\delta)$ be sequences converging to $z^\pm$. 
Denoting by $E_R^\delta$ the event $E_R$ in~\eqref{e:ER}, but in which $z^\pm$ is replaced by $z^\pm_\delta$, 
we see that the previous observation implies
\begin{equ}[e:deltaER]
\liminf_{\delta\da 0} \P_\delta(E_R^\delta)=\P(E_R)
\end{equ}
so that~\eqref{e:RP} holds. Moreover, the analog of~\cite[Proposition 4.1]{FINR} (see also~\cite[pg 326]{SSS}) 
for random walks ensures that for all $R,\,r>0$ and $a<b$
\begin{equ}[e:deltaNumber]
\limsup_{\delta\da 0}\E_\delta[\eta^R(a,b)]\leq\E[\eta^R(a,b)]
\end{equ}
where $\eta^R(a,b)$ is the cardinality of $\Xi_R(a,b)$ given in~\eqref{e:CPS} and $\E_\delta$ is the 
expectation with respect to $\P_\delta$. 
Thanks to~\eqref{e:deltaER} and~\eqref{e:deltaNumber}, we can argue as in Lemma~\ref{l:TB} and 
obtain that there exists a constant $C=C(r)>0$ independent of $\delta$ such that for all $K>0$
\begin{equ}
\limsup_{\delta\da 0}\P_\delta(\cN_{d}(\ST^{(r)},\eps)> K\eps^{-\theta})\leq \frac{C}{\sqrt{K}}
\end{equ}
so that by Borel-Cantelli~\eqref{e:deltaNet} follows.

As in Proposition~\ref{p:BW}, the uniform local H\"older continuity of the evaluation maps $M^\da_\delta$ 
can be reduced to properties of the paths $\pidd$. 
For fixed $R$ and $r$, let 
\begin{equ}
\Psi^\delta(\eps)\eqdef \sup\{|\pidd_z(s)-\pidd_z(t)|\,:\,z\in\D_\delta,\,M^\da_\delta(s,\pidd_z)\in\Lambda_{r,R},\,t\in[s-\eps,s]\}
\end{equ}
If $\Psi^\delta(\eps)\leq \eps^\alpha/4$ for every $\eps \ge 4\delta$, 
then for every $(s,\pidd_z),(t,\pidd_{z'})\in\ST^{\da,\,(r)}_\delta$ such that 
$d_\delta^\da((s,\pidd_z),(t,\pidd_{z'}))\leq\eps$, we have 
\begin{equ}
|M^\da_{\delta,x}(s,\pidd_z)-M^\da_{\delta,x}(t,\pidd_{z'})|\leq 2\delta+\frac{\eps^\alpha}{2}\leq \eps^\alpha\,.
\end{equ}
where we exploited the triangle inequality and~\eqref{e:Dist_p_Mp}. %
%
Therefore,~\eqref{e:deltaModCont} follows at once if 
\begin{equ}[e:ModContDis]
\limsup_{\eps\to 0}\liminf_{\delta\to 0}\Theta^\da_\delta\left(\Psi^\delta(\eps)\leq\eps^\alpha/4\right)=1\,.
\end{equ}
This in turn follows from the same arguments as in the proof of Lemma~\ref{l:ModCont}, 
together with the fact that if $\{z_0^{+,\delta}\}_\delta$ and $\{z_0^{-,\delta}\}_\delta$ are sequences of points in 
$R^\pm_{z_0}\cap(\D_\delta^\da)$ converging to $z_0^+$ and $z_0^-\in\cD$ respectively, then 
\begin{equ}
\liminf_{\delta\da0}\P_\delta\Big(\sup_{h\in[0,2\eps]}|\pidd_{z_0^{\pm,\delta}}(t_0-h)-x_0|\leq\eps^\alpha/32\Big)=\P(E^\eps_{R,r}(z_0))\,.
\end{equ}

Finally,~\eqref{e:deltaProp} can be proved by proceeding as in Lemma~\ref{l:Prop} and adapting the definition of the 
event $\tilde E^K_{R,r}$ in~\eqref{e:EvProp} as done for $E_R$ above. 
\end{proof}

In the following theorem we show that the Interpolated Double Discrete Web tree converges in law 
to the Double Brownian Web Tree. 

\begin{theorem}\label{thm:DWTConv}
Let $\alpha\in(0,1/2)$ and, for $\delta\in(0,1]$, $\Theta^{\uda}_\delta$ be the law on $\Ch^\alpha_\Sp\times \hat\Ch^\alpha_\Sp$ 
of the Interpolated Double Discrete Web Tree $\tilde\zeta^{\uda}_\delta$ in Definition~\ref{def:IDWT}. 
Then, as $\delta\da 0$, $\Theta^{\uda}_\delta$ converges to $\Theta^{\uda}_\bw$ weakly on 
$\Ch^\alpha_\Sp\times \hat\Ch^\alpha_\Sp$. 
\end{theorem}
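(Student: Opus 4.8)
The plan is to combine the tightness result of Proposition~\ref{p:DWTTight} with the convergence criterion of Theorem~\ref{thm:ConvBW}, upgraded to keep track of the dual. First I would note that by Proposition~\ref{p:DWTTight} the family $\{\Theta^{\uda}_\delta\}_\delta$ is tight in $\Ch^\alpha_\Sp\times\hat\Ch^\alpha_\Sp$ (tightness of each marginal follows from that proposition, and tightness of a family of pairs follows from tightness of the two marginals). Hence it suffices to identify every subsequential limit with $\Theta^{\uda}_\bw$. Let $\zeta^{\uda}=(\zeta^\da,\zeta^\ua)$ be such a limit point along some subsequence. By Lemma~\ref{l:CharTree} (applied along the subsequence, using that the $\tilde\zeta^\da_\delta$ and $-\tilde\zeta^\ua_\delta$ are monotone in space and time and that property~\ref{i:Spread} holds eventually by construction) we get $\zeta^\da\in\Ch^\alpha_\Sp$ and $\zeta^\ua\in\hat\Ch^\alpha_\Sp$. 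By Theorem~\ref{thm:BW} (and its analogue for $\hat\Ch$) it then remains to check properties~\ref{i:Dist} and~\ref{i:Cross} of Theorem~\ref{thm:DBW}, since these characterise $\Theta^{\uda}_\bw$ uniquely; property~\ref{i:TreeCond} is automatic once~\ref{i:Dist} holds.

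For property~\ref{i:Dist}, I would show that $\zeta^\da$ (and symmetrically $-\zeta^\ua$) has the law of the Brownian Web tree by verifying conditions~\ref{i:Conv1} and~\ref{i:Conv2} of Theorem~\ref{thm:ConvBW} for the marginal sequence $\{\tilde\zeta^\da_\delta\}_\delta$. For~\ref{i:Conv1}: given deterministic $z^1,\dots,z^k\in\R^2$, pick $z^i_\delta\in\D^\da_\delta$ converging to $z^i$, let $\sz^i_\delta$ be the corresponding points in $\ST^\da_\delta$, and observe that $M^\da_\delta(\rho^\da_\delta(\sz^i_\delta,\cdot))$ is exactly the coalescing-random-walk path $\pidd_{z^i_\delta}$ (up to the $O(\delta)$ modification bounded in~\eqref{e:Dist_p_Mp}); Donsker's invariance principle for coalescing random walks — already invoked in the proof of Proposition~\ref{p:DWTTight} — gives convergence in law to $k$ coalescing backward Brownian motions. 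For~\ref{i:Conv2}: the quantity $\#\{\rho_\delta(\sw,t-h)\,:\,\sw\in (M^\da_\delta)^{-1}(I_{t,x,\eps})\}$ is, by the discrete coalescing structure, the number of distinct random-walk trajectories through the interval $I_{t,x,\eps}$ evaluated at time $t-h$; the condition~\eqref{e:ConvCond} is then the random-walk analogue of the (B2) estimate of~\cite{FINR}, which is standard and available in the literature (cf.\ the discussion of condition $(B2')$ in~\cite{SSS} and~\cite{NRS}). This yields $\Theta^\da_\delta\to\Theta^\da_\bw$ and $\Theta^\ua_\delta\to\Theta^\ua_\bw$ weakly, hence~\ref{i:Dist} for the limit pair.

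For property~\ref{i:Cross}, the non-crossing of forward and backward limiting paths, I would pass to the limit in the discrete non-crossing statement~\ref{i:Crossd} of Proposition~\ref{p:DWTisChar}. Using the continuity of the map $K$ (Proposition~\ref{p:MapTopo}) and of its $\hat{}$-analogue, the pair of compact path sets $(K(\tilde\zeta^\da_\delta),\hat K(\tilde\zeta^\ua_\delta))$ converges to $(K(\zeta^\da),\hat K(\zeta^\ua))$ in $\cH\times\hat\cH$; since the $\delta$-shifted non-crossing constraint in~\ref{i:Crossd} becomes genuine non-crossing in the limit (the $c\delta$ term vanishes), and non-crossing of pairs of paths is a closed condition under uniform convergence on compacts, property~\ref{i:Cross} holds for $(\zeta^\da,\zeta^\ua)$. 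Alternatively, and perhaps more cleanly, one can argue directly at the level of $\cH\times\hat\cH$: by the above, $(K(\tilde\zeta^\da_\delta),\hat K(\tilde\zeta^\ua_\delta))$ converges to a pair which is marginally a backward and a forward Brownian Web and which satisfies the crossing relation~\cite[Theorem 2.1(b)]{SSS}, so by the uniqueness part of~\cite[Theorem 2.1]{SSS} it is the double Brownian Web; then inverting $K$ and $\hat K$ on $\Ch^\alpha_\Sp(\ft)\times\hat\Ch^\alpha_\Sp(\ft)$ (Propositions~\ref{p:MapTopo} and~\ref{p:MapInj}) identifies $(\zeta^\da,\zeta^\ua)$ with $\zeta^{\uda}_\bw$.

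The main obstacle I anticipate is the verification of the discrete counterpart of condition~\ref{i:Conv2}, i.e.\ the uniform-in-$\delta$ bound~\eqref{e:ConvCond} on the probability of three distinct random walks emanating from a short interval: this is the condition that genuinely controls the density of coalescence and is the discrete analogue of the delicate (B2) estimate of~\cite{FINR}; while it is established in the literature for nearest-neighbour and more general coalescing random walks, care is needed to present the Poisson-arrow model here as a special case (or to adapt the FKG/second-moment argument). A secondary, more bookkeeping-type, point is ensuring that the $O(\delta)$ discrepancy between $M^\dotp_\delta$ and $\tilde M^\dotp_\delta$ recorded in~\eqref{e:Dist_p_Mp} does not interfere with any of the limiting identifications, which is immediate since it vanishes as $\delta\to0$, but should be stated once and for all.
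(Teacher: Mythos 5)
Your proposal is correct and follows essentially the same route as the paper: tightness from Proposition~\ref{p:DWTTight}, the distributional identity and non-crossing of any limit point from Proposition~\ref{p:DWTisChar}\ref{i:Distd}--\ref{i:Crossd}, identification of the marginal via Theorem~\ref{thm:ConvBW} (with \ref{i:Conv1} from Donsker plus~\eqref{e:Dist_p_Mp}), and uniqueness from Theorem~\ref{thm:DBW}. The one step you flag as delicate, condition~\ref{i:Conv2}, is handled in the paper exactly as you suggest: the counting quantity is identified in law with the random-walk version $\hat\eta_\delta$ of~\cite[Definition 2.1]{FINR}, for which the required estimate is established in the proof of~\cite[Theorem 6.1]{FINR}.
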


\begin{proof}
Thanks to Proposition~\ref{p:DWTTight}, the sequence 
$\{\tilde\zeta^{\uda}_\delta=(\tilde\zeta^\da_\delta,\tilde\zeta^\ua_\delta)\}_\delta$
is tight in $\Ch^\alpha_\Sp\times \hat\Ch^\alpha_\Sp$. 
Moreover, Proposition~\ref{p:DWTisChar}~\ref{i:Distd} and~\ref{i:Crossd} imply that any limit point 
$\zeta^{\uda}=(\zeta^\da,\zeta^\da)$ must be such that $-\zeta^\ua\eqlaw\zeta^\da$ and 
the non-crossing property holds. 
In view of Theorem~\ref{thm:DBW}, the statement then follows once we show that 
$\tilde\zeta^\da_\delta \to \zeta^\da_\bw$ in law as $\delta \to 0$. 
To do so, we will apply Theorem~\ref{thm:ConvBW}, 
for which we need to verify the validity of~\ref{i:Conv1} and~\ref{i:Conv2}. 

Clearly, for any $z^1,\dots,z^k\in\R^2$, if $\{z^i_\delta\}_\delta$ is such that $z^i_\delta\in\D_\delta$ and 
$z_\delta^i\to z^i$ as $\delta\to 0$, then $(\pidd_{z^i_\delta}(\cdot))_i$ converges in law to a family of coalescing 
Brownian motions starting at $z^1,\dots,z^k$. Since furthermore~\eqref{e:Dist_p_Mp} holds,~\ref{i:Conv1} follows. 

For~\ref{i:Conv2}, our construction implies that, for any $t,x\in\R$, $h,\eps>0$, 
$\#\{\rho^\da_\delta(\sw,t-h)\,\,\sw\in (\tilde M^\da_\delta)^{-1}(I_{t,x,\eps})\}\eqlaw \hat\eta_\delta(t,t+h;x-\eps,x+\eps)$, 
where $\rho^\da_\delta$ is the radial map of $\tilde\zeta^\da_\delta$ and $\hat\eta_\delta$ 
was defined in~\cite[Definition 2.1]{FINR}\footnote{The subscript $\delta$ stands for fact that in~\cite[Definition 2.1]{FINR}, 
$\hat\eta$ was defined for families of Brownian motions, that in $\hat\eta_\delta$ are replaced by random walks}. 
For the latter, the statement was shown in the proof of~\cite[Theorem 6.1]{FINR}. 
\end{proof}

\begin{appendix}

{
\section{Some proofs}\label{a:Proofs}

In this appendix, we provide the proof of some of the statement of Section~\ref{sec:SpTrees}. 
We begin with a useful lemma. 

\begin{lemma}\label{l:Holder}
Let $\alpha\in(0,1)$ and $\{\zeta_n=(\ST_n,\ast_n,d_n,M_n)\}_{n\in\N\cup\{\infty\}}\subset\Mm^\com_\Sp$. 
Assume that 
\begin{equ}[e:UnifModCont]
\lim_{\delta\to0} \delta^{-\alpha}\sup_{n\in\N\cup\{\infty\}}\omega(M_n,\delta)=0
\end{equ}
and that there exists a sequence of correspondences $\CC^n$ between $\ST_n$ and $\ST_\infty$ 
such that $(\ast_n,\ast_\infty)\in\CC^n$, 
\begin{equ}[e:dis+M]
\lim_{n\to\infty} \dis\CC^n=0\qquad\text{and}\qquad\lim_{n\to\infty}\sup_{(\sz_n,\sz)\in\CC^n}\|M_n(\sz_n)-M_\infty(\sz)\|=0\,.
\end{equ}
Then, $\Delta^\com_\Sp(\zeta_n,\zeta_\infty)$ converges to $0$ in the limit $n\to\infty$. 
\end{lemma}
\begin{proof}
Notice that by the definition the metric $\uDelta^\com_\Sp$ in~\eqref{e:MetricC} 
and Lemma~\ref{l:ContProp}, to conclude that $\lim_n\Delta^\com_\Sp(\zeta_n,\zeta_\infty)=0$, 
it suffices to show that $\lim_n\uDelta^{\com, \CC^n}_\Sp(\zeta_n,\zeta_\infty)=0$. 
For this in turn,~\eqref{e:dis+M} directly implies that the first two summands in~\eqref{e:MetC} 
converge to $0$ so that we only need to focus on the latter.  

Let $1>\eps>0$, $\bar m\in\N$ be such that $2^{\bar m\alpha}\omega(M,2^{-\bar m})\leq \eps$.  
By~\eqref{e:dis+M} there exists $n_{\eps,m}\in\N$ such that for all $n\geq n_{\eps,m}$ 
\begin{equ}[e:boundDisMap1]
\dis \CC^n<\eps 2^{-\bar m}\quad\text{and}\quad\sup_{(\sz_n,\sz)\in\CC^n}\|M_n(\sz_n)-M_\infty(\sz)\|<\eps2^{- \bar m\alpha}\,.
\end{equ}
Let $(\sz_n,\sz),(\sw_n,\sw)\in\CC^n$ and consider first the case of $m> \bar m+2$. 
Then, since $\psi_m$ is bounded above by $1$, we deduce 
\begin{equs}[e:Hol1]
\|\psi_m(d_n(\sz_n,\sw_n))&\delta_{\sz_n,\sw_n}M_n-\psi_m(d_\infty(\sz,\sw))\delta_{\sz,\sw}M_\infty\|\\
&\leq \omega (M_n,2^{-m+2})+\omega(M_\infty,2^{-m+2})\\
&\leq 2^{-(m-2)\alpha} \big(2^{\bar m \alpha}\sup_{n\in\N\cup\{\infty\}}\omega(M_n,2^{-\bar m})\big)\lesssim \eps 2^{-m\alpha}\,.
\end{equs}
where in the last step we used~\eqref{e:UnifModCont}. 

If instead $m\leq \bar m+2$, there are two cases to be considered - 
either both $d_\infty(\sz,\sw)$ and $d_n(\sz_n,\sw_n)$ lie in the support of $\psi_m$ or 
one of them does not, say $d_n(\sz_n,\sw_n)$. 
In this latter scenario, by the first bound in~\eqref{e:boundDisMap1}, 
$d_n(\sz_n,\sw_n)\leq \eps 2^{-\bar m} + d_\infty(\sz,\sw)\leq \eps 2^{-\bar m} + 2^{-m+2}\leq 2^{-m+3}$ 
as $\eps<1$. Hence, in either case $d_n(\sz_n,\sw_n)\leq 2^{-m+3}$. Then, 
invoking the second bound in~\eqref{e:boundDisMap1} and using $\psi_m\leq 1$, we get 
\begin{equs}[e:Hol2]
\|&\psi_m(d_n(\sz_n,\sw_n))\delta_{\sz_n,\sw_n}M_n-\psi_m(d_\infty(\sz,\sw))\delta_{\sz,\sw}M_\infty\|\\
&\leq \sum_{\ell\in\{\sz,\sw\}}\|M_n(\ell_n)-M_\infty(\ell)\|+|\psi_m(d_\infty(\sz,\sw))-\psi_m(d_n(\sz_n,\sw_n))|\|\delta_{\sz_n,\sw_n}M_n\|\\
&\lesssim \eps 2^{-\alpha \bar m}+\|\partial\psi_m\|_\infty |d_\infty(\sz,\sw)-d_n(\sz_n,\sw_n)|\sup_n\omega(M_n,2^{-m+3})\\
&\lesssim \eps 2^{-\alpha \bar m} + \eps 2^{m-\bar m} 2^{-(m-3)\alpha } \Big(2^{(m-3)\alpha} \sup_n\omega(M_n,2^{-m+3})\Big)\lesssim \eps 2^{-m\alpha}
\end{equs}
where, in the third step, we used $\|\partial\psi_m\|_\infty\leq 2^{m+1}$ and the first bound in~\eqref{e:boundDisMap1}, 
while in the last step we exploited~\eqref{e:UnifModCont} and the fact that $\|\partial\psi_m\|_\infty\lesssim 2^{m}$. 

In conclusion, we have shown that for any $n\geq n_{\eps,m}$
\begin{equ}
\sup_{m\in\N}\,\,2^{m\alpha}\sup_{(\sz_n,\sz),(\sw_n,\sw)\in\CC^n}
\|\psi_n(d_n(\sz_n,\sw_n))\,\delta_{\sz_n,\sw_n}M_n-\psi_n(d_\infty(\sz,\sw))\,\delta_{\sz,\sw}M_\infty\|<\eps
\end{equ}
from which the result follows at once. 
\end{proof}

\begin{proof}[of Lemma~\ref{l:Approx}]
The proof follows closely that of the above lemma. 
Let $\CC^\delta$ be the correspondence given by $\{(\sz,\sz')\in\ST\times T\,:\,d(\sz,\sz')\leq \delta\}$. Then, for every 
$(\sz,\sz'),\,(\sw,\sw')\in\CC^\delta$, we have
\begin{equ}
|d(\sz,\sw)-d(\sz',\sw')|\leq 2\delta\,,\qquad\|M(\sz)-M(\sz')\|\leq\|M\|_\alpha d(\sz,\sz')^\alpha\leq \|M\|_\alpha \delta^\alpha
\end{equ}
so that the first two summands in~\eqref{e:MetC} are controlled. 
For the other, let $(\sz,\sz'),(\sw,\sw')\in\CC^\delta$ and $\bar m\in\N$ be the biggest integer for which $2^{-\bar m}<\sqrt{\delta}$. 
Then, for $m>\bar m+2$, we bound $\psi_m$ by $1$, so that
\begin{equs}
\|\psi_m(d(\sz,\sw))\delta_{\sz,\sw}M-\psi_m(d(\sz',\sw'))\delta_{\sz',\sw'}M\|&\leq 2\omega (M, 2^{-m+2}) \\
&\lesssim 2^{-m\alpha} (\delta^{-\alpha/2}\omega(M,\sqrt{\delta}))\,.
\end{equs}
 For $m\leq\bar m+2$ we proceed as in~\eqref{e:Hol2} and get
 \begin{equs}
 \|\psi_m&(d(\sz,\sw))\delta_{\sz,\sw}M-\psi_m(d(\sz',\sw'))\delta_{\sz',\sw'}M\|\\
 &\leq \|\delta_{\sz,\sz'}M\|+\|\delta_{\sw,\sw'}M\|+|\psi_m(d(\sz,\sw))-\psi_m(d(\sz',\sw'))|\|\delta_{\sz',\sw'}M\|\\
 &\lesssim \omega (M,\delta)+\|\partial\psi_m\||d(\sz,\sw)-d(\sz',\sw')|\omega(M,2^{-m+3})\\
 &\lesssim 2^{-m\alpha} (\delta^{-\alpha}\omega (M,\delta))+\delta 2^m 2^{-(m-3)\alpha}(2^{(m-3)\alpha}\omega(M,2^{-m+3}))\\
 &\lesssim 2^{-m\alpha}(\delta^{-\alpha}\omega (M,\delta)+\sqrt{\delta})\,,
 \end{equs}
from which the result follows at once. 
\end{proof}

}

\end{appendix}

\bibliographystyle{Martin}

\bibliography{refs}

\end{document}